\documentclass{amsart}
\usepackage{amssymb,amsmath,amscd,amsthm}
\usepackage{amsfonts,epsfig,latexsym,graphicx,amssymb}
\usepackage{color}
\usepackage{enumerate}
\usepackage{todonotes}

\usepackage{hyperref}

\date{\today}

\newcommand{\Z}{{\mathbb Z}}
\newcommand{\R}{{\mathbb R}}

\newcommand{\N}{{\mathbb N}}

\newcommand{\mK}{\mathcal{K}}
\newcommand{\mA}{\mathcal{A}}
\newcommand{\Cn}{\overline{M}}

\newcommand{\OZ}{\Omega_{\Z}}
\newcommand{\Es}{\mathcal{E}}

\newcommand{\SL}{{\mathrm{SL}}}
\newcommand{\Rot}{{\mathrm{Rot}}}

\newcommand{\bo}{\omega}
\newcommand{\bv}{\bar{v}}

\newcommand{\tf}{\tilde f}

\newcommand{\tx}{\tilde x}

\newcommand{\ki}{j}

\newcommand{\Ind}{\mathbf{1}}
\newcommand{\beps}{\overline{\eps}}

\newcommand{\mjump}{\bar{m}}
\newcommand{\mV}{m'}
\newcommand{\mmax}{\widehat{m}}
\newcommand{\mW}{\mjump}

\newcommand{\vu}{w}

\newcounter{mit}
\newenvironment{mainit}
	{\setcounter{mit}{0}\begin{itemize}}
	{\end{itemize}}
\newcommand{\mitem}{\refstepcounter{mit}\item[\themit]}
\renewcommand{\themit}{\textbf{\Roman{mit}}}

\newcounter{Bit}
\newenvironment{Bitemize}
	{\setcounter{Bit}{0}\begin{itemize}}
	{\end{itemize}}
\newcommand{\bitem}{\refstepcounter{Bit}\item[\theBit]}
\renewcommand{\theBit}{(B\arabic{Bit})}

\newtheorem{theorem}{Theorem}[section]
\newtheorem{lemma}[theorem]{Lemma}
\newtheorem{prop}[theorem]{Proposition}
\newtheorem{coro}[theorem]{Corollary}

\theoremstyle{definition}
\newtheorem{remark}[theorem]{Remark}

\newtheorem{example}[theorem]{Example}
\newtheorem{defi}[theorem]{Definition}

\sloppy


\newcommand{\Sc}{{\mathbb S}^1}

\newcommand{\Lx}{L}
\newcommand{\La}{L_p}
\newcommand{\ep}{\varepsilon'}

\def\dist{{\rm dist}}
\def\supp{\mathop{\rm supp}}

\newcommand{\bi}{{\bf i}}

\def\N{{\mathbb N}}

\newcommand{\E}{{\mathbb E}\,}
\newcommand{\Prob}{{\mathbb P}\,}
\def\P{\Prob}

\def\be{\begin{equation}}
\def\ee{\end{equation}}

\newcommand{\eps}{{\varepsilon}}

\newcommand{\const}{{\rm const}}

\newcommand{\mgr}{\mu}
\newcommand{\msp}{\nu}

\interfootnotelinepenalty=10000

\begin{document}

\title[Non-stationary Anderson Localization
]{Non-stationary Anderson Localization}


\author[A.\ Gorodetski]{Anton Gorodetski}

\address{Department of Mathematics, University of California, Irvine, CA~92697, USA}

\email{asgor@uci.edu}

\thanks{A.\ G.\ was supported in part by NSF grant DMS--2247966.}    

\author[V. Kleptsyn]{Victor Kleptsyn}

\address{CNRS, Institute of Mathematical Research of Rennes, IRMAR, UMR 6625 du CNRS}

\email{victor.kleptsyn@univ-rennes1.fr}

\thanks{V.K. was supported in part by ANR Gromeov (ANR-19-CE40-0007) and by Centre Henri Lebesgue (ANR-11-LABX-0020-01)}

\begin{abstract}
We consider discrete Schr\"odinger operators on $\ell^2(\mathbb{Z})$ with bounded random but not necessarily identically distributed values of the potential.
We prove spectral localization (with exponentially decaying eigenfunctions) {as well as dynamical localization for this model}.
An important ingredient of the proof is a non-stationary version of the parametric Furstenberg Theorem on random matrix products, which is also of independent interest.
\end{abstract}

\maketitle

\tableofcontents

\section{Introduction}

In this paper we prove spectral and dynamical localization for 1D discrete Schr\"odinger operators with potential given by independent but not necessarily identically distributed random variables. In particular, the setting in which our theorem is applicable includes non-stationary Anderson-Bernoulli Model; the latter leads to several interesting examples that we discuss in the appendix. In order to prove it, we establish a non-stationary parametric version of the Furstenberg Theorem on random matrix products.

\subsection{Anderson Localization}

The 1977 Nobel Prize in Physics was awarded to P.\,W.\,Anderson, N.\,F.\,Mott, and J.H.\,van\,Vleck ``for their fundamental theoretical investigations of the electronic structure of magnetic and disordered systems''. One of the main phenomena that contributed to the award was the suppression of electron transport due to disorder, which is nowadays called {\it Anderson localization}. Since then Anderson localization attracted enormous amount of attention and was heavily studied both by physicists and mathematicians. For a brief survey of the history of the subject from a perspective of a physicist see \cite{LTW}.

Mathematically Anderson localization can be formulated either as spectral localization (sometimes the term ``Anderson localization'' in math literature is specifically attributed to the spectral localization with exponentially decreasing eigenfunctions) or as dynamical localization, which is closer to physical intuition. The very first rigorous results related to Anderson localization were obtained by Goldsheid, Molchanov, Pastur \cite{GMP} in 1977 and Molchanov \cite{Mol} in 1978. In 1980, Kunz and Souillard \cite{KuS} proved localization for 1D discrete Schr\"odinger operators with  the potential given by independent random variables with nice densities. Localization for 1D Anderson-Bernoulli model turned out to be a harder problem, and was established by Carmona, Klein, Martinelli \cite{CKM} in 1987 (see also \cite{DSS} for the continuum 1D Anderson-Bernoulli case).

In the multidimensional lattice case, the key original articles are
those of Fr\"ohlich and Spencer \cite{FS}, Martinelli and Scoppola \cite{MS}, Simon and Wolff \cite{SW}, Kotani and Simon \cite{KotS}, Delyon, Levy, Souillard \cite{DLS},  von Dreifus and Klein \cite{vDK}, and Aizenman and Molchanov \cite{AM}. Once again, Anderson-Bernoulli case is essentially harder; for the first results on multi-dimensional Anderson localization (at the lower edge of the spectrum) in continuum Anderson-Bernoulli case see \cite{B1}, \cite{BK}, \cite{B2}. In the discrete case similar results were obtained only recently, see \cite{DSm}, \cite{LZ}.

For the detailed description of the existing methods and results see the classical \cite{BL}, \cite{CFKS}, \cite{CL}, \cite{FP} and more recent \cite{AW}, \cite{DKKKR} monographs, as well as lecture notes and surveys \cite{D}, \cite[Section 4]{D16}, \cite{His}, \cite{Hu}, \cite{Kir}, \cite{Sp1}, \cite{Sp2}, \cite{Sp3}, \cite{S}, \cite{St}, \cite{St2}.

%
%
%
%
%
%
%
%
%

Let us now state our main results. Consider discrete Schr\"odinger operators $H$ acting on $\ell^2(\Z)$ via
\begin{equation}\label{e.oper}
[H u](n) = u(n+1) + u(n-1) + V(n) u(n).
\end{equation}
We will assume that $\{V(n)\}$ are independent (but not necessarily identically distributed) random variables, distributed with respect to some compactly supported non-degenerate (support contains more than one point) probability measures $\{\mgr^\#_n\}$. Notice that we do not require the distributions $\mgr^\#_n$ to be continuous; in particular, the non-stationary Anderson-Bernoulli model (when the distribution $\mgr^\#_n$ is supported on two different values that can depend on $n$) is included in our setting. Denote
$$
\P=\prod_{n=-\infty}^\infty \mgr^\#_n
$$

\begin{theorem}[Spectral Anderson Localization, 1D]\label{t.al}
Suppose the potential $\{V(n)\}$ of the operator $H$ given by (\ref{e.oper}) is random and defined by the independent random variables defined by  distributions $\{\mgr^\#_n\}$ such that

\vspace{5pt}

 1) $\supp \mgr^\#_n\subseteq [-K, K]$;

\vspace{5pt}

 2) $\text{Var}\,(\mgr^\#_n)>\eps$,

\vspace{5pt}

  \noindent where $\eps>0, K<\infty$ are some uniform constants. Then the spectrum of the operator $H$ is $\P$-almost surely pure point, with exponentially decreasing eigenfunctions. The same statement holds for spectrum of discrete Schr\"odinger operator with non-stationary random potential on $\ell^2(\mathbb{N})$ with the Dirichlet boundary condition.
\end{theorem}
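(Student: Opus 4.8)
The plan is to follow the classical route to one-dimensional spectral localization via positivity of the Lyapunov exponent plus a Furstenberg-type regularity statement for the random transfer matrices, but in the non-stationary setting this must be replaced by the non-stationary parametric Furstenberg Theorem advertised in the abstract. First I would pass from the operator to its transfer matrices: for energy $E\in\R$ the cocycle is generated by the $\SL(2,\R)$ matrices $A_n(E)=\begin{pmatrix} E-V(n) & -1 \\ 1 & 0\end{pmatrix}$, and a formal solution $u$ of $Hu=Eu$ satisfies $\binom{u(n+1)}{u(n)} = A_n(E)\cdots A_1(E)\binom{u(1)}{u(0)}$. Hypotheses (1) and (2) — uniform boundedness of the supports and a uniform lower bound on the variances — are exactly what is needed to guarantee that the family $\{A_n(E)\}$ is uniformly non-degenerate (no common invariant direction, uniform contraction on $\mathbb{RP}^1$ along the random walk), so the non-stationary parametric Furstenberg Theorem applies and yields, for every $E$ in the relevant energy window, a uniform positive lower bound $\gamma(E)\ge \gamma_0>0$ on the (non-stationary) Lyapunov exponent, together with large-deviation / Hölder-regularity-in-$E$ control on $\tfrac1n\log\|A_n(E)\cdots A_1(E)\|$ and on the associated (random, non-stationary) invariant measures on $\mathbb{RP}^1$. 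This is the substitute for Furstenberg positivity and the Le Page / Craig–Simon estimates used in the i.i.d. case.

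Second, I would upgrade pointwise positivity of the Lyapunov exponent to exponential dynamical decay of the transfer matrices along almost every trajectory, with control that is locally uniform in $E$. Concretely, for a fixed compact energy interval $I$, I want: for $\P$-a.e. $\omega$ and Lebesgue-a.e. (or for a topologically large, dynamically relevant set of) $E\in I$, any solution of $Hu=Eu$ that is $\ell^2$ at $+\infty$ decays exponentially, and symmetrically at $-\infty$; moreover the two half-line decays cannot be reconciled for two independent energies, which is the mechanism forbidding $\ell^2$-solutions for Lebesgue-a.e. $E$. The engine here is Osceledets together with the uniform large deviation bounds from the non-stationary parametric Furstenberg Theorem, which give that the "upper Lyapunov exceptional set" in $E$ has measure (or capacity) zero; this is the non-stationary analogue of the Kotani-type / Ishii–Pastur argument that the a.c. spectrum is empty and that generalized eigenfunctions decay.

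Third, I would run a spectral-averaging / rank-one-perturbation argument (Simon–Wolff criterion, or the Carmona–Klein–Martinelli strategy since we explicitly allow Bernoulli distributions). Using non-degeneracy of each $\mgr^\#_n$ one obtains, after conditioning on all coordinates except $V(0)$ (or $V(1)$ for the half-line with Dirichlet condition), that the single-site distribution has enough spread for the spectral-averaging bound $\int \|(\text{imaginary part of the resolvent})\|\,d\mgr^\#_0 < \infty$ to hold on the set where the transfer matrices decay exponentially; combined with the exponential decay of generalized eigenfunctions established in the previous step this forces the spectral measure to be pure point, with exponentially decaying eigenfunctions, for $\P$-a.e.\ $\omega$, on each compact energy window, and then on all of $\R$ by exhausting with countably many windows. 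The half-line $\ell^2(\N)$ case with Dirichlet boundary condition is the same argument restricted to $n\ge 1$, using that the Dirichlet solution $u(0)=0$ is a genuine solution so only the $+\infty$ decay is needed together with the boundary-site rank-one perturbation.

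The main obstacle, and the reason this is not a routine transcription of the i.i.d. proof, is the loss of stationarity: there is no invariant measure for the projective cocycle, no ergodic theorem, and no single stationary measure on $\mathbb{RP}^1$ to run the classical Furstenberg/Le Page machinery. Everything must be made \emph{uniform in $n$} — uniform positivity of finite-scale Lyapunov exponents, uniform large deviations, uniform Hölder control in $E$ — and this uniformity is precisely what the hypotheses (uniform variance lower bound $\eps$ and uniform support bound $K$) are there to supply; establishing those uniform estimates is exactly the content of the non-stationary parametric Furstenberg Theorem that the paper proves separately, so at this stage I would invoke it as a black box and concentrate the remaining work on the deterministic spectral-theory reduction (transfer matrices $\Rightarrow$ dynamics of solutions $\Rightarrow$ Simon–Wolff), checking carefully that each step tolerates the $n$-dependence of the distributions.
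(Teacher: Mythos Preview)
Your proposal has the right architecture in its first two steps (pass to transfer matrices, invoke the non-stationary parametric Furstenberg machinery), but the third step --- the spectral-theory reduction --- contains a genuine gap, and this is precisely where the paper's argument differs from yours.

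You propose to close the argument with spectral averaging / Simon--Wolff, conditioning on one site and integrating the imaginary part of the resolvent against $\mgr^\#_0$. This does not work when $\mgr^\#_0$ is Bernoulli (or more generally singular): the Simon--Wolff criterion requires absolute continuity of the single-site distribution, and there is no ``enough spread'' for a two-point measure to make the resolvent integral finite. You acknowledge this by citing CKM, but then the sentence that follows is still a spectral-averaging bound, so the argument as written would fail in the Bernoulli case that the theorem explicitly covers. Relatedly, your Step~2 aims only at control for \emph{Lebesgue-a.e.}~$E$; for singular potentials the set of generalized eigenvalues has no reason to avoid a Lebesgue-null set, so ``a.e.~$E$'' is not enough to conclude pure point spectrum via Fubini.

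The paper bypasses spectral averaging entirely. Its key dynamical input (Theorem~\ref{t.vector}) is a statement valid \emph{for all} energies $E$ simultaneously on a set of full probability: almost surely, for every $E$, any vector whose images under the transfer matrices fail to grow at the rate $L_n(E)$ must in fact decay at rate $-L_n(E)+o(n)$. Since $L_n(E)\ge hn$ uniformly, any polynomially bounded solution of $Hu=Eu$ satisfies the hypothesis and is therefore exponentially decaying. Shnol's Theorem then gives pure point spectrum with exponentially decaying eigenfunctions in one stroke --- no rank-one perturbation, no conditioning on a site, no a.e.-$E$ exceptional set to worry about. The hard work is entirely inside the ``for all $E$'' upgrade, which comes from the parametric structure (monotonicity in $E$, subexponential parameter discretization, and the jump-interval analysis of Theorem~\ref{t:main}), not from any measure-theoretic averaging over the randomness at a single site.
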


Moreover, a stronger version of localization, namely dynamical localization, holds for non-stationary Anderson Model. A self-adjoint operator $H:\ell^2(\mathbb{Z})\to \ell^2(\mathbb{Z})$ has dynamical localization if for any $q>0$ one has
$$
\sup_{t}\sum_{n\in \mathbb{Z}}(1+|n|)^q|\langle \delta_n, e^{-itH}\delta_0\rangle|<\infty.
$$
We will show that slightly stronger version of dynamical localization holds in our setting.
\begin{defi}
Let $H$ be a self-adjoint operator on $\ell^2(\mathbb{Z})$. The operator $H$ has {\it semi-uniform dynamical localization (SUDL)} if there is $\alpha>0$ such that for any $\xi>0$ there is a constant $C_\xi$ so that for all $q, m\in \mathbb{Z}$
$$
\sup_t|\langle\delta_q, e^{-itH}\delta_m\rangle|\le C_\xi e^{\xi |m|-\alpha|q-m|}.
$$
\end{defi}
In fact, we are going to establish a different property, {\it SULE}.
\begin{defi}
A self-adjoint operator $H:\ell^2(\mathbb{Z})$ has {\it semi-uniformly localized eigenfunctions (SULE)}, if $H$ has a complete set $\{\phi_n\}_{n=1}^\infty$ of orthonormal eigenfunctions, and there is $\alpha>0$ and $\mmax_n\in \mathbb{Z}$, $n\in \mathbb{N}$, such that for each $\xi>0$ there exists a constant $C_\xi$ so that
\begin{equation}\label{eq:SULE}
 |\phi_n(m)|\le C_\xi e^{\xi|\mmax_n|-\alpha|m-\mmax_n|}
\end{equation}
 for all $m\in \mathbb{Z}$ and $n\in \mathbb{N}$.
\end{defi}
Theorem 7.5 from \cite{DJLS} claims that SULE $\Rightarrow$ SUDL, and for operators with simple spectrum SUDL $\Rightarrow$  SULE.

\begin{theorem}[Dynamical Localization, 1D]\label{t.dl}
Under assumptions of Theorem \ref{t.al}, $\P$-almost surely
the operator $H$ has SULE and, hence, SUDL.
\end{theorem}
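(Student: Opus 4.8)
The plan is to derive Theorem~\ref{t.dl} from Theorem~\ref{t.al} together with the quantitative transfer-matrix estimates underlying its proof, and then to invoke Theorem~7.5 of \cite{DJLS} (SULE $\Rightarrow$ SUDL); thus it suffices to establish SULE, i.e.\ the bound \eqref{eq:SULE}. Fix a realization $\omega$ in the full-measure event on which Theorem~\ref{t.al} holds, so that $H=H_\omega$ has a complete orthonormal family $\{\phi_n\}_{n\in\N}$ of exponentially decaying eigenfunctions with eigenvalues $E_n\in\sigma(H)\subseteq[-K-2,K+2]$; for each $n$ pick $\mmax_n\in\Z$ with $|\phi_n(\mmax_n)|=\max_{m\in\Z}|\phi_n(m)|$, so that $|\phi_n(m)|\le 1$ for all $m$.

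The main step is to record the hyperbolicity of the transfer cocycle in a form uniform both in the energy and in the site index. Writing $T^E_{[a,b]}(\omega)=A^E_b(\omega)\cdots A^E_a(\omega)$ with $A^E_k(\omega)=\begin{pmatrix}E-V(k,\omega)&-1\\1&0\end{pmatrix}$, what I expect to extract from the non-stationary parametric Furstenberg theorem is: there is $\gamma>0$ such that, on the above full-measure event, for every $\eps>0$ there is $C_\eps=C_\eps(\omega)<\infty$ with
$$
\|T^E_{[a,b]}(\omega)\|\ \ge\ C_\eps^{-1}\,e^{-\eps|a|}\,e^{\gamma(b-a+1)}\qquad\text{for all }a\le b,\ E\in[-K-2,K+2].
$$
Two features are essential here: uniformity in $E$ — because the eigenvalues $E_n$ form an $\omega$-dependent dense subset of $\sigma(H)$, so no fixed-countable-set Borel--Cantelli argument over energies is available — and the mild $e^{-\eps|a|}$ dependence on the base point, i.e.\ the fact that the exponential growth rate is attained after $o(|a|)$ steps as the base point recedes. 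This last point is where the non-stationarity is felt and where assumption~2 (the uniform lower bound on the variances) enters, through large-deviation estimates for the non-identically distributed cocycle that are uniform in the site index, together with a Borel--Cantelli argument over base points.

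I will combine this with the standard one-dimensional fact (see, e.g., \cite{CKM,CL}) that, since $\phi_n$ is an $\ell^2$ eigenfunction and the Lyapunov exponent is positive, the boundary vector $\binom{\phi_n(\mmax_n+1)}{\phi_n(\mmax_n)}$ is aligned with the most contracted direction of the long transfer matrices issuing from $\mmax_n$ in both directions, whence
$$
|\phi_n(m)|\ \le\ c\,\|T^{E_n}_{[\min(\mmax_n,m)+1,\,\max(\mmax_n,m)]}(\omega)\|^{-1}
$$
with $c=c(K)$ absolute, provided the norm on the right exceeds $2$. Feeding in the lower bound above, for such $m$ one gets $|\phi_n(m)|\le c\,C_\eps(\omega)\,e^{\eps|\mmax_n|}\,e^{-(\gamma-\eps)|m-\mmax_n|}$, while for the remaining $m$ — a range of $|m-\mmax_n|$ of length at most $\gamma^{-1}\bigl(\eps|\mmax_n|+\log(2C_\eps)\bigr)+O(1)$ — one uses $|\phi_n(m)|\le 1$. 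Choosing $\eps=\eps(\xi)$ small enough (e.g.\ $\eps=\min(\xi,\gamma/2)$), both estimates are subsumed in
$$
|\phi_n(m)|\ \le\ C_\xi\,e^{\xi|\mmax_n|-\frac{\gamma}{2}|m-\mmax_n|}\qquad(m\in\Z,\ n\in\N),
$$
which is \eqref{eq:SULE} with $\alpha=\gamma/2$. Theorem~7.5 of \cite{DJLS} then yields SUDL.

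The main obstacle is the first step. Upgrading the non-stationary parametric Furstenberg theorem to the displayed quantitative bound — uniform in the energy parameter and, crucially, with the exponential growth rate attained after $o(|a|)$ steps uniformly over the non-identical site distributions — requires the large-deviation input to be genuinely quantitative and uniform in the site index, which is exactly where assumption~2 is used, together with a summable-in-$a$ Borel--Cantelli estimate; the matching transversality control for the stable/unstable splitting must be obtained in the same uniform manner. Once these site-uniform, energy-uniform estimates are in place, passing to SULE and then, via \cite{DJLS}, to SUDL is routine.
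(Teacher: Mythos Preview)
There is a genuine gap at the heart of your argument: the uniform-in-$E$ lower bound
\[
\|T^E_{[a,b]}(\omega)\|\ \ge\ C_\eps^{-1}\,e^{-\eps|a|}\,e^{\gamma(b-a+1)}\qquad\text{for all }a\le b,\ E\in[-K-2,K+2]
\]
is \emph{false}. Indeed, the ``$G_\delta$-vanishing'' conclusion of Theorem~\ref{t:product} asserts that almost surely there is a dense $G_\delta$ set of energies $E$ in the interior of $\Es$ for which $\liminf_{b\to\infty}\frac{1}{b}\log\|T^E_{[0,b]}(\omega)\|=0$; at any such $E$ and with $a=0$ the displayed inequality fails for infinitely many $b$. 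More concretely, the ``Cancellation'' part~\ref{i:m4} of Theorem~\ref{t:main} produces, for every $n$, many parameters $a_k$ at which $\log\|T_{m,a_k,\omega}\|$ dips to $|L_{m_k}(a_k)-L_{[m_k,m]}(a_k)|$, which is $O(\eps n)$ for some $m\in[m_k,n]$. No energy-uniform, site-uniform lower bound on the \emph{matrix norm} is available, precisely because the eigenvalues $E_n$ will typically sit near such cancellation parameters.

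There is a second gap in the passage from matrix norms to eigenfunction decay. From the choice of $\mmax_n$ you only know $|T\,v_{\mmax_n}|\le|v_{\mmax_n}|$ where $T=T^{E_n}_{[\mmax_n+1,m]}$; for an $\SL(2,\R)$ matrix this does \emph{not} force $|Tv_{\mmax_n}|\le c\,|v_{\mmax_n}|/\|T\|$. The ``standard fact'' you invoke is usually proved for a fixed energy on a full-measure set and with constants depending on that energy; upgrading it to a statement uniform over the random, dense set $\{E_n\}$ with only an $e^{\xi|\mmax_n|}$ loss is exactly the content of SULE, not an input to it.

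The paper's proof avoids both issues by never appealing to a uniform lower bound on $\|T^E_{[a,b]}\|$. Instead, for each eigenfunction it tracks the \emph{vector} log-lengths $m\mapsto\log|v_{m,j}|$ directly via the ``curved-W'' description (Lemma~\ref{l:W-shape}, built from Theorem~\ref{t:main}). One shows (Lemma~\ref{l:W-bounds}) that for $n\gtrsim|\mmax_j|$ the central break index $\mW_{(n)}$ lies within $\lambda' n$ of the origin while $\mV_{\pm,(n)}$ lie beyond $\pm2\lambda' n$; comparing the W-shape estimates at $m$ and $\lceil m/2\rceil$ then yields monotone decrease of $r'_m=\log|v_{m,j}|+\alpha|m-\mmax_j|$ along the dyadic chain, giving~\eqref{eq:SULE} with $\alpha=h/2$ and $C_\xi$ depending only on the (random) first index $n_2$ after which Theorem~\ref{t:main} applies. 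The point is that the W-shape analysis simultaneously locates the localization center near the jump index and quantifies the decay of the \emph{specific} vector $v_{\cdot,j}$, rather than attempting to bound $|v_m|$ by $\|T\|^{-1}$.
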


A specific model where Theorems \ref{t.al} and \ref{t.dl} are applicable is a Schr\"odinger operator with a fixed background potential and iid random noise. Namely, the following holds:

\begin{coro}
Suppose $\{V_{back}(n)\}$ is a fixed bounded sequence of real numbers that we will refer to as a background potential, and $\{V_{rand}(n)\}$ is a random sequence given by an iid sequence of random variables defined by a compactly supported distribution. Then almost surely the operator (\ref{e.oper}) with the potential $V(n)=V_{back}(n)+V_{rand}(n)$ has pure point spectrum with exponentially decaying eigenfunctions, and satisfies SULE and SUDL.
\end{coro}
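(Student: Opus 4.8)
The plan is to obtain this as an immediate consequence of Theorems \ref{t.al} and \ref{t.dl}; essentially all the work is in checking that the potential $V(n)=V_{back}(n)+V_{rand}(n)$ lies within their hypotheses. First I would observe that, since $\{V_{rand}(n)\}$ is an i.i.d.\ family and each $V_{back}(n)$ is a deterministic real number, the shifted variables $\{V(n)\}$ are again independent; the law $\mgr^\#_n$ of $V(n)$ is the translate of the common law $\mgr$ of $V_{rand}(0)$ by the constant $V_{back}(n)$. Thus the operator (\ref{e.oper}) with this potential is exactly of the type covered by Theorem \ref{t.al}, provided the two quantitative hypotheses there hold uniformly in $n$.

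Next I would verify those two hypotheses. Let $L<\infty$ be chosen so that $\supp\mgr\subseteq[-L,L]$ (possible since $\mgr$ is compactly supported), and set $B:=\sup_{n}|V_{back}(n)|<\infty$ (finite since the background is bounded). Then $\supp\mgr^\#_n\subseteq[-(L+B),\,L+B]$ for every $n$, so condition 1) of Theorem \ref{t.al} holds with $K:=L+B$. For condition 2), note that variance is invariant under deterministic translations, so $\text{Var}\,(\mgr^\#_n)=\text{Var}\,(\mgr)$ for all $n$; since $\mgr$ is non-degenerate, this common value is a fixed positive number, and any $\eps>0$ strictly below it serves as the required uniform lower bound. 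With 1) and 2) in hand, Theorem \ref{t.al} yields $\P$-almost sure pure point spectrum with exponentially decaying eigenfunctions, and Theorem \ref{t.dl} yields SULE, hence SUDL.

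There is no real obstacle here beyond this bookkeeping: the corollary is simply the observation that adding a bounded deterministic background to an i.i.d.\ potential keeps the potential independent, keeps its supports uniformly bounded, and leaves all variances unchanged. The only point worth flagging is that the statement should be read with the underlying distribution of $V_{rand}$ non-degenerate (support containing more than one point), as that is precisely what guarantees $\text{Var}\,(\mgr)>0$ and thereby makes the uniform constant $\eps$ in hypothesis 2) available.
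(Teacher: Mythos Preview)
Your proof is correct and matches the paper's intended approach: the corollary is stated without separate proof precisely because it follows immediately from Theorems~\ref{t.al} and~\ref{t.dl} once one observes that the shifted i.i.d.\ potentials satisfy the uniform support and variance hypotheses, exactly as you verify. Your remark that the common law of $V_{rand}$ must be non-degenerate is also apt, as this is implicit in the paper's standing assumptions.
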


\begin{remark}
\begin{enumerate}[(a)]\

\item Notice that in general spectral localization does not imply dynamical localization. The classical example is given by the random dimer model \cite{DeG}, \cite{JSS}.
\item In the case when the distributions $\{\mgr^\#_n\}$ are absolutely continuous, the result stated in Theorem \ref{t.al} follows from Kunz -- Souillard method \cite{KuS} (see also \cite{Sim1} for alternative presentation and another application of Kunz -- Souillard method). In the case when the distributions $\{\mgr^\#_n\}$ are H\"older continuous, multiscale analysis method should be applicable \cite{Kl} (see \cite{Kl2} for a detailed discussion of the method). But in the case of Anderson-Bernoulli  potential (given by i.i.d. random variables that can take only finite number of values) most previously existing proofs \cite{BDFGVWZ, CKM,  GK, JZh} used Furstenberg Theorem \cite{Fur1, Fur2, Fur3} on random matrix products and positivity of Lyapunov exponent (see  \cite{SVW}  for a proof that uses a different approach, but also does not cover non-stationary case), and therefore could not be adapted to the non-stationary case. Theorem \ref{t.al} closes this gap and covers all the cases in a uniform fashion.
\item The proof of localization in the case of potential given by i.i.d. random variables  presented in \cite{CKM} does not require the potential to be bounded, it requires only existence of finite momenta (see also \cite{Ra}). While in Theorem \ref{t.al} we require uniform boundedness of the potential to reduce technical difficulties, we expect that with some extra effort our methods can be extended to cover an unbounded case as well.
\item  In the paper \cite{Kl3} the so called ``crooked'' Anderson Model (which can be considered as an analog of non-stationary random case) for continuous case is considered. Localization is proved under the H\"older continuity assumption on distributions.
\item In \cite{BMT, Hur, KLS, Sim1} the authors also consider models where the values of the potentials are random independent, but not identically distributed. There the randomness decays at infinity, and the focus is mainly on the rate of decay that still lead to localization or already insufficient to produce localization.
\item Ergodic Schr\"odiger operators with iid random noise are included into our setting. Questions on topological properties of the spectrum of these operators also attracted considerable amount of attention lately \cite{ADG, DFG, DG2, Wood}.
\item While the results of this paper imply that stationary and non-stationary Anderson Models have similar behaviour in terms of spectral type, there is a huge difference between stationary and non-stationary cases in terms of topological properties of the spectrum. In particular, the spectrum of a stationary Anderson Model is always a finite union of intervals, while in the non-stationary case the almost sure essential spectrum does not have to have dense interior. We construct a relevant example in Appendix \ref{a.1} below.
\end{enumerate}
\end{remark}

The crucial ingredient of the proof of Theorems \ref{t.al} and \ref{t.dl} is the parametric version of the non-stationary Furstenberg Theorem that we discuss below. In the stationary case the parametric Furstenberg Theorem is provided in \cite{GK}. There, in order to demonstrate the power of the developed techniques, we gave a geometrical proof of the spectral localization in 1D Anderson Model (including Anderson-Bernoulli case). The proofs of Theorems \ref{t.al} and  \ref{t.dl} are based  on similar (adapted to the non-stationary case) arguments.

\subsection{Parametric non-stationary Furstenberg Theorem}\label{ss.introparamfurst}

To prove Theorems~\ref{t.al} and \ref{t.dl} one needs to study the properties of the products of corresponding transfer matrices, and the way those products behave for different values of the energy. Motivated by this model, here we consider random products of independent but not identically distributed matrices from $\SL(2, \mathbb{R})$ that depend on a parameter. In other words, instead of one matrix in a random product
 we are working with a map $A(\cdot)$ from some compact interval $J=[b_{-}, b_{+}]\subset \R$ to $\SL(2,\R)$. We assume that all these maps are $C^1$; a random matrix, depending on a parameter, is therefore given by a measure on the space $\mA:=C^1(J,\SL(2,\R))$.  
For any such measure $\mgr$ on $\mA$ and any individual parameter value $a\in J$ we can consider the distribution of $A(a)$, that is a measure on~$\SL(2,\R$); we denote this measure~$\mgr^a$.

A (non-stationary) product of random matrices, depending on a parameter, is given by a sequence of measures $\mgr_n$ on $\mA$. We assume that all these measures belong to some compact set $\mK$ of measures on $C^1(J,\SL(2,\R))$, i.e. $\mgr_n\in \mK$ for all $n\in \mathbb{N}$.

We impose the following assumptions:
\begin{Bitemize}
\bitem\label{B:Furstenberg} \textbf{Measures condition}: for any measure $\mgr\in\mK$ and any $a\in J$, there are no Borel probability measures $\msp_1$, $\msp_2$ on $\mathbb{RP}^{1}$ such that $(f_A)_*\msp_1=\msp_2$ for $\mgr^a$-almost every matrix $A\in \SL(2, \mathbb{R})$.
\bitem\label{B:C1} \textbf{$C^1$-boundedness}: there exists a constant $\Cn$ such that any map $A(\cdot) \in C^1(J,\SL(2,\R))$
from the support of any $\mgr\in\mK$ has $C^1$-norm at most $\Cn$.
\bitem\label{B:Monotonicity} \textbf{Monotonicity}: there exists $\delta>0$ such that for any $\mgr\in\mK$, any map $A(\cdot)$ from the support of $\mgr$ and any $a_0\in J$ one has
$$
\forall v\in \R^2\setminus \{0\} \quad \left.\frac{d\arg (A(a)(v))}{da}\right|_{a_0}  >\delta.
$$
\end{Bitemize}
These are the exact analogues of the assumptions A1, A2 and A4 from~\cite{GK}.

At the same time, we do not impose any assumptions on absence of uniform hyperbolicity (similar to A3 in~\cite{GK}). Instead we modify some of the conclusions of the theorems. Examples \ref{ex:J-1} and \ref{ex:J-2} below illustrate the necessity of such modifications.

\begin{remark}\label{r:groups}
One can replace the assumptions \ref{B:Furstenberg}--\ref{B:Monotonicity} 
 by more general ones. Namely, it is enough to assume that for some given $k$ the conditions  \ref{B:Furstenberg}--\ref{B:Monotonicity} hold for laws of compositions
\[
A: J \mapsto \SL(2,\R), \quad A(a):=A_k(a) \dots A_1(a),
\]
where $A_i$ are distributed w.r.t. some $\mgr_i\in\mK$. This is useful, in particular, when working with Schr\"odinger cocycles (in this case, one takes $k=2$).
\end{remark}

Let us introduce some notation. Given a sequence of measures $\mgr_n\in\mK$, satisfying the assumptions above, we consider the probability space~\mbox{$\Omega:=\mA^{\N}$}, equipped with the measure $\Prob:=\prod_{n} \mgr_n$. For any point \mbox{$\omega=(A_1,A_2,\dots)\in\Omega$}
and a parameter $a\in J$ we denote
$$
T_{n,a, \omega}:=A_n(a)\dots A_1(a).
$$
For each $A\in \mA$ denote by $f_{A, a}:\Sc\to\Sc$ the projectivization of the map $A(a)\in \SL(2, \mathbb{R})$, and choose its lift  $\tilde f_{A, a}:\mathbb{R}\to \mathbb{R}$
 so that it depends continuously on $a$, and, to make this choice unique, satisfy  $\tilde f_{A, b_{-}}(0)\in [0,1)$, where $b_-$ is the left endpoint of the  interval of parameters~$J$.

Also, following the same notation as in~\cite{GK}, we denote by $f_{n, a, \omega}:\Sc\to\Sc$  the projectivizations of the maps~$T_{n,a, \omega}$, and choose their lifts $\tf_{n,a, \omega}:\R\to\R$ as $\tf_{n,a, \omega}=\tf_{A_n,a}\circ\ldots\circ \tf_{A_1, a}$.

Fix a point $x_0\in\Sc$, corresponding to some unit vector $v_0$, 
and its lift $\tx_0\in\R$. For an interval $I\subset J$, $I=[a',a'']$, define
$$
R_{n,\omega}(I):=\tf_{n,a'', \omega}(\tx_0)-\tf_{n,a', \omega}(\tx_0);
$$
in other words, as the parameter varies on $I$, the $n$-th (random) image of the initial vector~$v_0$ turns by the angle $\pi R_{n,\omega}$ in the positive direction.

In the stationary setting, $R_{n,\omega}(I)$ is almost surely bounded as $n\to\infty$ if and only if the random product is uniformly hyperbolic for any internal point $a$ of~$I$. Indeed, a dynamical analog of Johnson's Theorem implies uniform hyperbolicity on any open interval where the \emph{random rotation number} is locally constant (see \cite{J} for the original Johnson's Theorem, and \cite[Proposition C.1]{ABD}, \cite[Theorem A.9]{GK} for the dynamical analog that we refer to).

However, in the non-stationary case there is no notion of a random rotation number (in the same way as there is no well defined Lyapunov exponent), so we have to choose a more direct approach. Namely, we introduce the following


\begin{defi}
An open interval $I\subset J$ is \emph{inessential}, if almost surely
$$
\sup_n R_{n, \omega}(I) <\infty.
$$
The same applies to the intervals $I\subset J=[b_-,b_+]$ of the form $[b_-,a)$ and $(a,b_+]$.

The \emph{essential} set $\Es$ is defined as a complement to the union of all (open in $J$) inessential intervals:
$$
\Es=J \setminus \left(\bigcup_{I \ \text{is} \ \text{inessential}} I\right).
$$
\end{defi}
\begin{remark}
This definition is independent on the choice of the initial vector~$v_0$: using another vector cannot change the rotation angles by more than a complete turn, and hence cannot change the boundedness of the sequence $R_{n,\omega}(I)$.

Also, notice that due to the Kolmogorov's  $0-1$ law for any interval $I$ either the sequence~$R_{n,\omega}(I)$ is almost surely bounded, or it is almost surely unbounded. Indeed, its boundedness does not depend on any finite number $n_0$ of the first factors~$A_1,\dots, A_{n_0}$ in the product. 
\end{remark}

The choice of terminology is related to the fact that for the random (non-stationary) Schr\"odinger operators and the corresponding products of transfer matrices the defined essential set $\Es$ turns out to be exactly the almost-sure essential spectrum.


Let us  give an example that shows that, contrary to the stationary case, a full turn of the image of some vector does not forbid an interval to be inessential.
\begin{example}\label{ex:J-1}
Consider a stationary random parameter-dependent product $A_n(a)\dots A_1(a)$ that is uniformly hyperbolic for $a$ from on the parameter interval $I=[0,1]$. Now, modify only one (random) factor $A_1$ by taking its composition with a rotation by $2\pi a$:
$$
B_1(a)=\Rot_{2\pi a}\cdot A_1(a), \ \  \text{\rm and}\ \   B_j(a)=A_j(a)\  \ \text{\rm if} \ \  j>1.
$$
Then for the random product $B_n(a)\dots B_1(a)$ we have $R_{n,\omega}(I)\ge 1$, though the interval $I$ is inessential.
\end{example}

Similar example can be given in the random Schr\"odinger operators setting:
\begin{example}
Consider the Anderson-Bernoulli potential $V(n)$, $n\in \mathbb{Z}$, that with probabilities $1/2$  takes values 0 or $V_n$,  where
$$
V_n= \begin{cases}
1 & n \neq 0, \\
100, & n=0,
\end{cases}
$$
and the corresponding random Schr\"odinger operator~$H_V$.
Then, the \emph{essential} spectrum of this operator (almost surely) is the interval $[-2,3]$, and if $V(0)=0$, it is also the full spectrum.
However, if the random value $V(0)=100$, the spectrum of $H_V$ contains an additional eigenvalue. 
Thus, the non-essential parts of the spectrum may be random, and therefore one cannot talk about ``almost sure spectrum'' in the non-stationary case.
\end{example}


We are now ready to state the non-stationary parametric theorems for infinite products. Denote $L_n(a):=\E \log \|T_{n,\omega,a}\|$.

\begin{theorem}[Non-stationary parametric version of Furstenberg Theorem]\label{t:product}
Under the assumptions~\ref{B:Furstenberg}--\ref{B:Monotonicity} above, for $\Prob$-almost every $\omega\in\Omega$ the following holds:
\begin{itemize}
\item \textbf{(Regular upper limit)} For every $a\in J$ we have
$$
\limsup_{n\to\infty} \frac{1}{n} (\log \|T_{n,a, \omega} \| - L_n(a)) =0.
$$
%
\item \textbf{($G_{\delta}$-vanishing)} The set
$$
S_0(\omega):=\left\{a \in J \mid \liminf_{n\to\infty} \frac{1}{n} \log \|T_{n,a, \omega} \| =0 \right\}
$$
contains a (random) dense $G_{\delta}$-subset of the interior of the essential set~$\Es$.
\vspace{4pt}
\item \textbf{(Hausdorff dimension)} The (random) set of parameters with exceptional behaviour,
$$
S_{e}(\omega):=\left\{a \in J \mid \liminf_{n\to\infty} \frac{1}{n} (\log \|T_{n,a, \omega} \| -L_n(a))<0 \right\},
$$
has zero Hausdorff dimension:
$$
\dim_H S_{e}(\omega)=0.
$$
\end{itemize}
\end{theorem}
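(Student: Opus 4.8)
The plan is to transplant the strategy of \cite{GK} to the non-stationary setting. The whole argument rests on three families of estimates. \textbf{(a) A uniform Furstenberg estimate:} assumption \ref{B:Furstenberg}, holding uniformly over the compact family $\mK$, yields — via the standard contraction argument for stationary measures on $\mathbb{RP}^1$, made uniform in $a$ and over $\mK$ — that the operator norm and the norm along a fixed direction agree up to a sublinear error, $\sup_{z\in\Sc,\,a\in J}(L_n(a)-\E\log\|T_{n,a,\omega}z\|)=o(n)$. \textbf{(b) Large deviations:} for a fixed $a$, $\log\|T_{n,a,\omega}\|-L_n(a)$ is a bounded-increment martingale perturbation by \ref{B:C1}, so it has exponential upper and lower tails, with all constants uniform over $a$ and $\mK$. \textbf{(c) The monotonicity/$C^1$ machinery:} by \ref{B:C1}--\ref{B:Monotonicity} the oscillatory maps $a\mapsto\log\|T_{n,a,\omega}\|$ are controlled on short parameter intervals, whose number and lengths are governed by the total turning $R_{n,\omega}$; this is what turns ``for a fixed $a$'' statements into ``for all $a$'' statements. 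Finally, it is used throughout that on an inessential interval bounded turning forces a non-stationary dynamical analogue of uniform hyperbolicity, so on $J\setminus\Es$ one already has $\tfrac1n(\log\|T_{n,a,\omega}\|-L_n(a))\to0$ for every $a$, and all the real content lives on $\mathrm{int}\,\Es$.

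I would reduce all three bullets to controlling, at each scale $n$ and for each $\epsilon>0$, the exceptional parameter sets $B_n^{\pm,\epsilon}:=\{a\in J:\pm(\log\|T_{n,a,\omega}\|-L_n(a))>\epsilon n\}$. The \textbf{regular upper limit} ``$\le0$'' amounts to $\bigcap_N\bigcup_{n\ge N}B_n^{+,\epsilon}=\emptyset$ almost surely for every $\epsilon$; the ``$\ge0$'' part amounts to showing that no parameter lies in all but finitely many of the $B_n^{-,\epsilon}$ — for a fixed $a$ this event is null because $\Delta_n(a):=\log\|T_{n,a,\omega}\|-L_n(a)$ is, by (a), a submartingale up to a sublinear error in the step size, and the covering (c) upgrades it to all $a$ at once. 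The \textbf{Hausdorff dimension} bullet is the statement $\dim_H\bigl(\bigcap_N\bigcup_{n\ge N}B_n^{-,1/k}\bigr)=0$ for every $k$. The \textbf{$G_\delta$-vanishing} bullet follows from the unboundedness of $R_{n,\omega}(I)$ on $\mathrm{int}\,\Es$: by \ref{B:Monotonicity} this forces, at infinitely many scales $n$, parameters at which $T_{n,a,\omega}$ is nearly elliptic and $\|T_{n,a,\omega}\|$ is subexponential; a diagonal construction over scales, localized inside an arbitrary subinterval of $\mathrm{int}\,\Es$, then produces a dense $G_\delta$ (each set $\{a:\|T_{n,a,\omega}\|<e^{\epsilon n}\}$ is open) on which $\liminf_n\tfrac1n\log\|T_{n,a,\omega}\|=0$.

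The common technical heart is the following estimate on the exceptional sets: for every $\epsilon>0$ and every $s>0$, $\sum_n\E\bigl[\mathcal{H}^s_\infty(B_n^{\pm,\epsilon})\bigr]<\infty$, where $\mathcal{H}^s_\infty$ denotes $s$-dimensional Hausdorff content. Granting it, Markov's inequality and Borel--Cantelli in the parameter space give $\mathcal{H}^s(\bigcap_N\bigcup_{n\ge N}B_n^{\pm,\epsilon})=0$ almost surely for every $s>0$ — for the lower family this is exactly $\dim_H S_e(\omega)=0$, and a version of the same estimate arranged so that the covers also shrink in diameter gives that the upper-family set is not merely of dimension zero but \emph{empty}, which is what ``for every $a$'' in the regular-upper-limit bullet requires. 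The difficulty in proving this estimate is a genuine mismatch of scales: a single large-deviation bound of (b) is only exponentially small in $n$, with a rate degrading quadratically as $\epsilon\downarrow0$, whereas covering $J$ at the resolution $\sim\Cn^{-n}$ dictated by \ref{B:C1} naively costs $\sim\Cn^{\,n}$ intervals, so over a fixed net a crude union bound loses for small $\epsilon$.

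The remedy, following \cite{GK}, is a genuinely multi-scale covering of $\bigcap_N\bigcup_{n\ge N}B_n^{\pm,\epsilon}$: one builds a nested sequence of covers in which the branching at scale $N$ is estimated \emph{conditionally} on the parent interval (at scale $N-1$) having survived, so the deviation cost of (b) is charged only against the already-sparse bad set, while \ref{B:Monotonicity} bounds how many scale-$N$ subintervals a surviving parent can require; choosing the scales so that $\log(\#\,\text{scale-}N\ \text{cover})/\log(1/\mathrm{mesh}_N)\to0$ then forces Hausdorff dimension $0$. I expect the delicate part to be transplanting this scheme to the non-stationary regime: there is no Lyapunov exponent, so $L_n(a)$ must be carried along as a moving reference throughout the induction and the control of its increments must itself be made uniform; and every constant — in the uniform Furstenberg estimate, in the large deviations, and in the turning bounds — must be made uniform over the compact family $\mK$. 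Most of the labour, I anticipate, is in this uniformity bookkeeping rather than in any single new conceptual ingredient.
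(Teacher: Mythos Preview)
Your overall architecture is right: control the exceptional parameter sets $B_n^{\pm,\epsilon}$, sum Hausdorff content, apply Borel--Cantelli, deduce the three bullets. Your identification of the scale mismatch (exponential large-deviations bound vs.\ a $\Cn^{\,n}$-fine net) is also correct. But your proposed remedy --- a ``genuinely multi-scale covering'' with branching ``estimated conditionally on the parent interval having survived'' --- is not what \cite{GK} does, and is not what this paper does either. You have misread the mechanism.

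What actually happens is a \emph{single}-scale partition at each $n$: one divides $J$ into $N=[\exp(\sqrt[4]{n})]$ equal subintervals. Because $N$ is subexponential in $n$, a union bound over the $N+1$ endpoints against the exponential large-deviation estimate succeeds directly --- no nesting, no conditioning on parents. The real work is a geometric classification (the ``Types of behaviour'' proposition): on each subinterval $J_i$ one tracks the arc $X_{m,i}\subset\R$ spanned by the lifted image of a fixed direction as the parameter ranges over $J_i$, and shows that with high probability each $J_i$ is of one of three types --- the arc stays short throughout; or it briefly swells and shrinks back (``opinion-changer''); or it makes exactly one full turn on $\Sc$ (``jump interval''). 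Distortion control then propagates the endpoint estimates across all of $J_i$ in the first two cases; only jump intervals are exceptional, and there are $O(n)$ of them (bounded via the total turning $R_{n,\omega}(J)$). Thus $B_n^{-,\epsilon}$ is covered by $O(n)$ intervals of length $|J|\exp(-\sqrt[4]{n})$, which gives $\dim_H=0$ at once --- no multi-scale induction is involved.

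This structured finite-$n$ theorem (Theorem~\ref{t:main}) is also what drives the pieces you treat too loosely. For the ``$\ge 0$'' half of the regular upper limit, your submartingale-for-fixed-$a$ heuristic does not upgrade to \emph{all} $a$ by covering alone; one instead argues that either $a$ lies in typical intervals for all large $n$ (and the uniform-growth conclusion gives the bound), or $a$ hits exceptional intervals infinitely often, and then the jump structure supplies a good subsequence. For the $G_\delta$-vanishing, ``nearly elliptic'' is the right intuition, but the actual construction uses the Cancellation conclusion to locate, inside every subinterval of $\mathrm{int}\,\Es$ and at infinitely many scales, specific parameters $a_k$ at which $\log\|T_{m,a_k,\omega}\|$ follows a V-shape dipping to $o(m)$ at some controlled $m$; the open sets built from these are the dense sets one intersects. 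The missing idea in your plan is precisely this single-scale subexponential partition coupled with the three-types classification of image-arc behaviour; that is the engine, not a nested conditional scheme.
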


\begin{remark}
Regarding ``$G_{\delta}$-vanishing'' conclusion, we would like to emphasize that in the non-stationary setting the essential spectrum does not have to have dense interior. We construct the corresponding example in Appendix \ref{a.1}.
\end{remark}


In order to study random discrete Schr\"odinger operators on $\ell_2(\Z)$, we will have to consider sequences of
random (parameter-dependent) matrices, indexed by $n\in \Z$ instead of $n\in \N$. We thus denote $\OZ:=\mA^{\Z}$,
and for a given bi-infinite sequence $\{\mgr_n\}_{n\in \Z}$ (again, with all the $\mgr_n$'s from some compact set $\mK$
of measures) equip $\OZ$ with the measure $\Prob:=\prod_{n\in \Z} \mgr_n$.

We then denote for a sequence $\omega=(\dots, A_{-1},A_0, A_1,\dots)\in\OZ$
$$
T_{-n,a, \omega}:=(A_{-n}(a))^{-1} \dots (A_{-1}(a))^{-1} (A_{0}(a))^{-1}
$$
and
$$
L_{-n}(a):=\E \log \|T_{-n,a,\omega} \|.
$$

\begin{theorem}\label{t.vector}
Under the assumptions~\ref{B:Furstenberg}--\ref{B:Monotonicity} we have:
\begin{itemize}
\item For almost all $\omega\in \Omega$, for all $a\in J$ the following holds. If
\begin{equation}\label{eq:lim-less}
\limsup_{n\to+\infty} \frac{1}{n} \left( \log |T_{n,a,  \omega} \left( \begin{smallmatrix} 1 \\ 0 \end{smallmatrix} \right)| - L_n(a) \right)<0,
\end{equation}
then in fact $|T_{n,a, \omega} \left(  \begin{smallmatrix} 1 \\ 0 \end{smallmatrix} \right)|$ tends to zero exponentially as $n\to \infty$. Namely, 
$$
 \log |T_{n,a, \omega} \left( \begin{smallmatrix} 1 \\ 0 \end{smallmatrix} \right)| = -L_n(a) + o(n).
$$

\item[$\bullet$] For almost all $\omega\in\OZ$, for all $a\in J$ the following holds. If for some $\bar{v}\in \R^2\setminus \{0\}$ we have
\begin{equation}\label{eq:lim-both-less}
\limsup_{n\to+\infty} \frac{1}{n} \left( \log |T_{n,a, \omega} \bar v| - L_n(a) \right)<0, \ \ \text{and}\ \ \
\limsup_{n\to+\infty} \frac{1}{n} \left( \log |T_{-n,a, \omega} \bar v| - L_{-n}(a) \right)<0,
\end{equation}
then both sequences $|T_{n,a, \omega} \bar v|, |T_{-n,a, \omega} \bar v|$ in fact tend to zero exponentially. More specifically,
$$
\log |T_{n,a, \omega} \bar v| = - L_n(a) + o(n), \ \ \text{and}\ \ \
\log |T_{-n,a, \omega} \bar v| = - L_{-n}(a) + o(n).
$$
\end{itemize}
\end{theorem}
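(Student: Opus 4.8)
The plan is to derive both statements from Theorem~\ref{t:product} by analyzing how a single vector can behave under the cocycle, using the two-dimensionality of $\R^2$ together with the monotonicity assumption~\ref{B:Monotonicity}. The starting observation is that $\|T_{n,a,\omega}\| = \max(|T_{n,a,\omega}v|, |T_{n,a,\omega}v^\perp|)$ up to a bounded factor depending only on the unit vector $v$, so for a fixed direction $[v]\in \Sc$ the quantity $\log|T_{n,a,\omega}v| - L_n(a)$ is either $o(n)$ (when $v$ is aligned with the most-expanded direction, in a coarse sense) or bounded away from $L_n(a) $ by a definite amount corresponding to $2L_n(a)$, i.e.\ $\log|T_{n,a,\omega}v| - L_n(a) = -2L_n(a) + o(n)$ on the ``bad'' event. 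In other words, hypothesis~\eqref{eq:lim-less} is a strong statement: it forces $v$ to be exponentially close to the contracting direction of $T_{n,a,\omega}$, and then $|T_{n,a,\omega}v|$ must actually decay at the full rate $-L_n(a)+o(n)$. So the first step is: for a fixed $a$, show that on the $\Prob$-full-measure set where the ``regular upper limit'' conclusion of Theorem~\ref{t:product} holds, \eqref{eq:lim-less} implies $\log|T_{n,a,\omega}(\begin{smallmatrix}1\\0\end{smallmatrix})| = -L_n(a)+o(n)$. This is essentially the standard ``either the norm growth rate or its negative'' dichotomy for $\SL(2,\R)$ products, but phrased non-stationarily with $L_n(a)$ in place of $n\lambda$.

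The second step handles the quantifier ``for all $a\in J$'' on a single full-measure event. A priori the full-measure set in step one depends on $a$, and $J$ is uncountable. Here I would use the monotonicity/$C^1$-boundedness structure exactly as in the treatment of $S_e(\omega)$ in Theorem~\ref{t:product}: the set of parameters $a$ for which \eqref{eq:lim-less} holds but the conclusion fails is contained in the exceptional set $S_e(\omega)$ (or a closely related set), which by Theorem~\ref{t:product} has zero Hausdorff dimension, hence is empty of ``non-degenerate'' behavior in the sense needed — more precisely, I would show the conclusion holds for every $a$ simultaneously by a Fubini/covering argument: the set of $(a,\omega)$ violating the implication has, for a.e.\ $\omega$, zero-Hausdorff-dimension $a$-section, and one uses a $C^1$-graph transform (telescoping as in Remark~\ref{r:groups}) to upgrade ``a.e.\ $a$'' to ``every $a$'' using that $a\mapsto \tf_{n,a,\omega}(\tx_0)$ is monotone with derivative bounded below by $\delta$, so the contracting direction moves monotonically in $a$ and the set where alignment is ``too good'' at infinitely many scales is controlled. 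This is the same mechanism that produces the $\dim_H S_e(\omega)=0$ statement, so I would invoke it rather than redo it.

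For the bi-infinite (second) bullet, the plan is to apply the one-sided result in both time directions. The backward products $T_{-n,a,\omega}$ are, by construction, products of i.i.d.-like factors $(A_{-j}(a))^{-1}$ drawn from measures in the compact set $\mK$; inverses of $\SL(2,\R)$-matrices satisfying \ref{B:Furstenberg}--\ref{B:Monotonicity} again satisfy the hypotheses (monotonicity flips sign, which is harmless — one simply reverses orientation, or uses Remark~\ref{r:groups}), so Theorem~\ref{t:product} and the first bullet apply to the backward process as well, giving $\log|T_{-n,a,\omega}\bar v| = -L_{-n}(a)+o(n)$ whenever the second condition in \eqref{eq:lim-both-less} holds. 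Since $\bar v$ is an arbitrary nonzero vector rather than $(\begin{smallmatrix}1\\0\end{smallmatrix})$, I first reduce to the coordinate-vector case: $|T_{n,a,\omega}\bar v|$ and $|T_{n,a,\omega}(\begin{smallmatrix}1\\0\end{smallmatrix})|$ differ by at most a factor $\|M_{\bar v}^{\pm1}\|$ where $M_{\bar v}\in\SL(2,\R)$ maps $(\begin{smallmatrix}1\\0\end{smallmatrix})$ to a multiple of $\bar v$; absorbing $M_{\bar v}$ into $A_1$ changes the law only by a fixed conjugation-type modification that still lies in a compact set of measures satisfying the assumptions (again Remark~\ref{r:groups}), and changes $L_n(a)$ by $O(1)$, which is $o(n)$. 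Then the conclusion is immediate by combining the two one-sided statements.

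The main obstacle is step two: moving the full-measure event inside the universal quantifier over $a\in J$. The naive union bound fails on an uncountable set, and the content of the theorem is precisely that the ``exceptional parameters'' form a measure-zero (indeed dimension-zero) set that is nonetheless controlled uniformly. I expect this to require the same large-deviation-plus-$C^1$-monotonicity machinery used to prove the Hausdorff-dimension bound in Theorem~\ref{t:product}: one covers $J$ by dyadic intervals, uses monotonicity to say that on each such interval the relevant turning $R_{n,\omega}$ controls how many ``bad alignments'' can occur, and sums a convergent series of probabilities of exponentially unlikely alignment events over a polynomially growing number of intervals. Everything else — the $\SL(2,\R)$ dichotomy, the reduction from $\bar v$ to $(\begin{smallmatrix}1\\0\end{smallmatrix})$, the passage to backward products — is routine once Theorem~\ref{t:product} is granted.
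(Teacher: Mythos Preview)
There is a genuine gap. Your ``standard dichotomy'' in step one is precisely what must be proved: knowing only that $\limsup_n\frac{1}{n}(\log\|T_{n,a,\omega}\|-L_n(a))=0$ tells you nothing about where $\log|T_{n,a,\omega}v_0|$ sits inside $[-L_n(a),L_n(a)]$ for a fixed $v_0$; a priori this sequence can wander without approaching either endpoint, and there is no Oseledets theorem available in the non-stationary setting to invoke. Your step two also fails as written: zero Hausdorff dimension does not mean empty (the $G_\delta$-vanishing clause of Theorem~\ref{t:product} exhibits uncountably many exceptional parameters), and monotonicity of $a\mapsto\tf_{n,a,\omega}(\tx_0)$ by itself cannot upgrade ``dimension-zero $a$-section'' to ``empty $a$-section''. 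So neither the dichotomy nor the quantifier swap is actually established.

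The paper's route is different and does not pass through Theorem~\ref{t:product}. It goes back to the finite-products Theorem~\ref{t:main}, whose conclusions already hold \emph{uniformly for all $a\in J$} at each fixed $n$ with probability $1-\exp(-\delta_0\sqrt[4]{n})$; Borel--Cantelli then yields a single full-measure set of $\omega$ on which the interval classification (small\,/\,opinion-changing\,/\,jump) and the uniform norm estimates of Theorem~\ref{t:main} hold for every $a$ and every large $n$ simultaneously --- this is how ``for all $a$'' is obtained, with no dimension argument. On each $(L,n\eps')$-hyperbolic block the sequence $m\mapsto\log|v_m|$ is then pinned down by purely deterministic shape lemmas (Lemmata~\ref{l:line-shape}--\ref{l:W-shape}): it must lie within $n\eps$ of a ``V'' (one-sided case) or ``W'' (two-sided case) built from $\pm(L_m-L_{m'})$. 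Hypothesis~\eqref{eq:lim-less} is shown to force $a$ into a jump interval with jump index $\bar m_{(n)}=o(n)$ and V-minimum $m'_{(n)}\ge\const\cdot n$; a telescoping estimate along $m,\lceil m/2\rceil,\lceil m/4\rceil,\dots$ then gives $\log|v_m|+L_m(a)=o(m)$. The bi-infinite case is handled directly via the W-shape lemma applied to $T_{[-n,n],a,\omega}$, not by reduction to two one-sided applications.
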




Similarly to the stationary case treated in \cite{GK}, we obtain the results describing the behavior of infinite products by obtaining a description of the ``most probable'' behavior of a large finite product. To obtain such a description, for any given $n$ set $N=[\exp(\sqrt[4]{n})]$.
We divide the  interval of parameters $J$  into $N$ equal intervals $J_1,\dots, J_N$; let $b_-=b_0<b_1<\dots<b_{N-1}<b_N=b_+$ be their endpoints, i.e. $J_i=[b_{i-1},b_i]$.

By $U_{\varepsilon}(x)$ we denote the $\varepsilon$-neighborhood of the point $x$.

\begin{theorem}\label{t:main}
For any $\eps>0$ there exist $n_0=n_0(\eps)$ and $\delta_0=\delta_0(\eps)$ such that
for any $n>n_0$ the following statement hold. With probability $1-\exp(-\delta_0 \sqrt[4]{n})$, there exists a (random)
number $M\in \mathbb{N}$, exceptional intervals $J_{i_1},\dots,J_{i_{M}}$ (each of length $\frac{|J|}{N}$), and the corresponding numbers $m_1,\dots, m_{M}\in \{1,\dots,n\}$, such that:
\vspace{4pt}
\begin{mainit}
\mitem \textbf{(Uniform growth in typical subintervals)}
\label{i:m2}
For any $i$ different from $i_{1},\dots,i_{M}$, for any $a\in J_i$, and for any $m=1,\dots, n$ one has
$$
\log \| T_{m,a,\omega} \| \in U_{n\eps} (L_m(a)).
$$
\mitem \textbf{(Uniform growth in exceptional subintervals)}
\label{i:m3}
For any $k=1,\dots, M$, for any $a\in J_{i_k}$, and for any $m=1,\dots,m_k$ one has
$$
\log \| T_{m,a,\omega} \| \in U_{n\eps} (L_m(a));
$$
for any $m=m_k+1,\dots, n$ one has
$$
\log \| T_{[m_k,m],a,\omega} \| \in U_{n\eps} (L_{[m_k,m]}(a)),
$$
where
$$
T_{[m_k,m],a,\omega}:=T_{m,a,\omega} T_{m_k,a,\omega}^{-1} = A_m(a) A_{m-1}(a) \dots A_{m_k+1}(a)
$$
and
$$
L_{[m',m]}(a):=\E\log\|T_{[m',m],a,\omega}\|.
$$
\mitem \textbf{(Cancelation)}
\label{i:m4}
For any $k=1,\dots,M$ there exists $a_{k}\in J_{i_k}$ such that for any $m=1,\dots, n$
\begin{equation}\label{e.IV}
\log \|  T_{m,a_k,\omega} \| \in U_{n\eps} (\psi_{m_k}(m,a_k)),
\end{equation}
where 
$$\psi_{m'}(m,a)=
\begin{cases}
L_m(a) , & m<m', \\
|L_{m'}(a)-L_{[m',m]}(a)|, & m\ge  m'.
\end{cases}$$
In other words, for $m\ge m_k$ the parts of the product over the intervals $[1,m_k]$ and
$[m_k,m]$ cancel each other in the best possible way.
\vspace{4pt}

\mitem \textbf{(Distribution)}
\label{i:m5}
For any $m\le n$ and any interval $I\subset J$ of the form $I=[b_i,b_{i'}]$, $0\le i< i'\le N$, the number
$$
M_{I;m} := \# \{ k \mid a_{i_k}\in I, m_k\le m \}
$$
belongs to the interval $[R_{m,\omega}(I)-\eps n, R_{m,\omega}(I)+\eps n].$
In particular,
$$
M\in U_{n\eps}(R_{n,\omega}(J)).
$$


%
\end{mainit}
\end{theorem}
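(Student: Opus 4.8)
The plan is to run an inductive/multi-scale scheme over the $N=[\exp(\sqrt[4]{n})]$ subintervals $J_1,\dots,J_N$, tracking the quantity $\log\|T_{m,a,\omega}\|$ along $m=1,\dots,n$ for parameters in each $J_i$, in the same spirit as the finite-product analysis of \cite{GK} but with all Lyapunov-type quantities replaced by the deterministic comparison functions $L_m(a)$ (resp.\ $L_{[m',m]}(a)$). First I would record the basic large-deviation estimate for a single non-stationary product: using assumption \ref{B:C1} (uniform $C^1$-bound $\Cn$) together with the Furstenberg/measure condition \ref{B:Furstenberg} over the compact family $\mK$, one gets a uniform-in-$\mK$ large-deviation bound of the form $\P(|\log\|T_{m,a,\omega}\|-L_m(a)|>\eps m)\le Ce^{-cm}$, and, crucially, a version that holds \emph{uniformly in $a$ over a single subinterval $J_i$} after using monotonicity \ref{B:Monotonicity} and the $C^1$-bound to pass from a point to a $\frac{|J|}{N}$-interval (the oscillation of $\arg(T_{m,a,\omega}v)$ across $J_i$ is controlled, and the extra loss is absorbed into $\eps n$ since $|J|/N$ is super-exponentially small in $\sqrt[4]{n}$). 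Summing the failure probability over the $N$ subintervals and over $m\le n$ still leaves probability $\ge 1-\exp(-\delta_0\sqrt[4]{n})$, which matches the target bound; this is what produces conclusion \ref{i:m2} on the good subintervals and identifies the exceptional ones $J_{i_1},\dots,J_{i_M}$ as those where the above event fails at some first time.

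Next I would define, for an exceptional interval $J_{i_k}$, the breakpoint $m_k$ as (roughly) the first time $m$ at which $\log\|T_{m,a,\omega}\|$ fails to lie in $U_{n\eps}(L_m(a))$ for some $a\in J_{i_k}$; before $m_k$ the good estimate holds (first half of \ref{i:m3}), and after $m_k$ I restart the clock, applying the same large-deviation estimate to the ``shifted'' product $T_{[m_k,m],a,\omega}$, whose law is again governed by measures in $\mK$ — this gives the second half of \ref{i:m3}. The \textbf{(Cancelation)} conclusion \ref{i:m4} is where the geometry of $\SL(2,\R)$ enters: on an exceptional $J_{i_k}$ the image direction $f_{m_k,a,\omega}(x_0)$ sweeps (because of monotonicity) across a full range of directions as $a$ ranges over $J_{i_k}$ — in particular it crosses, at some $a_k\in J_{i_k}$, the direction that is \emph{most contracted} by the subsequent block $A_n(a)\cdots A_{m_k+1}(a)$, so that at $a=a_k$ the two blocks $[1,m_k]$ and $[m_k,n]$ act in opposite senses and $\log\|T_{m,a_k,\omega}\|$ tracks $|L_{m_k}(a_k)-L_{[m_k,m]}(a_k)|$ up to $n\eps$; to make this rigorous one combines the monotonicity-driven sweep with a contraction estimate (the direction orthogonal to the top singular direction of the block is contracted at rate $\sim e^{-2L_{[m_k,m]}}$, again uniformly via \ref{B:C1}), and an intermediate-value argument to pick $a_k$.

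Finally, \textbf{(Distribution)} \ref{i:m5} follows by bookkeeping: $R_{m,\omega}(I)=\tf_{m,b_{i'},\omega}(\tx_0)-\tf_{m,b_i,\omega}(\tx_0)$ counts (up to $O(1)$) how many times the image of $v_0$ has made a full turn as the parameter crosses $I$ by ``time'' $m$, and each such full turn forces, by monotonicity and the intermediate-value argument of \ref{i:m4}, exactly one exceptional breakpoint $a_{i_k}\in I$ with $m_k\le m$; conversely each such breakpoint contributes one turn, so $|M_{I;m}-R_{m,\omega}(I)|$ is bounded by the total accumulated error, which is $\le \eps n$ after choosing $n_0(\eps),\delta_0(\eps)$ appropriately. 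Taking $I=J$ and $m=n$ gives $M\in U_{n\eps}(R_{n,\omega}(J))$.

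The main obstacle I anticipate is the uniformity \emph{in the parameter across a whole subinterval $J_i$} in the large-deviation step — a pointwise Furstenberg-type estimate is standard, but propagating it to hold simultaneously for all $a\in J_i$ and all $m\le n$, while keeping the failure probability summable against $N=e^{\sqrt[4]{n}}$, requires carefully exploiting that $|J|/N$ is super-exponentially small together with the $C^1$-bound \ref{B:C1} to control $\partial_a\log\|T_{m,a,\omega}\|$ (which is only of size $O(m\Cn)$, hence the choice $N=[\exp(\sqrt[4]{n})]$ rather than something polynomial). The secondary difficulty is making the cancelation step \ref{i:m4} quantitative: one must show the ``best cancellation'' parameter $a_k$ actually realizes $\log\|T_{m,a_k,\omega}\|\approx|L_{m_k}-L_{[m_k,m]}|$ \emph{for all} $m\ge m_k$ simultaneously, not just at $m=n$, which needs the contraction/expansion rates of the two blocks to be themselves well-approximated by $L_{m_k}$ and $L_{[m_k,m]}$ uniformly — again an appeal to the uniform large-deviation bounds on $\mK$.
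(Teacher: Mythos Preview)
Your overall architecture matches the paper's, but there is a genuine gap at the very first step that the rest of the argument cannot absorb.

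You propose to pass from pointwise large-deviation estimates at the grid points $b_i$ to estimates valid for all $a\in J_i$ by controlling $\partial_a\log\|T_{m,a,\omega}\|$, which you claim is $O(m\Cn)$. This is false: on an interval where cancellation occurs the log-norm can drop from $L_m(a)\sim hm$ to $O(1)$ over a parameter window of width $\exp(-cm)$, so $\partial_a\log\|T_{m,a,\omega}\|$ is generically \emph{exponentially} large in $m$. (The same holds for $\partial_a\tf_{m,a,\omega}(\tx_0)$: by the chain rule this is a sum of terms each containing a space-derivative factor $(\tf_{[j,m],a})'$ that can be as large as $\Cn^{2(m-j)}$.) Consequently, multiplying by $|J_i|=|J|/N\sim e^{-\sqrt[4]{n}}$ does \emph{not} give a small error once $m$ is of order $n$; the naive derivative bound is useless precisely on the intervals you need to classify. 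This breaks your mechanism for \emph{detecting} exceptional intervals, and hence also your definition of $m_k$ as ``first time the uniform estimate fails'': with your bound, every interval would be exceptional for $m$ beyond $\sqrt[4]{n}/\log\Cn$.

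What the paper does instead is work entirely on the projective circle: it tracks the intervals $X_{m,i}=[\tx_{m,i-1},\tx_{m,i}]$ swept by the image of a fixed $\tx_0$ as $a$ ranges over $J_i$, and proves (Proposition~\ref{p:classes}) that with the required probability each $|X_{m,i}|$ either stays $<\eps'$ for all $m\le n$, or grows past $\eps'$ and then quickly returns to $<\eps'$ (``opinion-changer''), or grows past $\eps'$ and settles in $(1,1+\eps')$ (``jump''). This classification is what replaces the failed derivative bound: on small/opinion-changer intervals the \emph{sum} $\sum_m|X_{m,i}|$ is $O(\eps'n)$, and then a distortion lemma (Lemma~\ref{l:DC}) transfers the pointwise LD estimate at $b_i$ to all of $J_i$. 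The jump intervals are exactly the exceptional $J_{i_k}$, and the jump moment $m_0+\eps'n$ is $m_k$. The fact that the image has actually made a full turn on a jump interval (i.e.\ $|X_{m,i}|>1$) is not a consequence of ``LD fails'' but an independent dynamical conclusion, and it is what makes your intermediate-value argument for \ref{i:m4} legitimate. Proposition~\ref{p:classes} in turn is proved via an exponential-contraction estimate for the $s$-H\"older distance on $\Sc$ (Proposition~\ref{p:phi} and Corollary~\ref{c:l-a1}), not via any $a$-derivative bound.
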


Theorem \ref{t:main} is a non-stationary analog of Theorem 1.19 from \cite{GK}. In the stationary case the distribution of the exceptional intervals had to converge to some measure analogous to the density of states measure in the context of random Schr\"odinger operators. This is where Theorem \ref{t:main} essentially differs  from \cite[Theorem 1.19]{GK}. While some attempts to generalize the notion of the density of states to the case of non-ergodic 
Schr\"odinger operators were made (e.g. see \cite{BKl}), the situation is quite delicate in this case. In particular, contrary to the Johnson's Theorem in the stationary case, even if the random rotation number exists (e.g., see Appendix A in \cite{GK} for details), an interval of its constancy is not necessarily inessential: 
\begin{example}\label{ex:J-2}
As in Example~\ref{ex:J-1}, consider a stationary product $A(a)$, uniformly hyperbolic for $a\in I=[0,1]$, but now add an additional rotation at the steps that are perfect squares. That is, let
$$
B_j(a) = \begin{cases}
\Rot_{2\pi a} \cdot A_j(a), & j=k^2 \text{ for some } k\in \Z \\
A_j(a), & \text{otherwise}.
\end{cases}
$$
Then, for the product
$$
B_n(a)\dots B_1(a),
$$
on the one hand, the interval $I$ is not inessential, as $R_{\omega,k^2}(I)\ge k$ for any $k$.
On the other hand,
$$
\lim_n \frac{1}{n} R_{\omega, n}(I)=0,
$$
and the random rotation number is constant on the interval $I$.
\end{example}

Using the same ideas for the random Schr\"odinger operators setting, we get the following

\begin{example}
Consider the Anderson-Bernoulli potential $V(n)=\xi_n+r(n)$, where $\xi_n\in \{0,1\}$ are i.i.d. $(1/2, 1/2)$-Bernoulli random variables, and
$$
r(n)= \begin{cases}
20, & \text{if } n \text{ is a perfect square}, \\
0 & \text{otherwise}.
\end{cases}
$$

Then, due to Theorem~\ref{t.al}, the corresponding random Schr\"odinger operator almost surely has a pure point spectrum,
and its eigenfunctions are exponentially localized. However, the proportion of eigenfunctions with energy in the interval $[18,23]$ among those with the localization center in $[-N,N]$ tends to zero as $N\to\infty$. Hence for any natural definition of the density of states measure, it will not charge the interval $[18,23]$, though a nonempty subset of it belongs to the essential spectrum.
\end{example}


\subsection{Ideas of the proof and structure of the paper}

The proofs of non-stationary spectral and dynamical localization results, i.e. Theorems \ref{t.al} and  \ref{t.dl}, as well as the Non-stationary Parametric Furstenberg Theorem, i.e. Theorem \ref{t:product}, are based mostly on the results and strategy from two recent papers, \cite{GK} and \cite{GK22}. More specifically, our new proof of spectral localization in 1D Anderson Model, that we provided to demonstrate the power of the technics developed in \cite{GK}, used the results on parametric products of random $\SL(2, \mathbb{R})$ matrices. That result, in turn, was based on existence and positivity of Lyapunov exponents of random matrix products. In the non-stationary case  the norms of random matrix products do not have to have any exact exponential rate of growth, but their behaviour can be described by an exponentially growing non-random sequence $\{L_n\}$, as was shown in \cite{GK22}. This allows to use the general strategy of the proof of spectral localization from \cite{GK} in the non-stationary setting. Certainly, non-stationarity of the model brings many technical challenges. Just to give one example, in the stationary case the Lyapunov exponent $\lambda(a)$ is a continuous function of the parameter $a\in J$, and uniform continuity of that function over a compact interval in the parameter space is used in \cite{GK}. In the non-stationary setting it has to be replaced by equicontinuity of the sequence of functions $\{\frac{1}{n}L_n(a)\}_{a\in J}$, see Lemma \ref{l:equicontinuous} below.

Let us now describe the structure of the rest of the paper.

In Section \ref{s:PLD} we provide the statement of the Non-stationary Furstenberg Theorem and Large Deviations Estimates from \cite{GK22}, and explain why its parametric version, Theorem  \ref{t.2param}, also holds.
In Section \ref{ss.upperbound} we prove equicontinuity of the sequence $\{\frac{1}{n}L_n(a)\}_{a\in J}$, and use it to prove the first part (``Regular upper limit'') of Theorem \ref{t:product}, which can be considered as a dynamical analog of Craig-Simon's result on Schr\"odinger cocycles. After that, in Section \ref{ss.prrofofparam}, we deduce the rest of the Non-stationary Parametric Fursetenberg Theorem (Theorem \ref{t:product}) from Theorem~\ref{t:main} on properties of finite products of random matrices.

The central technical statement of this paper is Theorem~\ref{t:main}, describing the typical behaviours of finite length random products, and we prove it in Section \ref{s.proofof114}.  The number $N=[\exp\sqrt[4]{n}]$ of parameter intervals in Theorem~\ref{t:main} grows subexponentially in the number $n$ of iterations. Due to the large deviations type bounds, the growth of the log-norm at each of their endpoints is $n\eps$-close to its expected value with the probability that is exponentially close to~$1$, and hence (as the number of endpoints growth subexponentially), the same applies to all endpoints~$b_j$ simultaneously. We establish these large deviations type bounds 
in Section~\ref{ss.5.3}, see Lemma~\ref{l:LD-RD}.

Next, we extend the control from the endpoints to full parameter intervals. To do so, we study how the image of a given initial point $x_0\in\Sc$ after a given number of iterations $m$ varies when the parameter varies over the corresponding parameter interval $J_i=[b_{i-1},b_i]$. If all such variations are sufficiently small, the distortion control ideas (see Section~\ref{ss:distortion} and Lemma~\ref{l:DC}) allow us to observe the same growth of log-norms for all intermediate parameter values. The same applies if the variation becomes large at some intermediate moment $m$, but then ``quickly'' decreases.

Proposition~\ref{p:classes} in Section~\ref{s:finite-products} states that with the probability close to~$1$ there are only three possible types of behaviour for such variations: the two mentioned above and the third one, when the image point at some moment $m_0$ makes a full turn around the circle. In the latter case the product $T_{[m_0,m],a,\omega}$ after this moment turns out to be growing uniformly in $a\in J_j$, and such a parameter interval is thus exceptional in terms of Theorem~\ref{t:main}. In Sections~\ref{ss:norms} and~\ref{ss:cancellation}  we deduce Theorem~\ref{t:main} from Proposition~\ref{p:classes} described above. Then, in Section~\ref{ss.5.6}  we provide the proof of contraction estimates in the non-stationary setting, which is the only part where the proof of Proposition~\ref{p:classes} differs from its stationary version (see Remark~\ref{r.proofoftypes}).

In Section~\ref{s:spectral}, we prove Theorem~\ref{t.al} and therefore establish the spectral localization in our model. To do that, we study the possible growth of log-lengths of the images of a particular vector. It turns out that these log-lengths up to an error term $\eps n$ (with arbitrarily small~$\eps$) follow one of possible patterns, see Lemmata~\ref{l:line-shape}, \ref{l:V-shape}, and \ref{l:W-shape}. This allows to prove Theorem~\ref{t.vector} that claims that almost surely every vector whose images are not growing with respect to the non-random sequence $L_n(a)$ must actually decay exponentially fast. In the case of transfer matrices associated with Anderson Model  that implies that any solution that does not grow exponentially fast must in fact decay exponentially fast, and due to Shnol's Lemma the corresponding Schr\"odinger operator must enjoy spectral localization.

In Section \ref{s.dynloc} we establish that discrete 1D Schr\"odinger operator with non-stationary random potential has semi-uniformly localized eigenfunctions (SULE), which implies Dynamical Localization and proves Theorem~\ref{t.dl}.

Finally, we provide two appendices with examples that show that the properties of a non-stationary Anderson Model and non-stationary random matrix products can be drastically different from their stationary versions. Namely, in Appendix \ref{a.1} we show that the essential spectrum of a discrete Schr\"odinger operator with non-stationary random potential does not have to have dense interior. More specifically, we give an explicit example of such an operator whose essential spectrum intersected with an interval forms a Cantor set of zero measure. And in Appendix \ref{s:limsup} we give a surprising example of a non-stationary parametric random matrix sequence $A_1(a), A_2(a),\ldots, A_n(a), \ldots$ such that $\limsup_{n\to \infty} \frac{1}{n}\log\|A_n(a)A_{n-1}(a)\ldots A_1(a)\|$ is not a continuous function of the parameter. In the stationary setting this function corresponds to the Lyapunov exponent, that is known to be continuous in this context (under very mild conditions that are satisfied, say, by transfer matrices of Schr\"odinger cocycle associated with Anderson Model).

\section{Non-stationary Parametric Furstenberg Theorem}

\subsection{Parametric Large Deviation Estimates Theorem}\label{s:PLD}

First of all, we will need a version of the Large Deviation Estimates Theorem \cite[Theorem 1.8]{GK22}  
that is uniform in the parameter. Let us provide the original statement first:
\begin{theorem}[Large Deviations for Nonstationary Products, \cite{GK22}]\label{t.LDET2}
Let $\mathbf{K}$ be a compact set of probability measures on $\SL(d, \mathbb{R})$. Assume that the following hold:
\begin{itemize}
\item
\textbf{(finite moment condition)} There exists $\gamma>0$, $C$ such that
$$
\forall \mgr\in \mathbf{K}\quad  \int_{\SL(d,\R)} \|A\|^{\gamma} d \mgr(A) < C
$$
\item
\textbf{(measures condition)} For any $\mgr\in \mathbf{K}$ there are no Borel probability measures $\msp_1$, $\msp_2$ on $\mathbb{RP}^{d-1}$ such that $(f_A)_*\msp_1=\msp_2$ for $\mgr$-almost every $A\in \SL(d, \mathbb{R})$
\item
\textbf{(spaces condition)} For any $\mgr\in \mathbf{K}$ there are no two finite unions $U$, $U'$ of proper subspaces of~$\R^{d}$ such that $A(U)=U'$ for $\mgr$-almost every $A\in \SL(d, \mathbb{R})$.
\end{itemize}
Then for any $\varepsilon>0$ there exists $\delta>0$ such that for any sequence of distributions $\mgr_{1}, \mgr_{2},\ldots , \mgr_{n}, \ldots$ from $\mathbf{K}$, for all sufficiently large $n\in \mathbb{N}$ we have
$$
\mathbb{P}\left\{\left|\log\|T_n\|-L_n\right|>\varepsilon n\right\}<e^{-\delta n},
$$
where $T_n=A_nA_{n-1}\ldots A_1$, $\{A_j\}$ are chosen randomly and independently with respect to $\{\mu_j\}$, $\mathbb{P}=\mgr_{1}\times \mgr_{2}\times\ldots\times \mgr_{n}$, and $L_n=\mathbb{E}(\log \|T_n\|)$. Moreover, the same estimate holds for the lengths of random images of any given initial unit vector $v_0$:
$$
\forall v_0\in \R^d, \, |v_0|=1 \quad  \mathbb{P}\left\{\left|\log\|T_n v_0\|-L_n\right|>\varepsilon n\right\}<e^{-\delta n}.
$$
Finally, the expectations $L_n$ satisfy a lower bound
\[
L_n \ge n h,
\]
where the constant $h>0$ can be chosen uniformly for all possible sequences $\{\mgr_n\}\in \mathbf{K}^{\N}$.
\end{theorem}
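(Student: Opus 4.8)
\emph{Proof proposal.} The plan is to run the classical route to large deviations for i.i.d.\ products of $\SL(d,\R)$ matrices (Le~Page, Bougerol--Lacroix, Guivarc'h--Raugi), but making every constant uniform over the compact family $\mathbf{K}$ and never invoking a single Lyapunov exponent, only the sequence $L_n$. The first step is a \emph{uniform finite-scale positivity}: I would show there exist $N_0\in\N$ and $c>0$, depending only on $\mathbf{K}$, with
\[
\E\big[\log\|A_{N_0}\cdots A_1 v\|\big]\ \ge\ c \qquad\text{for every }\mu_1,\dots,\mu_{N_0}\in\mathbf{K}\text{ and every unit vector }v .
\]
This is proved by contradiction and compactness: a failure yields scales $N_j\to\infty$, measures from $\mathbf{K}$ and unit vectors along which the average log-growth is $\le 0$; passing to weak-$*$ limits of the measures and of the empirical distributions of the projective images produces a stationary configuration violating Furstenberg's positivity theorem, whose hypotheses are precisely the \emph{measures} and \emph{spaces} conditions (the finite moment condition ensures integrability throughout, and $\|A^{-1}\|\le\|A\|^{d-1}$ on $\SL(d,\R)$ controls the lower tails). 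Applying this block bound conditionally along an arbitrary sequence $(\mu_n)$, with the random but $\mathcal F_{kN_0}$-measurable unit direction of $T_{kN_0}v_0$ playing the role of $v$, gives $\E\log\|T_{kN_0}v_0\|\ge kc$, hence $L_n=\E\log\|T_n\|\ge hn$ for all large $n$ with $h>0$ uniform, which is the last assertion of the theorem.

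The second step is a \emph{uniform exponential contraction on $\mathbb{RP}^{d-1}$}: there are $C,\kappa>0$, uniform over $\mathbf{K}$, such that
\[
\E\big[\,\mathrm{diam}\big(T_n\cdot\mathbb{RP}^{d-1}\big)\big]\ \le\ Ce^{-\kappa n}
\]
for every sequence $(\mu_n)$ and every $n$, together with the corresponding bound $\E|\log\|T_n x\|-\log\|T_n y\||\le Ce^{-\kappa n}$ for all $x,y$. Again the uniformity is obtained by a compactness reduction to the stationary Guivarc'h--Raugi contraction, which follows from finite-scale positivity plus the irreducibility conditions. The consequence I will use is that, with probability $1-O(e^{-\kappa\ell/2})$, the forward image $\bar x_k:=T_k\cdot x_0$ of a fixed class $x_0$ lies within $e^{-\kappa\ell/2}$ of a point $\bar x_k^{(\ell)}$ depending only on the last $\ell$ factors $A_{k-\ell+1},\dots,A_k$.

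Now fix a unit vector $v_0$ with class $x_0$ and write $\log\|T_nv_0\|=\sum_{k=1}^n Y_k$, where $Y_k=\log\|A_k\bar x_{k-1}\|$ for a unit representative $\bar x_{k-1}$ of $T_{k-1}\cdot x_0$, and split $Y_k=Z_k+D_k$ with $D_k=\E[Y_k\mid\mathcal F_{k-1}]=g_{\mu_k}(\bar x_{k-1})$, $g_\mu(x):=\E_{A\sim\mu}\log\|Ax\|$, and $Z_k=Y_k-D_k$ a martingale difference. Since $|Y_k|\le(d-1)\log\|A_k\|$, the moment condition gives uniform conditional exponential tails for $Y_k$, so a Bernstein-type inequality for martingales yields $\P\!\big(|\sum_{k\le n}Z_k|>\tfrac\eps2 n\big)\le e^{-\delta n}$. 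For the drift term, $L_n^{v_0}:=\E\log\|T_nv_0\|=\sum_k\E D_k$, so it suffices to bound $\sum_k\big(g_{\mu_k}(\bar x_{k-1})-\E g_{\mu_k}(\bar x_{k-1})\big)$ by $\tfrac\eps2 n$ off an exponentially small event. Replacing $\bar x_{k-1}$ by its finite-window proxy $\bar x_{k-1}^{(\ell)}$ and using uniform H\"older regularity of the functions $g_\mu$ on $\mathbb{RP}^{d-1}$ (proved from the moment condition by truncating $\{\|A\|>R\}$), the replacement costs $\le\tfrac\eps4$ per term once $\ell=\ell(\eps)$ is fixed; what remains is a sum of uniformly bounded functions of disjoint length-$\ell$ windows of independent factors, which concentrates after splitting into $\ell$ independent subsums and applying Hoeffding, with exception probability $e^{-\delta(\eps)n}$. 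This gives $\P(|\log\|T_nv_0\|-L_n^{v_0}|>\eps n)<e^{-\delta n}$ for all large $n$. Finally, writing $T_n$ in singular value form, $\log\|T_nv_0\|\ge\log\|T_n\|+\log\dist(v_0,H_n)$, where $H_n$ is the span of the non-dominant right singular directions of $T_n$; a uniform non-clustering estimate — the law of $H_n$ charges any $r$-neighbourhood by at most $Cr^\beta$, once more a compactness reduction to the H\"older regularity of stationary Furstenberg measures — gives $\P(\dist(v_0,H_n)<e^{-\eps n})<e^{-\delta n}$ and, after integration, $|L_n-L_n^{v_0}|=o(n)$. Combining these two bounds with $\log\|T_nv_0\|\le\log\|T_n\|$ yields the claimed deviation estimates for both $\log\|T_nv_0\|$ and $\log\|T_n\|$ around $L_n$.

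\emph{Main obstacle.} The substantive difficulty is twofold: (i) upgrading the two qualitative inputs of Furstenberg/Guivarc'h--Raugi theory — positivity of the top exponent and exponential contraction on projective space, together with H\"older regularity of the limiting measures — to statements that are \emph{uniform over the compact family $\mathbf{K}$}, which is exactly where the \emph{measures} and \emph{spaces} conditions enter, through compactness/contradiction arguments; and (ii) carrying everything out under only a $\gamma$-moment hypothesis rather than boundedness, which forces truncation in the regularity of $g_\mu$, in the conditional moments of the martingale increments, and in the non-clustering estimate, and requires keeping track that all truncation errors are uniform in the sequence $(\mu_n)$.
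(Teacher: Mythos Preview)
The paper does not prove this theorem: it is quoted verbatim from \cite{GK22} (see the sentence introducing it, ``Let us provide the original statement first'') and is used as a black box. So there is no ``paper's own proof'' to compare against. What the present paper does contain are hints about the structure of the argument in \cite{GK22}: the proof of Lemma~\ref{l:equicontinuous} decomposes $T_n$ into blocks $B_j$ of a fixed large size~$k$, writes $\log|T_n v_0|=\sum_j\xi_j-\sum_j R_j$ with $\xi_j=\log\|B_j\|$ and cancellation terms $R_j$, and invokes \cite[Proposition~3.2]{GK22} for a large-deviation bound on $\sum R_j$. This block/cancellation scheme is a somewhat different organizing principle from your step-by-step martingale $Y_k=\log\|A_k\bar x_{k-1}\|$, though the two are closely related.

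Your outline is the classical Le~Page route and is broadly sound, but one step is genuinely underspecified: the ``compactness reduction to stationary Furstenberg'' for uniform finite-scale positivity. If the bound $\inf_{\mu_1,\dots,\mu_{N}\in\mathbf K,\,|v|=1}\E\log\|A_N\cdots A_1 v\|\le 0$ held for every $N$, a diagonal extraction over $\mathbf K^{\N}$ produces a limiting \emph{nonstationary} sequence $(\mu_j)$, not a single $\mu\in\mathbf K$, so you cannot directly invoke the stationary Furstenberg theorem to derive a contradiction --- that would be circular. The usual fix (and, as far as one can infer from the hints here, what \cite{GK22} does) is to reverse the order: first establish uniform one-step projective contraction directly from the measures/spaces conditions over the compact $\mathbf K$ (e.g.\ a uniform bound of the form $\E_\mu\,\varphi(f_A x,f_A y)\le(1-\eta)\varphi(x,y)+C$ for some $\varphi=\dist^s$), iterate it to get exponential contraction for arbitrary nonstationary products, and only then deduce linear growth of $L_n$ and the large-deviation estimates. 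Your contraction step and the finite-window martingale argument after it are fine; the gap is that positivity should be a \emph{consequence} of the uniform contraction machinery, not an independent input obtained by a compactness trick that, as written, does not close.
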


 Let us now state a version of the Large Deviation Estimates that we need:

\begin{theorem}\label{t.2param}
Under the assumptions of Theorem \ref{t:product} for any $\varepsilon>0$ there exists $\delta>0$ such that for all sufficiently large $n\in \mathbb{N}$ and all values of the parameter $a\in J$ we have
$$
\mathbb{P}\left\{\left|\log\|T_{n, a, \omega}\|-L_n(a)\right|>\varepsilon n\right\}<e^{-\delta n},
$$
where $\mathbb{P}=\mgr_{1}^a\times \mgr_{2}^a\times\ldots\times \mgr_{n}^a$. Moreover, the same estimate holds for the lengths of random images of any given initial unit vector $v_0$:
$$
\forall v_0\in \R^2, \, |v_0|=1 \quad  \mathbb{P}\left\{\left|\log\|T_{n, a, \omega} v_0\|-L_n(a)\right|>\varepsilon n\right\}<e^{-\delta n}.
$$
Finally, the expectations $L_n$ satisfy a lower bound
\begin{equation}\label{eq:L-nh}
L_n \ge n h,
\end{equation}
where the constant $h>0$ can be chosen uniformly for all possible sequences $\{\mgr_n\}\in \mK^{\N}$ and all parameters $a\in J$.
\end{theorem}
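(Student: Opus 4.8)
\subsection*{Proof proposal for Theorem~\ref{t.2param}}

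The plan is to deduce Theorem~\ref{t.2param} from the non-parametric Large Deviations Theorem~\ref{t.LDET2} (applied with $d=2$) by packaging all ``parameter slices'' $\mgr^a$ into one compact family. Concretely, I would introduce the evaluation map $\mathrm{ev}\colon\mA\times J\to\SL(2,\R)$, $\mathrm{ev}(A,a)=A(a)$, and the induced map on measures $\Phi\colon\mK\times J\to\mathcal P(\SL(2,\R))$, $\Phi(\mgr,a):=(\mathrm{ev}(\cdot,a))_*\mgr=\mgr^a$, and set $\mathbf K':=\Phi(\mK\times J)$. The first step is to check that $\Phi$ is continuous for the weak-$*$ topology, so that $\mathbf K'$ is compact: given $(\mgr_k,a_k)\to(\mgr,a)$ and $\vphi\in C_b(\SL(2,\R))$, I would write $\int\vphi\,d\mgr_k^{a_k}-\int\vphi\,d\mgr^a$ as the sum of $\int_\mA\bigl(\vphi(A(a_k))-\vphi(A(a))\bigr)d\mgr_k(A)$ and $\int_\mA\vphi(A(a))\,d\mgr_k(A)-\int_\mA\vphi(A(a))\,d\mgr(A)$; the latter tends to $0$ because $A\mapsto\vphi(A(a))$ is bounded and continuous on $\mA$ and $\mgr_k\to\mgr$ weakly, while the former tends to $0$ because, by $C^1$-boundedness~\ref{B:C1}, every $A\in\supp\mgr_k$ satisfies $\|A(a_k)-A(a)\|\le\Cn|a_k-a|$ with all matrices $A(a),A(a_k)$ in the fixed compact ball $\{\|B\|\le\Cn\}$, on which $\vphi$ is uniformly continuous, so that $\sup_{A\in\supp\mgr_k}|\vphi(A(a_k))-\vphi(A(a))|\to0$. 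Hence $\mathbf K'$ is a compact subset of $\mathcal P(\SL(2,\R))$.

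Next I would verify that $\mathbf K'$ satisfies the three hypotheses of Theorem~\ref{t.LDET2}. The \emph{finite moment condition} is immediate from~\ref{B:C1}: $\int\|A\|\,d\mgr^a(A)=\int_\mA\|A(a)\|\,d\mgr(A)\le\Cn$, so $\gamma=1$ works. The \emph{measures condition} is literally assumption~\ref{B:Furstenberg}, which is stated precisely for the slices $\mgr^a$ with $\mgr\in\mK$, $a\in J$; since $\mathbf K'$ is exactly the set of these slices (and is closed), no passage to a closure is required. For the \emph{spaces condition}: in $\R^2$ a finite union $U$ of proper subspaces is (up to the irrelevant zero subspace) a finite set of lines $\{[\ell_1],\dots,[\ell_p]\}\subset\mathbb{RP}^1$; if $A(U)=U'$ for $\mgr^a$-a.e.\ $A$, then invertibility of $A$ forces $U'$ to consist of exactly $p$ lines, and the normalized counting measures $\msp_1=\tfrac1p\sum_i\delta_{[\ell_i]}$ and $\msp_2=\tfrac1p\sum_j\delta_{[\ell_j']}$ would satisfy $(f_A)_*\msp_1=\msp_2$ for $\mgr^a$-a.e.\ $A$, contradicting~\ref{B:Furstenberg}. (Monotonicity~\ref{B:Monotonicity} plays no role here.)

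With $\mathbf K'$ in hand, the conclusion is essentially automatic. Fix $a\in J$: the sequence $\mgr_1^a,\mgr_2^a,\dots$ lies in $\mathbf K'$, the law of $(A_1(a),\dots,A_n(a))$ under $\Prob=\prod_n\mgr_n$ is $\mgr_1^a\times\cdots\times\mgr_n^a$, and $\log\|T_{n,a,\omega}\|$, $L_n(a)=\E\log\|T_{n,a,\omega}\|$ are exactly the quantities $\log\|T_n\|$, $L_n$ of Theorem~\ref{t.LDET2} for this sequence. Since the constant $\delta>0$ and the threshold on $n$ delivered by Theorem~\ref{t.LDET2} depend only on $\mathbf K'$ and $\eps$ — not on the chosen sequence in $\mathbf K'$, hence not on $a$ — the bound $\Prob\{|\log\|T_{n,a,\omega}\|-L_n(a)|>\eps n\}<e^{-\delta n}$ holds for all $a\in J$ and all large $n$ simultaneously; the statement for images of a unit vector and the uniform lower bound $L_n(a)\ge nh$ follow the same way from the corresponding parts of Theorem~\ref{t.LDET2}. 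I expect the only genuinely delicate point to be the continuity of $\Phi$ (equivalently, compactness of $\mathbf K'$), which is where $C^1$-boundedness~\ref{B:C1} is used in an essential way; everything else is a direct transcription.
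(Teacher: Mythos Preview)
Your proposal is correct and follows essentially the same route as the paper: define $\mathbf{K}'=\{\mgr^a:\mgr\in\mK,\,a\in J\}$, verify it is compact (the paper asserts this; you actually prove it via continuity of $\Phi$ using~\ref{B:C1}), check the three hypotheses of Theorem~\ref{t.LDET2} --- finite moment from~\ref{B:C1}, measures condition from~\ref{B:Furstenberg}, and spaces condition from the measures condition in dimension~$2$ via the counting-measure trick --- and then apply Theorem~\ref{t.LDET2} uniformly over sequences in~$\mathbf{K}'$. Your write-up is in fact more detailed than the paper's on the compactness point, which the paper leaves as a one-line claim.
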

Theorem \ref{t.2param} can be considered as a particular case of Theorem \ref{t.LDET2}. Indeed, finite moment condition trivially holds under the assumptions of Theorem \ref{t.LDET2} due to condition (B2). Since in Theorem \ref{t.2param} we are only interested in $\SL(2, \mathbb{R})$ matrices, spaces condition holds as soon as measures condition holds; otherwise an atomic measure on the finite invariant set in $\mathbb{RP}^1$ would be a measure with a deterministic image. Finally, the collection of measures $\left\{\mu^a\ |\ \mu\in \mathcal{K}, a\in J\right\}$ forms a compact subset in the space of measures on~$\SL(2, \mathbb{R})$, and due to condition (B1), every measure $\mu^a$ from that compact satisfies the measures condition from Theorem \ref{t.LDET2}. Therefore, Theorem \ref{t.LDET2} is applicable with $\mathbf{K}=\left\{\mu^a\ |\ \mu\in \mathcal{K}, a\in J\right\}$. In particular, it is applicable to any sequence of the form $\mgr_{1}^a,  \mgr_{2}^a, \ldots, \mgr_{n}^a, \ldots$, which gives Theorem~\ref{t.2param}.

\subsection{Upper bound for the upper limit}\label{ss.upperbound}

We will need the following statement:
\begin{lemma}\label{l:equicontinuous}
The sequence of functions $\left\{\frac{1}{n}L_n(a)\right\}_{n\in \mathbb{N}}$ is equicountinuous.
\end{lemma}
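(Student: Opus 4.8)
The plan is to establish equicontinuity of $\{\frac{1}{n}L_n(a)\}_n$ directly from the $C^1$-boundedness and monotonicity assumptions, using a two-scale argument. The key observation is that for fixed $n$ the log-norm $\log\|T_{n,a,\omega}\|$ cannot change too fast in $a$: if $A(\cdot)\in C^1(J,\SL(2,\R))$ has $C^1$-norm at most $\Cn$, then each factor, and hence the product, is a Lipschitz function of $a$ into $\SL(2,\R)$ with a Lipschitz constant that is at most exponential in $n$; differentiating $\log\|T_{n,a,\omega}\|$ in $a$ gives a crude bound of the form $|\partial_a\log\|T_{n,a,\omega}\|| \le C'\cdot n\cdot e^{C''n}$ (products of $n$ matrices of norm $\le e^{C''}$, with one factor replaced by a derivative). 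Averaging, $\frac{1}{n}L_n(\cdot)$ is Lipschitz with constant $\le C' e^{C''n}$ on all of $J$; this alone is useless for equicontinuity because the constant blows up in $n$, so the argument must be bootstrapped.

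The standard way around this is subadditivity combined with the large deviation bound of Theorem~\ref{t.2param}. First, I would record the approximate subadditivity $L_{n+m}(a) \le L_n(a) + L_{[n,n+m]}(a) + O(1)$ coming from $\|T_{n+m,a,\omega}\| \le \|T_{[n,n+m],a,\omega}\|\cdot\|T_{n,a,\omega}\|$; since all $\mgr_k$ lie in the compact set $\mK$ and, by Remark~\ref{r:groups}-type reasoning, shifting the sequence leaves us in the same class, the Fekete-type argument shows $\frac{1}{n}L_n(a)$ stays within $O(1/n)$ of its asymptotic behavior in a way controlled uniformly over $\mK^{\N}$ and over $a\in J$. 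Fix $\eps>0$. Choose $N_0$ so large that $\frac{1}{N_0}$ times the crude Lipschitz bound from the first paragraph, applied at scale $N_0$, is $\le \eps$ on intervals of length $\delta(\eps)$; more precisely, pick $\delta>0$ so that $|a-a'|<\delta$ forces $|\frac{1}{N_0}L_{N_0}(a) - \frac{1}{N_0}L_{N_0}(a')| < \eps$ — this is possible for each \emph{fixed} $N_0$ by the first paragraph. Then for $n = qN_0 + r$ with $0\le r<N_0$, split $T_{n,a,\omega}$ into $q$ blocks of length $N_0$ (plus a bounded remainder) and use approximate subadditivity in both directions to get $|\frac{1}{n}L_n(a) - \frac{1}{N_0}L_{N_0}(a)| \le C/N_0 + C N_0/n$; combining the same estimate at $a'$ with the choice of $\delta$ yields $|\frac{1}{n}L_n(a) - \frac{1}{n}L_n(a')| \le 2\eps + 2C/N_0 + 2CN_0/n$, which is $\le 5\eps$ once $N_0$ is large and $n$ is large. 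The finitely many small values of $n$ are handled individually (each $\frac{1}{n}L_n$ is continuous on the compact $J$, hence uniformly continuous).

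The point requiring the most care is making the subadditivity genuinely \emph{uniform} — the constant in $L_{n+m}(a)\le L_n(a)+L_{[n,n+m]}(a)+O(1)$ must not depend on $a\in J$, on $n,m$, or on which sequence $\{\mgr_k\}\in\mK^{\N}$ we took, and the blocks of length $N_0$ cut out of the tail of the sequence are governed by the shifted sequences $\{\mgr_{k}\}_{k\ge jN_0}$, which still lie in $\mK^{\N}$, so the scale-$N_0$ bound applies to them too; this is exactly the kind of uniformity that Theorem~\ref{t.2param} (with its uniform lower bound $L_n\ge nh$ and uniform large-deviation $\delta$) is designed to provide, and I would lean on it rather than re-proving a Fekete lemma by hand. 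A cleaner alternative, if one prefers to avoid the block decomposition: use Theorem~\ref{t.2param} to say that for fixed large $n$, $\frac1n\log\|T_{n,a,\omega}\|$ is within $\eps$ of $\frac1nL_n(a)$ with probability $\ge 1-e^{-\delta n}$ simultaneously at both $a$ and $a'$, then bound $|\log\|T_{n,a,\omega}\| - \log\|T_{n,a',\omega}\||$ pathwise using the $C^1$-bound together with the monotonicity assumption to keep the angular variation — hence the norm variation — controlled over a short parameter interval; the monotonicity hypothesis \ref{B:Monotonicity} prevents the Lyapunov-type fluctuations from being amplified by the parameter dependence. Either route works; the block/subadditivity route is more robust and is the one I would write up.
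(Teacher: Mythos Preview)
Your overall strategy---fix a block length, compare $L_n$ to the sum of block contributions, and use equicontinuity at the fixed block scale---is exactly the paper's approach. But the way you implement it has a real gap.

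The Fekete-type inequality you write, $\bigl|\tfrac{1}{n}L_n(a)-\tfrac{1}{N_0}L_{N_0}(a)\bigr|\le C/N_0 + CN_0/n$, is false in the non-stationary setting: there is no reason for $\tfrac{1}{n}L_n(a)$ to be close to the normalized expectation of the \emph{first} block, because later blocks are governed by different measures. What you actually need is to compare $\tfrac{1}{n}L_n(a)$ to the average $\tfrac{1}{q}\sum_{j=0}^{q-1}\tfrac{1}{N_0}L_{[jN_0,(j+1)N_0]}(a)$, and then use that each summand is equicontinuous with a common modulus (your observation about shifted sequences staying in $\mK^{\N}$ gives this). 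You hint at this correction in your ``care'' paragraph but never actually replace the wrong inequality with the right one.

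More seriously, ``subadditivity in both directions'' is not a thing: subadditivity only gives $L_n(a)\le\sum_j L_{[jN_0,(j+1)N_0]}(a)$. The reverse inequality $L_n(a)\ge\sum_j L_{[jN_0,(j+1)N_0]}(a)-\eps n$ is the whole content of the lemma and does not come for free. You are right that Theorem~\ref{t.2param} is the tool, but you have to actually do the work: with high probability $\log|T_{n,a,\omega}v_0|$ is $\eps n$-close to $L_n(a)$, and simultaneously (conditioning block by block) each $\log|T_{[jN_0,(j+1)N_0],a,\omega}v_j|$ is $\eps N_0$-close to $L_{[jN_0,(j+1)N_0]}(a)$; telescoping gives the near-superadditivity. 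This is precisely what the paper does (it phrases it as controlling the ``cancellation terms'' $R_{j,a}$ via a large-deviations bound from~\cite{GK22}, then uses pathwise equicontinuity of the block log-norms $\xi_{j,a}=\log\|B_j(a)\|$, and finally takes expectations, handling the small-probability bad set with the trivial bound $\log\|T_{n,a,\omega}\|\le n\log\Cn$). Once you fix these two points, your argument and the paper's coincide.

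One more remark: the monotonicity assumption~\ref{B:Monotonicity} plays no role here---the paper's proof uses only~\ref{B:Furstenberg} (via the large-deviations input) and~\ref{B:C1} (for equicontinuity of the block log-norms). Your suggestion that monotonicity ``prevents fluctuations from being amplified'' is not how the pathwise bound is obtained; without the block decomposition, the pathwise variation of $\log\|T_{n,a,\omega}\|$ in $a$ can genuinely be exponential in $n$, and monotonicity does not rescue this.
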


\begin{remark}
Lemma \ref{l:equicontinuous} can be considered as a non-stationary analog of continuity of the Lyapunov exponent in the classical stationary setting. Indeed,
it implies that $\limsup_{n\to \infty} \frac{1}{n}L_n(a)$ is a continuous function. Nevertheless, this analogy does not go too far. Namely, in spite of the first claim of Theorem \ref{t:product}, in the non-stationary setting there are examples where almost surely $\limsup_{n\to\infty} \frac{1}{n} \log \|T_{n,a, \omega} \|$ is a discontinuous function of the parameter $a$. We construct such an example in Appendix \ref{s:limsup}.
\end{remark}

\begin{proof}[Proof of Lemma~\ref{l:equicontinuous}]
For any fixed $k\in \N$ we can decompose the product $T_{n, a, \omega}$ into product of groups by $k$,
$$
T_{n,a, \omega}= B_m(a)\dots B_1(a),
$$
where 
$$
B_j(a):=(A_{kj}(a)\dots A_{k(j-1)+1}(a)), \quad j=1,\dots, m.
$$

Now, take any unit vector $v_0\in \mathbb{R}^d$; in order to control the cancellations in the action of the above product on $v_0$, define
$$
\xi_{j,a}:=\log \|B_j(a) \|, \quad S_{j,a}:=\log |T_{kj,a, \omega}(v_0)|, \quad R_{j,a}=S_{j,a}+\xi_{j+1,a}-S_{j+1,a}.
$$
Then 
\begin{equation}\label{eq:glue-param}
\log |T_{km,a, \omega} v_0| = S_{m,a} = (\xi_{1,a}+\dots+ \xi_{m,a})-(R_{0,a}+R_{1,a}+\dots+R_{m-1,a}).
\end{equation}
and hence
\begin{multline}\label{eq:xi-S-param}
(\xi_{1,a}+\dots+ \xi_{m,a})\ge \log \|T_{n,a, \omega}\| \ge \log |T_{n,a, \omega} v_0| \\
 =(\xi_{1,a}+\dots+ \xi_{m,a})-(R_{0,a}+R_{1,a}+\dots+R_{m-1,a}).
\end{multline}

By direct examining of the proof of \cite[Proposition~3.2]{GK22}, one can see that it actually implies its parametric version as well,
that provides a uniform in $a\in J$ value of $\delta^*>0$ for each given $\eps^*>0$:

\begin{prop}\label{p:R-LD-param}
For any $\eps^*>0$ there exists $k_1$, such that for any $k>k_1$ for some $\delta^*>0$ one has for all $n=km$ and for any $a\in J$
$$
\Prob(R_{0,a}+R_{1,a}+\dots+R_{m-1,a} > n\eps^*) < e^{-\delta^* m}.
$$
\end{prop}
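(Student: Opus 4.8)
The goal is to prove \textbf{Proposition~\ref{p:R-LD-param}}, a parametric (uniform in $a\in J$) large deviation bound for the sum of the ``loss'' terms $R_{0,a}+\dots+R_{m-1,a}$ that appear when one writes the log-length $\log|T_{km,a,\omega}v_0|$ as a telescoping difference of block norms $\xi_{j,a}$. The plan is to re-derive the bound from \cite[Proposition~3.2]{GK22} after checking that all the quantitative constants in its proof depend only on the compact set of measures $\mK$ (through $\mK^a:=\{\mgr^a:\mgr\in\mK,\ a\in J\}$, which is itself compact in the space of measures on $\SL(2,\R)$, and on the uniform ellipticity/moment bounds that hold for that compact), and \emph{not} on the particular sequence or on the value of~$a$.

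First I would recall the structure of the argument in \cite{GK22}: each $R_{j,a}\ge 0$ measures the defect $\log\|B_{j+1}(a)\|-\bigl(\log|T_{k(j+1),a,\omega}v_0|-\log|T_{kj,a,\omega}v_0|\bigr)$, i.e. how far the image direction of the accumulated product sits from the expanding direction of the next block $B_{j+1}(a)$. By the Furstenberg-type contraction (available uniformly over $\mK^a$ because of condition~\ref{B:Furstenberg} and the compactness observed in the proof of Theorem~\ref{t.2param}), for $k$ large enough one has a uniform bound $\E R_{j,a}\le \eps^*/2$ for every $a$ and every admissible block distribution, together with a uniform exponential tail $\Prob(R_{j,a}>t)\le C e^{-ct}$; crucially $C,c$ and the threshold $k_1$ depend only on $\mK$ and $J$, not on $n$, $j$ or $a$. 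The partial sums $R_{0,a}+\dots+R_{m-1,a}$ are not independent, but they form (a function of) the underlying independent blocks, and one controls them exactly as in \cite[Proposition~3.2]{GK22} via a supermartingale / Azuma–Hoeffding-type estimate on the centered increments $R_{j,a}-\E[R_{j,a}\mid\mathcal F_j]$, whose conditional exponential moments are uniformly bounded. This yields $\Prob\bigl(\sum_{j=0}^{m-1}R_{j,a}>m\eps^*\bigr)\le e^{-\delta^* m}$ for a $\delta^*>0$ that, by the uniformity just described, can be taken independent of $a$ and of the sequence $\{\mgr_n\}\in\mK^{\N}$.

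The one point that needs care — and which is the main obstacle — is verifying that \emph{every} estimate invoked in the proof of \cite[Proposition~3.2]{GK22} is genuinely uniform over the parameter, i.e. that replacing the single distribution $\mgr$ by the family $\{\mgr_j^a\}$ does not hide a dependence on $a$. This is precisely where one uses that $\{\mgr^a:\mgr\in\mK,\ a\in J\}$ is a compact subset of probability measures on $\SL(2,\R)$ (it is the continuous image of the compact $\mK\times J$ under $(\mgr,a)\mapsto\mgr^a$), that condition~\ref{B:Furstenberg} holds for every member of this compact, and that the moment/norm bounds follow from \ref{B:C1}. Hence the contraction rate, the tail constants, and the bound $k_1$ are obtained by taking the worst case over this compact, and the resulting $\delta^*$ is uniform. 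In short: the proposition follows from \cite[Proposition~3.2]{GK22} verbatim, once one substitutes $\mathbf K=\{\mgr^a\}$ and notes that the only inputs of that proof are properties that this compact family satisfies uniformly — so no step of the original argument needs to be redone, only re-read with the parameter carried along.

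I would therefore present the proof as: (i) recall that $\mK^a$-family is compact and satisfies the measures condition uniformly, so Theorem~\ref{t.2param}'s hypotheses hold for all of it; (ii) note $R_{j,a}\ge0$ and $\E R_{j,a}\le\eps^*/2$ for $k\ge k_1(\eps^*)$ uniformly in $a$ and in the block distribution, with uniform exponential tails; (iii) apply the martingale large-deviation step of \cite[Proposition~3.2]{GK22} to $\sum_{j<m}R_{j,a}$ to get the stated $e^{-\delta^* m}$ bound with $\delta^*=\delta^*(\eps^*)>0$ independent of $a$ and of $\{\mgr_n\}$. The whole argument is one paragraph of ``the proof of \cite[Proposition~3.2]{GK22} applies with parameters'', plus the compactness remark; no new computation is required.
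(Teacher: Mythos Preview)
Your proposal is correct and matches the paper's own treatment: the paper does not give a separate proof but simply remarks that ``by direct examining of the proof of \cite[Proposition~3.2]{GK22}, one can see that it actually implies its parametric version as well,'' i.e.\ exactly the argument you outline. Your version is in fact more detailed than the paper's, since you spell out the compactness of $\{\mgr^a:\mgr\in\mK,\ a\in J\}$ and the uniformity of the tail and contraction constants that make the original proof go through verbatim with the parameter carried along.
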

Let $\eps>0$ be fixed. Take $\eps^*:=\eps/5$, and let us apply Proposition~\ref{p:R-LD-param}:
for any sufficiently large $k$ there exists $\delta^*>0$ such that  for any $a\in J$
\begin{equation}\label{eq:R-bound}
R_{0,a}+R_{1,a}+\dots+R_{m-1,a} \le \frac{\eps}{5} n
\end{equation}
with probability at least $1-e^{-\delta^* m}$. Once~\eqref{eq:R-bound} holds, we have from~\eqref{eq:xi-S-param}
\begin{equation}\label{eq:T-xi}
\left | \log \|T_{n,a, \omega}\| - (\xi_{1,a}+\dots+ \xi_{m,a}) \right| <\frac{\eps}{5} n.
\end{equation}
Thus, for any two parameter values $a_1,a_2\in J$ both inequalities
\begin{equation}\label{eq:xi-L}
\left | \log \|T_{n,a_i, \omega}\| - (\xi_{1,a_i}+\dots+ \xi_{m,a_i}) \right| <\frac{\eps}{5} n, \quad i=1,2
\end{equation}
hold with the probability at least $1-2e^{-\delta^* m}$.

Finally, note that due to the assumption~\ref{B:C1} (for a fixed $k$) the random variables $\xi_{j,a}$, considered as (random) functions of the parameter $a\in J$, are equicontinuous. Hence, there exists $\delta_J$ such that for any $a_1,a_2\in J$ with $|a_1-a_2|<\delta_J$ we have
$$
|\xi_{j,a_1}-\xi_{j,a_2}|< \frac{\eps}{5},
$$
and thus
$$
\sum_{j=1}^m |\xi_{j,a_1}-\xi_{j,a_2}|< \frac{\eps}{5} m.
$$
Combining this estimate with~\eqref{eq:xi-L}, we conclude that with the probability at least $1-2e^{-\delta^* m}$
\begin{equation}\label{eq:a1-a2}
\left | \frac{1}{n}\log \|T_{n,a_1, \omega}\| - \frac{1}{n}\log \|T_{n,a_2, \omega}\| \right| < \frac{3}{5} \eps.
\end{equation}
Let us now consider the expectations $L_n(a_i) = \E \log \|T_{n,a_i, \omega}\|$. We have
\begin{multline*}
\frac{1}{n} |L_n(a_1) - L_n(a_2)| \le \frac{1}{n} \E \left | \log \|T_{n,a_1, \omega}\| - \log \|T_{n,a_2, \omega}\| \right|  =
\\
= \frac{1}{n} \E \Ind_{\{\text{Eq. \eqref{eq:xi-L} holds}\} } \left | \log \|T_{n,a_1, \omega}\| - \log \|T_{n,a_2, \omega}\| \right|
\\ + \frac{1}{n} \E \Ind_{\{\text{Eq. \eqref{eq:xi-L} does not hold}\} }  \left | \log \|T_{n,a_1, \omega}\| - \log \|T_{n,a_2, \omega}\| \right|
\end{multline*}
As we have $\frac{1}{n}\log \|T_{n,a, \omega}\| < \log \Cn$ everywhere due to the assumption~\ref{B:C1},
the contribution of the part where~\eqref{eq:xi-L} does not hold cannot exceed $2e^{-\delta^*n}\log \Cn$,
and hence tends to zero as $n\to\infty$. In particular, for all $n$ sufficiently large it does not exceed~$\frac{\eps}{5}$.

On the other hand, once $|a_1-a_2|<\delta_J$, due to~\eqref{eq:a1-a2} the contribution of the part where~\eqref{eq:xi-L} holds
does not exceed~$\frac{3\eps}{5}$. We finally get for all sufficiently large~$n=km$
$$
 \left|\frac{1}{n}L_n(a_1) - \frac{1}{n}L_n(a_2)\right| < \frac{3\eps}{5}+\frac{\eps}{5} <\eps.
$$
As $\eps>0$ was arbitrary (and with an easy handling of $n$ not divisible by $k$
and of a finite number of $n$ that are too small) we obtain the desired equicontinuity.


\end{proof}

Notice that the proof above in fact gives a bit more. Namely, equicontinuity holds for all the functions $\{\frac{1}{n}L_n(a)\}$ regardless of the specific choice of the sequence of the measures $\mu_1, \mu_2, \ldots \in \mathcal{K}$. Indeed, for any specific $n_0\ge 1$ the set of $n_0$-tuples of measures from $\mathcal{K}^{n_0}$ is compact, the map $(\mu_1, \mu_2, \ldots, \mu_{n_0})\mapsto \frac{1}{n_0}L_{n_0; \mu_{n_0}*\ldots *\mu_1}(\cdot)\in C(J, \mathbb{R})$ is continuous. Therefore, its image is compact, hence is an equicontinuous family of functions. That is, the following statement holds:
\begin{lemma}
The family of functions $\{\frac{1}{n}L_n(a)\}_{n\in \mathbb{N}, \mu_1, \mu_2, \ldots\in \mathcal{K}}$ is equicontinuous.
\end{lemma}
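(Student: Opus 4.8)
The plan is to split the family into a ``large $n$'' regime and a ``small $n$'' regime, handle each by a different mechanism, and then glue the two moduli of continuity. The point is that Lemma~\ref{l:equicontinuous} already does almost all the work; what remains is to notice that it is uniform in the sequence of measures and to dispose of finitely many small indices by compactness.

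\emph{Large $n$.} First I would revisit the proof of Lemma~\ref{l:equicontinuous} and check that every quantitative input in it is phrased purely in terms of the compact set $\mK$, rather than of one fixed sequence. Concretely: Proposition~\ref{p:R-LD-param}, which bounds $R_{0,a}+\dots+R_{m-1,a}$, is a statement about $\mK$ and is uniform in $a\in J$; the equicontinuity of the random functions $a\mapsto\xi_{j,a}$ is a consequence of assumption~\ref{B:C1} applied to the supports of all measures in $\mK$; and the a priori bound $\tfrac1n\log\|T_{n,a,\omega}\|<\log\Cn$, used to discard the low-probability event, again comes from~\ref{B:C1}. Hence the proof in fact produces, for each $\eps>0$, a threshold $N_0=N_0(\eps)\in\N$ and a modulus $\delta_J=\delta_J(\eps)>0$, both independent of the choice $\mgr_1,\mgr_2,\dots\in\mK$, such that $\bigl|\tfrac1nL_n(a_1)-\tfrac1nL_n(a_2)\bigr|<\eps$ whenever $n\ge N_0$ and $|a_1-a_2|<\delta_J$.

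\emph{Small $n$, and gluing.} There remain only finitely many indices $n_0<N_0(\eps)$. For each such $n_0$ I would argue by compactness: $\mK^{n_0}$ is compact since $\mK$ is, and the assignment $(\mgr_1,\dots,\mgr_{n_0})\mapsto\tfrac1{n_0}L_{n_0}(\cdot)\in C(J,\R)$ is continuous — here one uses~\ref{B:C1} to see that $A(\cdot)\mapsto\log\|A(\cdot)\|$ is a bounded continuous function whose dependence on $a$ is equi-Lipschitz, so that $\tfrac1{n_0}L_{n_0}$ varies continuously in the sup norm with the tuple of measures. The image of this map is therefore a compact subset of $C(J,\R)$, hence an equicontinuous family by Arzel\`a--Ascoli, and a finite union of equicontinuous families is equicontinuous; this yields a single modulus $\delta'=\delta'(\eps)>0$ valid for all $n_0<N_0$ and all sequences in $\mK$. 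Taking $\delta=\min(\delta_J,\delta')$ then gives the bound by $\eps$ for every $n\in\N$ and every sequence, which is the claimed equicontinuity.

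The only genuinely non-routine point is the first step: certifying that the constants produced in Lemma~\ref{l:equicontinuous} are truly sequence-independent. This is really just careful bookkeeping — the proof of that lemma never uses anything about the sequence beyond membership of each factor in $\mK$, and in particular Proposition~\ref{p:R-LD-param} is already uniform over $\mK^{\N}$ — but it is the part that must be checked honestly. Everything else (compactness of $\mK^{n_0}$, continuity of the finite-length expectation in the sup norm, Arzel\`a--Ascoli) is standard.
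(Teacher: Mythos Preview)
Your proposal is correct and follows essentially the same approach as the paper: the paper also notes that the proof of Lemma~\ref{l:equicontinuous} already yields sequence-independent constants, and then invokes exactly the compactness argument you describe (compactness of $\mK^{n_0}$, continuity of $(\mgr_1,\dots,\mgr_{n_0})\mapsto\tfrac{1}{n_0}L_{n_0}(\cdot)\in C(J,\R)$, hence compact image and equicontinuity). You are simply more explicit than the paper about the split into large and small $n$ and the final gluing step.
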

As an immediate consequence, we get the following:
\begin{lemma}\label{l.equim1m2}
For any given sequence of distributions $\{\mu_1, \mu_2, \ldots\}\subset \mathcal{K}^\mathbb{N}$, the family of functions $\{\frac{1}{m_2-m_1}L_{[m_1, m_2]}(a)\}_{0\le m_1<m_2}$ is equicontinuous.
\end{lemma}

We are now ready to prove half of the first part ({\it Regular upper limit}) of Theorem~\ref{t:product}. Namely, we have the following
\begin{prop}\label{p.upper}
For a.e. $\omega\in \Omega$ and \emph{any} $a\in J$ one has $$
\limsup_{n\to\infty} \frac{1}{n} (\log \|T_{n,a, \omega} \| - L_n(a)) \le 0.
$$
\end{prop}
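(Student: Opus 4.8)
The plan is to deduce Proposition~\ref{p.upper} from the parametric large deviations estimate (Theorem~\ref{t.2param}) together with the equicontinuity of $\{\frac1n L_n(a)\}$ (Lemma~\ref{l:equicontinuous}) via a net (discretization) argument over the parameter interval $J$. The issue that Theorem~\ref{t.2param} only gives control of $\log\|T_{n,a,\omega}\|-L_n(a)$ for a \emph{fixed} $a$, so one cannot directly apply Borel--Cantelli to get the bound simultaneously for all $a\in J$ — an uncountable union of exceptional events. The resolution is to use a sequence of finite $\eps$-nets: since the exceptional probability $e^{-\delta n}$ decays exponentially in $n$ while we only need polynomially (or even sub-exponentially) many parameter points, a single net of the right size handles all parameters via equicontinuity.

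In detail, I would first fix $\eps>0$. By Lemma~\ref{l:equicontinuous}, the functions $\frac1n L_n(a)$ are equicontinuous, so there is $\rho=\rho(\eps)>0$ such that $|a-a'|<\rho$ implies $|L_n(a)-L_n(a')|<\tfrac{\eps}{3}n$ for all $n$. Also, by $C^1$-boundedness \ref{B:C1}, $\frac1n\log\|T_{n,a,\omega}\|$ is (uniformly in $\omega$ and $n$) Lipschitz, or at least equicontinuous, in $a$: indeed $\|T_{n,a,\omega}\|/\|T_{n,a',\omega}\|$ is controlled by $\prod_j \|A_j(a)A_j(a')^{-1}\|$, and one can make $|\log\|T_{n,a,\omega}\|-\log\|T_{n,a',\omega}\||<\tfrac{\eps}{3}n$ once $|a-a'|$ is small enough, say $<\rho'$. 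Let $\rho_0=\min(\rho,\rho')$, and for each $n$ let $\Lambda_n\subset J$ be a $\rho_0$-net of cardinality at most $C/\rho_0$ (a fixed number, independent of $n$). By Theorem~\ref{t.2param}, for each $a\in\Lambda_n$, $\Prob\{\log\|T_{n,a,\omega}\|-L_n(a)>\tfrac{\eps}{3}n\}<e^{-\delta n}$, so by the union bound $\Prob\{\exists a\in\Lambda_n:\ \log\|T_{n,a,\omega}\|-L_n(a)>\tfrac{\eps}{3}n\}<\frac{C}{\rho_0}e^{-\delta n}$, which is summable in $n$. Borel--Cantelli then gives: a.s., for all $n$ large, $\log\|T_{n,a,\omega}\|-L_n(a)\le\tfrac{\eps}{3}n$ for every $a\in\Lambda_n$.

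Now I would propagate this to arbitrary $a\in J$: given such $a$, pick $a'\in\Lambda_n$ with $|a-a'|<\rho_0$; then
$$
\log\|T_{n,a,\omega}\|-L_n(a)\le \bigl(\log\|T_{n,a',\omega}\|-L_n(a')\bigr)+|\log\|T_{n,a,\omega}\|-\log\|T_{n,a',\omega}\||+|L_n(a')-L_n(a)| < \tfrac{\eps}{3}n+\tfrac{\eps}{3}n+\tfrac{\eps}{3}n=\eps n
$$
for all $n$ large, a.s. Hence $\limsup_n \frac1n(\log\|T_{n,a,\omega}\|-L_n(a))\le\eps$ simultaneously for all $a\in J$, on a full-measure event $\Omega_\eps$. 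Finally, intersecting $\Omega_{1/k}$ over $k\in\N$ gives a single full-measure event on which $\limsup_n \frac1n(\log\|T_{n,a,\omega}\|-L_n(a))\le 0$ for every $a\in J$, as claimed.

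The main obstacle — really the only subtlety — is making sure the two ``equicontinuity in $a$'' inputs (for $L_n$ and for the random $\log\|T_{n,\cdot,\omega}\|$) are genuinely \emph{uniform in $n$} with the same modulus, so that a net of $n$-independent size suffices and the union bound stays summable; both follow from \ref{B:C1} and Lemma~\ref{l:equicontinuous}, so no new estimate is needed. One should also note the event where the net bound fails can be taken to be in the tail $\sigma$-algebra up to finitely many coordinates, but this is not even necessary here since Borel--Cantelli already yields the a.s.\ conclusion directly.
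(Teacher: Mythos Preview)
Your overall architecture---discretize $J$, apply Theorem~\ref{t.2param} on a finite net, invoke Borel--Cantelli, then push to all $a$ by continuity---is the right one, and is also what the paper does. The gap is in the transfer step. You assert that $a\mapsto\tfrac1n\log\|T_{n,a,\omega}\|$ is Lipschitz (``or at least equicontinuous'') uniformly in $n$ and $\omega$, justified by ``$\|T_{n,a,\omega}\|/\|T_{n,a',\omega}\|$ is controlled by $\prod_j\|A_j(a)A_j(a')^{-1}\|$''. That inequality is false: with $B_j:=A_j(a)A_j(a')^{-1}$ and $C_j:=A_j(a')$ it reads $\|B_nC_n\cdots B_1C_1\|\le(\prod_j\|B_j\|)\,\|C_n\cdots C_1\|$, and already for $n=2$ the choice $C_1=\left(\begin{smallmatrix}\lambda&0\\0&\lambda^{-1}\end{smallmatrix}\right)$, $C_2=C_1^{-1}$, $B_1=R_{\pi/2}$, $B_2=R_{\pi/2}^{-1}$ gives left side $\lambda^2$ and right side $1$. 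More to the point, the uniform equicontinuity you want simply does not hold: the very ``exceptional parameter'' phenomenon this paper is about (the cancellation points $a_k$ in Theorem~\ref{t:main}, the $G_\delta$-vanishing set in Theorem~\ref{t:product}) means that for typical $\omega$ the map $a\mapsto\tfrac1n\log\|T_{n,a,\omega}\|$ drops from values $\sim h$ to values $\sim 0$ over parameter intervals of length $e^{-\sqrt[4]{n}}$. No $n$-independent modulus of continuity, and hence no fixed-size net, can absorb that.

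The paper's repair is to compare $\log\|T_{n,a,\omega}\|$ not with $\log\|T_{n,a',\omega}\|$ but with an equicontinuous \emph{majorant}. Submultiplicativity over blocks of a fixed length $k$ gives $\log\|T_{n,a,\omega}\|\le\sum_{j}\xi_{j,a}$ with $\xi_{j,a}:=\log\|A_{jk}(a)\cdots A_{(j-1)k+1}(a)\|$; for fixed $k$ each $\xi_{j,a}$ is deterministically equicontinuous in $a$, so this upper bound transfers between nearby parameters with the $\tfrac{\eps}{3}n$ error you wanted. The remaining work---showing that at the grid points $\sum_j\xi_{j,b_i}$ is within $O(\eps n)$ of $L_n(b_i)$ with probability $1-e^{-cn}$---is genuinely nontrivial (it needs control of the ``gluing defects'' $R_j$ via Proposition~\ref{p:R-LD-param}) and is exactly what the paper carries out in Proposition~\ref{p.upper.LD}. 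Once that is in hand, your Borel--Cantelli and countable-intersection steps go through verbatim.
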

In a sense it can be considered as a non-stationary dynamical analog of Craig-Simon's result \cite[Theorem~2.3]{CS} on Schr\"odinger cocycles. Its stationary counterpart is Proposition~2.1 from~\cite{GK}.

This statement is implied by a large deviation-type bound for products of a given length $n$, that will be useful for us later:
\begin{prop}\label{p.upper.LD}
For any $\beps>0$ there exists $c_2,C_2>0$ and $n_1\in\N$  
such that for any $n>n_1$ with the probability at least $1- C_2 \exp(-c_2 n)$ the following statement holds: for any $a\in J$ one has
\begin{equation}\label{eq:m-all}
\log \|T_{n,a,\bo}\| - L_{n}(a) \le n\beps.
\end{equation}
%
%
\end{prop}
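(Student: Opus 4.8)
The plan is to prove Proposition~\ref{p.upper.LD} (from which Proposition~\ref{p.upper} follows by Borel--Cantelli, picking a sequence $\beps \downarrow 0$), and the key is to upgrade the pointwise-in-$a$ large deviation bound of Theorem~\ref{t.2param} to a bound uniform over all $a \in J$ by exploiting the subexponential number of parameter intervals together with the $C^1$-boundedness of the factors. First I would fix $\beps>0$, set $\eps := \beps/3$, and apply Theorem~\ref{t.2param} with this $\eps$ to obtain $\delta>0$ and $n_1$ such that for all $n > n_1$ and every $a \in J$ one has $\Prob\{ \log \|T_{n,a,\omega}\| - L_n(a) > \eps n\} < e^{-\delta n}$. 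Now partition $J$ into $N := \lceil e^{n}/1 \rceil$ — more precisely into roughly $e^{\delta n /2}$ equal subintervals of length $|J| \cdot e^{-\delta n/2}$ (any subexponential-or-moderately-exponential count with rate strictly below $\delta$ works; one can even be crude and use $N = \lceil C e^{\delta n / 2}\rceil$), with endpoints $b_-=c_0<c_1<\dots<c_N=b_+$. Taking a union bound, with probability at least $1 - N e^{-\delta n} \ge 1 - C_2 e^{-(\delta/2) n}$ we have $\log\|T_{n,c_j,\omega}\| - L_n(c_j) \le \eps n$ simultaneously for all endpoints $c_j$.

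Next I would transfer the control from the endpoints to the whole intervals. By assumption~\ref{B:C1}, every factor $A_i(\cdot)$ has $C^1$-norm at most $\Cn$, and $\|A_i(a)\|, \|A_i(a)^{-1}\| \le \Cn$; hence for $a, a'$ in the same subinterval $[c_{j-1}, c_j]$, a standard telescoping estimate gives
$$
\bigl| \log\|T_{n,a,\omega}\| - \log\|T_{n,a',\omega}\| \bigr| \le n \cdot \Cn^{2} \cdot |a - a'| \le n \cdot \Cn^{2} \cdot |J| \cdot e^{-\delta n/2},
$$
which is $\le \eps n$ for $n$ large (in fact it is $o(n)$, uniformly). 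The same telescoping applied to the deterministic quantity $L_n$ — using $L_n(a) = \E \log\|T_{n,a,\omega}\|$ and the pointwise bound just displayed, or directly Lemma~\ref{l:equicontinuous} — gives $|L_n(a) - L_n(a')| \le \eps n$ for $a, a'$ in a common subinterval, for all large $n$. Combining the three estimates: for any $a \in J$, choosing the endpoint $c_j$ of its subinterval,
$$
\log\|T_{n,a,\omega}\| - L_n(a) \le \bigl(\log\|T_{n,c_j,\omega}\| - L_n(c_j)\bigr) + \bigl|\log\|T_{n,a,\omega}\| - \log\|T_{n,c_j,\omega}\|\bigr| + |L_n(c_j) - L_n(a)| \le 3\eps n = \beps n,
$$
on the good event of probability $\ge 1 - C_2 e^{-c_2 n}$ with $c_2 = \delta/2$. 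This proves Proposition~\ref{p.upper.LD}.

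To deduce Proposition~\ref{p.upper}: apply Proposition~\ref{p.upper.LD} with $\beps = 1/\ell$ for each $\ell \in \N$; the probabilities $C_2(\ell) e^{-c_2(\ell) n}$ are summable in $n$, so Borel--Cantelli gives that for a.e. $\omega$ and every $\ell$, the bound $\log\|T_{n,a,\omega}\| - L_n(a) \le n/\ell$ holds for all $a \in J$ and all $n$ large enough (depending on $\ell$ and $\omega$ but not on $a$). Hence $\limsup_{n} \frac1n(\log\|T_{n,a,\omega}\| - L_n(a)) \le 1/\ell$ for every $a$ and every $\ell$, which is the claim.

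The only genuinely delicate point — and the one I would scrutinize — is the interplay between the number of subintervals $N$ and the deviation rate $\delta$: $N$ must be large enough that the $C^1$-distortion term $n \Cn^2 |J| / N$ is $o(n)$ (indeed even $\le \eps n$ suffices and is automatic once $N \to \infty$), yet small enough that the union bound $N e^{-\delta n}$ still decays exponentially. Both requirements are easily met simultaneously — e.g. $N = \lceil e^{\delta n/2} \rceil$ — so there is no real obstruction; the one thing to be careful about is that $\delta$ here is the constant furnished by Theorem~\ref{t.2param} for the specific value $\eps = \beps/3$, and it is genuinely uniform in $a$ and in the sequence $\{\mgr_n\}$, which is exactly what Theorem~\ref{t.2param} guarantees.
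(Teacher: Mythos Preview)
Your argument has a genuine gap in the telescoping estimate. You claim
\[
\bigl| \log\|T_{n,a,\omega}\| - \log\|T_{n,a',\omega}\| \bigr| \le n \cdot \Cn^{2} \cdot |a - a'|,
\]
but a telescoping bound on the \emph{product} $T_{n,a,\omega}=A_n(a)\cdots A_1(a)$ only yields
\[
\|\partial_a T_{n,a,\omega}\| \le \sum_{i=1}^n \|A_n(a)\|\cdots\|\partial_a A_i(a)\|\cdots\|A_1(a)\| \le n\,\Cn^{\,n},
\]
hence (using $\|T_{n,a,\omega}\|\ge 1$) the Lipschitz constant of $a\mapsto \log\|T_{n,a,\omega}\|$ is at most $n\Cn^{\,n}$, which is \emph{exponential} in $n$, not linear. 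This is not just a crude bound: for products of $\SL(2,\R)$ matrices the log-norm genuinely can drop by order $n$ over a parameter window of width $e^{-cn}$ (think of Dirichlet eigenvalues in the Schr\"odinger setting), so the exponential Lipschitz constant is sharp in general.

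This breaks the balance you describe at the end. To make the distortion term $n\Cn^{\,n}|J|/N$ at most $\eps n$ you would need $N\gtrsim e^{(\log\Cn)n}$ subintervals, but then the union bound $N e^{-\delta n}$ no longer decays, since the large-deviation rate $\delta=\delta(\eps)$ from Theorem~\ref{t.2param} is typically much smaller than $\log\Cn$. No choice of $N$ makes both constraints hold simultaneously.

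The paper circumvents this by \emph{not} comparing $\log\|T_{n,a,\omega}\|$ directly across parameters. Instead it groups the factors into blocks of fixed length $k$, writes $\xi_{j,a}=\log\|B_j(a)\|$ with $B_j$ a product of $k$ matrices, and uses the submultiplicative upper bound $\log\|T_{n,a,\omega}\|\le \sum_{j}\xi_{j,a}$. Each $\xi_{j,a}$ has a Lipschitz constant depending only on $k$ and $\Cn$, so the sum has Lipschitz constant \emph{linear} in $n$; a \emph{fixed} (independent of $n$) finite grid then suffices. The price is that $\sum_j\xi_{j,a}$ is only an upper bound for $\log\|T_{n,a,\omega}\|$, and one must separately show that its expectation is close to $L_n(a)$; this is where Proposition~\ref{p:R-LD-param} enters.
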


Deducing Proposition~\ref{p.upper} from Proposition~\ref{p.upper.LD} is quite straightforward:

\begin{proof}[Proof of Proposition~\ref{p.upper}]
Applying Borel--Cantelli Lemma, we notice that for every $\beps>0$ and $a\in J$ the inequality~\eqref{eq:m-all} almost surely takes place for all sufficiently large~$n$. Hence, for every $\beps>0$ and $a\in J$ one has almost surely
$$
\limsup_{n\to\infty} \frac{1}{n} (\log \|T_{n,a, \omega} \| - L_n(a)) \le \beps.
$$
Taking a countable family of $\beps$'s, tending to~0, and intersecting the corresponding events, we obtain the desired conclusion.
\end{proof}

\begin{proof}[Proof of Proposition~\ref{p.upper.LD}]
Let $\eps>0$ be given; choose and fix $\eps^*,k,\delta_J$ as in the proof of Lemma~\ref{l:equicontinuous}. Take points
$\{b_1,\dots, b_N\}\subset J$, dividing $J$ into intervals of length less than~$\delta_J$.
The number $N'$ of these intervals, $N'>\frac{|J|}{\delta_J}$, does not depend on the number $m$ of summands; meanwhile, independent random variables $\xi_{j,a}= \log \|B_j(a)\|$ satisfy uniform upper bound
\[
|\xi_{j,a}| \le k \log \Cn
\]
and hence uniform large deviation estimates. That is, there exists $c_2>0$ such that for every $a\in J$ the probability of the event
\[
|\xi_{1,a}+\dots+\xi_{m,a}-\E(\xi_{1,a}+\dots+\xi_{m,a})| > \frac{\eps}{5} n
\]
does not exceed $e^{-c_2 n}$. The number $N$ of values $b_i$ does not depend on $n$, hence, the probability of the event
\begin{equation}\label{eq:a-i-N}
\forall i=1,\dots,N : \quad  |\xi_{1,b_i}+\dots+\xi_{m,b_i}-\E(\xi_{1,b_i}+\dots+\xi_{m,b_i})| \le \frac{\eps}{5} n
\end{equation}
is at least $1-N'e^{-c_2 n}$. 

Next, taking the expectation of the left hand side of~\eqref{eq:T-xi} and (as before) taking into account the uniform upper bound $\frac{1}{n}\log\|T_{n,a, \omega}\|\le \log \Cn$ to control the expectation where~\eqref{eq:xi-L} does not hold, we get that for all sufficiently large $n$ for all $i=1,\dots, N'$
$$
\frac{1}{n} \left| L_n(b_i) -\E(\xi_{1,b_i}+\dots+\xi_{m,b_i}) \right| < \frac{2\eps}{5}.
$$
Joining it with~\eqref{eq:a-i-N}, when the latter holds, we get that for all $i=1,\dots, N'$
$$
\frac{1}{n} \left| L_n(b_i) -(\xi_{1,b_i}+\dots+\xi_{m,b_i}) \right| < \frac{3\eps}{5}.
$$

Now, take any $a\in J$, and choose $i$ such that $b_i$ is $\delta_J$-close to~$a$. Then we have
$$
\log \|T_{n,a, \omega}\| \le \xi_{1,a}+\dots + \xi_{m,a} \le (\xi_{1,b_i}+\dots + \xi_{m,b_i}) + \frac{\eps}{5} m
$$
due to the equicontinuity of $\xi_{j,a}$'s and the choice of~$\delta_J$.
Joining it with the previous inequality, we get
$$
\frac{1}{n}\log \|T_{n,a, \omega}\| \le \frac{1}{n} L_{n}(b_i) + \frac{4\eps}{5}
$$
Finally, we have $\frac{1}{n} L_n(a) \ge \frac{1}{n} L_n(b_i)-\eps$, and thus
$$
\frac{1}{n}\log \|T_{n,a, \omega}\| \le \frac{1}{n} L_{n}(a) + \frac{9}{5} \eps
$$
holds with the probability is at least $1-N'e^{-c_2 n}$.

We have obtained the desired upper bound for $\beps=\frac{9}{5}\eps$.
As $\eps>0$ was arbitrary, Proposition~\ref{p.upper.LD} follows.
\end{proof}

\begin{coro}\label{c:upper-finite-m}
For any $\beps>0$ there exists $c_3>0$ and $n_1\in\N$ such that for any $n>n_1$ with the probability at least $1- \exp(-c_3 n)$ the following statement holds: for any $a\in J$ and any $0\le m'\le m''\le n$ one has
\begin{equation}\label{eq:m-all-a}
\log \|T_{[m',m''],a,\bo}\| - L_{[m',m'']}(a) \le n\beps.
\end{equation}
\end{coro}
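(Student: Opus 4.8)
The plan is to reduce the statement to Proposition~\ref{p.upper.LD} applied to shifted sequences of measures, combined with a union bound over the $O(n^2)$ pairs $(m',m'')$, after disposing of short sub-products by a trivial estimate.

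First I would note that the constants $c_2,C_2,n_1$ furnished by Proposition~\ref{p.upper.LD} can be taken uniform over all sequences $\{\mgr_n\}\in\mK^{\N}$: its proof uses only the uniform bound $|\xi_{j,a}|\le k\log\Cn$, the ensuing uniform Hoeffding-type large deviation estimate for $\xi_{1,a}+\dots+\xi_{m,a}$, the uniform equicontinuity of the $\xi_{j,a}$'s coming from~\ref{B:C1}, the deterministic bound $\frac1n\log\|T_{n,a,\omega}\|\le\log\Cn$, and Proposition~\ref{p:R-LD-param} (which is uniform in $a$ and, by the same compactness argument as in the remark following Lemma~\ref{l:equicontinuous}, uniform over sequences). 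Consequently, for every $m'\ge0$ the conclusion of Proposition~\ref{p.upper.LD} applies verbatim, with the same constants, to the product $T_{[m',m''],a,\omega}=A_{m''}(a)\dots A_{m'+1}(a)$ viewed as a product of $m''-m'$ independent matrices distributed by $\mgr_{m'+1},\dots,\mgr_{m''}$.

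Fix $\beps>0$ and apply the above with this $\beps$ to get $c_2,C_2,n_1$. Choose $\eps_1>0$ with $\eps_1\log\Cn<\beps$. For a sub-product with $m''-m'\le\eps_1 n$ I would use the deterministic estimate: since $\|A(a)\|\le\Cn$ for every matrix from the support of any measure in $\mK$, we get $\log\|T_{[m',m''],a,\omega}\|\le(m''-m')\log\Cn\le\eps_1 n\log\Cn<\beps n$, while $L_{[m',m'']}(a)\ge0$ (every matrix in $\SL(2,\R)$ has operator norm at least $1$), so~\eqref{eq:m-all-a} holds with probability~$1$ for all such pairs and all $a\in J$. For a sub-product with $m''-m'>\eps_1 n$, provided $n$ is large enough that $\eps_1 n>n_1$, Proposition~\ref{p.upper.LD} applied to the shifted sequence yields that with probability at least $1-C_2\exp(-c_2(m''-m'))\ge1-C_2\exp(-c_2\eps_1 n)$ one has $\log\|T_{[m',m''],a,\omega}\|-L_{[m',m'']}(a)\le(m''-m')\beps\le n\beps$ for all $a\in J$. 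Taking the union bound over the at most $(n+1)(n+2)/2\le n^2$ pairs $(m',m'')$, the probability that~\eqref{eq:m-all-a} fails for some long pair is at most $n^2 C_2\exp(-c_2\eps_1 n)$, which for any $c_3<c_2\eps_1$ is below $\exp(-c_3 n)$ once $n$ is large. Combining the two cases gives the corollary.

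The only point requiring genuine care is the uniformity of the constants in Proposition~\ref{p.upper.LD} over all sequences in $\mK^{\N}$ — needed so that the estimate survives the shift by $m'$ and the union bound over $\sim n^2$ interval choices — together with the observation that short sub-products, for which the large-deviation bound is vacuous, must be controlled deterministically; both are handled as above, and the rest is a routine union bound, so I do not expect any serious obstacle.
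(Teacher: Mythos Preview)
Your proposal is correct and follows essentially the same approach as the paper: split into short sub-products (handled by the deterministic bound $\log\|T_{[m',m'']}\|\le(m''-m')\log\Cn$) and long sub-products (handled by Proposition~\ref{p.upper.LD} applied to the shifted sequence), then take a union bound over the $O(n^2)$ pairs. The paper uses the threshold $m''-m'=n\beps/\log\Cn$, which is exactly your $\eps_1 n$ with $\eps_1=\beps/\log\Cn$; your explicit discussion of the uniformity of the constants $c_2,C_2,n_1$ over all sequences in $\mK^{\N}$ is a point the paper uses but leaves implicit.
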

\begin{proof}
If $m''-m'< n \frac{\beps}{\log \Cn}$, the inequality~\eqref{eq:m-all-a} holds automatically due to the upper bound on norms of $A_i(a)$'s.

If $m''-m'\ge n \frac{\beps}{\log \Cn}$, the probability of the the corresponding event
\[
\log \|T_{[m',m''],a,\bo}\| - L_{[m',m'']}(a) \ge n\beps
\]
is bounded from above by
\[
C_2 \exp (-c_2 (m''-m')) \le C_2 \exp\left(-c_2  \frac{\beps}{\log \Cn} \cdot n\right)
\]
 due to Proposition~\ref{p.upper.LD}. There are less than $n^2$ such events, and choosing sufficiently small $c_3>0$ allows to ensure the upper bound
 \[
n^2\cdot  C_2 \exp\left(-c_2  \frac{\beps}{\log \Cn} \cdot n\right) < \exp(-c_3 n)
 \]
 for all sufficiently large $n$.
\end{proof}

\subsection{Proof of parametric non-stationary Furstenberg Theorem via parameter discretization}\label{ss.prrofofparam}

Here we derive Theorem~\ref{t:product} (parametric non-stationary Furstenberg Theorem) from Theorem~\ref{t:main} (on properties of finite products of random matrices). The proof is parallel to the content of the Section 3 from \cite{GK}.

\begin{proof}[Proof of~Theorem~\ref{t:product}] Combining Borel-Cantelli Lemma with Theorem~\ref{t:main} we observe that for any $\eps>0$ almost surely there exists $n_0=n_0(\eps)$ such that for any $n\ge n_0$ there are $M_n\in \mathbb{N}$ and exceptional intervals $J_{i_1, n}, J_{i_2, n}, \ldots J_{i_{M_n}, n}$ such that the properties {\bf I--IV} from Theorem \ref{t:main} hold. Notice that comparing to the notation used in Theorem \ref{t:main} we add $n$ as an index to emphasize the dependence of these objects on $n$. Let us also define
$$
V_{n',\eps}:= \bigcup_{n\ge n'} \, \bigcup_{k=1,\dots, M_n} J_{i_k,n},
$$
and
$$H_{\eps}=\bigcap_{n'\ge n_0(\eps)} V_{n',\eps}.$$

{\bf Regular upper limit:} Due to Proposition \ref{p.upper} we only need to show that almost surely for all $a\in J$ we have
\begin{equation}\label{e.uplim}
\limsup_{n\to\infty} \frac{1}{n} \left(\log \|T_{n,a, \omega} \| - L_n(a)\right) \ge 0.
\end{equation}
If a given $a\in J$ does not belong to $H_{\eps}$, then it does not belong to exceptional intervals $J_{i_k,n}$ for all sufficiently large $n$. Therefore due to property~\ref{i:m2} from Theorem \ref{t:main} for all sufficiently large $n$ we have  $\log \|T_{n,a,\omega} \|-L_n(a)\ge -\eps n$, or $\frac{1}{n}(\log \|T_{n,a,\omega} \|-L_n(a))\ge -\eps.$ Hence
\begin{equation}\label{e.n11}
\limsup_{n\to\infty} \frac{1}{n} \left(\log \|T_{n,a, \omega} \| - L_n(a)\right) \ge -\eps.
\end{equation}
If $a\in H_{\eps}$, there is an arbitrarily large $n$ such that $a\in J_{i_k,n}$ for some exceptional interval $J_{i_k,n}$. Consider the corresponding value $m_{k,n}$ and notice that the property~\ref{i:m3} from Theorem \ref{t:main} implies the following. If $\frac{m_{k,n}}{n}>\sqrt{\eps}$, then $\log \|T_{m_{k,n},a,\omega} \|-L_{m_{k,n}}(a)\ge -\eps n$, and hence
\begin{equation}\label{e.n1}
\frac{1}{m_{k,n}}\left(\log \|T_{m_{k,n},a,\omega}\|-L_{m_{k,n}}(a)\right) \ge  -\eps \frac{n}{m_{k,n}}\ge  -\sqrt{\eps}.
\end{equation}
Now, assume that $\frac{m_{k,n}}{n}\le\sqrt{\eps}$. Then, we have
\[
L_n(a) \le L_{[m_{k,n},n]}(a)+L_{m_{k,n}}(a),
\]
and
\[
\log \|T_{n,a,\omega} \| \ge \log\|T_{[m_{k,n},n],a,\omega} \| -\log\|T_{m_k,a,\omega} \|.
\]
Subtracting, we get
\begin{multline*}
    \log\|T_{n,a,\omega} \|-L_{{n}}(a)\ge (\log \|T_{[m_{k,n}, n],a,\omega} \| - L_{[m_{k,n},n],a}) - \\
    - (\log \|T_{m_{k,n},a,\omega} \| - L_{m_{k,n},a}) - 2 L_{m_{k,n},a} \ge
    \\
    \ge
-\eps(n-m_{k,n})-\eps\,m_{k,n} -2L_{m_{k,n}}(a)\ge \\
-\eps n-2L_{m_{k,n}}(a)\ge -\eps n-2m_{k,n}C_{\max},
\end{multline*}
where we denote
\begin{equation}\label{e.defCa}
C_{\max}=\max_{\mgr\in \mathcal{K}, \ A\in \supp \mgr, \,a\in J}\log \|A(a)\|<\infty.
\end{equation}
Notice that $C_{\max}$ is finite due to condition (B2). Hence
\begin{equation}\label{e.n2}
\frac{1}{{n}}\left(\log\|T_{n,a,\omega} \|-L_{{n}}(a)\right)\ge -\eps -2C_{\max}\frac{m_{k,n}}{n}\ge -\eps-2C_{\max}\sqrt{\eps}.
\end{equation}

Therefore, in any case from (\ref{e.n1}) and (\ref{e.n2}) we get
\begin{equation}\label{e.n22}
\limsup_{n\to \infty}\frac{1}{{n}}\left(\log\|T_{n,a,\omega} \|-L_{{n}}(a)\right)\ge -\max(\sqrt{\eps}, \eps+2C_{\max}\sqrt{\eps}).
\end{equation}
Finally, applying (\ref{e.n11}) and (\ref{e.n22}) along a sequence of values of $\eps>0$ that tends to zero, we observe that almost surely  (\ref{e.uplim}) holds, and hence the first claim of Theorem~\ref{t:product} (on regular upper limit) follows.

\vspace{4pt}

{\bf $G_\delta$ vanishing:} Let us recall that the constant $C_{\max}$ given by (\ref{e.defCa}) is an upper bound for all log-norms of all the matrices that can be encountered in the random product. Due to Theorem~\ref{t.LDET2} above (or Theorem~1.4 from \cite{GK22}), there is $h>0$ such that $L_{[m_1, m_2]}(a)\ge h(m_2-m_1)$ for any $m_2>m_1$ and any $a\in J$. For each $n, p\in \mathbb{N}$ introduce the set
$$
W_{n, p}=\left\{a\in J\ \mid \ \text{for some}\ m\ge n\ \text{we have} \ \frac{1}{m}\log\|T_{m, a, \omega}\|<\frac{4\left(({C_{\max}}/{h})+1\right)}{p}\right\}.
$$
We claim that $W_{n,p}$ is open and dense in the interior of the essential set $\Es$ for any $n,p\in \mathbb{N}$. Indeed, it is clear that each set $W_{n,p}$ is open.  Apply Theorem \ref{t:main} for $\eps=\frac{1}{p}$ with sufficiently large $p$; namely, we require $p>10\left(\frac{C_{\max}}{h}+1\right)$. Denote by $\{a_{k,n}\}$ the set of exceptional parameters provided by Property {\bf III} for a given $n\ge n_0$. Since any interval $I\subset \Es$ is not inessential, Property {\bf IV} implies that $\cup_{n\ge n_0}\{a_{k,n}\}$ is dense in $\text{int}\,\Es$. Moreover, since $R_{n, \omega}(I)\to \infty$ as $n\to \infty$, we must have
\[
R_{\left(\frac{1}{({C_{\max}}/{h})+1}-\eps\right)n, \omega}(I)>R_{\frac{1}{2}\frac{n}{({C_{\max}}/{h})+1}, \omega}(I)
\]
for infinitely many large values of $n$. Therefore,  Property {\bf IV} implies that the set of exceptional parameters $\{a_{k,n}\}$ with
\[
\frac{m_{k,n}}{n}\in \left[\frac{1}{2}\frac{1}{({C_{\max}}/{h})+1}, \frac{1}{({C_{\max}}/{h})+1}\right]
\]
is also dense in $\text{int}\,\Es$. For any such parameter there exists $\tilde m_{k,n}\in [m_{k,n}, n]$ such that
\[
|L_{m_{k,n}}(a)-L_{[m_{k,n}, \tilde m_{k,n}]}(a)|\le C_{\max};
\]
therefore, Property {\bf IV} implies that
\[
\frac{1}{\tilde m_{k,n}}\log \|T_{\tilde m_{k,n}, a, \omega}\|\le \frac{C_{\max}+\eps n}{\tilde m_{k,n}}< 2\eps\frac{n}{m_{k,n}}\le 4\left(({C_{\max}}/{h})+1\right)\eps=\frac{4\left(({C_{\max}}/{h})+1\right)}{p}.
\]
This implies that  $W_{n,p}$ is dense in $\text{int}\,\Es$. Hence, the intersection $\bigcap_{n,p=1}^\infty (W_{n,p}\cap\, \text{int}\,\Es)$ is a dense $G_\delta$-subset of $\text{int}\,\Es$, and for any $a\in \bigcap_{n,p=1}^\infty W_{n,p}$ we have
$$
\liminf_{n\to \infty}\frac{1}{n}\log \|T_{n, a, \omega}\|=0.
$$

{\bf Hausdorff dimension:} First of all, notice that $H_{\eps}\subseteq J$ has zero Hausdorff dimension. Indeed, $H_{\eps}$ is contained in $V_{n',\eps}$, which is covered by $\left\{J_{i_k, n}\right\}_{n\ge n', \, k\le M_n}$. Property {\bf IV} from Theorem \ref{t:main} implies that $M_n$ cannot grow faster than a linear function in $n$. Taking into account Property {\bf I} from Theorem \ref{t:main}, $d$-volume of this cover can be estimated as follows:
$$
\sum_{n\ge n'} M_n\left(\frac{|J|}{N(n)}\right)^d\le  \sum_{n\ge n'} \const \cdot n \frac{|J|^d}{N(n)^d} \le \const' \sum_{n\ge n'} n \exp(-d \delta_0 \sqrt[4]{n}).
$$
Therefore it tends to zero as $n'$ tends to $\infty$. Since this holds for any $d>0$, we have $\text{dim}_H\, H_\eps=0$.

If $a\not\in H_\eps$, then due to Property \ref{i:m2} from Theorem \ref{t:main} for all sufficiently large $n$ we have $\frac{1}{n}\left(\log \|T_{n,a,\omega} \|-L_n(a)\right)\ge -\eps,$ hence
$$
\liminf_{n\to\infty} \frac{1}{n}\left(\log \|T_{n,a,\omega} \|-L_n(a)\right)\ge -\eps.
$$
Taking a countable union of sets $H_\eps$ over a sequence of values of $\eps>0$ that tend to zero, we get a set of zero Hausdorff dimension that contains all values of $a\in J$ such that
$$
\liminf_{n\to\infty} \frac{1}{n}\left(\log \|T_{n,a,\omega} \|-L_n(a)\right)< 0.
$$
This proves the last part of Theorem \ref{t:product}.
\end{proof}

\section{Finite products: proof of Theorem \ref{t:main}}\label{s.proofof114}

\subsection{Finite products of random matrices}\label{s:finite-products}

Here we prove Theorem~\ref{t:main}. This theorem is a non-stationary version of~\cite[Theorem~1.5]{GK}, and its proof closely follows~\cite{GK}.

First, let us remind some notation from Section~\ref{ss.introparamfurst}. Together with the initial linear dynamics of $\SL(2,\R)$-matrices $A(a)$, $a\in J$, we consider
their projectivizations that act on the circle of directions $\Sc\cong \mathbb{RP}^1$, and lift this action to the action on
the real line $\R$ for which $\Sc=\R/\Z$:
let
$$
f_{A,a}:\Sc\to \Sc
$$
be the map induced by $A(a):\mathbb{R}^2\to \mathbb{R}^2$, and let
$$
\tf_{A,a}:\mathbb{R}\to \mathbb{R}
$$
be the lift of $f_{A, a}:\mathbb{S}^1\to \mathbb{S}^1$.
The lifts $\tf_{A, a}$ can be chosen continuous in $a\in J$ and so that $\tf_{A, b_-}(0)\in [0,1)$. Also, given $\omega=(A_1, A_2, \ldots)\in \Omega=\mA^{\mathbb{N}}$, denote by
$$
f_{n, a, \omega}:\Sc\to \Sc
$$
the map induced by $T_{n,a, \omega}:\mathbb{R}^2\to \mathbb{R}^2$, $T_{n,a, \omega}=A_n(a)\cdot \ldots\cdot A_1(a)$,  and by
$$
\tf_{n, a,  \omega}:\mathbb{R}\to \mathbb{R}
$$
 the lift of $f_{n, a, \omega}:\mathbb{S}^1\to \mathbb{S}^1$, $\tf_{n,a, \omega}=\tf_{A_n, a}\circ \ldots \circ \tf_{A_1, a}$.

 For any fixed value of parameter $a\in J$, the (exponential)
growth of norms of $T_{n,\omega, a}$ is related to the (exponential) contraction on the circle of the projectivized dynamics.
Namely, standard easy computation shows that for a unit vector $v_0$ in the direction given by the point $x_0$, one has
\begin{equation}\label{eq:der-norm}
f'_{n,a,\omega} (x_0) = \frac{1}{|T_{n,a,\omega}(v_0)|^2}.
\end{equation}

Fix some point $x_0\in \Sc$, for example, the point that corresponds to the vector $\left( \begin{smallmatrix} 1 \\ 0 \end{smallmatrix} \right)$. Denote by $\tx_0\in [0,1)$ its lift to $\mathbb{R}^1$.
Recall that the interval $J=[b_-, b_+]$ was divided into $N=[\exp(\sqrt[4]{n})]$ equal intervals $J_1,\dots,J_N$ that were denoted by $J_i=[b_{i-1},b_{i}]$, $i=1, \ldots, N$.

Let $\tx_{m,i}$ be the image of $\tx_0$ after $m$ iterations of the lifted maps that correspond to the value of the parameter $b_i$,
$$
\tx_{m,i}:=\tf_{m,b_i,\omega}(\tx_0)
$$
(we omit here the explicit indication of the dependence on the $\omega$), and
let
\begin{equation}\label{e.Xmi}
X_{m,i}:=[\tx_{m,i-1},\tx_{m,i}]
\end{equation}
be the interval that is spanned by $m$-th (random) image of the initial point $\tx_0$ while the
parameter~$a$ varies in~$J_i=[b_{i-1},b_{i}]$.

The main step in the proof of Theorem~\ref{t:main} is the following proposition, describing possible types of behavior for lengths of the intervals $X_{m,i}$. It is a word-for-word analogue of Proposition~4.1 of \cite{GK}, that still holds in the non-stationary setting:
\begin{prop}[Types of the behavior]\label{p:classes}
For any $\eps'>0$ there exists $c_1>0$ such that for any sufficiently large $n$
with the probability at least $1-\exp(-c_1 \sqrt[4]{n})$ the following  holds. For each $i=1,\dots,N$ the lengths $|X_{m,i}|$ behave in one of the three possible ways:
\begin{itemize}
\item[\textbf{$\bullet$}]\textbf{(Small intervals)} The lengths $|X_{m,i}|$ do not exceed $\eps'$ for all $m=1,\dots,n$;
\item[\textbf{$\bullet$}]\textbf{(Opinion-changers)} There is $m_0$ such that $|X_{m_0, i}|>\eps'$, and
$$
|X_{m,i}|<\eps' \quad \text{if } m<m_0 \ \text{ or } \  m>m_0+\eps'n;
$$
\item[\textbf{$\bullet$}]\textbf{(Jump intervals)} There is $m_0$ such that $|X_{m_0,i}|>\eps'$, and
$$
|X_{m,i}|<\eps' \quad \text{if } m<m_0,
$$
$$
1<|X_{m,i}|<1+\eps' \quad \text{if } m>m_0+\eps'n.
$$
\end{itemize}
\end{prop}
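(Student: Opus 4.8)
The plan is to follow the architecture of \cite[Proposition~4.1]{GK}, replacing every use of the (stationary) Lyapunov exponent $\lambda(a)$ by the non-random sequence $\tfrac1n L_n(a)$ and every use of continuity of $\lambda$ by the equicontinuity of $\{\tfrac1n L_n(a)\}$ established in Lemma~\ref{l:equicontinuous} and Lemma~\ref{l.equim1m2}. The key analytic inputs are: (i) the derivative formula \eqref{eq:der-norm}, which turns growth of $\|T_{m,a,\omega}(v_0)\|$ into contraction of $f'_{m,a,\omega}$ at $x_0$; (ii) the parametric large deviation estimate, Theorem~\ref{t.2param}, together with the uniform-in-$a$, uniform-in-$(m',m'')$ upper bound of Corollary~\ref{c:upper-finite-m}; and (iii) the monotonicity assumption \ref{B:Monotonicity}, which guarantees $\tx_{m,i-1}<\tx_{m,i}$ for all $m$, so that $X_{m,i}$ is genuinely an interval whose length is controlled by $\tf'_{m,a,\omega}$ via the mean value theorem, $|X_{m,i}|=\tf'_{m,\xi,\omega}(\tx_0)\cdot|J_i|$ for some $\xi\in J_i$.

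First I would fix $\eps'>0$, choose an auxiliary scale $\eps''\ll\eps'$, and invoke Corollary~\ref{c:upper-finite-m} with $\beps$ a small multiple of $\eps''$: on an event of probability $\ge 1-\exp(-c_3 n)$ we have, simultaneously for all $a\in J$ and all $0\le m'\le m''\le n$, the upper bound $\log\|T_{[m',m''],a,\omega}\|\le L_{[m',m'']}(a)+\beps n$. Combined with the lower bound $L_{[m',m'']}(a)\ge h(m''-m')$ from \eqref{eq:L-nh}, and with the distortion-control/equicontinuity tools, this lets me compare the behaviour of $|X_{m,i}|$ at an endpoint $b_i$ with its behaviour at nearby parameters. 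The dichotomy to extract is: for each interval $J_i$, either $x_0$ stays (projectively) in a region where $T_{m,\cdot,\omega}$ is contracting uniformly on $J_i$ for all $m\le n$ — giving \textbf{small intervals}; or there is a first moment $m_0$ where $|X_{m_0,i}|$ first exceeds $\eps'$. In the latter case, by \ref{B:Monotonicity} the image point is moving around the circle at a definite rate, so within $\eps' n$ further steps either the endpoints $\tx_{m,i-1},\tx_{m,i}$ re-enter a common contracting region (the interval ``changes its opinion'' about the direction it selects — \textbf{opinion-changer}), or $\tx_{m,i}-\tx_{m,i-1}$ crosses $1$, i.e. the image performs a full turn, after which the two endpoints of $X_{m,i}$ sit on top of each other on $\Sc$ and the cocycle $T_{[m_0,m],a,\omega}$ is then uniformly expanding in $a\in J_i$ — a \textbf{jump interval}, with $|X_{m,i}|\in(1,1+\eps')$ pinned by the distortion bound once the turn is completed. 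The probability bookkeeping is the usual one: there are $N=[\exp\sqrt[4]{n}]$ intervals and $\le n$ choices of each cutoff moment, so a union bound over a family of events each of probability $\le\exp(-\delta n)$ (from Theorem~\ref{t.2param}) still leaves probability $\ge 1-\exp(-c_1\sqrt[4]{n})$, which is the claimed bound.

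The genuinely non-stationary part of the argument — the one place where the proof departs from \cite{GK} — is the contraction estimate controlling how fast $x_0$ is attracted back toward the ``good'' direction once $|X_{m,i}|$ has become large; in the stationary case this rests on a fixed invariant (Furstenberg) measure and its uniform contraction properties, which no longer exist here. I expect \emph{this} to be the main obstacle, and the intended resolution is to replace the stationary contraction lemma by a non-stationary one proved separately (the analogue of the estimate announced for Section~\ref{ss.5.6} and Lemma~\ref{l:DC}), using only the uniform compactness of $\mathcal{K}$, assumption \ref{B:Furstenberg} (no common random image measure, hence uniform contraction of the Markov operators associated to any finite block), and the large deviation bound Theorem~\ref{t.2param} to guarantee that the contraction accrued over a block of length $k$ beats any bounded loss incurred at a single step. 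Once this non-stationary contraction estimate is in hand, the rest of the proof is word-for-word the stationary argument of \cite[\S4]{GK} with $\lambda(a)$ systematically replaced by $\tfrac1m L_m(a)$ and ``continuity of $\lambda$'' replaced by ``equicontinuity of $\tfrac1m L_m$''.
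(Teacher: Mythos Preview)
Your proposal is correct and matches the paper's approach: as stated in Remark~\ref{r.proofoftypes} and the sketch following Corollary~\ref{c:l-a1}, the \emph{only} modification needed from \cite[Proposition~4.1]{GK} is the non-stationary contraction estimate (Proposition~\ref{p:phi} and Corollary~\ref{c:l-a1}), after which the argument is verbatim---and your final paragraph identifies this precisely.

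One clarification on your middle paragraph: the actual mechanism is not that monotonicity~\ref{B:Monotonicity} drives the image point around the circle until it ``re-enters a contracting region.'' Rather, once $|X_{m_0,i}|>\eps'$, the $\varphi$-metric contraction of Corollary~\ref{c:l-a1} (converted to high probability via Markov's inequality with $\gamma=\exp(-\sqrt[4]{n})$) forces the \emph{projective} images $x_{m,i-1},x_{m,i}\in\Sc$ to become $\gamma^{1/12}$-close within $O(\sqrt[3]{n})$ further steps; the opinion-changer/jump dichotomy is then simply whether the \emph{lifts} $\tx_{m,i-1},\tx_{m,i}$ differ by approximately $0$ or approximately~$1$. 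The tools you invoke there (Corollary~\ref{c:upper-finite-m}, equicontinuity of $\tfrac1n L_n$) are used in Sections~\ref{ss:norms}--\ref{ss:cancellation} to derive parts~\ref{i:m2}--\ref{i:m4} of Theorem~\ref{t:main} \emph{from} Proposition~\ref{p:classes}, not in the proof of the proposition itself.
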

\begin{remark}\label{r.proofoftypes}
Notice that while the statement of Proposition \ref{p:classes} is a verbatim repetition of \cite[Proposition~4.1]{GK}, one cannot just give a reference to \cite{GK}, since \cite[Proposition~4.1]{GK} was proven in the stationary setting. However, the only part of the proof of \cite[Proposition~4.1]{GK} that has to be modified is the proof of \cite[Corollary~4.25]{GK}, and we prove its non-stationary analog, Corollary~\ref{c:l-a1}, in Section~\ref{ss.5.6}. That proves Proposition \ref{p:classes}.
\end{remark}

\begin{figure}[!h!]
\begin{center}
\includegraphics{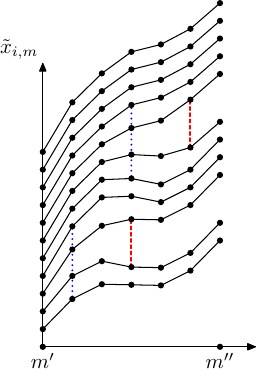}
\end{center}
\caption{Graphs of $\tilde x_{m,i}$ (consecutive iterations are linked), with the occurring suspicious intervals marked with blue (dotted) lines and the jumping ones with red (dashed) lines.}\label{f:jumps-2}
\end{figure}

\begin{figure}[!h!]
\begin{center}
\includegraphics[scale=0.8]{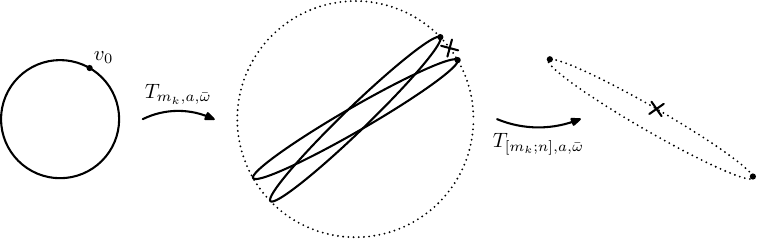}
\end{center}
\caption{Left: a unit circle with a marked point $x_0$. Center: its image after $m_k$ iterations under two different values of parameter $a=b_{i_k-1}$ and $a=b_{i_k}$,
together with a most contracted direction for $T_{[m_k,n],a,{\omega}}$ for some $a\in J_{i_k}$, marked by a cross. Right: final image after $n$ iterations; note that the images of $x_0$ are almost opposite, meaning that they have made a full turn on the projective line of the directions.}\label{f:jumps-1}
\end{figure}

\subsection{Large deviations: convenient versions}\label{ss.5.3}

Here we formulate several consequences of the Large Deviation Theorem (i.e. Theorem~\ref{t.2param})  in the context of random matrix products that will be specifically useful in our setting.

\begin{lemma}\label{l.prelimLD}
For any $\ep>0$ there exists $\zeta_1>0$ such that for all  sufficiently large $n\in \mathbb{N}$  the following holds. For any $a\in J$, any given $0\le m_1<m_2\le n$, and $\tx_0\in \mathbb{R}$ with probability at least $1-\exp(-\zeta_1n)$ one has
\begin{equation}\label{eq:LD-RD}
\log \tf'_{[m_1, m_2],a,\omega}(\tf_{m_1, a, \omega}(\tx_0)) \in U_{\ep n} (-2L_{[m_1, m_2]}(a)).
\end{equation}
\end{lemma}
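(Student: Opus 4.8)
The plan is to translate the statement into a statement about norms of random matrix products and then apply the parametric Large Deviation Estimates, Theorem~\ref{t.2param}. The key identity is the derivative-norm relation \eqref{eq:der-norm}: if $x_0\in\Sc$ corresponds to a unit vector $v_0$, then $f'_{n,a,\omega}(x_0)=|T_{n,a,\omega}(v_0)|^{-2}$. Applying this to the product $T_{[m_1,m_2],a,\omega}=A_{m_2}(a)\cdots A_{m_1+1}(a)$ acting on the unit vector $v_{m_1}$ in the direction of the point $\tf_{m_1,a,\omega}(\tx_0)$, we get
$$
\tf'_{[m_1,m_2],a,\omega}\big(\tf_{m_1,a,\omega}(\tx_0)\big)=\frac{1}{|T_{[m_1,m_2],a,\omega}(v_{m_1})|^2},
$$
so that $\log \tf'_{[m_1,m_2],a,\omega}(\tf_{m_1,a,\omega}(\tx_0)) = -2\log |T_{[m_1,m_2],a,\omega}(v_{m_1})|$. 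Thus \eqref{eq:LD-RD} is equivalent to $\log|T_{[m_1,m_2],a,\omega}(v_{m_1})| \in U_{\ep n/2}(L_{[m_1,m_2]}(a))$.

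The issue to handle is that $v_{m_1}$ is itself random — it depends on $A_1,\dots,A_{m_1}$ — whereas the vector-valued large deviations statement in Theorem~\ref{t.2param} is stated for a \emph{fixed} initial unit vector. The resolution is independence: the factors $A_{m_1+1},\dots,A_{m_2}$ (hence the product $T_{[m_1,m_2],a,\omega}$) are independent of $A_1,\dots,A_{m_1}$ (hence of $v_{m_1}$). So I would condition on the first block $\omega_1:=(A_1,\dots,A_{m_1})$: once $\omega_1$ is fixed, $v_{m_1}=v_{m_1}(\omega_1)$ becomes a deterministic unit vector, and Theorem~\ref{t.2param} applied to the sequence $\mgr_{m_1+1}^a,\dots,\mgr_{m_2}^a$ (which still lies in the relevant compact family, so the uniform $\delta$ depends only on $\ep$, not on $m_1,m_2,a$) gives
$$
\mathbb{P}\Big(\big|\log|T_{[m_1,m_2],a,\omega}(v_{m_1}(\omega_1))| - L_{[m_1,m_2]}(a)\big| > \tfrac{\ep n}{2}\ \Big|\ \omega_1\Big) < e^{-\delta(m_2-m_1)}
$$
for all sufficiently large $m_2-m_1$; integrating over $\omega_1$ removes the conditioning. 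I would also need to note that $L_{[m_1,m_2]}(a)$, the expectation of $\log\|T_{[m_1,m_2],a,\omega}\|$ over the second block alone, is exactly the quantity appearing in Theorem~\ref{t.2param} for that shifted sequence (here the stationary-shift-invariance is replaced simply by the fact that both sides refer to the same block of measures).

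The only remaining gap is the regime where $m_2-m_1$ is not large — say $m_2-m_1 < \eta n$ for a small constant $\eta$ depending on $\ep$. There one cannot invoke the asymptotic estimate directly, but the bound holds trivially: by assumption~\ref{B:C1} (or the constant $C_{\max}$ of \eqref{e.defCa}) both $\log|T_{[m_1,m_2],a,\omega}(v_{m_1})|$ and $L_{[m_1,m_2]}(a)$ lie in $[-C_{\max}(m_2-m_1),\, C_{\max}(m_2-m_1)]$, so their difference is at most $2C_{\max}(m_2-m_1) < 2C_{\max}\eta n \le \ep n$ once $\eta$ is chosen small enough; this deterministic bound costs no probability. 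Taking $\zeta_1 := \min(\delta\eta/2,\ \text{something})>0$ — more precisely choosing $\zeta_1$ so that $e^{-\zeta_1 n}\ge e^{-\delta(m_2-m_1)}$ whenever $m_2-m_1\ge \eta n$ — and combining the two regimes yields the claim uniformly in $a\in J$, in the pair $0\le m_1<m_2\le n$, and in $\tx_0\in\mathbb{R}$ (the dependence on $\tx_0$ enters only through the deterministic vector $v_{m_1}$, for which the estimate is uniform). I expect the conditioning-on-the-first-block argument to be the only genuinely substantive point; the rest is bookkeeping with $C_{\max}$ and the uniform $\delta$ from Theorem~\ref{t.2param}.
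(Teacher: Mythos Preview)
Your proposal is correct and follows essentially the same route as the paper: split into the regime $m_2-m_1<\eta n$ (handled deterministically via the $C_{\max}$ bound) and the regime $m_2-m_1\ge\eta n$ (handled by Theorem~\ref{t.2param} applied to the shifted block), then take $\zeta_1=\eta\delta$. The paper's proof is terser---it invokes Theorem~\ref{t.2param} directly without spelling out the conditioning on $\omega_1=(A_1,\dots,A_{m_1})$ that justifies applying the vector estimate to the random direction $v_{m_1}$---so your explicit treatment of that point is a small improvement in rigor, not a different idea.
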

\begin{remark}
Notice that in the case $m_1=0, m_2=n$ the statement of Lemma~\ref{l.prelimLD} turns into Theorem~\ref{t.2param}. 
\end{remark}
\begin{proof}[Proof of Lemma \ref{l.prelimLD}]
Let us recall that if $C_{\max}$ is defined as in (\ref{e.defCa}), then
$$
L_{[m_1, m_2]}(a)=\E\log\|T_{[m_1, m_2], a, \omega}\|\le (m_2-m_1)C_{\max}.
$$
Also, if $A\in \SL(2, \mathbb{R})$, $v$ is a unit vector, $f$ and $x$ are projectivizations of $A$ and $v$, and $\tf$ and $\tx$ are lifts of $f$ and $x$, then
$$
|\log \tf'(\tx)|=2|\log|Av|\,|\le 2\log\|A\|,
$$
see (\ref{eq:der-norm}).

Set $$\eps^*=\frac{\eps'}{4C_{\max}}.$$
If $m_2-m_1<\eps^*n$, then
\begin{multline*}
\left|\log \tf'_{[m_1,m_2], a, \omega}(\tf_{m_1, a, \omega}(\tx_0))\right|\le \sum_{k=m_1+1}^{m_2}\left|\log\tf'_{A_k, a}(\tf_{{k-1}, a, \omega}(\tx_0))\right|\le \\  (m_2-m_1)\cdot 2 C_{\max}\le (m_2-m_1)\frac{\ep}{2\eps^*}\le \frac{\ep}{2}n,
\end{multline*}
and
$$
2L_{[m_1, m_2]}(a)\le 2(m_2-m_1)C_{\max}=2(m_2-m_1)\frac{\eps'}{4\eps^*}\le \frac{\ep}{2}n.
$$
Therefore,
$$
\log \tf'_{[m_1,m_2], a, \omega}(\tf_{m_1, a, \omega}(\tx_0))\in U_{\ep n}(-2L_{[m_2,m_1]}(a)).
$$
If $\eps^*n\le m_2-m_1\le n$, then by Theorem~\ref{t.2param} 
for some $\zeta>0$  we have
\begin{multline*}
\P\left(\log \tf'_{[m_1,m_2], a, \omega}(\tf_{m_1, a, \omega}(\tx_0))\not\in U_{\ep n}(-2L_{[m_2,m_1]}(a))\right)\le \\
\P\left(\log \tf'_{[m_1,m_2], a, \omega}(\tf_{m_1, a, \omega}(\tx_0))\not\in U_{\ep (m_2-m_1)}(-2L_{[m_2,m_1]}(a))\right)\le \\ e^{-\zeta(m_2-m_1)}\le e^{-\zeta\eps^*n}.
\end{multline*}
Hence, Lemma \ref{l.prelimLD} holds with $\zeta_1=\eps^*\zeta$.
\end{proof}
Let us recall that the interval $J$ is divided into $N=[\exp(\sqrt[4]{n})]$ equal subintervals $J_1,\dots,J_N$ detoted  $J_i=[b_{i-1},b_{i}]$, $i=1, \ldots, N$. With large probability (\ref{eq:LD-RD}) holds simultaneously for all possible $m_1, m_2$ with $0\le m_1<m_2\le n$ and all parameter values that form the grid $\{b_0, b_1, \ldots, b_N\}$. Namely, the following statement holds:
\begin{lemma}\label{l:LD-RD}
For any $\ep>0$ there exists $\zeta_2>0$ such that for all  sufficiently large $n\in \mathbb{N}$ the following holds. For a given $\tx_0\in \mathbb{R}$ with probability at least $1-\exp(-\zeta_2n)$ one has
\begin{equation}\label{eq:LD-RD-2}
\log \tf'_{[m_1, m_2],b_i,\omega}(\tf_{m_1, b_i, \omega}(\tx_0)) \in U_{\ep n} (-2L_{[m_1, m_2]}(b_i)).
\end{equation}
 for all $m_1, m_2$ with $0\le m_1<m_2\le n$ and all $i=0,1,\dots, N$.
\end{lemma}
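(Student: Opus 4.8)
The plan is to deduce Lemma~\ref{l:LD-RD} from Lemma~\ref{l.prelimLD} by a union bound, exploiting the fact that the grid $\{b_0,\dots,b_N\}$ has only $N+1=[\exp(\sqrt[4]{n})]+1$ points, which grows subexponentially in $n$, whereas the failure probability in Lemma~\ref{l.prelimLD} decays exponentially in $n$.

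Fix $\ep>0$ and apply Lemma~\ref{l.prelimLD} with this $\ep$ to obtain $\zeta_1>0$. For every triple $(m_1,m_2,i)$ with $0\le m_1<m_2\le n$ and $0\le i\le N$, the value $b_i$ lies in $J$, so Lemma~\ref{l.prelimLD} applied at the parameter $a=b_i$ (and with the given fixed point $\tx_0$) shows that the ``bad event''
\[
B_{m_1,m_2,i}:=\left\{\log \tf'_{[m_1,m_2],b_i,\omega}(\tf_{m_1,b_i,\omega}(\tx_0))\notin U_{\ep n}\bigl(-2L_{[m_1,m_2]}(b_i)\bigr)\right\}
\]
has probability at most $\exp(-\zeta_1 n)$ for all $n$ larger than some threshold. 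Crucially, this threshold can be chosen uniformly over all triples, since the statement of Lemma~\ref{l.prelimLD} is already uniform in $a\in J$, in the pair $0\le m_1<m_2\le n$, and in $\tx_0$; and the same fixed $\tx_0$ is used throughout, so no quantification over initial points intervenes.

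Now I count: there are fewer than $n^2$ choices of the pair $(m_1,m_2)$ and at most $N+1\le \exp(\sqrt[4]{n})+2$ choices of $i$, so the total number of bad events is at most $n^2\bigl(\exp(\sqrt[4]{n})+2\bigr)\le \exp(2\sqrt[4]{n})$ for all sufficiently large $n$. By the union bound, the probability that some $B_{m_1,m_2,i}$ occurs is at most
\[
n^2\bigl(\exp(\sqrt[4]{n})+2\bigr)\exp(-\zeta_1 n)\le \exp\bigl(2\sqrt[4]{n}-\zeta_1 n\bigr)\le \exp\!\left(-\tfrac{\zeta_1}{2}\,n\right)
\]
for $n$ large enough. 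Setting $\zeta_2:=\zeta_1/2$, on the complementary event — which has probability at least $1-\exp(-\zeta_2 n)$ — the estimate \eqref{eq:LD-RD-2} holds simultaneously for all admissible $m_1,m_2$ and all $i=0,1,\dots,N$, as desired.

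I do not expect any genuine obstacle in this argument: all the analytic substance is already contained in Lemma~\ref{l.prelimLD} (hence in the parametric large deviation estimate of Theorem~\ref{t.2param}), and the present lemma is a soft ``simultaneous'' version obtained for free because the number of parameters on the grid is subexponential in $n$. The only mildly delicate point is bookkeeping: ensuring the ``$n$ sufficiently large'' threshold in Lemma~\ref{l.prelimLD} is uniform across the triples (it is, by the uniform formulation of that lemma) and verifying that $n^2(N+1)$ stays subexponential, which is immediate from $N=[\exp(\sqrt[4]{n})]$.
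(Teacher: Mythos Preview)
Your proof is correct and is essentially identical to the paper's own argument: both apply Lemma~\ref{l.prelimLD} to each triple $(m_1,m_2,i)$ and take a union bound, using that $\frac{n(n+1)}{2}(N+1)$ grows subexponentially since $N=[\exp(\sqrt[4]{n})]$. The only cosmetic difference is that the paper chooses any $\zeta_2<\zeta_1$ rather than specifically $\zeta_2=\zeta_1/2$.
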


\begin{proof}
Let $\zeta_1$ be given by Lemma \ref{l.prelimLD}, and take any positive $\zeta_2<\zeta_1$. For a given $a\in\{b_0,b_1,\dots, b_N\}$ and given $m\in \{1,\dots, n\}$ the event \eqref{eq:LD-RD} holds with probability at least $1-\exp(-\zeta_1 n)$. Intersecting the events \eqref{eq:LD-RD} for all  $a\in\{b_0,b_1,\dots, b_N\}$ and  all $m_1, m_2=0, 1,\dots, n$ with $m_1<m_2$  we observe that  \eqref{eq:LD-RD-2} holds with probability at least
\[
1-\frac{n(n+1)}{2}(N+1)\exp(-\zeta_1 n).
\]
Since $N=[\exp(\sqrt[4]{n})]$ and $\zeta_2<\zeta_1$, we get
$$
1-\frac{n(n+1)}{2}(N+1)\exp(-\zeta_1 n)> 1-\exp(-\zeta_2n)
$$
for all sufficiently large $n$.
\end{proof}

Exactly the same arguments that prove Lemma \ref{l.prelimLD}  and Lemma  \ref{l:LD-RD} provide a very similar but formally different statement:

\begin{lemma}\label{l.TvLm1m2}
For any $\ep>0$ there exists $\zeta_2>0$ such that for all  sufficiently large $n\in \mathbb{N}$ the following holds. For any given unit vector $v\in \mathbb{R}^2$, $\|v\|=1$, with probability at least $1-\exp(-\zeta_2n)$ for all $m_1, m_2$ with $0\le m_1<m_2\le n$ and all $i=0,1,\dots, N$ one has
\begin{equation}
\log \|T_{[m_1, m_2],b_i,\omega}v\| \in  U_{\ep n} (L_{[m_1, m_2]}(b_i)).
\end{equation}
\end{lemma}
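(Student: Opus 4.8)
The plan is to follow the proofs of Lemma~\ref{l.prelimLD} and Lemma~\ref{l:LD-RD} essentially verbatim, replacing the derivative input (the first clause of Theorem~\ref{t.2param}) by its ``moreover'' clause on lengths of images of a fixed unit vector. First I would fix the unit vector $v$ (the required uniformity is only over the triples $(m_1,m_2,i)$, \emph{not} over $v$, which is what makes the argument routine) and the accuracy $\ep>0$. The key observation is that for any $0\le m_1<m_2\le n$ and any $a\in J$ the matrix $T_{[m_1,m_2],a,\omega}=A_{m_2}(a)\cdots A_{m_1+1}(a)$ is a non-stationary product of independent $\SL(2,\R)$-matrices whose laws $\mgr^a_{m_1+1},\dots,\mgr^a_{m_2}$ all belong to the compact family $\mathbf{K}=\{\mgr^a\mid \mgr\in\mathcal{K},\ a\in J\}$ to which Theorem~\ref{t.LDET2} applies — this is precisely the reduction already carried out in the proof of Theorem~\ref{t.2param}.

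Next I would split into two cases according to the length $\ell:=m_2-m_1$, with threshold $\eps^* n$ where $\eps^*:=\ep/(4C_{\max})$ and $C_{\max}$ is the constant of~\eqref{e.defCa}. For short sub-products, $\ell<\eps^*n$, I would use only the deterministic bound $\log\|A(a)\|\le C_{\max}$ for each factor: since $\|v\|=1$, both $|\log\|T_{[m_1,m_2],a,\omega}v\||$ and $L_{[m_1,m_2]}(a)$ are at most $\ell\,C_{\max}<\tfrac{\ep}{4}n$, so $\log\|T_{[m_1,m_2],a,\omega}v\|\in U_{\ep n}(L_{[m_1,m_2]}(a))$ holds with no exceptional set whatsoever. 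For long sub-products, $\eps^*n\le\ell\le n$, I would apply the ``moreover'' part of Theorem~\ref{t.2param} to the product of length $\ell$ with initial vector $v$; it supplies $\zeta>0$, depending only on $\ep$ (and uniform in $a$ and in the choice of the sequence of measures), for which the deviation event at scale $\ep\ell$ — and hence, a fortiori, the event $\log\|T_{[m_1,m_2],a,\omega}v\|\notin U_{\ep n}(L_{[m_1,m_2]}(a))$, since $U_{\ep\ell}\subseteq U_{\ep n}$ — has probability at most $e^{-\zeta\ell}\le e^{-\zeta\eps^*n}$, valid once $n$ is large enough that $\ell\ge\eps^*n$ is itself large.

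Finally I would take a union bound over the grid. There are at most $n^2$ pairs $(m_1,m_2)$ with $0\le m_1<m_2\le n$ and $N+1=[\exp\sqrt[4]{n}]+1$ parameter values $b_0,\dots,b_N$, so the probability that $\log\|T_{[m_1,m_2],b_i,\omega}v\|\in U_{\ep n}(L_{[m_1,m_2]}(b_i))$ fails for some admissible triple $(m_1,m_2,i)$ is at most $n^2(N+1)e^{-\zeta\eps^*n}$. Since $N$ grows subexponentially in $n$, for any fixed $\zeta_2$ with $0<\zeta_2<\zeta\eps^*$ this quantity is smaller than $e^{-\zeta_2 n}$ for all sufficiently large $n$, which is exactly the asserted bound.

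I do not anticipate any genuine obstacle: the statement differs from Lemma~\ref{l:LD-RD} only in that the controlled object is $\log\|T_{[m_1,m_2],a,\omega}v\|$ rather than $\log\tf'_{[m_1,m_2],a,\omega}(\cdot)$, and the second clause of Theorem~\ref{t.2param} is tailored exactly to this. The one point deserving a line of care is the short-sub-product case, where the large deviation bound of Theorem~\ref{t.2param} is unavailable (the length $\ell$ need not be large compared with itself, even though $n$ is large) and must be replaced by the crude deterministic estimate on the individual factors; this is also the only reason the case split is forced.
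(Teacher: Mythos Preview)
Your proposal is correct and is precisely the argument the paper has in mind: the paper's own proof is the single sentence ``Exactly the same arguments that prove Lemma~\ref{l.prelimLD} and Lemma~\ref{l:LD-RD} provide a very similar but formally different statement,'' and you have carried out exactly those arguments with the derivative input replaced by the vector-length clause of Theorem~\ref{t.2param}. The short/long case split, the deterministic handling of short sub-products, and the subexponential union bound over $(m_1,m_2,i)$ all match the proofs of Lemmata~\ref{l.prelimLD} and~\ref{l:LD-RD} line by line.
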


Combining the statements of Lemma \ref{l.TvLm1m2} and Corollary \ref{c:upper-finite-m} we get the following:
\begin{lemma}\label{l.UnLm1m2}
For any $\ep>0$ there exists $\zeta_3>0$ such that for all  sufficiently large $n\in \mathbb{N}$ the following holds. With probability at least $1-\exp(-\zeta_3n)$ for all $m_1, m_2$ with $0\le m_1<m_2\le n$ and all $i=0,1,\dots, N$ one has
\begin{equation}
\log \|T_{[m_1, m_2],b_i,\omega}\|\in  U_{\ep n} (L_{[m_1, m_2]}(b_i)).
\end{equation}
 \end{lemma}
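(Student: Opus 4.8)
The final statement to prove is Lemma~\ref{l.UnLm1m2}, which combines Lemma~\ref{l.TvLm1m2} (lower-type bound for $\log\|T_{[m_1,m_2],b_i,\omega}v\|$ for a fixed unit vector, uniformly over grid points and index pairs) with Corollary~\ref{c:upper-finite-m} (upper bound $\log\|T_{[m',m''],a,\omega}\|-L_{[m',m'']}(a)\le n\beps$ uniformly over all $a\in J$ and all $0\le m'\le m''\le n$).

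Here is the plan. The proof is essentially a sandwich argument. For the upper direction, Corollary~\ref{c:upper-finite-m} directly gives, with probability at least $1-\exp(-c_3 n)$, that for all $a\in J$ (in particular all grid points $b_i$) and all $0\le m_1<m_2\le n$ one has $\log\|T_{[m_1,m_2],b_i,\omega}\|\le L_{[m_1,m_2]}(b_i)+\beps n$; choosing $\beps=\eps$ handles the right half of the neighborhood $U_{\eps n}$. For the lower direction, apply Lemma~\ref{l.TvLm1m2} with the fixed unit vector $v=v_0=\left(\begin{smallmatrix}1\\0\end{smallmatrix}\right)$ (any fixed unit vector works): with probability at least $1-\exp(-\zeta_2 n)$, for all grid indices $i$ and all $0\le m_1<m_2\le n$ one has $\log\|T_{[m_1,m_2],b_i,\omega}v_0\|\ge L_{[m_1,m_2]}(b_i)-\eps n$. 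Since $\|T_{[m_1,m_2],b_i,\omega}\|\ge \|T_{[m_1,m_2],b_i,\omega}v_0\|$ for a unit vector $v_0$, this immediately yields $\log\|T_{[m_1,m_2],b_i,\omega}\|\ge L_{[m_1,m_2]}(b_i)-\eps n$, which is the left half of $U_{\eps n}$.

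Finally I would intersect the two events. Since the upper-bound event holds with probability at least $1-\exp(-c_3 n)$ and the lower-bound event with probability at least $1-\exp(-\zeta_2 n)$, their intersection has probability at least $1-\exp(-c_3 n)-\exp(-\zeta_2 n)$, which for all sufficiently large $n$ exceeds $1-\exp(-\zeta_3 n)$ for any chosen $\zeta_3<\min(c_3,\zeta_2)$. On this intersection, for all $0\le m_1<m_2\le n$ and all $i=0,1,\dots,N$ we have $\log\|T_{[m_1,m_2],b_i,\omega}\|\in U_{\eps n}(L_{[m_1,m_2]}(b_i))$, as claimed. One should note that $\eps$ here is the same parameter as the $\eps$ in the conclusion, so no rescaling is needed beyond setting $\beps=\eps$ in Corollary~\ref{c:upper-finite-m}.

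There is essentially no serious obstacle here: the lemma is a bookkeeping corollary that packages two already-proven uniform-in-$(m_1,m_2,i)$ estimates into a single two-sided statement. The only minor point to be careful about is that Lemma~\ref{l.TvLm1m2} and Corollary~\ref{c:upper-finite-m} are stated with possibly different exponential rates ($\zeta_2$ versus $c_3$) and, in the case of the Corollary, the conclusion is phrased for general $a\in J$ rather than only grid points — but restricting to the grid is harmless, and taking $\zeta_3$ below the minimum of the two rates absorbs the union bound, which costs only a polynomial-in-$n$ or subexponential ($N=[\exp(\sqrt[4]{n})]$) factor already handled inside those statements. So the ``hard part'', if any, is purely notational: making sure the index ranges and the choice of fixed vector are aligned across the two inputs.
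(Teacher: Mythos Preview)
Your proposal is correct and follows exactly the paper's approach: the paper simply states that Lemma~\ref{l.UnLm1m2} follows by ``combining the statements of Lemma~\ref{l.TvLm1m2} and Corollary~\ref{c:upper-finite-m}'', and your sandwich argument (upper bound from the Corollary, lower bound from the vector estimate via $\|T\|\ge\|Tv_0\|$, then intersect the two high-probability events) is precisely that combination spelled out in detail.
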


 Besides, Lemma \ref{l.TvLm1m2} allows to prove the following useful ``additivity'' property of the expectations $\{L_n\}$:
\begin{prop}\label{p.additivity}
For any $\beps>0$ there exists $n_0\in \mathbb{N}$ such that for any $n\ge n_0$, any $m_1, m_2, m_3\in \mathbb{N}$ with $0\le m_1<m_2<m_3\le n$, and any $a\in J$ we have
$$
0\le L_{[m_1, m_2]}(a)+L_{[m_2, m_3]}(a)-L_{[m_1,m_3]}(a)\le n\beps.
$$
\end{prop}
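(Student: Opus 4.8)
The plan is to prove Proposition~\ref{p.additivity} as a direct consequence of the submultiplicativity of operator norms together with the large-deviations control already established. The left inequality $L_{[m_1,m_3]}(a)\le L_{[m_1,m_2]}(a)+L_{[m_2,m_3]}(a)$ is immediate: since $T_{[m_1,m_3],a,\omega}=T_{[m_2,m_3],a,\omega}T_{[m_1,m_2],a,\omega}$, we have $\|T_{[m_1,m_3],a,\omega}\|\le\|T_{[m_2,m_3],a,\omega}\|\cdot\|T_{[m_1,m_2],a,\omega}\|$, and taking $\log$ and then expectations (using independence of the two blocks of factors, which actually is not even needed for this direction --- only $\E$ of a sum equals the sum of $\E$'s) gives the claim. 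So the content is entirely in the upper bound $L_{[m_1,m_2]}(a)+L_{[m_2,m_3]}(a)-L_{[m_1,m_3]}(a)\le n\beps$.

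For the upper bound, the idea is that a product of two independent random matrices typically grows like the product of the two norms --- the ``cancellation'' that could make $\|T_{[m_1,m_3]}\|$ much smaller than $\|T_{[m_1,m_2]}\|\cdot\|T_{[m_2,m_3]}\|$ is a low-probability event. Concretely, I would fix $a\in J$ and the three indices, and let $v$ be the (random, but measurable w.r.t.\ the second block $A_{m_2+1},\dots,A_{m_3}$ only) unit vector realizing $\|T_{[m_2,m_3],a,\omega}\|=\|T_{[m_2,m_3],a,\omega}\bar v\|$ where $\bar v$ is the most-expanded direction; equivalently, take the image under $T_{[m_1,m_2]}$ of a fixed unit vector and apply Lemma~\ref{l.TvLm1m2}. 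Cleaner: apply the ``image of a vector'' part of Theorem~\ref{t.2param} (or Lemma~\ref{l.TvLm1m2}) to the first block $T_{[m_1,m_2],a,\omega}$ with an arbitrary fixed initial unit vector $v_0$, obtaining that with probability $\ge 1-e^{-\delta n}$ one has $\log\|T_{[m_1,m_2],a,\omega}v_0\|\ge L_{[m_1,m_2]}(a)-n\beps/4$; then, \emph{conditionally} on the first block, the direction $w:=T_{[m_1,m_2],a,\omega}v_0/|T_{[m_1,m_2],a,\omega}v_0|$ is a fixed unit vector independent of the second block, so applying the same estimate to the second block yields $\log\|T_{[m_2,m_3],a,\omega}w\|\ge L_{[m_2,m_3]}(a)-n\beps/4$ with conditional probability $\ge 1-e^{-\delta n}$. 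On the intersection of these two events,
\[
\log\|T_{[m_1,m_3],a,\omega}\|\ge \log|T_{[m_1,m_3],a,\omega}v_0| = \log|T_{[m_2,m_3],a,\omega}w| + \log|T_{[m_1,m_2],a,\omega}v_0| \ge L_{[m_1,m_2]}(a)+L_{[m_2,m_3]}(a)-\tfrac{n\beps}{2}.
\]
Taking expectations and splitting according to whether this good event holds --- on its complement, of probability $\le 2e^{-\delta n}$, one uses the deterministic bound $\log\|T_{[m_1,m_3],a,\omega}\|\ge -nC_{\max}$ and the analogous deterministic upper bounds $L_{[m_i,m_j]}(a)\le nC_{\max}$ to control the error --- one gets $L_{[m_1,m_3]}(a)\ge L_{[m_1,m_2]}(a)+L_{[m_2,m_3]}(a)-\tfrac{n\beps}{2}-O(n e^{-\delta n})$, which for $n$ large is $\ge L_{[m_1,m_2]}(a)+L_{[m_2,m_3]}(a)-n\beps$. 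Note the estimate is uniform in $a\in J$ and in the triple $(m_1,m_2,m_3)$ because $\delta$ from Theorem~\ref{t.2param} and the bound $e^{-\delta n}$ in it are uniform in $a$, and because the two block-lengths $m_2-m_1$ and $m_3-m_2$ are each $\le n$ (when a block is very short the corresponding $L$ is already $\le nC_{\max}$ and the deterministic bound $\log\|T v_0\|\ge -nC_{\max}$ makes the inequality trivially survive, so no large-deviations input is needed there --- this is the same split-by-length trick as in the proof of Lemma~\ref{l.prelimLD}).

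The main obstacle, such as it is, is bookkeeping the conditioning correctly: one must be careful that $w$ is a legitimate fixed unit vector as far as the second block is concerned, so that the ``image of a given initial unit vector'' large-deviation estimate from Theorem~\ref{t.2param} applies with that $w$ --- this is fine because the second-block estimate in Theorem~\ref{t.2param} holds for \emph{every} unit vector with a uniform $\delta$, hence in particular for a random vector measurable with respect to an independent sigma-algebra, after conditioning. A secondary point is to make sure one uses a version of the estimate valid for a \emph{single} length $n$ (products of length exactly $m_2-m_1$, resp.\ $m_3-m_2$) rather than asymptotically; Theorem~\ref{t.2param} as stated gives exactly this for all sufficiently large lengths, and the short-block case is handled deterministically as above. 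No $G_\delta$ or Hausdorff-dimension machinery is involved; this is a soft consequence of the large-deviation theorem plus submultiplicativity.
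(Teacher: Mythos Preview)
Your proof is correct and follows essentially the same approach as the paper's: both use the vector decomposition $\log|T_{[m_1,m_3]}v_0|=\log|T_{[m_2,m_3]}w|+\log|T_{[m_1,m_2]}v_0|$ together with the large-deviations estimate for vector images (Theorem~\ref{t.2param}/Lemma~\ref{l.TvLm1m2}) applied to each block, the second block being handled by conditioning on the first so that $w$ becomes a fixed vector. The only minor difference is that the paper first proves the inequality at the grid points $b_i$ (via Lemma~\ref{l.UnLm1m2}) and then transfers to arbitrary $a$ by equicontinuity (Lemma~\ref{l.equim1m2}), whereas you work directly at $a$ using the uniformity-in-$a$ built into Theorem~\ref{t.2param}; your route is slightly more direct and your treatment of the conditioning step is in fact more explicit than the paper's.
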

\begin{proof}
Recall that for any $a\in J$ we have $a\in [b_{i-1}, b_i]$ for some $i\le N$. Equicontinuity result provided by Lemma \ref{l.equim1m2} implies that for a given $\eps'>0$ and all sufficiently large $n\in \mathbb{N}$ we have
\begin{multline*}
  \left|(L_{[m_1, m_2]}(a)+L_{[m_2, m_3]}(a)-L_{[m_1,m_3]}(a))- \right.\\
  - \left.(L_{[m_1, m_2]}(b_i)+L_{[m_2, m_3]}(b_i)-L_{[m_1,m_3]}(b_i))\right|\le \\
  \le |(L_{[m_1, m_2]}(a) -L_{[m_1, m_2]}(b_i)|+|L_{[m_2, m_3]}(a)-L_{[m_2, m_3]}(b_i)|+\\
  +|L_{[m_1,m_3]}(a)-L_{[m_1,m_3]}(b_i)|
  \le \eps'(m_2-m_1)+\eps'(m_3-m_2)+\eps'(m_3-m_1)\le 3\eps'n.
\end{multline*}
Take any unit vector $v\in \mathbb{R}^2$, $\|v\|=1$. We have
$$
T_{[m_1, m_3], b_i, \omega}v=T_{[m_2, m_3], b_i, \omega}T_{[m_1, m_2], b_i, \omega}v,
$$
and hence
$$
\log\|T_{[m_1, m_3], b_i, \omega}v\|=\log\left\|T_{[m_2, m_3], b_i, \omega}\left(\frac{T_{[m_1, m_2], b_i, \omega}v}{\|T_{[m_1, m_2], b_i, \omega}v\|}\right)\right\|+\log\|T_{[m_1, m_2], b_i, \omega}v\|.
$$
Due to Lemma \ref{l.UnLm1m2}, with large probability we have
$$
\log \|T_{[m_1, m_2],b_i,\omega}\|\in  U_{\ep n} (L_{[m_1, m_2]}(b_i)), \log \|T_{[m_2, m_3],b_i,\omega}\|\in  U_{\ep n} (L_{[m_2, m_3]}(b_i)),
$$
and
$$
\log \|T_{[m_1, m_3],b_i,\omega}\|\in  U_{\ep n} (L_{[m_1, m_3]}(b_i)).
$$
Therefore,
$$
0\le L_{[m_1, m_2]}(b_i)+L_{[m_2, m_3]}(b_i)-L_{[m_1,m_3]}(b_i)\le 3n\eps',
$$
and hence
$$
0\le L_{[m_1, m_2]}(a)+L_{[m_2, m_3]}(a)-L_{[m_1,m_3]}(a)\le 6n\eps'.
$$
Taking $\eps'=\frac{\beps}{6}$, we get the desired result.
\end{proof}

\subsection{Distortion control}\label{ss:distortion}
Here we collect several distortion estimates that will be needed later.

\begin{lemma}[Distortion control]\label{l:DC}
For any $\omega\in \Omega$, $\omega=(A_1, A_2, \ldots)$, the following holds. Given $m'<m''$, $y_1<y_2$, and $\bar a_1<\bar a_2$, define the sequence of intervals $Y_{m}=[y_{m,1},y_{m,2}]$, $m=m',...,m''$,
 by
$$ 
y_{m',j}=y_j, \quad  y_{m+1,j}=\tf_{A_m, \bar a_j}(y_{m,j}), \quad j=1,2, \ \  m=m',...,m''-1.
$$
Then for any $\bar a_3 \in [\bar a_1,\bar a_2]$, any $m=m',\dots,m''$, and any $y_3\in [y_1,y_2]$ we have
$$
\left| \log \tilde f'_{[m',m], \bar a_3,\omega}(y_3)- \log \tilde f'_{[m',m],\bar a_1,\omega}(y_1) \right| \le \kappa \sum_{k=m'}^{m''-1} |Y_k| + C |\bar a_2-\bar a_1| \cdot (m''-m'),
$$
where the constants $\kappa$ and $C$ are defined by 
$$
\kappa:= \sup_{y\in \mathbb{R}^1, \, \mu\in \mathcal{K},\, A\in \supp \mu,  \, a\in J} |\partial_y \log \tilde f'_{A, a}(y)|,
\quad C:= \sup_{y\in \mathbb{R}^1, \, \mu\in \mathcal{K},\, A\in \supp \mu, \,  a\in J} |\partial_a \log \tilde f'_{a,\omega}(y)|.
$$
\end{lemma}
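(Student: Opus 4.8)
The plan is to reduce the statement to a telescoping sum over the individual iteration steps and then estimate each term. Writing $g_{k}(y):=\log\tilde f'_{A_k,a}(y)$ with the parameter $a$ made explicit, one has
\[
\log\tilde f'_{[m',m],\bar a_3,\omega}(y_3)=\sum_{k=m'}^{m-1}\log\tilde f'_{A_k,\bar a_3}\big(\tf_{[m',k],\bar a_3,\omega}(y_3)\big),
\]
and similarly for the $\bar a_1$, $y_1$ branch. Subtracting term by term, the problem becomes bounding, for each $k$,
\[
\Big|\log\tilde f'_{A_k,\bar a_3}\big(\tf_{[m',k],\bar a_3,\omega}(y_3)\big)-\log\tilde f'_{A_k,\bar a_1}\big(\tf_{[m',k],\bar a_1,\omega}(y_1)\big)\Big|.
\]

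For each such term I would split the difference into a parameter part and a point part: first move the parameter from $\bar a_3$ to $\bar a_1$ keeping the point fixed, picking up at most $C|\bar a_2-\bar a_1|$ by the definition of $C$ (note $\bar a_3\in[\bar a_1,\bar a_2]$, so $|\bar a_3-\bar a_1|\le|\bar a_2-\bar a_1|$); then move the point from $\tf_{[m',k],\bar a_3,\omega}(y_3)$ to $\tf_{[m',k],\bar a_1,\omega}(y_1)$ at the fixed parameter $\bar a_1$, picking up at most $\kappa$ times the distance between these two points. The key geometric observation is that the point $\tf_{[m',k],\bar a_3,\omega}(y_3)$ lies between $y_{k,1}$ and $y_{k,2}$ (by monotonicity of the maps $\tf_{A,a}$ in both $y$ and $a$, assumption \ref{B:Monotonicity}, together with $y_3\in[y_1,y_2]$ and $\bar a_3\in[\bar a_1,\bar a_2]$), so that this distance is at most $|Y_k|$. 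Summing the $\kappa|Y_k|$ contributions over $k=m',\dots,m-1$ gives $\kappa\sum_{k=m'}^{m-1}|Y_k|\le\kappa\sum_{k=m'}^{m''-1}|Y_k|$, and summing the $C|\bar a_2-\bar a_1|$ contributions gives $C|\bar a_2-\bar a_1|(m-m')\le C|\bar a_2-\bar a_1|(m''-m')$, which is exactly the claimed bound.

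The one point requiring care — and the main (mild) obstacle — is justifying that the intermediate orbit $\tf_{[m',k],\bar a_3,\omega}(y_3)$ really stays in $[y_{k,1},y_{k,2}]$ for every $k$. This is an induction on $k$: at step $m'$ it holds because $y_3\in[y_1,y_2]=[y_{m',1},y_{m',2}]$; and if $\tf_{[m',k],\bar a_3,\omega}(y_3)\in[y_{k,1},y_{k,2}]$, then applying $\tf_{A_k,\bar a_3}$ and using that $\tf_{A_k,a}(y)$ is nondecreasing in $y$ (it is a lift of a circle homeomorphism) and strictly increasing in $a$ (assumption \ref{B:Monotonicity}) gives
\[
y_{k+1,1}=\tf_{A_k,\bar a_1}(y_{k,1})\le\tf_{A_k,\bar a_3}\big(\tf_{[m',k],\bar a_3,\omega}(y_3)\big)\le\tf_{A_k,\bar a_2}(y_{k,2})=y_{k+1,2}.
\]
One also needs the suprema $\kappa$ and $C$ to be finite, which follows from the $C^1$-boundedness assumption \ref{B:C1} (the maps $\tf_{A,a}$ and their relevant derivatives are uniformly bounded and uniformly $C^1$ over the compact parameter interval $J$ and the compact family $\mathcal{K}$, and $\log\tilde f'$ is controlled away from singularities since $\tilde f'>0$). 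Once these two routine verifications are in place, the telescoping estimate above completes the proof.
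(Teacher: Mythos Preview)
Your proof is correct and follows the standard telescoping--distortion argument that the paper defers to (the paper's own proof is simply a reference to Lemma~4.3 of \cite{GK}). The chain-rule decomposition, the two-step estimate (parameter shift then point shift) for each summand, and the induction showing $\tf_{[m',k],\bar a_3,\omega}(y_3)\in Y_k$ via monotonicity in both $y$ and $a$ are exactly what is needed, and together with the finiteness of $\kappa,C$ from \ref{B:C1} this completes the argument.
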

\begin{proof}[Proof of Lemma \ref{l:DC}]
The proof is a verbatim repetition of the proof of Lemma 4.3 from \cite{GK}, with some obvious adjustments of the notation.
\end{proof}
Another estimate that we will need shows how fast nearby points can diverge under iterates of different but close maps.
\begin{lemma}\label{l.shift}
In notations of Lemma \ref{l:DC}, we have
\begin{equation}\label{eq:shift}
|y_{m'', 1}-y_{m'', 2}|\le \Lx^{m''-m'}|y_{m', 1}-y_{m', 2}|+\La(m''-m')\cdot \Lx^{m''-m'-1}|\bar a_2-\bar a_1|,
\end{equation}
where $$\Lx=  \sup_{y\in \mathbb{R}^1, \, \mu\in \mathcal{K},\, A\in \supp \mu,\,  a\in J}|\tf'_{A, a}(y)|$$ and $$\La=\sup_{y\in \mathbb{R}^1, \, \mu\in \mathcal{K},\, A\in \supp \mu,\,  a\in J}|{\partial_a}\tf_{A, a}(y)|$$ are the Lipschitz constants for the maps~$\tf_{A, a}(y)$ in space and parameter directions respectively.
\end{lemma}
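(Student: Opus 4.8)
The plan is a one-step-at-a-time propagation estimate, iterated. First I would establish the single-step recursion. Fix $m$ with $m' \le m \le m''-1$. Since $y_{m+1,j} = \tf_{A_m,\bar a_j}(y_{m,j})$, inserting the intermediate quantity $\tf_{A_m,\bar a_1}(y_{m,2})$ and applying the triangle inequality,
\begin{align*}
|y_{m+1,1}-y_{m+1,2}| &= |\tf_{A_m,\bar a_1}(y_{m,1})-\tf_{A_m,\bar a_2}(y_{m,2})| \\
&\le |\tf_{A_m,\bar a_1}(y_{m,1})-\tf_{A_m,\bar a_1}(y_{m,2})| + |\tf_{A_m,\bar a_1}(y_{m,2})-\tf_{A_m,\bar a_2}(y_{m,2})|.
\end{align*}
The first summand is at most $\Lx\,|y_{m,1}-y_{m,2}|$ by the mean value theorem and the definition of $\Lx$ as a uniform Lipschitz constant of $\tf_{A,a}$ in the space variable, and the second is at most $\La\,|\bar a_2-\bar a_1|$ by the definition of $\La$; both constants are finite thanks to assumption~\ref{B:C1}. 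Writing $d_m:=|y_{m,1}-y_{m,2}|=|Y_m|$ and $\Delta:=\La\,|\bar a_2-\bar a_1|$, this reads $d_{m+1}\le \Lx\, d_m + \Delta$.

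Next I would iterate this from $m=m'$ up to $m=m''-1$, obtaining
$$
d_{m''} \le \Lx^{m''-m'} d_{m'} + \Delta \sum_{k=0}^{m''-m'-1} \Lx^k .
$$
To rewrite the geometric sum in the form appearing in~\eqref{eq:shift}, I would use that $\Lx\ge 1$: for each fixed $A$ and $a$, the map $\tf_{A,a}$ is the lift of an orientation-preserving circle diffeomorphism, so $\tf_{A,a}(y+1)=\tf_{A,a}(y)+1$ and hence $\int_0^1 \tf'_{A,a}(y)\,dy=1$, which forces $\sup_y \tf'_{A,a}(y)\ge 1$; taking the supremum over all admissible $A,a$ gives $\Lx\ge 1$. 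Therefore $\Lx^k\le \Lx^{m''-m'-1}$ for every $0\le k\le m''-m'-1$, so $\sum_{k=0}^{m''-m'-1}\Lx^k\le (m''-m')\,\Lx^{m''-m'-1}$, and plugging this in yields exactly~\eqref{eq:shift}. The degenerate cases $m''=m'$ (empty sum) and $m''=m'+1$ are immediate from the recursion.

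There is no genuine obstacle in this lemma; it is a routine Lipschitz-propagation bound and the argument above is essentially the whole proof. The only point deserving a moment's attention is the passage from the geometric series $\sum_k \Lx^k$ to the bound $(m''-m')\Lx^{m''-m'-1}$, which is why I made the inequality $\Lx\ge 1$ explicit. If one wished to bypass that remark, one could leave the conclusion with $\sum_{k=0}^{m''-m'-1}\Lx^k$ in place of $(m''-m')\Lx^{m''-m'-1}$, but the stated form is the convenient one for the later distortion arguments.
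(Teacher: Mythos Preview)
Your argument is correct and is precisely the standard Lipschitz propagation estimate one expects here; the paper itself does not spell out a proof but simply refers to the identical Lemma~4.4 in~\cite{GK}, whose proof is exactly the single-step recursion plus iteration you wrote down. Your explicit justification that $\Lx\ge 1$ (via the circle-lift normalization) to pass from the geometric sum to $(m''-m')\Lx^{m''-m'-1}$ is the only nontrivial point, and you handled it cleanly.
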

\begin{proof}[Proof of Lemma \ref{l.shift}]
The proof is a verbatim repetition of the proof of Lemma 4.4 from \cite{GK}, with some obvious adjustments of the notation.
\end{proof}

\subsection{Uniform growth estimates}\label{ss:norms}
Here we deduce parts \ref{i:m2} and~\ref{i:m3}
of Theorem~\ref{t:main} from Proposition~\ref{p:classes}.

First let us show that the distortion control given by Lemma \ref{l:DC} together with Proposition~\ref{p:classes} allows us to use Lemma~\ref{l:LD-RD}
to estimate the derivatives at~$\tx_0$ at all parameter values $a\in J$:

\begin{prop}\label{p:derivatives-control}
There exists a constant $C_1$ such that for any $\varepsilon'>0$  the following property holds for all sufficiently large~$n$.
Assume that $\omega$ is such that the conclusions of Lemma~\ref{l:LD-RD} and Proposition~\ref{p:classes} hold.
Then, for any $a\in J$:
\begin{itemize}
\item If $a\in J_i$, and $J_i$ is either ``small'' or ``opinion-changing'' interval in terms of Proposition~\ref{p:classes}, then
\begin{equation}\label{eq:derivative-u}
\forall\ m=1,\dots, n \quad \log \tf'_{m,a,\omega}(\tx_0) \in U_{C_1\eps' n} (-2L_m(a)).
\end{equation}
\item If $a\in J_i$, and $J_i$ is a ``jump'' interval in terms of Proposition~\ref{p:classes}, with the associated moment $m_0$, then
\begin{equation}\label{eq:derivative-upl}
\forall\ m=1,\dots, \mjump \quad \log \tf'_{m,a,\omega}(\tx_0) \in U_{C_1\eps' n} (-2L_m(a)),
\end{equation}
where $\mjump:=m_0+\eps' n$
is what we will call  \emph{``jump index''} below,
and
\begin{equation}\label{eq:derivative-upr}
\forall\ m={\mjump}+1,\dots, n \quad \log \tf'_{[\mjump, m],a,\omega}(\tx_1) \in U_{C_1\eps' n} (-2L_{[\mjump, m]}(a)),
\end{equation}
for any $\tx_1\in X_{\mjump, i}'$, where $\mjump:=m_0+\eps'n$ and we denote $ X_{\mjump, i}':=[\tx_{\mjump,i-1}+1,\tx_{\mjump,i}]$.
\end{itemize}
\end{prop}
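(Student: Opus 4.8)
The plan is to propagate the grid-point derivative estimates of Lemma~\ref{l:LD-RD} to all parameters $a\in J$ by means of the distortion control of Lemma~\ref{l:DC}, with the trichotomy of Proposition~\ref{p:classes} supplying exactly the bound on $\sum_m|X_{m,i}|$ that Lemma~\ref{l:DC} needs. Throughout I would fix $\eps'>0$, restrict to an $\omega$ for which the conclusions of Lemma~\ref{l:LD-RD} and Proposition~\ref{p:classes} hold, and take $n$ large enough that $C|J|/N\le\eps'$ (recall $N=[\exp(\sqrt[4]{n})]$ is superpolynomial in $n$, so $\tfrac{n}{N}\to0$) and that the grid step $|J|/N$ is below the modulus of equicontinuity of $\{\tfrac1mL_m\}$ and $\{\tfrac1{m_2-m_1}L_{[m_1,m_2]}\}$ for the error threshold $\eps'$ (Lemmas~\ref{l:equicontinuous} and~\ref{l.equim1m2}). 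The observation making Lemma~\ref{l:DC} directly applicable is that, run from $m'=0$ with parameter endpoints $\bar a_1=b_{i-1}<b_i=\bar a_2$ and the (degenerate) initial interval $y_1=y_2=\tx_0$, it produces precisely $Y_m=X_{m,i}=[\tx_{m,i-1},\tx_{m,i}]$.

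I would treat the ``small'' and ``opinion-changing'' cases at once. By Proposition~\ref{p:classes} the lengths $|X_{m,i}|$ are below $\eps'$ off a window of at most $\eps' n$ consecutive indices, and on that window they are bounded by an absolute constant (at most $1$: an opinion-changer does not complete a full turn, otherwise it would be a jump interval), so $\sum_{k=0}^{n-1}|X_{k,i}|\le 3\eps' n$. Lemma~\ref{l:DC} with $m''=n$, $\bar a_3=a\in J_i$, $y_3=\tx_0$ then gives $|\log\tf'_{m,a,\omega}(\tx_0)-\log\tf'_{m,b_{i-1},\omega}(\tx_0)|\le 3\kappa\eps' n+C\tfrac{|J|}{N}n\le(3\kappa+1)\eps' n$ for all $m\le n$; Lemma~\ref{l:LD-RD} with $(m_1,m_2)=(0,m)$ places $\log\tf'_{m,b_{i-1},\omega}(\tx_0)$ within $\eps' n$ of $-2L_m(b_{i-1})$; and equicontinuity places $L_m(b_{i-1})$ within $\eps' m\le\eps' n$ of $L_m(a)$. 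Summing the three errors yields \eqref{eq:derivative-u} with, e.g., $C_1=3\kappa+4$ — a constant assembled only from $\kappa$, hence independent of $\eps'$.

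For a ``jump'' interval $J_i$ with moment $m_0$ and jump index $\mjump=m_0+\eps' n$ I would split at $\mjump$. For $m\le\mjump$ one has $\sum_{k=0}^{\mjump-1}|X_{k,i}|\le 3\eps' n$ just as before ($|X_{k,i}|<\eps'$ for $k<m_0$, and $|X_{k,i}|<1+\eps'$ on $[m_0,\mjump]$ since the turn is only being completed there), so the same three-step estimate gives \eqref{eq:derivative-upl}. For $m>\mjump$ I would instead run Lemma~\ref{l:DC} from $m'=\mjump$ with the \emph{shifted} initial interval $[y_1,y_2]=X'_{\mjump,i}=[\tx_{\mjump,i-1}+1,\tx_{\mjump,i}]$ and the same parameter endpoints $b_{i-1}<b_i$: since lifts of circle maps commute with translation by $1$, the intervals produced are $Y_m=X'_{m,i}=[\tx_{m,i-1}+1,\tx_{m,i}]$, of length $|X_{m,i}|-1<\eps'$ for $m\ge\mjump$, whence $\sum_{k=\mjump}^{n-1}|Y_k|<\eps' n$. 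Lemma~\ref{l:DC} then controls $\log\tf'_{[\mjump,m],a,\omega}(\tx_1)$ for every $\tx_1\in X'_{\mjump,i}$ through $\log\tf'_{[\mjump,m],b_{i-1},\omega}(\tx_{\mjump,i-1}+1)=\log\tf'_{[\mjump,m],b_{i-1},\omega}(\tx_{\mjump,i-1})$ (using that $\tf'$, as the derivative of a lift, is $1$-periodic); and $\tx_{\mjump,i-1}=\tf_{\mjump,b_{i-1},\omega}(\tx_0)$, so Lemma~\ref{l:LD-RD} with $(m_1,m_2)=(\mjump,m)$ puts this within $\eps' n$ of $-2L_{[\mjump,m]}(b_{i-1})$, and equicontinuity of $\{\tfrac1{m_2-m_1}L_{[m_1,m_2]}\}$ replaces $b_{i-1}$ by $a$ at cost $2\eps' n$. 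This gives \eqref{eq:derivative-upr}, again with $C_1=3\kappa+4$.

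The one genuinely delicate point is the post-jump step: I must check that feeding $X'_{\mjump,i}$ together with the two endpoint parameters into Lemma~\ref{l:DC} really does reproduce the ``once-wrapped'' intervals $X'_{m,i}$ (this is where the $+1$ and the $1$-periodicity of lifts come in), and that it is the sharp bound $|X_{m,i}|<1+\eps'$ from Proposition~\ref{p:classes} that makes $|X'_{m,i}|=|X_{m,i}|-1<\eps'$, so the distortion sum stays $O(\eps' n)$ rather than exponential in $n$. A minor nuisance is the boundary index $m=\mjump$: for $X'_{\mjump,i}$ to be a genuine interval one needs $|X_{\mjump,i}|>1$, which I would arrange by reading $\mjump=m_0+\lceil\eps' n\rceil$ (equivalently, the full turn is already complete at the right end of the window). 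Everything else is routine bookkeeping of three $O(\eps' n)$ error terms — the distortion sum, the parameter-variation term $C\tfrac{|J|}{N}n$ (negligible because $N$ is superpolynomial), and the transfer $b_{i-1}\to a$ in the expectations via equicontinuity — each with a constant depending only on the system, which is what makes $C_1$ universal.
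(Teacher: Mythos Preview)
Your proposal is correct and follows essentially the same approach as the paper: bound $\sum_m |X_{m,i}|$ using the trichotomy of Proposition~\ref{p:classes}, feed this into the distortion control Lemma~\ref{l:DC} to pass from a grid point to arbitrary $a\in J_i$, invoke Lemma~\ref{l:LD-RD} at the grid point, and use equicontinuity of $\{\tfrac1{m_2-m_1}L_{[m_1,m_2]}\}$ to move the center. The only cosmetic differences are that the paper uses $b_i$ rather than $b_{i-1}$ as the reference grid point, keeps the parameter-variation constant $C$ explicit in $C_1$ (you absorb it into the condition $C|J|/N\le\eps'$), and bounds the window lengths by $1$ (opinion-changer) and $2$ (jump) rather than your $1$ and $1+\eps'$; note that Proposition~\ref{p:classes} as stated gives no explicit bound on $|X_{m,i}|$ inside the transition window $[m_0,m_0+\eps'n]$, so both your bound and the paper's rely on the same implicit ``no more than one full turn'' understanding of the classification.
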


\begin{proof}[Proof of Proposition \ref{p:derivatives-control}]
In the first case, regardless of whether the interval $J_i$ is a ``small'' one or an ``opinion-changer'', we have an
upper bound for the sum of the corresponding lengths
\begin{equation}\label{eq:sum-no-jump}
\sum_{m=0}^{n-1} |X_{m,i}|=\sum_{|X_{m,i}|<\eps'} |X_{m,i}|+\sum_{|X_{m,i}|\ge \eps'} |X_{m,i}| \le n\cdot \eps'+ n\eps'\cdot 1= 2n\eps'.
\end{equation}
Lemma~\ref{l:DC} implies that for all $a\in J_i$ and all $m=1, \ldots, n$ we have
$$
|\log \tf'_{m,a,\omega}(\tx_0)-\log \tf'_{m,b_i,\omega}(\tx_0)| \le 2\kappa \eps'  n + C \cdot \frac{|J|}{N} n.
$$

Due to Lemma \ref{l:equicontinuous}, the sequence of functions $\{\frac{1}{n}L_n(a)\}$ is equicontinuous. Therefore, for a given $\eps'>0$ and any sufficiently large $n$ we have:
 $$
 \left|\frac{1}{n}L_n(a)-\frac{1}{n}L_n(b_i)\right|\le \eps', \ \ \text{and}\ \ \ \frac{|J|}{N}<\eps'.
 $$
Together with the estimate~\eqref{eq:LD-RD-2} this gives
%
\begin{multline}\label{eq:log-f-prim}
|\log \tf'_{m,a,\omega}(\tx_0) + 2L_m(a)| \le |\log \tf'_{m,a,\omega}(\tx_0) - \log \tf'_{m,b_i,\omega}(\tx_0) |+
\\
 +  |\log \tf'_{m,b_i,\omega}(\tx_0) + 2 L_m(b_i)| + |2L_m(b_i) - 2L_m(a)|\le \\ 2\kappa \eps'  n + C \eps' n + \eps' n + 2\eps' m   \le  (2\kappa +C+3) \eps' n.
\end{multline}
Therefore~\eqref{eq:derivative-u} holds once $C_1>2\kappa+C+3$.

Suppose now that $J_i$ is a ``jump'' interval. Checking~\eqref{eq:derivative-upl} goes exactly in the same way as in~\eqref{eq:sum-no-jump}:
$$
\sum_{m=0}^{\mjump} |X_{m,i}|=\sum_{m=0}^{m_0-1} |X_{m,i}|+\sum_{m=m_0}^{\mjump-1} |X_{m,i}| \le n\cdot \eps'+ n\eps'\cdot 2= 3n\eps'.
$$
Hence, in the same way as in~\eqref{eq:log-f-prim}, we have for any $m\le \mjump$
$$
|\log \tf'_{m,a,\omega}(\tx_0) + 2 L_m(a)| \le 3\kappa \eps'  n + C \eps' n + \eps' n + 2\eps' m   \le  (3\kappa +C+3) \eps' n,
$$
and we have the desired~\eqref{eq:derivative-upl} once $C_1>3\kappa+C+3$.

Finally, the intervals $X_{m,i}'$ for $m\ge \mjump$ also satisfy the assumptions of Lemma~\ref{l:DC}. One has
$$
\sum_{m=\mjump}^{n} |X_{m,i}| \le \eps' n,
$$
and thus (again, together with~\eqref{eq:LD-RD-2}) we get
$$
|\log \tf'_{[\mjump,m],a,\omega}(\tx_1) +2 L_{[\mjump, m]}(a)| \le \kappa \eps'  n + C \eps' n + \eps' n + 2\eps' m   \le  (\kappa +C+3) \eps' n.
$$
This proves~\eqref{eq:derivative-upr} for any $C_1>\kappa +C+3$, and thus concludes the proof of Proposition \ref{p:derivatives-control}.
\end{proof}

Proposition \ref{p:derivatives-control} implies the parts \ref{i:m2} and \ref{i:m3} of Theorem~\ref{t:main}. Indeed,
 for any $A\in \SL(2,\R)$ and for any vector $v\neq 0$ one has
\begin{equation}\label{eq:der-norms}
f_A'(x_v)= \frac{|v|^2}{|Av|^2},
\end{equation}
where $x_v\in \Sc$ is the direction corresponding to the vector~$v$. In particular, for any point $x$ on the circle
one has $\log \|A\| \ge -\frac{1}{2} \log f_A'(x)$ (as the right hand side of~\eqref{eq:der-norms} is not less than~$\frac{1}{\|A\|^2}$).
In particular, for any $m, a,\omega$ we have
\begin{equation}\label{eq:norms-lower}
\log \|T_{m,a,\omega}\| \ge -\frac{1}{2} \log f'_{m,a,\omega}(\bar x).
\end{equation}

Joining this estimate with~\eqref{eq:derivative-u}, we obtain a lower bound for the norm
$$
\log \|T_{m,a,\omega}\| \ge -\frac{1}{2} \cdot (-2L_m(a) + C_1 n \eps') = L_m(a) - \frac{C_1}{2} \eps' n.
$$
Hence, to obtain the lower bound in the ``Uniformity'' part, it suffices to take
$$
\eps'<\frac{2\eps}{C_1}.
$$

On the other hand, Proposition~\ref{p.upper} states that the upper bound
$$
\log \|T_{m,a,\omega}\| < L_m(a) + n\eps
$$
holds with the probability $1-\exp(c_3 n)$. We thus obtain the desired
$$
\log \|T_{m,a,\omega}\| \in U_{n \eps}(L_m(a))
$$
for all $a\in J_i$, provided that the interval $J_i$ was ``small'' or ``opinion-changing''. Now, assume that $a\in J_i$,
and the interval $J_i$ is a ``jump'' interval. Then again, joining~\eqref{eq:norms-lower} with~\eqref{eq:derivative-upl}--\eqref{eq:derivative-upr}, we obtain
$$
\forall m=1,\dots, {\mjump} \quad \log \|T_{m,a,\omega}\| \ge L_m(a)  - \frac{C_1}{2} \eps' n> L_m(a)  - \eps n
$$
and
$$
\forall m={\mjump}+1,\dots, n \quad \log \|T_{[{\mjump},m],a,\omega}\| \ge L_{[\mjump, m]}(a) - \frac{C_1}{2} \eps' n> L_{[\mjump, m]}(a) -\eps n,
$$
where the last inequalities come from the choice of~$\eps'$.

Again, Proposition~\ref{p.upper} gives the upper bounds
$$
\forall m=1,\dots, {\mjump} \quad \log \|T_{m,a,\omega}\| < L_m(a) + n\eps
$$
and
$$
\forall m={\mjump}+1,\dots, n \quad \log \|T_{[{\mjump},m],a,\omega}\| < L_{[\mjump, m]}(a)  + n\eps.
$$
This implies the desired ``Uniformity'' estimates
$$
\forall m=1,\dots, {\mjump} \quad \log \|T_{m,a,\omega}\| \in U_{n\eps}(L_m(a))
$$
$$
\forall m={\mjump}+1,\dots, n \quad \log \|T_{[{\mjump},m],a,\omega}\| \in U_{n\eps}( L_{[\mjump, m]}(a)),
$$
thus concluding the proof of parts \ref{i:m2} and \ref{i:m3}  of Theorem~\ref{t:main}.

\subsection{Cancelation lemmata}\label{ss:cancellation}

Here we prove the ``Cancellation'' part~\ref{i:m4} of Theorem~\ref{t:main}. The content of this subsection is parallel to the Section~4.5 from \cite{GK}.

For any $A\in \SL(2, \mathbb{R})$ denote by $f_A$ the corresponding projective map of $S^1$. Also, for $A\notin SO(2,\R)$ let $x^-(A)\in \Sc$ be the point where $f_A$ has the largest derivative, and $x^+(A)\in \Sc$ be the image under $f_A$ of the point where $f_A$ has the smallest derivative. Equivalently, $x^+(A)$ is the direction of the large axis of the ellipse, obtained by applying $A$ to the unit circle, and $x^-(A)=x^+(A^{-1})$.

Let $\alpha$ and $\beta$ be the angles of $x^-(A)$ and $x^+(A)$ respectively. Then, it is easy to see that
$$
A=\pm R_{\beta} \left(
\begin{matrix}
\|A\| & 0 \\
0 & \|A\|^{-1}
\end{matrix}
\right) R_{\alpha+\pi/2}^{-1}.
$$
In particular, the following statement holds:
\begin{lemma}[Cancellation for matrices, Lemma 4.13 from \cite{GK}]\label{l:cancel}
Let $A,B\in\SL(2,\R)\setminus SO(2,\R)$ be two matrices such that $x^+(A)=x^-(B)$. Then
$$
\|BA\|=\max \left(\frac{\|B\|}{\|A\|}, \frac{\|A\|}{\|B\|}\right).
$$
\end{lemma}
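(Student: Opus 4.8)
The plan is to reduce $\|BA\|$ to the operator norm of an explicit anti-diagonal $2\times 2$ matrix by means of the decomposition recorded just before the lemma. Recall that any $M\in\SL(2,\R)\setminus SO(2,\R)$ satisfies $\|M\|>1$, so its singular values $\|M\|,\|M\|^{-1}$ are distinct and $x^-(M),x^+(M)$ are well defined; writing $\alpha_M,\beta_M$ for their angles, one has
$$
M=\pm R_{\beta_M}\begin{pmatrix}\|M\|&0\\0&\|M\|^{-1}\end{pmatrix}R_{\alpha_M+\pi/2}^{-1}.
$$
Applying this to $A$ and to $B$ and multiplying, the two interior rotations combine into $R_{\beta_A-\alpha_B-\pi/2}$. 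The hypothesis $x^+(A)=x^-(B)$ says $\beta_A=\alpha_B$ as points of $\mathbb{RP}^1$, i.e. modulo $\pi$; since $R_\pi=-I$ has no effect on operator norms, we may assume $\beta_A=\alpha_B$, so this interior rotation is exactly $R_{-\pi/2}$. The operator norm is unchanged by multiplying on either side by an orthogonal matrix (or by $-I$), hence
$$
\|BA\|=\left\|\begin{pmatrix}\|B\|&0\\0&\|B\|^{-1}\end{pmatrix}R_{-\pi/2}\begin{pmatrix}\|A\|&0\\0&\|A\|^{-1}\end{pmatrix}\right\|.
$$

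It then remains to evaluate the right-hand side. Using $R_{-\pi/2}=\left(\begin{smallmatrix}0&1\\-1&0\end{smallmatrix}\right)$, a direct multiplication gives
$$
\begin{pmatrix}\|B\|&0\\0&\|B\|^{-1}\end{pmatrix}R_{-\pi/2}\begin{pmatrix}\|A\|&0\\0&\|A\|^{-1}\end{pmatrix}=\begin{pmatrix}0&\|B\|/\|A\|\\-\|A\|/\|B\|&0\end{pmatrix},
$$
an anti-diagonal matrix whose two singular values are $\|B\|/\|A\|$ and $\|A\|/\|B\|$. Therefore $\|BA\|=\max(\|B\|/\|A\|,\|A\|/\|B\|)$, as claimed.

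There is no genuine obstacle here beyond the bookkeeping of angle conventions, and even that is harmless: whether the interior rotation collapses to $R_{+\pi/2}$ or $R_{-\pi/2}$, the resulting product is anti-diagonal with entries of absolute value $\|B\|/\|A\|$ and $\|A\|/\|B\|$, so the final norm is the same; likewise the identification $x^+(A)=x^-(B)$ is only an equality of directions modulo $\pi$, which is irrelevant since $-I$ preserves norms. As a sanity check one may note that the lower bound $\|BA\|\ge\|A\|/\|B\|$ is transparent geometrically --- the top singular direction of $A$ is stretched to length $\|A\|$, landing in the direction $x^+(A)=x^-(B)$, which $B$ then contracts to length $\|B\|^{-1}$ --- and applying the same observation to $(BA)^{-1}=A^{-1}B^{-1}$ (using $\|(BA)^{-1}\|=\|BA\|$ together with $x^+(B^{-1})=x^-(B)$ and $x^-(A^{-1})=x^+(A)$, so that $x^+(B^{-1})=x^-(A^{-1})$) yields $\|BA\|\ge\|B\|/\|A\|$; the explicit computation above supplies the matching upper bound.
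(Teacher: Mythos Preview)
Your proof is correct and follows exactly the route the paper indicates: the paper records the decomposition $A=\pm R_{\beta}\,\mathrm{diag}(\|A\|,\|A\|^{-1})\,R_{\alpha+\pi/2}^{-1}$ and then states the lemma as an immediate consequence (``In particular, the following statement holds''), referring to \cite{GK} for details. You have simply written out that immediate consequence explicitly, including the harmless bookkeeping about angles modulo~$\pi$ and signs, arriving at the same anti-diagonal matrix whose norm is $\max(\|B\|/\|A\|,\|A\|/\|B\|)$.
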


We will also use the following lemma: 
\begin{lemma}[Lemma 4.14 from \cite{GK}]\label{l:x-C-image}
Let $A\in \SL(2,\R)\setminus SO(2,\R)$, $x\in \Sc$ be a point on the circle, and $v_x$ be some vector in the corresponding direction. Then:
\begin{itemize}
\item $\dist (f_A(x),x^+(A))\le \frac{\pi}{2} \cdot \frac{|v_x|/|Av_x|}{\|A\|}$,
\item $\dist (x,x^-(A))\le \frac{\pi}{2} \cdot \frac{|Av_x|/|v_x|}{\|A\|}$,
\item If we have $f_A'(x)<\frac{1}{C}$, then $\|A\|\ge \sqrt{C}$ and $x^+(A)$ belongs to $\frac{\pi }{2C}$-neighborhood of $f_A(x)$.
\end{itemize}
\end{lemma}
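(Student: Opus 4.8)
\emph{Plan of proof.} The statement is purely two-dimensional and elementary, and the plan is to reduce to the diagonal case and then compute. By homogeneity, both sides of each of the three estimates are unchanged when $v_x$ is rescaled, so we may assume $|v_x|=1$. Next, recall the decomposition $A=\pm R_\beta\,\mathrm{diag}(\|A\|,\|A\|^{-1})\,R_{\alpha+\pi/2}^{-1}$ displayed above: composing $A$ with rotations on the left and on the right rotates the target and source copies of $\Sc$ rigidly and leaves $\|A\|$, the value $|Av_x|$ (with $v_x$ adjusted by the right rotation), and, by \eqref{eq:der-norms}, the derivative $f_A'(x)$ unchanged, while carrying the pair $(x,x^-(A))$ and the pair $(f_A(x),x^+(A))$ to their rigid rotations. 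Since $\dist$ on $\Sc$ is rotation-invariant, it suffices to treat $A=D:=\mathrm{diag}(s,s^{-1})$ with $s=\|A\|\ge1$. For such $A$, inspecting the ellipses obtained by applying $D$ and $D^{-1}$ to the unit circle shows that $x^+(D)$ is the point of $\Sc$ corresponding to $e_1=(1,0)$ and $x^-(D)$ the one corresponding to $e_2=(0,1)$.

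Now I would write $v_x=(\cos\phi,\sin\phi)$, so that $Dv_x=(s\cos\phi,s^{-1}\sin\phi)$ and $|Dv_x|^2=s^2\cos^2\phi+s^{-2}\sin^2\phi$. The angle between a nonzero vector $w=(w_1,w_2)$ and the $e_1$-axis equals $\arcsin(|w_2|/|w|)$, which, by the elementary inequality $t\le\frac{\pi}{2}\sin t$ on $[0,\pi/2]$ (concavity of $\sin$), is at most $\frac{\pi}{2}\cdot|w_2|/|w|$; applied with $w=Dv_x$ this gives
\[
\dist(f_D(x),x^+(D))\le\frac{\pi}{2}\cdot\frac{s^{-1}|\sin\phi|}{|Dv_x|}\le\frac{\pi}{2}\cdot\frac{1/|Dv_x|}{s},
\]
which is the first bullet, since $|v_x|=1$ and $\|A\|=s$. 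Likewise, the angle between $v_x$ and the $e_2$-axis is $\arcsin(|\cos\phi|)\le\frac{\pi}{2}|\cos\phi|$, and from $|Dv_x|^2\ge s^2\cos^2\phi$ we get $|\cos\phi|\le|Dv_x|/s$, which yields the second bullet.

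For the third bullet I would argue directly from the first: if $f_A'(x)<\frac1C$, then $|Av_x|^2/|v_x|^2>C$ by \eqref{eq:der-norms}, hence $\|A\|\ge|Av_x|/|v_x|>\sqrt{C}$; feeding $|v_x|/|Av_x|<1/\sqrt{C}$ and $\|A\|>\sqrt{C}$ into the first bullet gives $\dist(f_A(x),x^+(A))<\frac{\pi}{2}\cdot\frac{1}{\sqrt{C}\cdot\sqrt{C}}=\frac{\pi}{2C}$.

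There is no genuine obstacle here. The only points that require attention are fixing the metric normalization on $\Sc$ so that the reduction to diagonal form is by isometries and the constant $\frac{\pi}{2}$ comes out correctly, and keeping straight which of $x^\pm(A)$ is the major axis of the image of the unit circle under $A$ and which comes from $A^{-1}$. Once $A$ is diagonalized, the whole argument is the inequality $t\le\frac{\pi}{2}\sin t$ applied twice, together with the identity $f_A'=|v|^2/|Av|^2$.
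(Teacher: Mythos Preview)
Your argument is correct. The paper does not give its own proof of this lemma; it simply quotes it from \cite{GK}, so there is no in-paper proof to compare against. Your reduction to the diagonal case via the singular-value decomposition, followed by the elementary bound $\arcsin u\le\frac{\pi}{2}u$ on $[0,1]$, is exactly the natural approach and matches the standard proof. The one point you rightly flag---the normalization of $\dist$ on $\Sc\cong\mathbb{RP}^1$---is the only thing to pin down: with the angular metric (total length~$\pi$) your computation gives the constant $\frac{\pi}{2}$ on the nose, while with the $\R/\Z$ metric used elsewhere in the paper one would get $\frac{1}{2}$, which is only stronger.
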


Let us now prove the ``Cancellation'' part~\ref{i:m4} of the conclusions of Theorem~\ref{t:main}; to do that, we have to handle the ``jump'' intervals. 
Namely, assume that the conclusions of Lemma~\ref{l:LD-RD} hold, and $J_i$ is a ``jump'' interval in terms of Proposition~\ref{p:classes}. Recall that we denoted $\mjump:=m_0+\eps' n$, where $m_0$ is given by the definition of ``jump interval'' in Proposition~\ref{p:classes}.
Notice (we will use it later) that by increasing
$\mjump$ by~$1$ we can (and do) assume that
\begin{equation}\label{e.assumption}
|X_{\mjump,i}|\ge 1+ \delta \frac{|J|}{N},
\end{equation}  
where $\delta>0$ is given by the monotonicity assumption \ref{B:Monotonicity}.

We start by handling the case when the jump moment happens too close to the first or the last iteration.
\begin{lemma}\label{l:close}
Let $\eps', \eps''>0$, and assume that the conclusions of  Proposition~\ref{p:classes} hold,
that $J_i$  is a ``jump'' interval with associated index $m_0$, and set $\mjump:=m_0+\eps' n$.
Assume also that the conclusions of the part~\ref{i:m3} of Theorem~\ref{t:main} hold with the value $\eps'$ instead of $\eps$, and that $\mjump\le \eps'' n $ or $\mjump \ge (1-\eps'') n$.
Then the conclusions of the ``Cancellation'' part~\ref{i:m4} of Theorem~\ref{t:main} are satisfied for arbitrary $a\in J_i$, provided that one has
$$
2 \eps' +2C_{\max} \eps''<\eps. 
$$
\end{lemma}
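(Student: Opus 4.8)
The plan is to treat Lemma~\ref{l:close} as the easy boundary case of the cancellation statement, where the ``jump'' is so close to one end of the product that essentially no cancellation is needed --- the trivial bound $\log\|T\|\le C_{\max}\cdot(\text{number of factors})$ on the short piece already gives what we want. Concretely, the conclusion we must verify is that there is $a_k\in J_i$ with $\log\|T_{m,a_k,\omega}\|\in U_{n\eps}(\psi_{\mjump}(m,a_k))$ for all $m=1,\dots,n$, where $\psi_{\mjump}(m,a)=L_m(a)$ for $m<\mjump$ and $|L_{\mjump}(a)-L_{[\mjump,m]}(a)|$ for $m\ge\mjump$.

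First I would pick $a_k\in J_i$ arbitrarily (as the statement allows) and recall that part~\ref{i:m3} of Theorem~\ref{t:main}, assumed to hold with $\eps'$ in place of $\eps$, already gives $\log\|T_{m,a_k,\omega}\|\in U_{n\eps'}(L_m(a_k))$ for $m\le \mjump$ and $\log\|T_{[\mjump,m],a_k,\omega}\|\in U_{n\eps'}(L_{[\mjump,m]}(a_k))$ for $m>\mjump$. For $m<\mjump$ this is immediately the desired estimate with room to spare since $\eps'<\eps$. The work is entirely for $m\ge\mjump$: we must compare $\log\|T_{m,a_k,\omega}\|$ to $|L_{\mjump}(a_k)-L_{[\mjump,m]}(a_k)|$.

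Next I would split into the two subcases. If $\mjump\le\eps''n$, then the initial segment is short, so $\log\|T_{\mjump,a_k,\omega}\|\le C_{\max}\mjump\le C_{\max}\eps''n$ and likewise $L_{\mjump}(a_k)\le C_{\max}\eps''n$; writing $T_{m,a_k,\omega}=T_{[\mjump,m],a_k,\omega}T_{\mjump,a_k,\omega}$ and using submultiplicativity of the norm together with $\|B^{-1}\|=\|B\|$ for $B\in\SL(2,\R)$, one gets $\bigl|\log\|T_{m,a_k,\omega}\|-\log\|T_{[\mjump,m],a_k,\omega}\|\bigr|\le C_{\max}\eps''n$, and similarly $\bigl|L_{[\mjump,m]}(a_k)-\bigl(L_{\mjump}(a_k)+\text{something bounded by }C_{\max}\eps''n\bigr)\bigr|$ is controlled; combining with the $U_{n\eps'}$ bound from part~\ref{i:m3} and the hypothesis $2\eps'+2C_{\max}\eps''<\eps$ yields the claim. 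If instead $\mjump\ge(1-\eps'')n$, then for every $m\ge\mjump$ the remaining segment $[\mjump,m]$ has length at most $\eps''n$, so $\log\|T_{[\mjump,m],a_k,\omega}\|\le C_{\max}\eps''n$ and $L_{[\mjump,m]}(a_k)\le C_{\max}\eps''n$; hence $\psi_{\mjump}(m,a_k)=|L_{\mjump}(a_k)-L_{[\mjump,m]}(a_k)|$ differs from $L_{\mjump}(a_k)$, and from $L_m(a_k)$, by at most $C_{\max}\eps''n$ (using also the near-additivity $L_m(a_k)\in[L_{\mjump}(a_k),\,L_{\mjump}(a_k)+L_{[\mjump,m]}(a_k)+o(n)]$ from Proposition~\ref{p.additivity}), while part~\ref{i:m3} gives $\log\|T_{m,a_k,\omega}\|\in U_{n\eps'}(L_m(a_k))$; again $2\eps'+2C_{\max}\eps''<\eps$ closes the estimate.

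The only mild subtlety --- and the step I would be most careful about --- is bookkeeping the sign inside $\psi$ and making sure the constants line up: one has to keep track that $\psi_{\mjump}(m,a_k)=|L_{\mjump}(a_k)-L_{[\mjump,m]}(a_k)|$, and that in the regime $\mjump\ge(1-\eps'')n$ it is $L_{[\mjump,m]}$ that is small while $L_{\mjump}$ may be of order $n$, whereas in the regime $\mjump\le\eps''n$ it is $L_{\mjump}$ that is small. In both regimes, though, one of the two pieces is $O(C_{\max}\eps''n)$, so $\psi_{\mjump}(m,a_k)$ is within $C_{\max}\eps''n$ of the other, larger piece, and the triangle inequality with the already-available $U_{n\eps'}$ estimates finishes it; there is no probabilistic content here beyond what is inherited from parts~\ref{i:m3} and Proposition~\ref{p.additivity}, so no genuine obstacle arises.
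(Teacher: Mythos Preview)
Your approach is essentially the same as the paper's --- split into the two subcases, observe that one of the two pieces of the product has length at most $\eps'' n$ and is therefore controlled by the trivial bound $C_{\max}\eps'' n$, and combine with the $U_{n\eps'}$-estimates from part~\ref{i:m3}.

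There is one small slip in your second subcase ($\mjump\ge(1-\eps'')n$). You write that ``part~\ref{i:m3} gives $\log\|T_{m,a_k,\omega}\|\in U_{n\eps'}(L_m(a_k))$'' for $m\ge\mjump$, but part~\ref{i:m3} only gives that estimate for $m\le\mjump$; for $m>\mjump$ it controls $\log\|T_{[\mjump,m],a_k,\omega}\|$ instead. The clean fix --- and this is exactly what the paper does, by symmetry with the first subcase --- is to compare $\psi_{\mjump}(m,a_k)$ to $L_{\mjump}(a_k)$ rather than to $L_m(a_k)$. Since $L_{[\mjump,m]}(a_k)\le C_{\max}\eps'' n$ one has $|\psi_{\mjump}(m,a_k)-L_{\mjump}(a_k)|\le C_{\max}\eps'' n$; and since $\log\|T_{\mjump,a_k,\omega}\|\in U_{n\eps'}(L_{\mjump}(a_k))$ (which part~\ref{i:m3} \emph{does} give, at the endpoint $m=\mjump$) together with $\bigl|\log\|T_{m,a_k,\omega}\|-\log\|T_{\mjump,a_k,\omega}\|\bigr|\le\log\|T_{[\mjump,m],a_k,\omega}\|\le C_{\max}\eps'' n$, one gets $|\log\|T_{m,a_k,\omega}\|-\psi_{\mjump}(m,a_k)|\le n\eps'+2C_{\max}\eps'' n<\eps n$. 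This route also removes the need for Proposition~\ref{p.additivity}, whose extra $o(n)$ error term would otherwise have to be absorbed somewhere in the hypothesis $2\eps'+2C_{\max}\eps''<\eps$.
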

\begin{proof}
Consider first the case $\mjump\le \eps''n$.
For $m\le \mjump$, due to the conclusions of part~\ref{i:m3}
we have
$$
\log \|T_{m,a,\omega}\|\le n\eps' + L_{\mjump}(a), \quad \psi_{\mjump}(m, a)=L_{m}(a),
$$
and hence
$$
\left| \log \|T_{m,a,\omega}\| - \psi_{\mjump}(m,a) \right| \le n\eps' + 2 L_{m}(a) \le (\eps'+ 2 C_{\max} \eps'') n < \eps n
$$
thus guaranteeing the desired~\eqref{e.IV}. On the other hand, once $m\ge \mjump$, we have
$$
\log \|T_{\mjump,a,\omega} \| \le n\eps' + L_{\mjump}(a), \quad \log \|T_{[\mjump, m],a,\omega}\| \in U_{n\eps'} (L_{[\mjump, m]}(a)),
$$
hence
\begin{equation}\label{eq:left}
\log \|T_{m,a,\omega}\| \in U_{2 n\eps'+L_{\mjump}(a)} (L_{[\mjump, m]}(a)).
\end{equation}
Therefore, we have
\begin{multline*}
\left| \log \|T_{m,a,\omega}\| -\psi_{\mjump}(m,a)\right|=\left| \log \|T_{m,a,\omega}\| -|L_{\mjump}(a)-L_{[\mjump, m]}(a)| \right| \le \\
2 n\eps'+L_{\mjump}(a) + L_{\mjump}(a) \le 2 n\eps'+2C_{\max}\mjump\le (2 \eps'+2C_{\max}\eps'')n<\eps n.
\end{multline*}
The case $\mjump\ge (1-\eps'') n$ is completely analogous. 
\end{proof}

Let us now consider the case when the jump moment is ``sufficiently away'' from the endpoints of the interval of iterations, $\eps''n<\mjump<(1-\eps'') n$.
First, we find the corresponding value of the parameter $a\in J_i$.

Notice that if $h$ given by Theorem \ref{t.2param}, then for any $0\le m_1<m_2\le n$ and any $a\in J$ we have
$$
L_{[m_1, m_2]}(a)\ge h(m_2-m_1).
$$
Moreover, for some uniform $c_0>0$ and any $0\le m_1\le m_2\le n$ and any $a\in J$ we have
\begin{equation}
(m_2-m_1)\frac{h}{2}-c_0\le L_{m_2}(a)-L_{m_1}(a)\le L_{[m_1, m_2]}(a)\le (m_2-m_1) C_{\max},
\end{equation}
see \cite[Lemma 3.7]{GK22}.
\begin{lemma}\label{l:m-half}
Let $\eps', \eps''>0$ satisfy
\begin{equation}\label{eq:eps-pp}
\frac{h}{4C_1} \eps'' > \eps',
\end{equation}
where $C_1>1$ is given by Proposition \ref{p:derivatives-control}.
For all sufficiently large $n$, the following statement holds.

Assume that the conclusions of Lemma~\ref{l:LD-RD} and of Proposition~\ref{p:classes} hold,
 $J_i$  is a ``jump'' interval with associated index $m_0$, and set $\mjump:=m_0+\eps' n$.
 Assume also that the conclusions of the part~\ref{i:m3} hold with the value~$\eps'$ instead of~$\eps$. Then there exists $a\in J_i$ such that
$$
x^+(T_{\mjump,a,\omega})=x^-(T_{[\mjump, \mjump'],a,\omega}),
$$
where $\mjump':=n$ if $L_{[\mjump, m]}(b_i)\le L_{\mjump}(b_i)$ for all $m=\mjump+1, \ldots, n$, and otherwise $\mjump':=\min\left\{m>\mjump\ |\ L_{[\mjump, m]}(b_i)> L_{\mjump}(b_i)\right\}$.
\end{lemma}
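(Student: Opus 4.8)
The plan is to prove the statement by an intermediate–value (winding) argument on the interval $J_i=[b_{i-1},b_i]$, tracking how the two points $x^+(T_{\mjump,a,\omega})$ and $x^-(T_{[\mjump,\mjump'],a,\omega})$ move as $a$ ranges over $J_i$. I first record that, on the event under consideration, both $T_{\mjump,a,\omega}$ and $T_{[\mjump,\mjump'],a,\omega}$ are uniformly hyperbolic for every $a\in J_i$: by part~\ref{i:m3} one has $\log\|T_{\mjump,a,\omega}\|\ge L_{\mjump}(a)-\eps'n$, and $L_{\mjump}(a)\ge h\mjump$ is of order $n$ — the relevant case, complementary to Lemma~\ref{l:close}, is $\mjump\gtrsim\eps''n$; moreover $\mjump'-\mjump$ is itself of order $n$, since $L_{[\mjump,m]}(b_i)\le C_{\max}(m-\mjump)$ forces $\mjump'-\mjump\gtrsim L_{\mjump}(b_i)/C_{\max}$ (resp.\ $n-\mjump\ge\eps''n$ when $\mjump'=n$), so $\log\|T_{[\mjump,\mjump'],a,\omega}\|$ is of order $n$ as well by part~\ref{i:m3} and~\eqref{eq:L-nh}. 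Hence $x^+(T_{\mjump,a,\omega})$ and $x^-(T_{[\mjump,\mjump'],a,\omega})$ are well defined and depend continuously on $a\in J_i$; lifting them to continuous functions $\theta_1,\theta_2\colon J_i\to\R$ (using $\mathbb{RP}^1=\R/\Z$ in our normalization), the assertion becomes $\theta_1(a^*)-\theta_2(a^*)\in\Z$ for some $a^*\in J_i$, and by continuity it suffices to show that $\theta_1-\theta_2$ has total increment greater than $1$ over $J_i$.

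For $\theta_1$: by~\eqref{eq:derivative-upl} of Proposition~\ref{p:derivatives-control}, $f'_{\mjump,a,\omega}(x_0)\le e^{-2L_{\mjump}(a)+C_1\eps'n}$, which by $L_{\mjump}(a)\gtrsim\eps''n$ and the hypothesis~\eqref{eq:eps-pp} is exponentially small in $n$; the third claim of Lemma~\ref{l:x-C-image} then puts $x^+(T_{\mjump,a,\omega})$ within an exponentially small distance of $f_{\mjump,a,\omega}(x_0)$. Propagating the monotonicity assumption~\ref{B:Monotonicity} through the composition $\tf_{\mjump,a,\omega}=\tf_{A_{\mjump},a}\circ\cdots\circ\tf_{A_1,a}$ — each factor is an orientation–preserving circle map whose $a$-derivative exceeds $\delta$, hence $\frac{d}{da}\tf_{\mjump,a,\omega}(\tx_0)\ge\delta$ — shows that $a\mapsto\tf_{\mjump,a,\omega}(\tx_0)$ is strictly increasing, with total increment over $J_i$ equal to $|X_{\mjump,i}|$, which by the ``jump'' property together with the normalization~\eqref{e.assumption} satisfies $|X_{\mjump,i}|\ge 1+\delta\frac{|J|}{N}$. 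Thus $\theta_1$ is increasing with $\theta_1(b_i)-\theta_1(b_{i-1})\ge |X_{\mjump,i}|-o(1)$, the $o(1)$ being exponentially small in $n$.

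It remains — and this is the crux — to show that $\theta_2$ moves over $J_i$ by strictly less than $|X_{\mjump,i}|-1$; together with the previous paragraph this gives total increment $>1$ for $\theta_1-\theta_2$, and then continuity yields the desired $a^*\in J_i$ with $x^+(T_{\mjump,a^*,\omega})=x^-(T_{[\mjump,\mjump'],a^*,\omega})$. The point $p(a):=x^-(T_{[\mjump,\mjump'],a,\omega})$ is the attracting fixed point of the backward circle map induced by $T_{[\mjump,\mjump'],a,\omega}^{-1}=A_{\mjump+1}(a)^{-1}\cdots A_{\mjump'}(a)^{-1}$; differentiating the fixed–point relation in $a$ and using that the multiplier there equals $\|T_{[\mjump,\mjump'],a,\omega}\|^{-2}=o(1)$ gives a bound of the form $|p'(a)|\le(1+o(1))\,\Lambda'\sum_{m}E_m(a)$, where $\Lambda'$ is a uniform Lipschitz bound (via~\ref{B:C1}) for the lifts of the inverse maps $f_{A(a)^{-1}}$ and $E_m(a)$ — the derivative, along the backward orbit of $p(a)$, of the partial composition corresponding to $T_{[\mjump,m],a,\omega}$ — equals $|T_{[\mjump,m],a,\omega}v_{p(a)}|^2/|v_{p(a)}|^2$ with $v_{p(a)}$ a vector in the direction $p(a)$. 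Since $p(a)$ is the \emph{most contracted} direction of the full product $T_{[\mjump,\mjump'],a,\omega}$, applying $T_{[m,\mjump'],a,\omega}$ yields $E_m(a)\le \|T_{[m,\mjump'],a,\omega}\|^2/\|T_{[\mjump,\mjump'],a,\omega}\|^2$, and this ratio decays geometrically in $\mjump'-m$ by the non‑stationary contraction estimates of Section~\ref{ss.5.6} (Corollary~\ref{c:l-a1}), which furnish the requisite hyperbolicity of the relevant partial products along that orbit; hence $\sum_m E_m(a)$ is controlled and $|p'(a)|$ is uniformly small on $J_i$, so that the variation of $x^-(T_{[\mjump,\mjump'],a,\omega})$ across $J_i$ is $<|X_{\mjump,i}|-1$. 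I expect this last step — making the control of $\sum_m E_m$ sharp enough to beat $|X_{\mjump,i}|-1$, i.e.\ obtaining genuinely strong contraction along the orbit of the contracted direction despite the $\eps'n$‑type errors inherent in the large–deviation bounds — to be the main obstacle, and it is exactly where the non‑stationary argument departs from its stationary counterpart in~\cite{GK} (cf.\ Remark~\ref{r.proofoftypes}).
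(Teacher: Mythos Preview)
Your overall plan --- an intermediate-value argument on the lifts $\theta_1,\theta_2$ of $x^+(T_{\mjump,a,\omega})$ and $x^-(T_{[\mjump,\mjump'],a,\omega})$ --- is exactly the paper's, and your treatment of $\theta_1$ (closeness to $f_{\mjump,a,\omega}(x_0)$ via Lemma~\ref{l:x-C-image}, combined with $|X_{\mjump,i}|\ge 1+\delta|J|/N$) matches the paper essentially verbatim.

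The gap is in your control of $\theta_2$. You attempt to bound $|p'(a)|$ via $\sum_m E_m(a)$ with $E_m\le \|T_{[m,\mjump']}\|^2/\|T_{[\mjump,\mjump']}\|^2$. But the only norm estimates available here are the large-deviation ones (part~\ref{i:m3}, Corollary~\ref{c:upper-finite-m}, Proposition~\ref{p.additivity}), and they all carry additive errors of size $\eps' n$ in the logarithm. Thus your bound becomes $E_m\le \exp(-2L_{[\mjump,m]}(a)+O(\eps' n))$, and the sum is at best $O(e^{O(\eps' n)})$. Multiplying by $|J_i|=|J|/N=|J|e^{-\sqrt[4]{n}}$ gives a variation bound of order $e^{O(\eps' n)-\sqrt[4]{n}}$, which blows up, whereas you need it below $|X_{\mjump,i}|-1$, a quantity that can be as small as $\delta|J|/N$. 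Corollary~\ref{c:l-a1} does not help here: it is an \emph{expectation} contraction used to prove Proposition~\ref{p:classes}, not a pointwise norm bound that would remove the $e^{O(\eps' n)}$ factor.

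The paper sidesteps this entirely with a topological observation you are missing. By~\eqref{eq:derivative-upr} of Proposition~\ref{p:derivatives-control}, for every $a\in J_i$ and every point of the arc $\pi(X'_{\mjump,i})$ the derivative of $f_{[\mjump,\mjump'],a,\omega}$ is at most $\exp(-2L_{[\mjump,\mjump']}(a)+C_1\eps' n)<1$ (using the lower bound~\eqref{e.Lbm} on $L_{[\mjump,\mjump']}$ and the hypothesis~\eqref{eq:eps-pp}). Since $x^-(a)$ is by definition the point where this derivative equals $\|T_{[\mjump,\mjump']}\|^2>1$, it follows that $x^-(a)$ \emph{never enters} the arc $\pi(X'_{\mjump,i})$ as $a$ ranges over $J_i$; in particular it never crosses the midpoint $r$ of that arc. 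Meanwhile $x^+(a)$ makes a full turn and hence crosses $r$ at least twice. Choosing lifts so that $\tilde x^+-\tilde x^-$ changes sign then yields the desired $a$. No estimate on the speed of $x^-$ is needed --- only that it is confined to the complement of a fixed arc.
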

\begin{proof}[Proof of Lemma \ref{l:m-half}]
Notice that equicontinuity of the functions $\frac{1}{m_2-m_1}L_{[m_1, m_2]}(a)$, see Lemma \ref{l.equim1m2}, implies that for large enough $n$ and any $m$ between $\mjump$ and $n$ we have
\begin{equation}\label{e.Lab}
|L_{[\mjump, m]}(a)-L_{[\mjump, m]}(b_i)|<\eps'(\mjump'-m)<\eps'n.
\end{equation}
In particular, we have
$$
|L_{[\mjump, \mjump']}(a)-L_{[\mjump, \mjump']}(b_i)|<\eps'(\mjump'-\mjump)<\eps'n.
$$
We claim that for any $a\in J_i$ one has
\begin{equation}\label{e.Lbm}
L_{[\mjump, \mjump']}(a)>(\eps''h-\eps')n.
\end{equation}
Indeed, if $\mjump'<n$, then
$$
L_{[\mjump, \mjump']}(a)>L_{\mjump}(b_i)-\eps'n>\mjump h-\eps'n>(\eps''h-\eps')n,
$$
and if $\mjump'=n$, then
$$
L_{[\mjump, \mjump']}(a)>(n-\mjump)h> \eps''hn> (\eps''h-\eps')n.
$$
Note that the uniformity estimates imply that the products $T_{\mjump,a,\omega}$ and $T_{[\mjump, \mjump'],a,\omega}$
are of norm bounded away from~1 for all $a\in J_i$. Indeed, the conclusions of the part~\ref{i:m3} imply that
$$
\log \|T_{\mjump,a,\omega} \| > L_{\mjump}(a)- n \eps' > n( \eps'' h -\eps') >0,
$$
and
$$
\log \|T_{[\mjump,\mjump'], a,\omega} \| > L_{[\mjump, \mjump']}(a)-\eps'n>(\eps''h-2\eps')n>0,
$$
where we used (\ref{e.Lbm}), and in both cases the last inequalities are due to~\eqref{eq:eps-pp}.


Hence the directions
$x^+(T_{\mjump,a,\omega})$ and $x^-(T_{[\mjump,\mjump'],a,\omega})$ depend continuously on~$a\in J_i$. To shorten the notations, we denote
$$
x^+(a):=x^+(T_{\mjump,a,\omega}), \quad x^-(a):=x^-(T_{[\mjump, \mjump'],a,\omega}).
$$

 Lemma~\ref{l:x-C-image} implies that $x^+(a)$ stays $\frac{\pi}{2} f_{\mjump,a,\omega}'(x_0)$-close to the image~$f_{\mjump,a,\omega}(x_0)$
as $a$ varies in $J_i$. At the same time, for any $a\in J_i$, due to Proposition \ref{p:derivatives-control}, we have
\begin{multline*}
  \frac{\pi}{2}f_{\mjump,a,\omega}'(x_0)< \frac{\pi}{2}\exp(-2L_{\mjump}(a)+ C_1n \eps') <\frac{\pi}{2}\exp\left(-2\mjump h + C_1n \eps' \right) < \\
< \frac{\pi}{2}\exp\left((-2h\eps'' + C_1\eps')n \right)  <\frac{\delta |J|}{2N},
\end{multline*}
where we used the assumption $\mjump \ge n\eps''$, inequality \eqref{eq:eps-pp}, and the subexponential growth of~$N=\exp(\sqrt[4]{n})$.

At the same time, due to (\ref{e.assumption}),
 we have $|X_{\mjump, i}'|\ge  \frac{\delta |J|}{N}$.
Hence, as $a$ varies over $J$, the point $x^+(a)$ passes through the midpoint
$$
r:=\pi\left(\frac{(\tx_{\mjump, i-1}+1)+\tx_{\mjump, i}}{2}\right)
$$
of the interval $\pi(X_{\mjump,i}')=\pi([\tx_{\mjump, i-1}+1,\tx_{\mjump, i}])$
at least twice, making the full turn in between; see Figure~\ref{f:x-plus-minus}.

\begin{figure}[!h!]
\begin{center}
\includegraphics{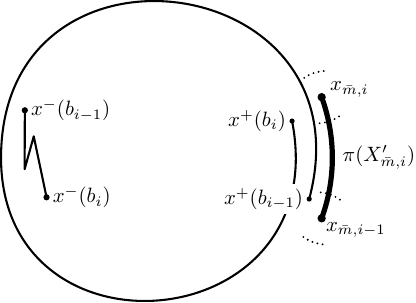}
\end{center}
\caption{While the parameter $a$ varies over a jump interval $J_i$, the $x^+(a):=x^{+}(T_{\mjump,\omega,a})$ makes more than a full turn,
staying in a neighborhood of the corresponding image $f_{\mjump,a,\omega}(\bar x)$. At the same time, the point $x^-(a):=x^-(T_{[\mjump,\mjump'],a,\omega})$
never enters the interval $X'_{\mjump, i}$ (the arc shown in bold).}\label{f:x-plus-minus}
\end{figure}

At the same time, we know from the
distortion control estimates in the proof of Proposition~\ref{p:derivatives-control}
that the derivatives of $f_{[\mjump,\mjump'],a,\omega}$ on $X_{\mjump,i}'$ do not exceed
$$
\exp(-2L_{[\mjump, \mjump']}(a)+C_1\eps'n)< \exp(-2(\eps''h-\eps')n +C_1\eps' n)  < 1,
$$
 using (\ref{e.Lbm}) for the first inequality and~\eqref{eq:eps-pp} for the last one.

Hence the point $x^-(a)$ never crosses $r$ for $a\in J_i$. Thus, we can choose the lifts $\tilde x^+(a)$ and $\tilde x^-(a)$ on the real line of
$x^+(a)$, $x^-(a)$ respectively such that the difference $\tilde x^+(a)-\tilde x^-(a)$ changes sign while $a$ varies in $J_i$. Hence, there
exists a point $a\in J_i$ for which the directions $x^+(a)$ and $x^-(a)$ coincide.
\end{proof}

We are now ready to conclude the proof of the ``Cancellation'' part~\ref{i:m4}. Take $\eps', \eps''>0$ such that~\eqref{eq:eps-pp}
holds, as well as
$$
\eps'<\frac{\eps}{10}, \quad 2(\eps'+ C_{\max} \eps'') < \eps.
$$

Assume that the conclusions of Lemma~\ref{l:LD-RD} hold and of Proposition~\ref{p:classes} hold,
that $J_i$ in its terms is a ``jump'' interval, with $\mjump:=m_0+\eps' n$ being the corresponding jump moment.
Assume also that the conclusions of the part~\ref{i:m3} hold with the value~$\eps'$ instead of~$\eps$.

Let us show that then the part~\ref{i:m4} of conclusions of Theorem~\ref{t:main} are satisfied.
Indeed, if $\mjump\le \eps'' n$ or $\mjump\ge (1-\eps'') n$, this directly follows from Lemma~\ref{l:close}. Otherwise
we can apply Lemma~\ref{l:m-half}; take $a_i$ to be the value of the parameter $a$ given by Lemma~\ref{l:m-half},
and let us check that \eqref{e.IV} holds for all $m= 1,\dots, n$.

Note that for any $m\in [1,\mjump]$ the estimates of the part~\ref{i:m3} imply
\begin{equation}\label{eq:m1}
\log \|T_{m,a_i,\omega}\| \in U_{\eps' n}(L_m(a_i)) = U_{\eps' n}(\psi_{\mjump}(m, a_i)).
\end{equation}

We have now to handle the case $m\in [\mjump,n]$. The next steps depend on whether $\mjump'<n$ or $\mjump'=n$. 

Consider first the case $\mjump'<n$.
Notice that by definition of $\mjump'$ we have $L_{[\mjump, \mjump']}(b_i)>L_{\mjump}(b_i)$ and $L_{[\mjump, \mjump'-1]}(b_i)\le L_{\mjump}(b_i)$, hence
$$
|L_{[\mjump, \mjump']}(b_i)-L_{\mjump}(b_i)|\le C_{\max}.
$$
Together with (\ref{e.Lab}) this implies that
\begin{equation}\label{e.symmetry}
|L_{[\mjump, \mjump']}(a_i)-L_{\mjump}(a_i)|\le 3\eps'n.
\end{equation}
Then, applying Lemma~\ref{l:cancel} and the uniformity estimates on the intervals $[1,\mjump]$ and $[\mjump, \mjump']$, we get
\begin{multline}\label{eq:almost-zero}
\log \|T_{\mjump',a_i,\omega}\| = \left| \log \|T_{\mjump,a_i,\omega}\| -  \log \|T_{[\mjump,\mjump'],a_i,\omega}\| \right|
\\
\le \left| \log \|T_{\mjump,a_i,\omega}\| - L_{\mjump}(a_i) \right| +\left| \log \|T_{[\mjump,\mjump'],a_i,\omega}\| - L_{[\mjump, \mjump']}(a_i)\right|+\left|L_{\mjump}(a_i) - L_{[\mjump, \mjump']}(a_i)\right|\le \\
\le \eps'n+\eps'n+3\eps'n=5\eps'n.
\end{multline}
For any $m\in [\mjump,\mjump']$, due to the part~\ref{i:m3} of Theorem~\ref{t:main} we have
$$
\log\|T_{\mjump, a_i,\omega}\|\in U_{\eps'n}(L_{\mjump}(a_i)), \ \ \ \log\|T_{[\mjump,  m],a_i,\omega}\|\in U_{\eps'n}(L_{[\mjump, m]}(a_i)),
$$
and hence we get the following  estimate from below:
\begin{multline*}
  \log \|T_{m,a_i,\omega}\|\ge \log\|T_{\mjump, a_i, \omega}\|-\log\|T_{[\mjump, m], a_i, \omega}\|\ge \\
  \ge  L_{\mjump}(a_i)-L_{[\mjump, m]}(a_i)-2\eps'n=\psi_{\mjump}(m, a_i)-2\eps'n.
\end{multline*}
To get an estimate from above, we can represent
$$
T_{m,a_i,\omega} = T_{[m, \mjump'],a_i,\omega}^{-1} T_{\mjump',a_i,\omega}.
$$
The log-norm of the latter factor does not exceed $5\eps'n$ by~\eqref{eq:almost-zero}, while for the former factor we have
$$
\|T_{[m, \mjump'],a_i,\omega}^{-1}\|=\|T_{[m, \mjump'],a_i,\omega}\|.
$$
Due to Corollary \ref{c:upper-finite-m}, Proposition \ref{p.additivity}, and by using (\ref{e.symmetry}), we get
\begin{multline*}
  \log\|T_{[m, \mjump'],a_i,\omega}\|\le L_{[m, \mjump']}(a_i)+\eps'n\le L_{[\mjump, \mjump']}(a_i)-L_{[\mjump, m]}(a_i) + 2\eps'n\le \\
  \le L_{\mjump}(a_i)-L_{[\mjump, m]}(a_i) + 5\eps'n=\psi_{\mjump}(m, a_i)+ 5\eps'n,
\end{multline*}
and, therefore,
$$
\log\|T_{m,a_i,\omega}\|\le \log\|T_{\mjump',a_i,\omega}\|+ \log\|T_{[m, \mjump'],a_i,\omega}\|\le \psi_{\mjump}(a_i, m)+ 10n\eps'.
$$
Combining lower and upper bounds we obtain
$$
\log\|T_{m,a_i,\omega}\|\in U_{10n\eps'}(\psi_{\mjump}(a_i, m)).
$$
As we have $\eps'<\frac{\eps}{10}$, we obtained the desired estimate.

Now let us consider the case when $\mjump'<n$ and $m\in [\mjump',n]$. Again, due to (\ref{e.symmetry}), (\ref{eq:almost-zero}), Corollary \ref{c:upper-finite-m}, and Proposition \ref{p.additivity}, we have
\begin{multline*}
  \log\|T_{m,a_i,\omega}\|\le \log\|T_{\mjump',a_i,\omega}\|+\log\|T_{[\mjump', m],a_i,\omega}\|\le \log\|T_{[\mjump', m],a_i,\omega}\|+5\eps'n\le \\
\le L_{[\mjump', m]}(a_i)+6\eps'n\le L_{[\mjump, m]}(a_i)-L_{[\mjump, \mjump']}(a_i)+7\eps'n\le L_{[\mjump, m]}(a_i)-L_{\mjump}(a_i)+10\eps'n=\\
=\psi_{\mjump}(m, a_i)+10\eps'n.
\end{multline*}
Similarly, since
$$
T_{m,a_i,\omega}=T_{[\mjump', m],a_i,\omega}T_{\mjump',a_i,\omega},
$$
we get an estimate from below:
$$
  \log\|T_{m,a_i,\omega}\|\ge \log\|T_{[\mjump', m],a_i,\omega}\| - \log\|T_{\mjump',a_i,\omega}\|\ge \log\|T_{[\mjump', m],a_i,\omega}\|-5\eps'n
$$
Now, since
$$
T_{[\mjump', m],a_i,\omega}=T_{[\mjump, m],a_i,\omega}T_{[\mjump, \mjump'],a_i,\omega}^{-1},
$$
using (\ref{e.symmetry}) and the estimates of the part~\ref{i:m3} we have
\begin{multline*}
  \log\|T_{[\mjump', m],a_i,\omega}\|\ge \log\|T_{[\mjump, m],a_i,\omega}\| - \log\|T_{[\mjump, \mjump'],a_i,\omega}\|\ge \\
\ge L_{[\mjump, m]}(a_i)-\eps'n - L_{[\mjump, \mjump']}(a_i)-\eps'n \ge  L_{[\mjump, m]}(a_i) -L_{\mjump}-5\eps'n=\psi_{\mjump}(m, a_i)-5\eps'n,
\end{multline*}
and hence
$$
\log\|T_{m,a_i,\omega}\|\ge \psi_{\mjump}(m, a_i)-10\eps'n.
$$
Putting estimates from above and from below together, we obtain
$$
\log\|T_{m,a_i,\omega}\|\in U_{10n\eps'}(\psi_{\mjump}(a_i, m)).
$$

Finally, consider the case $\mjump'=n$. 
The estimates of the part~\ref{i:m3} imply
$$
\log \|T_{\mjump,a_i,\omega}\| \in U_{n\eps'}(L_{\mjump}(a_i)), \quad  \log \|T_{[\mjump,n],a_i,\omega}\| \in U_{n\eps'}(L_{[\mjump, n]}(a_i)),
$$
and thus we have
\begin{equation}\label{eq:bm-p}
\log \|T_{n,a_i,\omega}\| \in U_{2n\eps'} (|L_{\mjump}(a_i)-L_{[\mjump, n]}(a_i)|)= U_{2n\eps'} (\psi_{\mjump}(a_i, n)).
\end{equation}

Now, for any $m\in [\mjump,n]$ we have two representations for $T_{m,a_i,\omega}$:
\begin{equation}\label{eq:steps}
T_{m,a_i,\omega}=T_{[\mjump,m],a_i,\omega}T_{\mjump,a_i,\omega} = T_{[m,n],a_i,\omega}^{-1} T_{n,a_i,\omega}.
\end{equation}
By Corollary \ref{c:upper-finite-m} we have
$$
\log \| T_{[\mjump,m],a_i,\omega}\| \le L_{[\mjump, m]}(a_i) + n\eps', \quad \log \| T_{[m,n],a_i,\bo}\| \le L_{[m,n]}(a_i) + n\eps',
$$
so using Proposition \ref{p.additivity},  from~\eqref{eq:bm-p}  and~\eqref{eq:steps} we get both a bound from above
\begin{multline*}
\log \|T_{m,a_i,\bo}\| \le \log \|T_{n,a_i,\bo}\|  + \log \|T_{[m,n],a_i,\bo}\| \le
\\
\le (L_{\mjump}(a_i)-L_{[\mjump, n]}(a_i)+2\eps'n)+(L_{[m,n]}(a_i)+\eps'n)\le \\
\le (L_{\mjump}(a_i)-L_{[\mjump, m]}(a_i)) +(L_{[m,n]}(a_i) -L_{[\mjump, m]}(a_i)+L_{[\mjump, m]})+3\eps'n\le \\
\le \psi_{\mjump}(a_i, m)+4\eps'n
\end{multline*}
and from below
\begin{multline*}
\log \|T_{m,a_i,\omega}\| \ge \log \|T_{\mjump,a_i,\omega}\|  - \log \|T_{[\mjump,m],a_i,\omega}\| \ge
\\
\ge (L_{\mjump}(a_i)-\eps'n)-(L_{[\mjump, m]}(a_i)+\eps'n)= \psi_{\mjump}(a_i, m)- 2  \eps'n.
\end{multline*}

Thus, in this case we also get the desired
$$
\log \|T_{m,a_i,\omega}\| \in U_{4n\eps'} (\psi_{\mjump}(m, a_i)),
$$
concluding the proof of the ``Cancellation'' part~\ref{i:m4} of Theorem~\ref{t:main}.

\subsection{Exponential contraction: quantitative statements}\label{ss.5.6}
This part is parallel to Section~4.6 from \cite{GK}. Notice that we cannot use the statements proven in \cite{GK} directly, since in the non-stationary setting some of the proofs must be essentially modified.

We start by establishing the following two contraction-type statements. The first one is a negative Lyapunov exponent type of a statement:

\begin{prop}\label{p.derivparam}
There exist $k_0\in \mathbb{N}$ such that for any $k\ge k_0$, any $x_0\in S^1$, any $\mgr_1, \ldots, \mgr_k\in \mathcal{K}$ and any $ a\in J$ we have
$$
\mathbb{E}_{\mgr_{k}^a, \ldots, \mgr_{1}^a}\log f_{k, a, \omega}'(x_0)\le -1.
$$
\end{prop}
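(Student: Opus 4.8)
The plan is to derive Proposition~\ref{p.derivparam} from the uniform Large Deviation Estimates (Theorem~\ref{t.2param}) together with the uniform lower bound $L_m(a)\ge mh$. Recall the identity $f'_{k,a,\omega}(x_0)=|T_{k,a,\omega}(v_0)|^{-2}$, so that $\log f'_{k,a,\omega}(x_0)=-2\log|T_{k,a,\omega}(v_0)|$, where $v_0$ is a unit vector in the direction $x_0$. Hence
\[
\mathbb{E}_{\mgr_k^a,\dots,\mgr_1^a}\log f'_{k,a,\omega}(x_0)=-2\,\mathbb{E}\log|T_{k,a,\omega}(v_0)|.
\]
So it suffices to show that $\mathbb{E}\log|T_{k,a,\omega}(v_0)|\ge \tfrac12$ for all large $k$, uniformly in $x_0\in S^1$, in $a\in J$, and in the choice of $\mgr_1,\dots,\mgr_k\in\mathcal{K}$.

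First I would split the expectation according to the large deviation event. Fix $\eps=h/2$ and let $\delta>0$ be the constant furnished by Theorem~\ref{t.2param} for this $\eps$ (applied to the compact family $\mathbf{K}=\{\mgr^a\mid \mgr\in\mathcal{K},\,a\in J\}$, as justified right after the statement of Theorem~\ref{t.2param}). On the complement of the bad event we have $\log|T_{k,a,\omega}(v_0)|\ge L_k(a)-\eps k\ge (h-\eps)k=\tfrac{h}{2}k$, using~\eqref{eq:L-nh}. On the bad event, whose probability is at most $e^{-\delta k}$, we use the deterministic two-sided bound $|\log|T_{k,a,\omega}(v_0)||\le 2k\log\Cn$ coming from the $C^1$-boundedness assumption~\ref{B:C1} (each factor $A_j(a)$ has norm at most $\Cn$, so $\|T_{k,a,\omega}\|\le \Cn^{k}$ and likewise $\|T_{k,a,\omega}^{-1}\|\le \Cn^k$). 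Therefore
\[
\mathbb{E}\log|T_{k,a,\omega}(v_0)|\ \ge\ (1-e^{-\delta k})\cdot\frac{h}{2}k\ -\ e^{-\delta k}\cdot 2k\log\Cn\ =\ k\left(\frac{h}{2}-e^{-\delta k}\Big(\frac{h}{2}+2\log\Cn\Big)\right).
\]
Since $h,\delta,\Cn$ are all uniform constants (not depending on the sequence of measures or on $a$), the right-hand side is at least $\tfrac12$ once $k$ is large enough; call the threshold $k_0$. Multiplying by $-2$ gives $\mathbb{E}\log f'_{k,a,\omega}(x_0)\le -1$ for all $k\ge k_0$, as required.

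I do not anticipate a serious obstacle here: the only point requiring a little care is confirming that all the constants entering the estimate ($h$ from~\eqref{eq:L-nh}, $\delta$ from Theorem~\ref{t.2param}, and $\Cn$ from~\ref{B:C1}) are genuinely uniform over $\mathcal{K}^{k}$ and over $a\in J$ and over the initial direction $x_0$ — which is exactly what the "uniform" formulation of Theorem~\ref{t.2param} asserts (the vector version holds for any unit $v_0$ with the same $\delta$), so this is already in hand. The choice $\eps=h/2$ is what makes $L_k(a)-\eps k$ still a positive multiple of $k$; any $\eps<h$ would work, with the constant $-1$ replaced by a suitable negative constant, but one could equally absorb the discrepancy by enlarging $k_0$, so the precise value $-1$ on the right-hand side is harmless.
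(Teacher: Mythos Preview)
Your proof is correct and is essentially the same as the paper's: both use the identity $\log f'_{k,a,\omega}(x_0)=-2\log|T_{k,a,\omega}(v_0)|$, apply Theorem~\ref{t.2param} with $\eps=h/2$ to get $\log|T_{k,a,\omega}(v_0)|\ge \tfrac{h}{2}k$ outside an event of probability $\le e^{-\delta k}$, and handle the bad event via the crude bound coming from~\ref{B:C1}. The only cosmetic difference is that your bad-event bound $2k\log\Cn$ is slightly looser than necessary (since $A\in\SL(2,\R)$ gives $\|A^{-1}\|=\|A\|\le\Cn$, one has $|\log|T_kv_0||\le k\log\Cn$), but this is harmless.
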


The second is an actual contraction:

\begin{lemma}\label{l:r-contr}
For any $\eps_1,\eps_2>0$ there exists $K_1\in \N$ such that for any $a\in J$ and any $x,y\in \Sc$
we have
$$
\P \left( \dist( f_{K_1, a, \omega} (x), f_{K_1, a, \omega} (y)) <\eps_1 \right) > 1- \eps_2.
$$
\end{lemma}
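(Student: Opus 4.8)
The plan is to deduce the contraction from the uniform negative drift of $\log f'$ supplied by Proposition~\ref{p.derivparam}, via a martingale concentration argument, and then to convert smallness of the derivative $f'_{K_1,a,\omega}$ at the two prescribed points $x$ and $y$ into closeness of their images using the third bullet of Lemma~\ref{l:x-C-image}. First I would fix $k\ge k_0$ (with $k_0$ from Proposition~\ref{p.derivparam}) and, for $m\in\N$, set $K:=mk$ and decompose $T_{K,a,\omega}=B_m(a)\cdots B_1(a)$ into the independent blocks $B_j(a)=A_{jk}(a)\cdots A_{(j-1)k+1}(a)$. For a fixed $z\in\Sc$, writing $z_j:=f_{jk,a,\omega}(z)$ and $\Fk_j:=\sigma(A_1,\dots,A_{jk})$, the chain rule gives $\log f'_{K,a,\omega}(z)=\sum_{j=1}^{m}\log f'_{B_j,a}(z_{j-1})$. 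Conditionally on $\Fk_{j-1}$ the point $z_{j-1}$ is fixed and $B_j$ is independent of $\Fk_{j-1}$, so Proposition~\ref{p.derivparam} applied to the measures $\mgr_{(j-1)k+1},\dots,\mgr_{jk}$ and initial point $z_{j-1}$ yields $\E[\log f'_{B_j,a}(z_{j-1})\mid\Fk_{j-1}]\le -1$; moreover, by~\eqref{eq:der-norms} and assumption~\ref{B:C1} one has $|\log f'_{B_j,a}(z_{j-1})|\le 2\log\|B_j(a)\|\le 2k\log\Cn$.

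Consequently $M_m:=\sum_{j=1}^{m}\bigl(\log f'_{B_j,a}(z_{j-1})-\E[\log f'_{B_j,a}(z_{j-1})\mid\Fk_{j-1}]\bigr)$ is a martingale with increments bounded by $D:=4k\log\Cn$, and $\log f'_{K,a,\omega}(z)\le M_m-m$. The Azuma--Hoeffding inequality then gives $\P(\log f'_{K,a,\omega}(z)\ge -m/2)\le\P(M_m\ge m/2)\le e^{-cm}$ with $c:=1/(8D^2)>0$ depending only on $k$ and $\Cn$ --- in particular uniformly in $a\in J$, in $z\in\Sc$, and in the sequence $\{\mgr_n\}\in\mK^{\N}$. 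Given $\eps_1,\eps_2>0$, I would choose $m$ (and set $K_1:=mk$) large enough that $\pi e^{-m/2}<\eps_1$ and $2e^{-cm}<\eps_2$. Applying the bound above to $z=x$ and to $z=y$ and taking a union bound, with probability at least $1-2e^{-cm}>1-\eps_2$ we have $f'_{K_1,a,\omega}(x)<e^{-m/2}$ and $f'_{K_1,a,\omega}(y)<e^{-m/2}$ simultaneously; then the third bullet of Lemma~\ref{l:x-C-image} (with $C=e^{m/2}$) places both $f_{K_1,a,\omega}(x)$ and $f_{K_1,a,\omega}(y)$ within distance $\tfrac{\pi}{2}e^{-m/2}$ of the single direction $x^+(T_{K_1,a,\omega})$, so $\dist(f_{K_1,a,\omega}(x),f_{K_1,a,\omega}(y))\le\pi e^{-m/2}<\eps_1$, uniformly in $a$ and in $x,y$.

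The step I expect to be the main obstacle is not the concentration --- that is routine once Proposition~\ref{p.derivparam} provides the uniform negative conditional drift --- but rather the passage from ``$f'_{K_1,a,\omega}$ is small at the two prescribed points $x,y$'' to ``$f_{K_1,a,\omega}(x)$ and $f_{K_1,a,\omega}(y)$ are close''. A small derivative at a single point does not contract distances in general, since $f_{K_1,a,\omega}$ is a circle diffeomorphism; one needs the geometric fact (precisely the content of Lemma~\ref{l:x-C-image}) that a small value of $f'_{K_1,a,\omega}$ at a point forces that point away from the most-expanded direction $x^-(T_{K_1,a,\omega})$ and hence sends it into a small neighbourhood of the single most-contracted direction $x^+(T_{K_1,a,\omega})$, so that applying this to both $x$ and $y$ collapses them together. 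Note that one cannot instead argue that $x$ and $y$ themselves avoid the random direction $x^-(T_{K_1,a,\omega})$ with high probability, since $x,y$ are fixed in advance while $x^-$ is random; it is essential that the \emph{event} of a small derivative at a fixed point has probability close to $1$, which is exactly what the martingale estimate provides.
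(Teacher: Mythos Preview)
Your argument is correct. The difference from the paper is the source of the concentration estimate: the paper simply invokes the uniform large-deviations bound~\eqref{eq:v0-h-growth} from Theorem~\ref{t.2param}, which directly gives $\P(\log|T_{n,a,\omega}v_z|>\tfrac{h}{2}n)>1-e^{-\delta n}$ for each unit vector $v_z$, and then feeds this into the first bullet of Lemma~\ref{l:x-C-image} to get $\dist(f_{n,a,\omega}(z),x^+(T_{n,a,\omega}))<e^{-nh}$. You instead rebuild a concentration bound from scratch, using the negative drift of Proposition~\ref{p.derivparam} together with Azuma--Hoeffding on the block martingale, and then invoke the third bullet of Lemma~\ref{l:x-C-image}. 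Both routes land in the same place --- high-probability smallness of $f'_{K_1,a,\omega}$ at each prescribed point, hence both images close to the single direction $x^+(T_{K_1,a,\omega})$. The paper's version is shorter because the LDT has already been paid for; your version is more self-contained within the section and makes the dependence on~$k$ and~$\Cn$ explicit, which is harmless here but could be useful if one wanted quantitative control on~$K_1$. Note incidentally that Proposition~\ref{p.derivparam} is itself derived in the paper from~\eqref{eq:v0-h-growth}, so your detour through Azuma--Hoeffding is logically redundant (though not circular): the concentration you recover is a weaker consequence of what Theorem~\ref{t.2param} already provides.
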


\begin{proof}[Proof of Proposition~\ref{p.derivparam}]
Recall that for $A\in \SL(2,\R)$ and a point $x_0\in \Sc$, corresponding to the direction of a unit vector $v_0\in \R^2$, one has
\[
(f_A)'(x_0) = \frac{1}{|Av_0|^2}.
\]
Hence,
\begin{equation}\label{eq:log-f-prime}
\log (f_A)'(x_0) = -2 \log |Av_0|.
\end{equation}
Now, recall that Theorem~\ref{t.2param} provides a lower bound $L_n\ge hn$ and a large deviations type bound for every $\eps>0$: there exists $\delta>0$ such that for all sufficiently large $n$ and any $a\in J$
\[
\mathbb{P}\left\{\left|\log |T_{n, a, \omega} v_0|-L_n(a)\right|>\eps n\right\}<e^{-\delta n}.
\]
Take $\eps=\frac{h}{2}$; this implies that for any $a\in J$ one has $L_n(a)-n\eps \ge \frac{nh}{2}$. Joining this with the lower deviations bound, we get that there exists $\delta>0$ such that  for every sufficiently large $n$ we have
\begin{equation}\label{eq:v0-h-growth}
\forall a\in J \quad \forall v_0, \, |v_0|=1 \quad \mathbb{P}\left\{ \log|T_{n, a, \omega} v_0| >\frac{h}{2} n\right\}> 1-e^{-\delta n}.
\end{equation}
Joining with~\eqref{eq:log-f-prime}, we get for all sufficiently large $n$ an upper bound for the expectation
\begin{multline*}
\E_{\mgr_1,\dots,\mgr_n} \log f_{n,a,\omega}'(x_0) =  \E_{\mgr_1,\dots,\mgr_n}  (-2 \log |T_{n,a,\omega}(v_0)|) \le
\\
\le -hn \cdot (1-e^{-\delta n}) + n \log \Cn \cdot e^{-\delta n} = - n (h -e^{-\delta n} (h+\log \Cn)).
\end{multline*}
where we have used a uniform upper bound $\|A\|\le \Cn$ for any $A\in \supp \mgr^a$, any $a\in J$ and any $\mgr\in \mK$. As the second factor in the right hand side tends to $h$ as $n\to\infty$, for all sufficiently large $n$ we get the desired
\[
\E_{\mgr_1,\dots,\mgr_n} \log f_{n,a,\omega}'(x_0)  \le - n (h -e^{-\delta n} (h+\log \Cn)) < -1.
\]
\end{proof}

\begin{proof}[Proof of Lemma~\ref{l:r-contr}]
Recall that Lemma~\ref{l:x-C-image} states
\[
\dist (f_A(x),x^+(A))\le \frac{\pi}{2} \cdot \frac{|v_x|/|Av_x|}{\|A\|}
\]
Applying~\eqref{eq:v0-h-growth} for $v_x$, we get that for all sufficiently large $n$ for every $a\in J$
\[
\P \left( \dist( f_{n, a, \omega} (x), x^+(T_{n,a,\omega})) < e^{-nh} \right) \ge 1- e^{-\delta n}.
\]
The same applies to the vector $v_y$, and thus
\[
\P \left( \dist( f_{n, a, \omega} (x), f_{n, a, \omega} (y)) < 2 e^{-nh} \right) \ge 1- 2e^{-\delta n}.
\]
Taking $n$ sufficiently large so that $2 e^{-nh}<\eps_1$, $2 e^{-\delta n}<\eps_2$ concludes the proof.
\end{proof}

\begin{defi}
For every $s\in (0,1]$ let the function $\varphi_s(x,y)$ be defined as
\begin{equation}\label{eq:phi-def}
\varphi(x,y):=(\dist_{\Sc}(x,y))^s.
\end{equation}
\end{defi}

The next statement provides another view on the contraction of orbits on the projective line; it states that for a sufficiently small $s$ the $s$-th power of the distance decreases in average under the random dynamics.
It is deduced from the two above contraction statements, joined with the estimate $d^s =1+ s \log d +O(s^2)$, in the same way as its stationary counterpart was established in~\cite[Proposition~4.17]{GK}).

\begin{prop}\label{p:phi}
There are constants $s\in (0,1]$ and $K_{\varphi}\in \N$ such that for any $a\in J$ one has
\begin{multline}\label{eq:phi-contraction}
\E \varphi(f_{K_{\varphi}, a,\omega}(x),f_{K_{\varphi}, a,\omega}(y)) =
\\
\int \varphi(f_{K_{\varphi}, a,\omega}(x),f_{K_{\varphi}, a,\omega}(y)) d\mgr_1^a\ldots d\mgr_{K_\varphi}^a
\le  \frac{1}{2} \varphi(x,y).
\end{multline}
\end{prop}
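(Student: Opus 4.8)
The plan is to derive the $\varphi$-contraction from the two contraction statements that precede it --- Proposition~\ref{p.derivparam} (average logarithmic contraction of the derivative) and Lemma~\ref{l:r-contr} (contraction with high probability of the distance between two orbits) --- by a now-standard $s \to 0$ argument, exactly mirroring the stationary proof in \cite[Proposition~4.17]{GK}. The key point is the elementary expansion $d^{s} = 1 + s\log d + O(s^{2})$ for $d$ in a bounded range of scales, which turns "$\log$ decreases on average" into "the $s$-th power decreases on average" once $s$ is small enough.

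\textbf{Step 1: fix a good block length.} First I would choose $k$ so that Proposition~\ref{p.derivparam} applies, and simultaneously use Lemma~\ref{l:r-contr} to fix $K_1$ so that for any $a\in J$ and any $x,y\in\Sc$ one has $\dist(f_{K_1,a,\omega}(x),f_{K_1,a,\omega}(y))<\eps_1$ with probability $>1-\eps_2$, for suitable small $\eps_1,\eps_2$ to be pinned down later. Take $K_\varphi$ to be a common multiple (or a sum) of such block lengths; the $C^1$-boundedness assumption~\ref{B:C1} gives a uniform Lipschitz bound $\Lx$ on each $\tf_{A,a}$, hence a uniform two-sided bound $\Lx^{-K_\varphi}\le f_{K_\varphi,a,\omega}'(x)\le \Lx^{K_\varphi}$ on the derivatives along a block, uniformly in $\omega$, $a$ and $x$. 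This is what keeps all logarithms encountered in a single block of length $K_\varphi$ inside a fixed compact interval, so that the $O(s^2)$ error in $d^s=1+s\log d + O(s^2)$ is uniform.

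\textbf{Step 2: compare $\varphi(f(x),f(y))/\varphi(x,y)$ with $s\log f'$.} Write $d = \dist_{\Sc}(f_{K_\varphi,a,\omega}(x),f_{K_\varphi,a,\omega}(y))$ and $d_0 = \dist_{\Sc}(x,y)$. By the mean value theorem, $d/d_0 = f_{K_\varphi,a,\omega}'(z)$ for some $z$ between $x$ and $y$ (on the circle, for $x,y$ close; for $x,y$ far apart one splits into a bounded number of short arcs, or simply uses that then Lemma~\ref{l:r-contr} already forces contraction with high probability). Hence $\varphi(f(x),f(y))/\varphi(x,y) = (d/d_0)^{s} = (f'_{K_\varphi,a,\omega}(z))^{s}$, and the expansion gives
\[
\E\,\frac{\varphi(f_{K_\varphi,a,\omega}(x),f_{K_\varphi,a,\omega}(y))}{\varphi(x,y)} \le 1 + s\,\E\log f_{K_\varphi,a,\omega}'(z) + C s^{2},
\]
with $C$ depending only on $K_\varphi$ and $\Lx$. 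The subtlety is that $z$ is not $x_0$ and is random; this is handled exactly as in \cite{GK} by combining the two inputs: on the high-probability event of Lemma~\ref{l:r-contr} the two points are already $\eps_1$-close (so the ratio is at most $1+s\log(\eps_1/d_0)+\dots$, which is harmless once $\eps_1$ is small), while Proposition~\ref{p.derivparam} controls the expectation of $\log f'$ by $-1$ uniformly in the base point and the parameter. Choosing $s$ small enough that $Cs^2 < s/2$, and $\eps_1,\eps_2$ small enough to absorb the exceptional-event contribution into the remaining $s/2$, yields $\E\,\varphi(\cdot,\cdot) \le (1 - s/2)\,\varphi(x,y) \le \tfrac12\varphi(x,y)$ after, if needed, enlarging $K_\varphi$ by a fixed factor.

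\textbf{Main obstacle.} The genuinely delicate point --- and the only place one must think rather than copy --- is the uniformity in the parameter $a\in J$ and in the sequence of measures $\mgr_1,\dots,\mgr_{K_\varphi}\in\mK$. All the constants ($s$, $K_\varphi$, $\eps_1$, $\eps_2$, the $O(s^2)$ bound) must be chosen independently of $a$ and of which measures from the compact set $\mK$ occur; this is exactly why Proposition~\ref{p.derivparam} and Lemma~\ref{l:r-contr} were stated with uniform constants, so the burden is really just to check that the $s\to0$ bookkeeping does not secretly reintroduce $a$-dependence. It does not, because the derivative bounds from~\ref{B:C1} are uniform and the two contraction inputs are uniform; hence the argument of \cite[Proposition~4.17]{GK} goes through verbatim with $\lambda(a)$ replaced everywhere by the uniform lower bound $1$ coming from Proposition~\ref{p.derivparam}.
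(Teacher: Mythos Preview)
Your proposal is correct and takes essentially the same approach as the paper: both reduce to repeating the stationary argument of \cite[Proposition~4.17]{GK} verbatim, with Proposition~\ref{p.derivparam} and Lemma~\ref{l:r-contr} supplying the two non-stationary inputs (average logarithmic contraction and high-probability distance contraction, uniformly in $a$ and in the choice of measures from~$\mK$). The paper's proof is in fact terser than yours, simply pointing to those two substitutions; your identification of the uniformity in $a$ as the only point requiring attention matches the paper's rationale for restating these two ingredients in the non-stationary setting.
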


\begin{proof}[Proof of Proposition \ref{p:phi}]
The proof repeats the proof of \cite[Proposition~4.17]{GK} modulo the following adjustments:

\vspace{5pt}

1) The arguments leading to the formula (45) in \cite{GK} should be replaced by the statement of Proposition \ref{p.derivparam} above;

\vspace{5pt}

2) The proof of \cite[Lemma~4.19]{GK} should be replaced by the proof of Lemma \ref{l:r-contr} above.
\end{proof}

Finally, we use Proposition~\ref{p:phi} to estimate the behavior of random iterations
with different parameters:
\begin{coro}\label{c:l-a}
Fix constants $K_{\varphi},s$ given by Proposition~\ref{p:phi}. There exists a constant $C_\varphi$ such that for any $a,a'\in J$, $x,y\in \Sc$ one has
\begin{equation}\label{eq:C-a}
\E \varphi(f_{K_{\varphi},a,\omega}(x),f_{K_{\varphi},a',\omega}(y)) \le \frac{1}{2} \varphi(x,y) + C_{\varphi} |a-a'|^s.
\end{equation}
\end{coro}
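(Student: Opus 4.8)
The plan is to reduce \eqref{eq:C-a} to Proposition~\ref{p:phi} together with a deterministic parameter-perturbation estimate, using the quasi-triangle inequality for $\varphi$.

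First I would record that, since $0<s\le 1$, the map $t\mapsto t^s$ is subadditive on $[0,\infty)$, so $\varphi$ obeys $\varphi(x,z)\le \varphi(x,y)+\varphi(y,z)$ for all $x,y,z\in\Sc$. Inserting the intermediate point $f_{K_\varphi,a,\omega}(y)$ and taking expectations gives
$$
\E\,\varphi(f_{K_\varphi,a,\omega}(x),f_{K_\varphi,a',\omega}(y)) \le \E\,\varphi(f_{K_\varphi,a,\omega}(x),f_{K_\varphi,a,\omega}(y)) + \E\,\varphi(f_{K_\varphi,a,\omega}(y),f_{K_\varphi,a',\omega}(y)).
$$
The first term on the right is at most $\frac{1}{2}\varphi(x,y)$ directly by Proposition~\ref{p:phi}, so it only remains to bound the second term by $C_\varphi|a-a'|^s$.

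For the second term I would observe that $f_{K_\varphi,a,\omega}$ and $f_{K_\varphi,a',\omega}$ iterate the \emph{same} random factors $A_1,\dots,A_{K_\varphi}$ with only the parameter changed from $a$ to $a'$. Applying Lemma~\ref{l.shift} with $m'=0$, $m''=K_\varphi$ and a common initial lift $y_1=y_2$ of $y$ (so that the first term of \eqref{eq:shift} vanishes) yields the \emph{deterministic} bound $\dist_\Sc(f_{K_\varphi,a,\omega}(y),f_{K_\varphi,a',\omega}(y))\le \La\,K_\varphi\,\Lx^{K_\varphi-1}|a-a'|$, valid for every $\omega$. Raising to the power $s$ and taking expectation (which changes nothing, the bound being deterministic) then gives the claim with $C_\varphi:=(\La\,K_\varphi\,\Lx^{K_\varphi-1})^s$, a finite constant since $K_\varphi$ is fixed by Proposition~\ref{p:phi} and $\Lx,\La<\infty$ by the $C^1$-boundedness assumption~\ref{B:C1}.

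There is no genuine obstacle here; the only points that need a moment's care are to use the subadditivity of $t\mapsto t^s$ (equivalently, the quasi-triangle inequality for $\varphi$) rather than the ordinary triangle inequality, and to make sure that the parameter-perturbation estimate of Lemma~\ref{l.shift} is applied uniformly in $\omega$, so that no large-deviation argument is needed for this part of the bound.
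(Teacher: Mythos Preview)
Your proof is correct and follows essentially the same approach as the paper (which refers to Corollary~4.25 of~\cite{GK}): split via the quasi-triangle inequality for $\varphi$, apply Proposition~\ref{p:phi} to the equal-parameter term, and control the parameter-perturbation term by the deterministic Lipschitz estimate of Lemma~\ref{l.shift} with coinciding initial lifts.
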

\begin{proof}[Proof of Corollary \ref{c:l-a}]
The proof is the verbatim repetition of the proof of Corollary~4.25 from \cite{GK}.
\end{proof}
Iterating Corollary \ref{c:l-a}, we get
\begin{coro}\label{c:l-a1}
There are positive constants $C_{\varphi}'$ and $C_{\varphi}''$ (that depend on $K_{\varphi}, s$, and constants $\Lx$, $\La$ from Lemma~\ref{l.shift}) such that for any $l\in \N$, $k'<K_{\varphi}$, and any $a,a'\in J$, $x,y\in \Sc$ we have
\begin{equation}\label{e.in}
\E \varphi(f_{lK_{\varphi}+k',a,\,\omega}(x),f_{lK_{\varphi}+k',a',\,\omega}(y)) \le \frac{C_{\varphi}'}{2^l}\varphi(x,y)+ C_{\varphi}'' |a-a'|^s.
\end{equation}
\end{coro}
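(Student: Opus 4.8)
The plan is to iterate Corollary~\ref{c:l-a} over $l$ consecutive blocks of length $K_{\varphi}$ and to absorb the remaining $k'<K_{\varphi}$ iterations by hand, using the deterministic Lipschitz bounds of Lemma~\ref{l.shift}. Concretely, writing $n=lK_{\varphi}+k'$, I would factor the parameter-dependent map as $f_{n,a,\omega}=h^{a}_{l}\circ\cdots\circ h^{a}_{1}\circ g^{a}_{0}$, where $g_{0}$ is the composition of the first $k'$ random maps (indexed by $A_{1},\dots,A_{k'}$) and $h_{j}$, $j=1,\dots,l$, is the composition of the $K_{\varphi}$ maps indexed by $A_{k'+(j-1)K_{\varphi}+1},\dots,A_{k'+jK_{\varphi}}$. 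The blocks $g_{0},h_{1},\dots,h_{l}$ involve pairwise disjoint families of matrices, hence are independent, and each $h_{j}$ is governed by $K_{\varphi}$ measures from $\mK$, so Corollary~\ref{c:l-a} applies to it with one and the same constant $C_{\varphi}$ (the constants $s,K_{\varphi},C_{\varphi}$ are uniform over all $K_{\varphi}$-tuples of measures from $\mK$, since they ultimately come from the uniform statements Proposition~\ref{p.derivparam} and Lemma~\ref{l:r-contr}).

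Next I would dispose of the initial $k'$ steps. Since $k'<K_{\varphi}$, Lemma~\ref{l.shift} gives, for every $\omega$ and all $a,a'\in J$, $x,y\in\Sc$,
\[
\dist_{\Sc}\!\big(g^{a}_{0}(x),g^{a'}_{0}(y)\big)\le \Lx^{k'}\dist_{\Sc}(x,y)+\La\,k'\,\Lx^{k'-1}\,|a-a'|\le L_{0}\big(\dist_{\Sc}(x,y)+|a-a'|\big),
\]
with $L_{0}:=\max\{\Lx^{K_{\varphi}},\La K_{\varphi}\Lx^{K_{\varphi}-1}\}$; raising to the power $s$ and using the subadditivity of $t\mapsto t^{s}$ on $[0,\infty)$ for $s\in(0,1]$ yields the deterministic bound $\varphi\big(g^{a}_{0}(x),g^{a'}_{0}(y)\big)\le L_{0}^{s}\big(\varphi(x,y)+|a-a'|^{s}\big)$.

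Then I would run the iteration. Put $P_{j}:=f_{k'+jK_{\varphi},a,\omega}(x)$ and $Q_{j}:=f_{k'+jK_{\varphi},a',\omega}(y)$, so that $(P_{0},Q_{0})=(g^{a}_{0}(x),g^{a'}_{0}(y))$ and $P_{j}=h^{a}_{j}(P_{j-1})$, $Q_{j}=h^{a'}_{j}(Q_{j-1})$. Conditioning on $\mathcal F_{j-1}:=\sigma(A_{1},\dots,A_{k'+(j-1)K_{\varphi}})$ and applying Corollary~\ref{c:l-a} (the unconditional inequality for deterministic inputs may be used pointwise in the $\mathcal F_{j-1}$-measurable pair $(P_{j-1},Q_{j-1})$ because $h_{j}$ is independent of $\mathcal F_{j-1}$) gives
\[
\E\big[\varphi(P_{j},Q_{j})\mid\mathcal F_{j-1}\big]\le \tfrac12\,\varphi(P_{j-1},Q_{j-1})+C_{\varphi}|a-a'|^{s}.
\]
Taking expectations and iterating over $j=1,\dots,l$ produces
\[
\E\,\varphi(P_{l},Q_{l})\le \frac{1}{2^{l}}\,\E\,\varphi(P_{0},Q_{0})+C_{\varphi}|a-a'|^{s}\sum_{i=0}^{l-1}\frac{1}{2^{i}}\le \frac{1}{2^{l}}\,\E\,\varphi(P_{0},Q_{0})+2C_{\varphi}|a-a'|^{s}.
\]
Since $P_{l}=f_{lK_{\varphi}+k',a,\omega}(x)$ and $Q_{l}=f_{lK_{\varphi}+k',a',\omega}(y)$, combining this with the deterministic bound on $\varphi(P_{0},Q_{0})$ yields \eqref{e.in} with $C_{\varphi}'=L_{0}^{s}$ and $C_{\varphi}''=L_{0}^{s}+2C_{\varphi}$, which depend only on $K_{\varphi},s,\Lx,\La$.

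The one point that needs care rather than ingenuity is the measurability and independence bookkeeping in the iteration step: one must check that Corollary~\ref{c:l-a}, stated for deterministic $x,y$, is legitimately applied to the random points $(P_{j-1},Q_{j-1})$, which works precisely because the $j$-th block of matrices is independent of the $\sigma$-algebra generated by all earlier matrices. Everything else is the subadditivity estimate for $t^{s}$ and a geometric-series summation.
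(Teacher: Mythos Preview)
Your proof is correct and follows essentially the same approach as the paper, which simply refers to the proof of Corollary~4.26 in~\cite{GK}: iterate the one-block contraction of Corollary~\ref{c:l-a} across the $l$ blocks of length $K_{\varphi}$ (using independence of the blocks to pass from deterministic inputs to the random $(P_{j-1},Q_{j-1})$ by conditioning), sum the resulting geometric series, and absorb the leftover $k'<K_{\varphi}$ iterations via the deterministic Lipschitz bound of Lemma~\ref{l.shift}. Your observation that the constants in Proposition~\ref{p:phi} and Corollary~\ref{c:l-a} are uniform over all $K_{\varphi}$-tuples of measures from $\mK$ (because they come from the uniform statements Proposition~\ref{p.derivparam} and Lemma~\ref{l:r-contr}) is exactly the point needed to apply the one-block estimate to each shifted block $h_j$, and your handling of the subadditivity of $t\mapsto t^{s}$ is clean.
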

\begin{proof}[Proof of Corollary \ref{c:l-a1}]
The proof is the verbatim repetition of the proof of Corollary~4.26 from~\cite{GK}.
\end{proof}

\begin{proof}[Sketch of the proof of Proposition~\ref{p:classes}]
Once Corollary~\ref{c:l-a1} is obtained, Proposition~\ref{p:classes} follows by repeating verbatim the same arguments as in~\cite{GK} (see Remark~\ref{r.proofoftypes}).
Namely, consider the sequence of intervals $|X_{m,i}|$,  $m=1,\dots,n$. If all of them are of length at most $\eps'$, we are in the first (``small intervals'') case. Otherwise, there is a first iteration number~$m'$ for which $|X_{m',i}|>\eps'$.

Denote $\gamma:=\exp(-\sqrt[4]{n})$. Then, for each such interval, Corollary~\ref{c:l-a1} (together with the Markov inequality) implies that with the probability at least $1-\gamma^{s/3}$ the images
\[
x_{m,i-1}=f_{m,b_{i-1},\omega}(x_0) \quad \text{and} \quad x_{m,i}=f_{m,b_i,\omega}(x_0)
\]
approach each other at the time $m=m'+K\sqrt[3]{n}$ at the distance at most~$\gamma^{1/12}$, and stay close to each other until $m=n$. Now, their lifts $\tx_{m,i-1}$ and $\tx_{m,i}$ can either approach each other --- in which case the length of the corresponding interval $X_{m,i}$ becomes small, and this is the ``opinion-changer'' option. Or the difference between these lifts can be close to~$1$, and this is the ``jump interval'' case.

Finally, for every $m$ the intervals $X_{m,i}$ have disjoint interiors, hence there are at most $\const\cdot n^2$ of them that are larger than~$\eps'$. Thus, with the probability at least $1-\const \cdot n^2 \gamma^{s/3}$ the above description applies simultaneously to all non-small intervals, and this concludes the proof.
\end{proof}


\subsection{Distribution of jump intervals}

Here we provide a sketch of the proof of Property \ref{i:m5} in Theorem \ref{t:main}. The proof is almost a verbatim repetition of the proof of Parts {\bf I} and {\bf V} from \cite[Theorem 1.19]{GK}. Here we just explain what steps of the proof has to be adjusted in the non-stationary setting.

Recall that we denoted $\tx_{m, i}=\tf_{m, b_i, \bo}(\tx_0)$, the intervals $X_{m,i}$ were defined by (\ref{e.Xmi}), and for an interval $I\subset J$, $I=[a',a'']$, we defined
$$
R_{n,\omega}(I):=\tf_{n,a'', \omega}(\tx_0)-\tf_{n,a', \omega}(\tx_0).
$$
The Property \ref{i:m5} follows immediately from the following statement, which is analogous to \cite[Proposition 4.27]{GK}:
\begin{prop}\label{p:ones}
 For any $\eps'>0$ there exists $\zeta_5>0$ such that for any $m\le n$
\begin{equation}\label{eq:ones}
\Prob\left( \frac{(\tx_{m,N}-\tx_{m,0}) - \#\{j \,:\, |X_{m,j}|\ge 1 \}}{n}>\eps'\right) < \exp(-\zeta_5 \sqrt[4]{n}).
\end{equation}
\end{prop}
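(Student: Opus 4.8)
The plan is to track how the difference $\tx_{m,i} - \tx_{m,i-1} = |X_{m,i}|$ relates to whether the $i$-th interval of parameters is currently "jumping." First I would observe the telescoping identity
$$
\tx_{m,N} - \tx_{m,0} = \sum_{i=1}^{N} |X_{m,i}|,
$$
so that the quantity in~\eqref{eq:ones} is
$$
\frac{1}{n}\left( \sum_{i=1}^N |X_{m,i}| - \#\{i : |X_{m,i}|\ge 1\}\right) = \frac{1}{n} \sum_{i=1}^N \left( |X_{m,i}| - \Ind_{\{|X_{m,i}|\ge 1\}} \right).
$$
The goal is thus to show that with overwhelming probability each summand is small except for a controlled number of exceptions, and that the "integer part" of each $|X_{m,i}|$ is captured precisely by the indicator $\Ind_{\{|X_{m,i}|\ge 1\}}$ — i.e. $|X_{m,i}|$ almost never exceeds $1+\eps'$ or lies in a gap like $(\eps', 1)$.

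The key input is Proposition~\ref{p:classes}: with probability at least $1-\exp(-c_1\sqrt[4]{n})$, every interval $J_i$ is "small", an "opinion-changer", or a "jump interval". For a "small" interval, $|X_{m,i}| < \eps'$ for all $m$, so the summand is bounded by $\eps'$ in absolute value (and the indicator is $0$). For a "jump" interval with associated moment $m_0$, we have $|X_{m,i}| < \eps'$ for $m < m_0$ and $1 < |X_{m,i}| < 1+\eps'$ for $m > m_0 + \eps' n$; so for $m$ outside the transition window $[m_0, m_0+\eps' n]$ the summand $|X_{m,i}| - \Ind_{\{|X_{m,i}|\ge 1\}}$ is again at most $\eps'$ in absolute value. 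The only problematic contributions come from (a) "opinion-changer" intervals, where $|X_{m,i}|$ can exceed $\eps'$ (but is always $<1$, hence the indicator is $0$) during a window of length at most $\eps' n$; and (b) "jump" intervals during their transition window of length $\eps' n$. In both cases the summand is bounded by $1+\eps'$ in absolute value, but at each fixed $m$ only boundedly many such windows are active. Precisely, since for each fixed $m$ the intervals $X_{m,i}$ (being consecutive images $[\tx_{m,i-1},\tx_{m,i}]$) have pairwise disjoint interiors and are contained in an interval whose length is $R_{m,\omega}(J)+1 \le C n$, the number of $i$ with $|X_{m,i}| > \eps'$ is at most $Cn/\eps'$; a cruder bound suffices, namely at most $\const\cdot n$. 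This already gives that the total exceptional contribution is $O(n)$, which is the wrong order.

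To fix this, I would use a more refined counting: the number of $i$ that are currently in a "suspicious window" at time $m$ (i.e. opinion-changers or jump intervals with $m \in [m_0, m_0+\eps' n]$) must itself be small. Here one argues as in~\cite{GK}: a suspicious window for interval $J_i$ lasting from $m_0(i)$ to $m_0(i)+\eps' n$ can be "charged" to a full unit of turning of the image point as the parameter crosses $J_i$; since at any fixed time $m$ the total turning over all of $J$ is $R_{m,\omega}(J) \le Cn$, and each suspicious window "in progress" at time $m$ was opened within the last $\eps' n$ steps, a double-counting over the $\eps' n$ possible opening times shows that the number active at time $m$ is $O(\eps' n \cdot \frac{R_{n,\omega}(J)}{n})$, hence $O(\eps' n)$ after reabsorbing constants — which, summed and divided by $n$, contributes $O(\sqrt{\eps'})$ after the usual rescaling of $\eps'$. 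Combining: off the bad event of Proposition~\ref{p:classes} (probability $\le \exp(-c_1\sqrt[4]{n})$), and for the appropriate choice of the $\eps'$ in Proposition~\ref{p:classes} as a small multiple of the target $\eps'$ here, the sum is at most $\eps' n$ for every $m\le n$ simultaneously. Taking $\zeta_5 < c_1$ and absorbing the polynomial factor $n$ (from the union over $m \le n$, if one wants it simultaneous — though the statement is for fixed $m$, so this is free) into the exponential for large $n$ gives~\eqref{eq:ones}.

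The main obstacle I anticipate is the counting argument in the last paragraph: showing that at any fixed time the number of parameter-subintervals currently "in transition" (opinion-changers or mid-jump) is $o(n)$, or at least small compared to $n$ after reabsorbing $\eps'$. This is where the monotonicity assumption~\ref{B:Monotonicity} and the structure of $R_{m,\omega}$ enter, and it must be extracted from the stationary argument in~\cite[Proof of Theorem 1.19]{GK} with the density-of-states measure replaced by direct bounds on $R_{m,\omega}(J)$; the cited Proposition~4.27 of~\cite{GK} is the template, and the non-stationary modifications are confined to replacing convergence-to-DOS statements with the uniform bound $R_{m,\omega}(J) \le R_{n,\omega}(J) \le \const\cdot n$ that follows from assumption~\ref{B:C1}.
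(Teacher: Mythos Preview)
There is a genuine gap in your approach, and it is not the counting of transitioning intervals that you flag as the main obstacle, but something more basic: the aggregate contribution of the ``small'' (and pre-jump, post-opinion-changer) intervals. You bound each such summand by the Proposition~\ref{p:classes} parameter, call it~$\eps''$, and then implicitly sum to get a total of at most~$N\eps''$. But $N=[\exp(\sqrt[4]{n})]$ grows super-polynomially in~$n$, so $N\eps'' \gg \eps' n$ for any \emph{fixed} $\eps''>0$. Letting $\eps''=\eps''(n)\le \eps' n/N \to 0$ would destroy the constant $c_1(\eps'')$ in Proposition~\ref{p:classes}, and the only alternative global bound available, $\sum_{\text{small}}|X_{m,i}|\le R_{m,\omega}(J)\le \const\cdot n$, is off by exactly the factor of~$\eps'$ you need. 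The classification of Proposition~\ref{p:classes} simply does not see that the many tiny intervals together contribute little total length; it only bounds each one individually.

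The paper does not route through Proposition~\ref{p:classes} here at all. It invokes the $\varphi$-contraction of Proposition~\ref{p:phi} directly, following~\cite[Proposition~4.27]{GK} verbatim with that single substitution. The contraction yields an expectation bound on the $s$-th powers of the \emph{circle} distances $\dist_{\Sc}(x_{m,i-1},x_{m,i})$, uniform over all~$i$, and a Markov-type argument then controls the aggregate fractional contribution of \emph{all} $N$ subintervals at once---bypassing the $N\eps''$ blowup. Your double-counting of active transition windows is a separate issue (and is itself not fully justified as written, since the jump moments~$m_0(i)$ need not be even approximately equidistributed in~$[1,n]$), but it becomes secondary once the small-interval contribution is handled via contraction rather than via classification.
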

Proposition \ref{p:ones} applied to any interval $I\subset J$ of the form $I=[b_i,b_{i'}]$, $0\le i< i'\le N$ instead of $J$, implies that with probability at least $1-\exp(-\zeta_5 \sqrt[4]{n})$, the number
$$
M_{I;m} := \# \{ k \mid a_{i_k}\in I, m_k\le m \}
$$
is $\eps'n$-close to $\tf_{n, b_i, \bo}(\tx_0)-\tf_{n, b_{i'}, \bo}(\tx_0)=R_{n,\omega}(I)$. And applied to $m=n$, it gives that
with probability at least $1-\exp(-\zeta_5 \sqrt[4]{n})$,
 $M=\#\{j \,:\, |X_{n,j}|\ge 1 \}$ is $\eps'n$-close to $\tf_{n, b_+, \bo}(\tx_0)-\tf_{n, b_-, \bo}(\tx_0)=R_{n,\omega}(J)$. This gives the part {\bf V} of Theorem \ref{t:main}.

 The proof of Proposition~\ref{p:ones} is exactly the same as the proof of \cite[Proposition~4.27]{GK}, where the only difference is that, in order to accommodate the shift from stationary to non-stationary setting, one should use Proposition~\ref{p:phi} instead of \cite[Proposition~4.18]{GK}.


\section{Spectral Localization: proof of Theorems~\ref{t.vector} and~\ref{t.al}}\label{s:spectral}

\subsection{Deducing spectral localization from Theorem~\ref{t.vector}}

Let us first show that  Theorem~\ref{t.vector} implies spectral localization.
\begin{proof}[Proof of Theorem~\ref{t.al}]
We will need the following result, that is usually referred to as “Shnol Theorem”, due to a similar
result in the paper \cite{Sch} (see also \cite{Gl1, Gl2}):
\begin{theorem}[Shnol Theorem]\label{t.Schnol}
Let $H: \ell_2(\Z)\to \ell_2(\Z)$
be an operator of the form
\[
H u(n) = u(n-1) + u(n + 1) + V (n)u(n),
\]
with a bounded potential $\{V (n)\}_{n\in \Z}$. If every polynomially bounded solution to
$H u = Eu$ is in fact exponentially decreasing, then $H$ has pure point spectrum,
with exponentially decaying eigenfunctions. Similar statement holds for operators
on $\ell_2(\N)$ with Dirichlet boundary condition.
\end{theorem}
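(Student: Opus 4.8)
The plan is to deduce the statement from the classical generalized eigenfunction expansion together with a short spectral-measure argument. I would first fix a canonical scalar spectral measure for $H$. Since $H$ is a self-adjoint Jacobi-type operator, the pair $\{\delta_0,\delta_1\}$ is cyclic --- iterating $H$ on these two vectors recovers every $\delta_n$ --- so $\mu:=\langle\delta_0,E_H(\cdot)\delta_0\rangle+\langle\delta_1,E_H(\cdot)\delta_1\rangle$, where $E_H$ denotes the projection-valued spectral measure of $H$, is a maximal spectral type: $\supp\mu=\sigma(H)$, a Borel set is $\mu$-null iff its spectral projection vanishes, and the spectral measure of every vector is absolutely continuous with respect to $\mu$. (On $\ell^2(\N)$ with a Dirichlet condition, a single vector, say $\delta_1$, is already cyclic and the argument is identical.)

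Next I would invoke Shnol's lemma: for $\mu$-a.e.\ $E\in\R$ the difference equation $Hu=Eu$ has a nonzero solution $u_E$ of at most polynomial growth (indeed $|u_E(n)|=o((1+|n|)^{1/2+\eta})$ for every $\eta>0$). The proof I have in mind fixes $s>1/2$, lets $W$ be multiplication by $(1+|n|)^{-s}$ --- a Hilbert--Schmidt operator since $\sum_n(1+|n|)^{-2s}<\infty$ --- and differentiates the operator-valued measure $A\mapsto W\,E_H(A)\,W$ with respect to $\mu$: for $\mu$-a.e.\ $E$ the limit $\Psi_E:=\lim_{\eps\downarrow0}\mu((E-\eps,E+\eps))^{-1}\,W\,E_H((E-\eps,E+\eps))\,W$ exists as a Hilbert--Schmidt operator and is nonzero. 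For such $E$, unwinding the weight $W$ produces a nonzero vector $u_E$ solving $Hu_E=Eu_E$ (the weak form of a second-order difference equation being automatically genuine) with $\sum_n|u_E(n)|^2(1+|n|)^{-2s}<\infty$, which forces the stated polynomial bound. For the details I would cite \cite{Sch,Gl1,Gl2}.

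Finally I would combine this with the hypothesis. Applying the assumption to $u_E$ shows that for $\mu$-a.e.\ $E$ the function $u_E$ is a nonzero element of $\ell^2$ solving $Hu_E=Eu_E$; thus, writing $\Sigma_{\mathrm{pp}}$ for the (necessarily countable) set of eigenvalues of $H$, we get $\mu(\R\setminus\Sigma_{\mathrm{pp}})=0$. Splitting $\mu=\mu_{\mathrm{pp}}+\mu_{\mathrm c}$ into atomic and continuous parts, countability of $\Sigma_{\mathrm{pp}}$ gives $\mu_{\mathrm c}(\Sigma_{\mathrm{pp}})=0$, and combined with $\mu(\R\setminus\Sigma_{\mathrm{pp}})=0$ this forces $\mu_{\mathrm c}\equiv0$. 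Hence $\mu$ is pure point; since the spectral measure of every vector is absolutely continuous with respect to $\mu$, it is pure point as well, so $H$ has pure point spectrum. Exponential decay of the eigenfunctions is then automatic: each eigenfunction lies in $\ell^2$, hence is bounded, hence is a polynomially bounded solution of its eigenvalue equation, hence decays exponentially by hypothesis.

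The only substantial point is the generalized eigenfunction expansion of the second step; everything else is routine spectral bookkeeping. As this is the classical Shnol--Berezanskii result, in the write-up I would most likely simply quote it from \cite{Sch,Gl1,Gl2} rather than reproduce the Hilbert--Schmidt computation; if a self-contained proof were wanted, the sketch above is the route I would take.
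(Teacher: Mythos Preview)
Your proposal is correct and in fact goes further than the paper does: the paper does not prove Theorem~\ref{t.Schnol} at all but simply cites it as a known result, pointing to \cite{Sch,Gl1,Gl2} for the original Shnol argument, to \cite[Theorem~1.1]{Sim} for the continuum case, and to \cite[Theorem~7.1]{Kir}, \cite[Lemma~2.6]{JZ}, \cite{H} for the discrete version. Your sketch --- maximal spectral type via a cyclic pair, the Hilbert--Schmidt differentiation to produce polynomially bounded generalized eigenfunctions for $\mu$-a.e.\ $E$, then the observation that the hypothesis forces $\mu$ to be carried by the countable set of eigenvalues --- is exactly the standard route those references take, so there is no discrepancy in approach. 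Since you also end by saying you would most likely just cite the result rather than reproduce the computation, your write-up and the paper's treatment would be essentially identical.
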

In the continuum case Theorem~\ref{t.Schnol} follows also from \cite[Theorem~1.1]{Sim}. The formal proof
in the discrete case can be found, for instance, in \cite[Theorem~7.1]{Kir}; we also refer the reader to some
improved versions of this result in~\cite[Lemma~2.6]{JZ} or~\cite{H}.

Due to Theorem~\ref{t.Schnol}, it suffices to show that (almost surely) every polynomially bounded
solution $u$ to the eigenvector problem $Hu=Eu$ is in fact exponentially decreasing. Now, for a random
Schr\"odinger operator~$H$, given by~\eqref{e.oper}, this relation can be written as
$$
u_{n+1} = (E-V(n)) u_n - u_{n-1},
$$
that transforms into a recurrent relation for vectors $v_n:=\left( {u_{n+1} \atop u_n }\right)$:
\begin{equation}\label{eq:Pi-v}
\left( {u_{n+1} \atop u_n }\right) = \Pi_{n,E} \left( {u_{n} \atop u_{n-1} }\right),
\end{equation}
where
$$
\Pi_{n, E}=\left(
               \begin{array}{cc}
                 E-V(n) & -1 \\
                 1 & 0 \\
               \end{array}
             \right).
$$

Note that the product of matrices
corresponding to the random Schr\"odinger operator~\eqref{e.oper}, satisfies the assumptions
of Theorem~\ref{t.vector} after grouping these matrices in pairs (that is, the condition
in Remark~\ref{r:groups} for $k=2$).
Indeed, these matrices are independent (as random variables $V(n)$ are), and satisfy
the $C^1$-boundedness assumption~\ref{B:C1}. Now, they can be represented as
$$
\Pi_{n, E}=
\left(
               \begin{array}{cc}
                 1 & E-V(n) \\
                 0 & 1 \\
               \end{array}
             \right)
\left(
               \begin{array}{cc}
                 0 & -1 \\
                 1 & 0 \\
               \end{array}
             \right);
$$
this implies non-strict monotonicity, as the first (parabolic) factor turns everything in the positive direction, except for the vector $\left( {1 \atop 0 }\right)$, that is the image of the $\left( {0 \atop 1 }\right)$ vector under the action of the second matrix. As these two vectors are different, a composition of any two such matrices $\Pi_{n+1,E} \Pi_{n,E}$ satisfies the strict monotonicity condition~\ref{B:Monotonicity}.

Finally, if for two measures $\msp_1,\msp_2$ and some homeomorphisms $f,g$ one has $f_*\msp_1=g_*\msp_1=\msp_2$, then $\msp_2$ is an invariant measure of the quotient~$(f\circ g^{-1})$. However, the quotient of any two different maps $\Pi_{n,E}$ is a parabolic map of the form
$$
\left(
               \begin{array}{cc}
                 1 & * \\
                 0 & 1 \\
               \end{array}
             \right),
$$
and the only invariant measure of its projectiviation is the Dirac one, concentrated at the direction of the vector $\left( {1 \atop 0 }\right)$. This measure is the image of the measure concentrated at the direction of the vector~$\left( {0 \atop 1 }\right)$. And as these two measures are different, there is no measure with a deterministic image under a composition of two matrices, and hence for such a composition the condition~\ref{B:Furstenberg} is also satisfied. Hence, the assumptions of Theorem~\ref{t.vector} are satisfied.

Returning to polynomially growing solutions of $Hu=Eu$, note that for any such solution,
$$
\limsup_{n\to\infty} \frac{1}{n}(\log |v_n| - L_n(E)) = \limsup_{n\to\infty} \, - \frac{1}{n} L_n(E) \le -h <0,
$$
and hence due to Theorem~\ref{t.vector}
$$
\log |v_n| = -L_n + o(n),
$$
thus $u_n$ is exponentially decreasing. This (due to Shnol's lemma) completes the proof of the spectral localization.
\end{proof}

\subsection{Hyperbolic-like products and behaviour of log-norms}

We will need the following two definitions; roughly speaking, the first one is the condition that means that in a product of given matrices there is ``not too much cancellation'':

\begin{defi}
Given matrices $A_1,\dots,A_n \in \SL(2,\R)$, and a sequence $L_j$, $j=1,\dots,n$ of real numbers,
we say that the product $A_n\dots A_1$ is \emph{$(L,r)$-hyperbolic}
if for any $0\le m< m'\le n$ for the product $T_{[m,m']}:=A_{m'}\dots A_{m+1}$ one has
\begin{equation}\label{eq:U-r-L}
\log \|T_{[m,m']}\| \in U_{r}(L_{m'}-L_m).
\end{equation}
Also, we say that a part $[n_1,n_2]$ of this product is \emph{$(L,r)$-hyperbolic}, if~\eqref{eq:U-r-L} holds for all $m,m'$ such that $[m,m']\subseteq [n_1,n_2]$.
\end{defi}
The second definition imposes restrictions on the sequences $L$ we will be using:
\begin{defi}
A sequence $L=(L_j)_{j=1,\dots,n}$ is \emph{$(h,\tilde{C})$-growing}, if for any $0\le m<m'\le n$ one has
$$
L_{m'}-L_m \ge h(m'-m)-\tilde{C},
$$
where we set $L_0:=0$.
\end{defi}

%


Now, to establish Theorem~\ref{t.vector}, we will study possible behaviours of the sequence of log-norms $\log |T_{m,a,  \omega} \left( \begin{smallmatrix} 1 \\ 0 \end{smallmatrix} \right)|$. To do so, assume again that we are given a (finite) sequence of matrices~$A_1,\dots,A_n \in \SL(2,\R)$. Then, given a (nonzero) vector $v_0\in \R^2$, we can consider the sequence of its iterations
\begin{equation}\label{eq:v-def}
v_m=A_m v_{m-1}, \quad m=1,\dots, n.
\end{equation}

The following statements, describing possible behaviours of the sequence of log-norms $\log |v_m|, \, m=0,\dots, n$, are non-stationary analogues of Lemmata~5.2, 5.4, and~5.5 and of Remark~5.3 from~\cite{GK}. Their proofs are almost verbatim reproduction of the arguments from~\cite{GK}, but we present them here for completeness.

\begin{lemma}[Growth curve]\label{l:line-shape}
For any $\Cn, h, \eps>0$ there exist $\eps',n_1>0$ with the following property. Assume that $n>n_1$ and the following conditions hold:
\begin{itemize}
\item a part $[m_0, m_1]$ of the product $A_n\dots A_1$ is $(L,n\eps')$-hyperbolic,
\item all $A_i$ satisfy $\|A_i\|\le \Cn$,
\item the sequence $L$ is $(h, n\eps')$-growing
\item and $v_{m_0}$ is the least norm vector in the sequence~\eqref{eq:v-def} in the index interval $[m_0,m_1]$, i.e. $|v_{m_0}|\le |v_m|$ for all $m=m_0+1,\ldots, m_1$.
\end{itemize}
Then
$$
\forall m=m_0,m_0+1,\dots, m_1 \quad \log |v_m| - \log |v_{m_0}| \in U_{n\eps}(L_m-L_{m_0}).
$$
\end{lemma}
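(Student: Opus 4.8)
The plan is to prove the two one-sided bounds
\[
L_m-L_{m_0}-n\eps \ \le\ \log|v_m|-\log|v_{m_0}|\ \le\ L_m-L_{m_0}+n\eps
\]
separately, choosing $\eps'$ small compared to $\eps,h,\log\Cn$ and $n_1$ large. The upper bound is immediate: $|v_m|=|T_{[m_0,m]}v_{m_0}|\le\|T_{[m_0,m]}\|\,|v_{m_0}|$, and $(L,n\eps')$-hyperbolicity of $[m_0,m_1]$ gives $\log\|T_{[m_0,m]}\|\le L_m-L_{m_0}+n\eps'<L_m-L_{m_0}+n\eps$. So all the work is in the lower bound.

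For the lower bound I would argue by contradiction. Suppose some index $m_b\in[m_0,m_1]$ satisfies $\log|v_{m_b}|-\log|v_{m_0}|<(L_{m_b}-L_{m_0})-n\eps$. Since $v_{m_0}$ has least norm on $[m_0,m_1]$, we have $\log|v_{m_b}|\ge\log|v_{m_0}|$, which already forces $L_{m_b}-L_{m_0}>n\eps$; in particular $\|T_{[m_0,m_b]}\|\ge e^{(L_{m_b}-L_{m_0})-n\eps'}$ is exponentially large. Then $\frac{|T_{[m_0,m_b]}v_{m_0}|/|v_{m_0}|}{\|T_{[m_0,m_b]}\|}<e^{-n\eps+n\eps'}$, so Lemma~\ref{l:x-C-image} puts the direction of $v_{m_0}$ within $\delta:=\tfrac{\pi}{2}e^{-n\eps+n\eps'}$ of the most contracted direction $x^-(T_{[m_0,m_b]})$ (a genuine direction since $T_{[m_0,m_b]}\notin SO(2,\R)$). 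Let $u$ be the unit vector in that direction with $\langle u,v_{m_0}\rangle\ge0$, so $|u-v_{m_0}/|v_{m_0}|\,|\le\pi\delta$. Using $|T_{[m_0,m_b]}u|=\|T_{[m_0,m_b]}\|^{-1}$ together with $|T_{[m_0,m_b]}u|\ge|T_{[m_0,m]}u|/\|T_{[m,m_b]}\|$ and hyperbolicity, one gets $|T_{[m_0,m]}u|\le e^{-(L_m-L_{m_0})+2n\eps'}$ for every $m\in[m_0,m_b]$, whence
\[
\frac{|v_m|}{|v_{m_0}|}\ \le\ |T_{[m_0,m]}u|+\|T_{[m_0,m]}\|\,\pi\delta\ \le\ e^{-(L_m-L_{m_0})+2n\eps'}+\tfrac{\pi^2}{2}\,e^{(L_m-L_{m_0})-n\eps+2n\eps'}.
\]

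The crucial step is then to exhibit an intermediate index where the right-hand side is $<1$, contradicting minimality of $|v_{m_0}|$. Because $\|A_i\|\le\Cn$ and $[m_0,m_b]$ is hyperbolic, consecutive values of $c_m:=L_m-L_{m_0}$ differ by at most $\log\Cn+n\eps'$, which for large $n$ is less than $\tfrac{n\eps}{2}-4n\eps'$; since $c_{m_0}=0$ and $c_{m_b}>n\eps$, there is $m^\dagger\in(m_0,m_b)$ with $c_{m^\dagger}\in[4n\eps',\tfrac{n\eps}{2}]$ (take the first index reaching $4n\eps'$). For that index the two terms above are at most $e^{-2n\eps'}$ and $\tfrac{\pi^2}{2}e^{-n\eps/2+2n\eps'}$, so their sum is $<1$ once $\eps'<\eps/8$ and $n$ is large, giving $|v_{m^\dagger}|<|v_{m_0}|$ — impossible. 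I expect this last part, the $O(n\eps')$ bookkeeping, to be the main obstacle: one must make sure that the errors coming from hyperbolicity, from the distortion of the projective map near $x^-$, and from the discreteness of the values $c_m$ are all simultaneously absorbed by a single choice of $\eps'$ depending only on $\eps,h,\Cn$. The geometric inputs (Lemma~\ref{l:x-C-image} and elementary facts about singular values of $\SL(2,\R)$ matrices) are insensitive to stationarity, so that part follows \cite{GK} essentially verbatim; the only non-stationary change is that the linear reference $L_m-L_{m_0}$ plays the role of $\lambda(m-m_0)$ throughout.
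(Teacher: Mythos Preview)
Your argument is correct and takes a genuinely different route from the paper's proof. The paper obtains the lower bound \emph{directly}: for each fixed target index $m$ it introduces the unit vector $w_{m_0}$ realising $\|T_{[m_0,m]}\|$, tracks the intermediate images $w_j$, and uses the $\SL(2,\R)$ area-preservation (the parallelogram on $v_j,w_j$ has area $\le 1$) together with the minimality $|v_j|\ge 1$ and the lower bound on $|w_j|$ coming from $(L,n\eps')$-hyperbolicity and the $(h,n\eps')$-growth to force the \emph{angle} between $[v_j]$ and $[w_j]$ to be small for all but $O(\eps' n/h)$ indices. Equicontinuity of the length-change functions $\phi_A$ then transfers the growth increments from $w$ to $v$ term by term.

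Your contradiction argument trades this trajectory-wide comparison for a single well-chosen intermediate index: a bad index $m_b$ pins $[v_{m_0}]$ near $x^-(T_{[m_0,m_b]})$ via Lemma~\ref{l:x-C-image}, and hyperbolicity then propagates the contraction of the singular direction to every intermediate $m$, so a value $c_{m^\dagger}=L_{m^\dagger}-L_{m_0}$ in the window $[4n\eps',\tfrac{n\eps}{2}]$ (which exists because consecutive $c_m$ change by at most $\log\Cn+n\eps'$, a consequence of hyperbolicity on single steps together with $\|A_i\|\le\Cn$) gives $|v_{m^\dagger}|<|v_{m_0}|$. This is more elementary --- no equicontinuity step, no tracking of a second full trajectory --- and, notably, never invokes the $(h,n\eps')$-growth hypothesis at all. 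The paper's approach, on the other hand, gives a uniform description of how $\log|v_m|$ shadows $L_m-L_{m_0}$ over the whole interval in one pass, which is closer in spirit to the later ``curved-V/W'' lemmata. Your bookkeeping worry is unfounded: with $\eps'<\eps/10$ and $n_1$ large enough that $\log\Cn<\tfrac{n\eps}{4}$, all the error terms fit.
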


\begin{figure}[!h!]
\begin{center}
\includegraphics[width=0.3\textwidth]{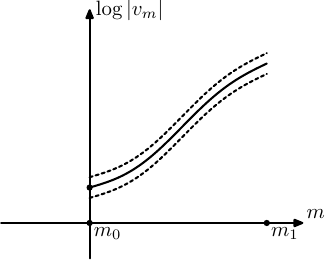} \hfill  \includegraphics[width=0.3\textwidth]{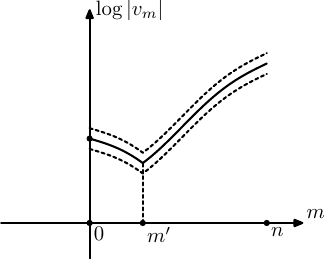}
\hfill \includegraphics[width=0.3\textwidth]{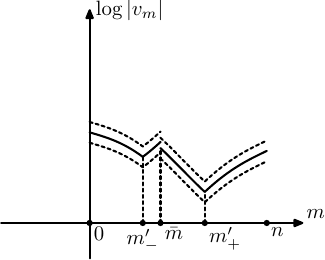}
\end{center}
\caption{Behaviour of log-norm of iterations of a given vector as in Lemmata~\ref{l:line-shape}, \ref{l:V-shape},~\ref{l:W-shape}. Bold line corresponds to the prediction curve (mid-point of the vertical neighborhood), dashed region shows its $\eps n$-neighborhood.}\label{f:norms}
\end{figure}

\begin{proof}
Without loss of generality, we can assume that $v_{m_0}$ is a unit vector. Take another unit vector, $\vu_{m_0}$, that realizes the norm of the full product until given $m\in [m_0,m_1]$,
$$
|T_{[m_0,m]} \vu_{m_0}| = \|T_{[m_0,m]}\|,
$$
and consider the sequence of the corresponding intermediate images,
$$
\vu_j=A_j \vu_{j-1}, \quad j=m_0+1,\dots, m.
$$
Then, we have a lower bound for their norms: as $\vu_m=T_{[j,m]}\vu_{j}$,
\begin{multline}\label{eq:u-lower}
\log |\vu_j|\ge \log \frac{|\vu_m|}{\|T_{[j,m]}\|} = \log \|T_{[m_0,m]}\| - \log \|T_{[j,m]}\|  \\ \ge ((L_m-L_{m_0})-n\eps')- ((L_m-L_j) +n\eps') =
\\
= (L_j-L_{m_0}) - 2n\eps' \ge h(j-m_0) - 3n\eps',
\end{multline}
where we have used the $(h,n\eps')$-growth assumption for the sequence~$L$.

Now, let $\phi_A$ be the function on the circle of directions that describes the change of the length:
$$
\phi_A([v])=\log \frac{|Av|}{|v|}
$$
for $v\in \R^2\setminus \{0\}$, where $[v]$ is the corresponding point of $\Sc=\R P^1$.

Then the log-length of an image of a vector is given by a sum:
\begin{equation}\label{eq:phi-sum}
\log |v_m|=\sum_{j=m_0+1}^m \log \frac{|A_j v_{j-1}|}{|v_{j-1}|} = \sum_{j=m_0+1}^m \phi_{A_j}([v_{j-1}]),
\end{equation}
\begin{equation}\label{eq:phi-sum-2}
\log |\vu_m|=\sum_{j=m_0+1}^m \log \frac{|A_j \vu_{j-1}|}{|\vu_{j-1}|} = \sum_{j=m_0+1}^m \phi_{A_j}([\vu_{j-1}]).
\end{equation}
Family of the functions $\phi_A$ for $A\in \SL(2,\R)$, $\|A\|\le \Cn$, is equicontinuous on~$\R P^1$. Hence, for any $\eps>0$
there exists $\delta>0$ such that
\begin{equation}\label{eq:eps-delta}
|\phi_A([u])-\phi_A([v])|< \frac{\eps}{2}
\end{equation}
for all $A\in \SL(2,\R)$ with $\|A\|\le M$ and all $u,v$ with the
angle between the corresponding lines less than~$\delta$. At the same time, subtracting~\eqref{eq:phi-sum-2} from~\eqref{eq:phi-sum} gives
\begin{equation}\label{eq:Am-v}
\log |v_m| = \log |\vu_m| + \sum_{j=m_0+1}^m \left(\varphi_{A_j}([v_{j-1}]) - \varphi_{A_j}([\vu_{j-1}]) \right).
\end{equation}
The first summand is within~$2n\eps'$ from $(L_m-L_{m_0})$ due to~\eqref{eq:u-lower} and the assumption on $(n\varepsilon', L)$-hyperbolicity.

Let us decompose the sum in the second summand depending on whether we can guarantee that the directions of~$\vu_{j-1}$ and~$v_{j-1}$ are less than~$\delta$ apart. To do so, note that the initial $v_{m_0}$ and $\vu_{m_0}$ are unit vectors, and thus they form a parallelogram of area at most~$1$. Hence
the same holds for their images $v_j$ and $\vu_j$ for any~$j=m_0,\dots,m_1$ (see Fig.~\ref{f:angles}).
As we have assumed $|v_j|\ge 1$ and as $|\vu_j|\ge \exp(L_j-L_{m_0}-2n\eps')$, the angle between the lines
passing through $v_j$ and $\vu_j$ does not exceed
\begin{equation}\label{eq:angles-bound}
\dist([v_j],[\vu_j])\le \frac{\pi}{2}\exp(-L_j+L_{m_0}+2n\eps').
\end{equation}

\begin{figure}[!h!]
\includegraphics{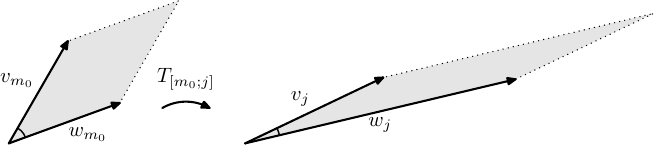}
\caption{Controlling angles between $v_j$ and $\vu_j$.}\label{f:angles}
\end{figure}

Hence, due to~\eqref{eq:angles-bound} we can guarantee that the angle between the lines containing $v_j$ and $\vu_j$ is at most~$\delta$ once
$$
h(j-m_0)-3n\eps' \ge \log(\frac{\pi}{2\delta}),
$$
or, equivalently, once
\[
j-m_0 \ge \frac{3\eps'}{h} n + \frac{\log (\pi/2\delta)}{h};
\]
in particular, for all $n$ sufficiently large, it suffices to assume that
$$
j-m_0 \ge \frac {4\eps'}{h} n.
$$
Hence, in~\eqref{eq:Am-v} there are at most $\frac {4\eps'}{h}n$ summands with the angles exceeding $\delta$, each of which does not exceed $2\log \Cn$; hence, their contribution does not exceed is
at most
\[
\frac{4\eps'}{h} n \cdot 2\log \Cn
\]
At the same time, the contribution of the other ones is bounded by $\frac{\eps}{2}n$ due
the choice of~$\delta$. Thus, we get an estimate
$$
\left|\log |v_m| - \log |\vu_m| \right| \le  n \frac{\eps}{2} + 2\log M \cdot \frac{4\eps'}{h} n= \left( \frac{8\log M}{h} \eps' + \frac{\eps}{2} \right) n ,
$$
and adding it with~\eqref{eq:u-lower}, we finally get
$$
\left |\log |v_m| - L_m \right| \le
\left(\left(2 +  \frac{8 \log M}{h} \right) \eps' + \frac{\eps}{2} \right) n
$$

Fixing $\eps' = \left(2 +  \frac{8 \log M}{h} \right)^{-1} \cdot \frac{\eps}{2}$, we get (for all sufficiently large~$n$) the desired upper bound
$$
|\log |v_m| - L_m | \le \eps n.
$$
This completes the proof of Lemma \ref{l:line-shape}.
\end{proof}

Removing the assumption that $v_{m_0}$ is the least norm vector in the sequence, we then immediately get the following
\begin{lemma}[Curved-V-shape]\label{l:V-shape}
For any $\Cn, \eps,h>0$ there exist $\eps',n_2>0$ with the following property. Assume that $n>n_2$ and
\begin{itemize}
\item a part $[m_0, m_1]$ of the product $A_n\dots A_1$ is $(L,n\eps')$-hyperbolic,
\item all $A_i$ satisfy $\|A_i\|\le \Cn$,
\item the sequence $L$ is $(h, n\eps')$-growing
\item and $(v_m)$ be a sequence of intermediate images associated to some $v_0\in\R^2\setminus \{0\}$ given by~\eqref{eq:v-def}
\end{itemize}
Then there exists $\mV\in \{m_0,\dots,m_1\}$, such that
$$
\forall m=m_0,m_0+1,\dots, m_1 \quad \log |v_m| -\log |v_{\mV}| \in U_{n\eps}(|L_m-L_{\mV}|).
$$
\end{lemma}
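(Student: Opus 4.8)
The plan is to reduce Lemma~\ref{l:V-shape} to the already established Lemma~\ref{l:line-shape} by breaking the index interval at the point of minimal norm and running the product forward on one side and backward on the other. Concretely, let $\mV\in\{m_0,\dots,m_1\}$ be an index at which $\log|v_m|$ attains its minimum over $m\in[m_0,m_1]$. On the right half $[\mV,m_1]$ Lemma~\ref{l:line-shape} applies verbatim: $[\mV,m_1]$ is a subinterval of $[m_0,m_1]$ and hence still $(L,n\eps')$-hyperbolic, the matrices still satisfy $\|A_i\|\le\Cn$, the sequence $L$ is still $(h,n\eps')$-growing, and $v_{\mV}$ is by construction the least-norm vector among $v_{\mV},\dots,v_{m_1}$. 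This gives $\log|v_m|-\log|v_{\mV}|\in U_{n\eps_0}(L_m-L_{\mV})$ for every $m\in[\mV,m_1]$, with $\eps_0$ any prescribed accuracy once $\eps'$ is small enough and $n$ is large (the thresholds depending only on $\Cn,h,\eps_0$).

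For the left half $[m_0,\mV]$ I would apply Lemma~\ref{l:line-shape} to the reversed product. Put $B_j:=A_{\mV-j+1}^{-1}$ for $j=1,\dots,\mV-m_0$; since every $A_i$ lies in $\SL(2,\R)$ one has $\|B_j\|=\|A_{\mV-j+1}\|\le\Cn$, and the successive images of $v_{\mV}$ under $B_1,B_2,\dots$ are exactly $v_{\mV-1},v_{\mV-2},\dots,v_{m_0}$, with $v_{\mV}$ still the least-norm one. Moreover, for $0\le p<q\le\mV-m_0$ the subproduct $B_q\cdots B_{p+1}$ equals $T_{[\mV-q,\mV-p]}^{-1}$, whose norm coincides with $\|T_{[\mV-q,\mV-p]}\|$; hence $(L,n\eps')$-hyperbolicity of $[m_0,m_1]$ translates into $(\tilde L,n\eps')$-hyperbolicity of the reversed product with respect to the sequence $\tilde L_j:=L_{\mV}-L_{\mV-j}$ (and $\tilde L_0=0$), and the same reindexing shows $\tilde L$ is $(h,n\eps')$-growing. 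Lemma~\ref{l:line-shape} then yields $\log|v_{\mV-j}|-\log|v_{\mV}|\in U_{n\eps_0}(\tilde L_j)$, i.e.\ $\log|v_m|-\log|v_{\mV}|\in U_{n\eps_0}(L_{\mV}-L_m)$ for every $m\in[m_0,\mV]$.

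To finish, I would merge the two halves. For $m\ge\mV$ the $(h,n\eps')$-growth gives $L_m-L_{\mV}\ge -n\eps'$, so $|L_m-L_{\mV}|$ differs from $L_m-L_{\mV}$ by at most $2n\eps'$; symmetrically, for $m\le\mV$ one has $L_{\mV}-L_m\ge -n\eps'$. Choosing the accuracy $\eps_0:=\eps/2$ in the two applications of Lemma~\ref{l:line-shape} and taking $\eps'\le\eps/4$ together with $n$ large, both estimates upgrade to $\log|v_m|-\log|v_{\mV}|\in U_{n\eps}(|L_m-L_{\mV}|)$ for all $m\in[m_0,m_1]$, which is exactly the assertion with $\mV$ playing the role of the index in the statement. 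The output constants $\eps',n_2$ are those furnished by Lemma~\ref{l:line-shape} for accuracy $\eps/2$, further shrunk so that $\eps'\le\eps/4$.

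The argument is essentially bookkeeping, so I do not expect a serious obstacle; the one place deserving care is the reversal step — verifying that passing to the inverse matrices leaves the hyperbolicity window and the growth rate of the auxiliary sequence intact, and that the single minimality condition on $|v_{\mV}|$ is simultaneously the hypothesis needed for both the forward sub-product on $[\mV,m_1]$ and the backward sub-product on $[m_0,\mV]$.
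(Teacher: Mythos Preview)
Your proposal is correct and follows essentially the same approach as the paper: choose $\mV$ to be the index of minimal $|v_m|$ on $[m_0,m_1]$ and apply Lemma~\ref{l:line-shape} separately to the two halves $[m_0,\mV]$ and $[\mV,m_1]$. The paper's proof states this in two sentences; your version is more explicit about the reversal step (passing to the inverse matrices and the reindexed sequence $\tilde L_j=L_{\mV}-L_{\mV-j}$) needed to put the left half into the form required by Lemma~\ref{l:line-shape}, and about absorbing the $2n\eps'$ discrepancy between $L_m-L_{\mV}$ and $|L_m-L_{\mV}|$, but the underlying argument is the same.
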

\begin{proof}
It suffices to take $\mV$ to be the index of the least norm $v_m$, $m=m_0,\dots,m_1$, and apply Lemma~\ref{l:line-shape} to intervals $[m_0,\mV]$ and $[\mV,m_1]$ separately.

To handle the case of one of these intervals being too small (of length less than~$n_0$), we choose~$n_1$ sufficiently large so that $n_1\eps > 2 n_0 \log \Cn$.
\end{proof}

Now, the conclusions~\ref{i:m2} and~\ref{i:m3} of Theorem~\ref{t:main} together
imply (for $n,\eps$ for which these conclusions hold) that for any $a\in J$ the product $T_{n,a,\omega}$ either is $(n\eps,(L_m))$-hyperbolic
itself, or can be divided into two hyperbolic products; also, Proposition~\ref{p.additivity} implies that $\{L_m\}$ is $(h, \eps n)$ growing. Thus, under the conclusions of Theorem~\ref{t:main} and Proposition~\ref{p.additivity} we have the following lemma.

\begin{lemma}[Curved-W-shape]\label{l:W-shape}
For any $\eps>0$ there exists $\eps'>0$, $n_1\in \mathbb{N}$ such that for all $n>n_1$ the following holds. Assume that the conclusions of Theorem~\ref{t:main} with the given $\eps'$
are satisfied for some finite product $T_{n,a,\omega}$. Then for any sequence $\bv_m$ of nonzero vectors such that
$\bv_{m}=T_{m,a,\omega}(\bv_{0})$, there exist numbers $\mV_-\le \mW \le \mV_+$ such that
$$
\forall m\in [0,\mW] \quad \log |v_m|-  \log |v_{\mV_-}| \in U_{n\eps}(|L_{m}-L_{\mV_-}|),
$$
$$
\forall m\in[\mW,n] \quad \log |v_m|-  \log |v_{\mV_+}| \in U_{n\eps}(|L_{m}-L_{\mV_+}|).
$$
\end{lemma}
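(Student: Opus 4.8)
The plan is to reduce the statement to the already-proved Curved-V-shape Lemma~\ref{l:V-shape}, exactly as anticipated in the paragraph preceding the statement. Fix $\eps>0$; the auxiliary smallness $\eps'$ (and then $n_1$) will be chosen at the very end, once all the constants below have appeared. Under the conclusions of Theorem~\ref{t:main} (parts~\ref{i:m2} and~\ref{i:m3}) together with Proposition~\ref{p.additivity} and Corollary~\ref{c:upper-finite-m}, exactly one of two situations occurs for the product $T_{n,a,\omega}$ associated with our $a\in J$. Either (i) $a$ lies in a ``small'' or ``opinion-changing'' interval, in which case the full product on $[0,n]$ is $(L_\bullet,n\eps')$-hyperbolic with $L_m:=L_m(a)$; or (ii) $a$ lies in an exceptional interval with jump index $\mjump\in\{1,\dots,n\}$, in which case the initial part on $[0,\mjump]$ is $(L_\bullet,n\eps')$-hyperbolic and the terminal part $A_n\cdots A_{\mjump+1}$ is hyperbolic with respect to the shifted sequence $\widetilde L_j:=L_{[\mjump,\mjump+j]}(a)$ (with $\widetilde L_0=0$). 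The passage from ``$\log\|T_{[0,m]}\|$ (resp. $\log\|T_{[\mjump,m]}\|$) is controlled'', which is literally what parts~\ref{i:m2}, \ref{i:m3} assert, to genuine $(L,r)$-hyperbolicity in the sense of Section~\ref{s:spectral} (control of \emph{all} sub-products $T_{[m,m']}$) is routine: the lower bound comes from submultiplicativity $\|T_{[0,m']}\|\le\|T_{[m,m']}\|\,\|T_{[0,m]}\|$, the upper bound from Corollary~\ref{c:upper-finite-m}, and Proposition~\ref{p.additivity} reconciles $L_{[m,m']}(a)$ with $L_{m'}(a)-L_m(a)$ (resp. with $\widetilde L_\bullet$-differences) up to an $O(n\eps')$ error. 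Likewise Proposition~\ref{p.additivity} and the lower bound $L_{[m,m']}(a)\ge h(m'-m)$ from Theorem~\ref{t.2param} show that both $L_\bullet$ and $\widetilde L_\bullet$ are $(h,n\eps')$-growing.

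In case (i) I apply Lemma~\ref{l:V-shape} to the part $[m_0,m_1]=[0,n]$ with accuracy $\eps$ (and the $\eps'$ the lemma demands, which is what dictates how small $\eps'$ above must be taken), obtaining an index $\mV$ with $\log|v_m|-\log|v_{\mV}|\in U_{n\eps}(|L_m-L_{\mV}|)$ for every $m\in[0,n]$, where $v_m:=T_{m,a,\omega}(v_0)$. Setting $\mV_-=\mV_+=\mW=\mV$ gives both displayed inclusions immediately. In case (ii) I apply Lemma~\ref{l:V-shape} twice: once to the part $[0,\mjump]$ with the sequence $L_\bullet$, producing $\mV_-\in[0,\mjump]$ and the first inclusion for all $m\in[0,\mjump]$; and once to the part $[\mjump,n]$ with the sequence $\widetilde L_\bullet$ (after re-indexing $m\mapsto m-\mjump$), producing $\mV_+\in[\mjump,n]$ and $\log|v_m|-\log|v_{\mV_+}|\in U_{n\eps}\bigl(|L_{[\mjump,m]}(a)-L_{[\mjump,\mV_+]}(a)|\bigr)$ for $m\in[\mjump,n]$. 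I then invoke Proposition~\ref{p.additivity} once more to replace $L_{[\mjump,m]}(a)-L_{[\mjump,\mV_+]}(a)$ by $L_m(a)-L_{\mV_+}(a)$ at the cost of an extra $O(n\eps')$ in the radius, absorbed into $n\eps$ by the choice of $\eps'$. Taking $\mW:=\mjump$ we get $\mV_-\le\mW\le\mV_+$ and both inclusions.

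One degenerate situation needs a separate word, handled exactly as inside the proof of Lemma~\ref{l:V-shape}: if one of the parts $[0,\mjump]$, $[\mjump,n]$ has length below the threshold $n_2$ of Lemma~\ref{l:V-shape}, then over that part $\log|v_m|$ varies by at most $n_2\log\Cn$ and $|L_m-L_{m'}|\le n_2 C_{\max}$, both $<n\eps$ once $n>n_1$ is large; one then simply takes the corresponding $\mV_\pm$ to be an endpoint of that part.

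The main obstacle I anticipate is purely bookkeeping rather than conceptual: keeping the chain of smallness choices consistent. Lemma~\ref{l:V-shape} itself converts the target accuracy $\eps$ into a required $\eps'$; the extraction of honest $(L,r)$-hyperbolicity from parts~\ref{i:m2}--\ref{i:m3} introduces the constants $C_{\max}$, $\log\Cn$ and a factor from Proposition~\ref{p.additivity}; and the final additivity correction on $[\mjump,n]$ consumes another slice of the error budget. So $\eps'$ (and then $n_1$) must be fixed only after all these constants are on the table. The genuinely substantive inputs — hyperbolicity of the one or two pieces, the $(h,\cdot)$-growth of the relevant $L$-sequences, and the V-shape description of a single hyperbolic block — are all already available.
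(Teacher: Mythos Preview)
Your proposal is correct and follows essentially the same route as the paper: the paper does not give an explicit proof of Lemma~\ref{l:W-shape}, but the paragraph immediately preceding it states that parts~\ref{i:m2} and~\ref{i:m3} of Theorem~\ref{t:main} together with Proposition~\ref{p.additivity} make the full product either $(L,n\eps)$-hyperbolic or splittable at the jump index into two such pieces, so that one or two applications of Lemma~\ref{l:V-shape} suffice. Your write-up is in fact more careful than the paper's sketch, since you spell out the passage from control of products starting at $0$ or $\mjump$ to full $(L,r)$-hyperbolicity via submultiplicativity and Corollary~\ref{c:upper-finite-m}, and you handle the degenerate short-piece case explicitly.
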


In the same way as the previous ones, this lemma admits a geometric interpretation in terms of the corresponding graphs; see Fig.~\ref{f:norms}.


\subsection{First part of Theorem~\ref{t.vector}: Dirichlet conditions}

In the same way as in~\cite[Theorem~1.13]{GK}, Lemma~\ref{l:V-shape} allows us to prove the first (one-sided products) part of Theorem~\ref{t.vector}.


\begin{proof}[Proof of the first part of Theorem~\ref{t.vector}]
Denote
\begin{equation}\label{eq:v-def-2}
v_0=\left(\begin{smallmatrix} 1 \\ 0 \end{smallmatrix}\right), \quad v_m=T_{m,a,  \omega} v_0, \quad m=1,\dots, n.
\end{equation}

Assume that~\eqref{eq:lim-less} holds; then for some $\eps_0>0$ one has for all sufficiently large $n$
\begin{equation}\label{eq:n-eps0}
\log |v_n| = \log |T_{n,a,  \omega} \left( \begin{smallmatrix} 1 \\ 0 \end{smallmatrix} \right)| < L_n(a)- n \eps_0.
\end{equation}
Due to the standard argument of a countable intersection (considering a sequence of positive values of  $\eps_0$ that tends to zero) it suffices to show that the conclusion of the theorem holds with~\eqref{eq:lim-less}
replaced with~\eqref{eq:n-eps0}. From now on, fix small $\eps_0>0$.

Take the point $x_0$ on the circle to be the projectivization
image of the vector~$v_0$. As in~\cite{GK}, note that due to the convergence of the series
$\sum_n \exp(-\delta_0 \sqrt[4]{n})$, Borel--Cantelli lemma implies that for any $\eps,\eps'>0$ almost surely
for all sufficiently large~$n$ the conclusions of Theorem~\ref{t:main} and of Proposition~\ref{p:derivatives-control} (for this specific choice of the point~$x_0$) hold.

Take and fix sufficiently small $\eps'$ (we will impose an assumption on its smallness later), and let $n_2=n_2(\eps')$ be such that the conclusions of Theorem~\ref{t:main} (for $\eps'$ instead of $\eps$) and of Proposition~\ref{p:derivatives-control}, as well as~\eqref{eq:n-eps0}, hold for all $n>n_2$.
Note first that for $n>n_2$ the parameter interval $J_i$, containing $a$, cannot be neither small nor opinion-changing, and hence it is a jump interval.
%
%
%

Indeed, otherwise due to Proposition~\ref{p:derivatives-control} we get
\[
\log \tf'_{n,a,\omega}(\tx_0) \in U_{C_1\eps' n} (-2 L_n(a)),
\]
thus implying a lower bound for the norm
\[
\log |v_n|= \log | T_{n,a,\omega}(v_0)| \ge L_n(a) - \frac{C_1\eps'}{2} n.
\]
Once~$\eps'$ is sufficiently small to ensure
\[
\frac{C_1 \eps'}{2} < \eps_0,
\]
this lower bound contradicts~\eqref{eq:n-eps0}. This proves that the interval $J_i\ni a$ is actually a jump interval for all $n>n_1$.

Moreover, if $\mjump_{(n)}$ is the corresponding jump index (provided by the conclusion of Proposition~\ref{p:derivatives-control}), using the estimate~\eqref{eq:derivative-upl} for the derivative after $m=\mjump_{(n)}$ iterations again provides a lower bound
\begin{equation}\label{eq:v-break}
\log |v_{\mjump_{(n)}}|= \log | T_{\mjump_{(n)},a,\omega}(v_0)| \ge L_{\mjump_{(n)}}(a) - \frac{C_1\eps'}{2} n.
\end{equation}
Together with an upper bound~\eqref{eq:n-eps0} at the same iteration $\mjump_{(n)}$, still assuming that $\mjump_{(n)}>n_2$, we get
\[
\log |v_{\mjump_{(n)}}| < L_{\mjump_{(n)}}(a)- \mjump_{(n)} \eps_0,
\]
this (see Fig.~\ref{f:bm-n-upper}, left) provides an inequality
\[
 \frac{C_1\eps'}{2} n \ge \mjump_{(n)} \eps_0.
\]
Hence, for all sufficiently large $n$ (namely, for $n>\frac{2\eps_0  n_2}{C_1\eps'}$) we have an upper bound
\begin{equation}\label{eq:mn-upper}
\mjump_{(n)} \le  \frac{C_1\eps'}{2\eps_0} n .
\end{equation}

\begin{figure}[!h!]
\includegraphics{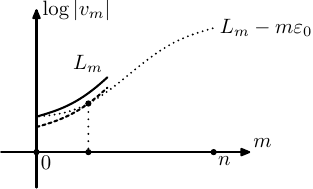} \quad \includegraphics{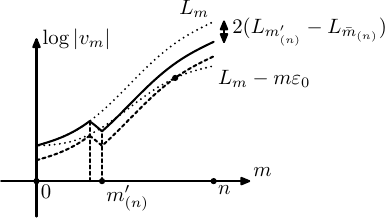}
\caption{Left: Upper estimate for $\mjump_{(n)}$; if it did not hold, lower (dashed) and upper (dotted) estimates for $\log |v_m|$ would contradict each other at $m=\frac{C_1\eps'}{2\eps_0} n$. \newline Right: Lower estimate for $\mV_{(n)}$; otherwise, lower (dashed) and upper (dotted) estimates for $\log |v_n|$ would contradict each other.}\label{f:bm-n-upper}
\end{figure}

Now, for given $\eps>0$ let $\eps'>0$ be chosen sufficiently small, and $n$ be sufficiently large for Lemma~\ref{l:V-shape} to be applicable.
Then, the conclusions of Theorem~\ref{t:main} and Proposition~\ref{p:derivatives-control} 
imply that the part $[\mjump_{(n)}, n]$ of the product $A_n(a)\dots A_1(a)$ is $(L,n\eps')$-hyperbolic, and hence the conclusions of Lemma~\ref{l:V-shape} hold on this interval of indices. Let $\mV=\mV_{(n)}\in [m_{(n)},n]$ be the corresponding index. Then, we have lower bounds for the log-norms, where the former one is~\eqref{eq:v-break}, and two latter ones are implied by Lemma~\ref{l:V-shape}:
\begin{equation}\label{eq:incr1}
\log |v_{\mjump_{(n)}}| \ge L_{\mjump_{(n)}}(a) - \frac{C_1\eps'}{2} n,
\end{equation}
\begin{equation}\label{eq:incr2}
\log |v_{\mV_{(n)}}| - \log |v_{\mjump_{(n)}}| \ge  -(L_{\mV_{(n)}}(a)- L_{\mjump_{(n)}}(a)) - 2 n\eps,
\end{equation}
\begin{equation}\label{eq:incr3}
\log |v_n| - \log |v_{\mV_{(n)}}| \ge  (L_n(a)- L_{\mV_{(n)}}(a)) - 2 n\eps.
\end{equation}
Hence,
\begin{equation}\label{eq:incr-total}
\log |v_n| \ge L_n(a) - 2(L_{\mV_{(n)}}(a)- L_{\mjump_{(n)}}(a)) - 4 n\eps - \frac{C_1\eps'}{2} n.
\end{equation}
On the other hand, recall that $\log |v_n| \le L_n(a) - n\eps_0$. Hence,
\begin{equation}\label{eq:L-diff}
2(L_{\mV_{(n)}}(a)- L_{\mjump_{(n)}}(a)) \ge n(\eps_0 - 4 \eps - \frac{C_1 \eps'}{2}),
\end{equation}
and as we can choose $\eps$ and then $\eps'$ arbitrarily small, we can ensure that the right hand side of~\eqref{eq:L-diff} is at least $\frac{\eps_0}{2}n$, finally implying a lower bound (see Fig.~\ref{f:bm-n-upper}, right)
\[
(L_{\mV_{(n)}}(a)- L_{\mjump_{(n)}}(a)) \ge \frac{\eps_0}{4} n
\]
and thus
\begin{equation}\label{eq:bm-p-lower}
\mV_{(n)} - \mjump_{(n)} \ge \frac{\eps_0}{4 \log \Cn} n.
\end{equation}
Let $\lambda:=\frac{\eps_0}{8 \log \Cn}$. Then, for a sufficiently small $\eps'$ and all sufficiently large $n$, from~\eqref{eq:mn-upper} and~\eqref{eq:bm-p-lower} we get
\[
\mjump_{(n)} < \lambda n, \quad \mV_{(n)} > 2 \lambda n.
\]

\begin{figure}
\includegraphics{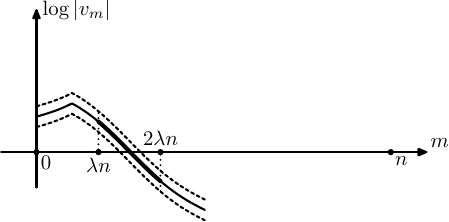}
\caption{Guaranteed decrease of $\log |v_m|$ between $m=\lambda n$ and $m=2\lambda n$.}\label{f:m-2m}
\end{figure}

Now, the conclusions of Lemma~\ref{l:V-shape} imply that (for all sufficiently large $n$ and for all $m_1, m_2$ on the interval $[\lambda n, 2\lambda n]$ one has
\begin{equation}\label{eq:m12}
(\log |v_{m_1}| - \log |v_{m_2}|) \in  U_{2n\eps} \left(-(L_{m_1}(a)- L_{m_2}(a))\right);
\end{equation}
see Fig.~\ref{f:m-2m}.
Denote $r_m:=\log |v_m| + L_m(a)$, then we can rewrite~\eqref{eq:m12} as
\[
\forall m_1, m_2\in [\lambda n, 2\lambda n] \quad |r_{m_1} -r_{m_2}| \le  2n\eps.
\]
Hence, for every sufficiently large $m$ we have
\begin{equation}\label{eq:m-half}
|r_{m} - r_{\lceil \frac{m}{2} \rceil } |\le \frac{2}{\lambda} \eps \cdot m,
\end{equation}
where we are taking $n=\lceil m/{2\lambda} \rceil$ to ensure that both $m_1:=m, \, m_2:= \lceil m/2 \rceil $ belong to $[\lambda n, 2\lambda n]$.

Finally, summing~\eqref{eq:m-half} over the decreasing geometric series~$m, \frac{m}{2}, \frac{m}{4},\dots$, we get
\begin{equation}\label{eq:m-estimate}
\forall m \quad  |r_m| \le \frac{4}{\lambda} \eps \cdot m + \const
\end{equation}
for some uniform constant that does not depend on $m$. (One can also see the proof of~\eqref{eq:m-estimate} as an induction argument, and the constant coming from the base of the induction, that is, small values of~$m$.)

Thus,
\[
\limsup_{m\to\infty} \frac{1}{m} \left|\log |v_{m}| +L_{m}(a) \right| \le \frac{4}{\lambda} \eps.
\]
As $\eps>0$ can be chosen arbitrarily small, and $\lambda$ depends only on $\eps_0$, but not on $\eps$, we obtain the desired
\[
\limsup_{m\to\infty} \frac{1}{m} \left( \log |v_{m}| +L_{m}(a) \right) = 0.
\]
This completes the proof of the first part of Theorem~\ref{t.vector}.
\end{proof}


\subsection{Second part of Theorem~\ref{t.vector}}

The proof of Theorem~\ref{t.vector} repeats almost word-for-word the proof of~\cite[Theorem~1.11]{GK}, though there are some modifications adapting it to the non-stationary case.

\begin{proof}[Proof of the second part of Theorem~\ref{t.vector}]
Let $v_n:=T_{n,a,\omega}(v)$ for all $n$. Without loss of generality, we can assume that $|v_0|=1$.
As in the proof of the first part, it suffices to show that
\begin{equation}\label{eq:two-eps0}
\limsup_{n\to\pm \infty} \frac{1}{|n|} (\log |v_n | - L_n(a)) < -\eps_0
\end{equation}
in fact forces
$$
\limsup_{n\to\pm \infty} \frac{1}{|n|} (\log |v_n | +L_n(a)) = 0.
$$
As before,~\eqref{eq:two-eps0} implies that for all sufficiently large $n$ we have
\begin{equation}\label{eq:two-finite-n}
\log |v_n | < L_n(a)-\eps_0 n, \quad \log |v_{-n} | < L_{-n}(a) -\eps_0 n.
\end{equation}

We can assume that for any $\eps,\eps'>0$, $\eps'\ll \eps\ll \eps_0$, for all $n$ sufficiently large
the conclusions of Theorem~\ref{t:main} hold for the product
\begin{equation}\label{eq:T2n}
T_{[-n;n],a,\bo}= A_{n}(a) \dots A_{-n}(a),
\end{equation}
and hence Lemma~\ref{l:W-shape} can be applied.

In the same way as before, for any such $n$ we let $\mV_{-,(n)}<\mW_{(n)}<\mV_{+,(n)}$ be the indices given for the product~\eqref{eq:T2n} by
Lemma~\ref{l:W-shape} (that correspond to the breakpoints of the ''curved W'' graph,  the central one
being the upwards break point). 

Note first that for all sufficiently large~$n$ one has
\begin{equation}\label{eq:0-between}
\mV_{-,(n)}<0<\mV_{+,(n)}
\end{equation}
Indeed, if $\mV_{+,(n)}\le 0$, then one would have
$$
\log |v_n|-\log |v_0| \in U_{2n\eps} (L_n(a)),
$$
and hence (recall that $|v_0|=1$)
$$
\log |v_n| \ge L_n(a) - 2 n \eps,
$$
contradicting the assumed~\eqref{eq:two-finite-n} as $2\eps< \eps_0$.
In the same way we get $\mV_{-,(n)}< 0$.

Now, in the same way as in the first part, we are going to show that the ``jump'' index $\mW_{(n)}$ is sufficiently close to~$0$.
Indeed, the conclusions of Lemma~\ref{l:W-shape} together with ~\eqref{eq:0-between}  imply that 
$$
\log |v_{\mW_{(n)}}| \ge L_{\mW_{(n)}}(a) - 2n\eps.
$$
Since due to~\eqref{eq:two-finite-n} we know that
$$
\log |v_{\mW_{(n)}}| \le L_{\mW_{(n)}}(a)-\eps_0\cdot |\mW_{(n)}|,
$$
we get
$$
\eps_0\cdot |\mW_{(n)}| \le 2n\eps,
$$
and thus
$$
|\mW_{(n)}| \le \frac{2\eps}{\eps_0} n.
$$

Now, in the same way as in the first part, denoting we are going to prove the auxiliary
\begin{lemma}\label{l:2-half}
$\mV_{+,(n)}, |\mV_{-,(n)}| \ge 2\lambda n$ for all sufficiently large $n$, where $\lambda=\frac{\eps_0}{8 \log \Cn}$.
\end{lemma}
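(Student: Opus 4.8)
The plan is to mirror, now on both sides of the central break point $\mW_{(n)}$ of the curved-W graph, the argument that produced the lower bound~\eqref{eq:bm-p-lower} for $\mV_{(n)}$ in the proof of the first part of the theorem. I will carry out the estimate for $\mV_{+,(n)}$ in detail; the bound $|\mV_{-,(n)}|\ge 2\lambda n$ then follows by the symmetric reflection argument, with $\log|v_{-n}| < L_{-n}(a)-\eps_0 n$ from~\eqref{eq:two-finite-n} playing the role of $\log|v_n| < L_n(a)-\eps_0 n$.

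First I would restrict the conclusion of Lemma~\ref{l:W-shape} to the index interval $[\mW_{(n)},n]$, on which it becomes a curved-V with valley at $\mV_{+,(n)}$. Reading off this curved-V inequality at $m=\mW_{(n)}$ and at $m=n$, and feeding in the two facts already established in this proof --- namely $\log|v_{\mW_{(n)}}|\ge L_{\mW_{(n)}}(a)-2n\eps$ and $\mV_{-,(n)}<0<\mV_{+,(n)}$ (so that $0<\mV_{+,(n)}\le n$) --- I would obtain, exactly as in the derivation of~\eqref{eq:incr-total}, a lower bound of the shape
\[
\log|v_n|\ \ge\ L_n(a)-2L_{\mV_{+,(n)}}(a)-O(\eps n),
\]
where the term carrying $L_{\mW_{(n)}}(a)$ is discarded using $L_{\mW_{(n)}}(a)\ge 0$ (and, when $\mW_{(n)}<0$, the crude bound $L_{\mW_{(n)}}(a)\le|\mW_{(n)}|\log\Cn=O(\eps n)$, which is available because the smallness $|\mW_{(n)}|\le\frac{2\eps}{\eps_0}n$ has already been shown).

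Comparing this with the hypothesis $\log|v_n| < L_n(a)-\eps_0 n$ gives $2L_{\mV_{+,(n)}}(a)\ge \eps_0 n-O(\eps n)$, hence $L_{\mV_{+,(n)}}(a)\ge \tfrac{\eps_0}{4}n$ for all sufficiently large $n$, once $\eps$ (and then $\eps'$) has been fixed small compared with the given $\eps_0$. Since every factor obeys $\|A_i(a)\|\le\Cn$ and $\mV_{+,(n)}>0$, one has $L_{\mV_{+,(n)}}(a)=\E\log\|T_{\mV_{+,(n)},a,\omega}\|\le\mV_{+,(n)}\log\Cn$, which yields $\mV_{+,(n)}\ge\tfrac{\eps_0}{4\log\Cn}n=2\lambda n$, as required.

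The step I expect to demand the most care is the bookkeeping of the $L$-indices across the origin in the bi-infinite picture: the absolute values $|L_m(a)-L_{\mV_{+,(n)}}(a)|$ in Lemma~\ref{l:W-shape} must be converted into the clean additive/monotone relations used above, and the three relevant sequences --- $L_m$ for $m\ge 0$, $L_{-m}$ for $m\ge 0$, and the glued sequence attached to the product $T_{[-n;n],a,\omega}$ --- must be checked to agree up to the usual $O(\eps n)$ and $O(1)$ errors furnished by Proposition~\ref{p.additivity} and the growth estimates around Lemma~\ref{l:m-half}. All of these discrepancies are $O(\eps n)$, so they are absorbed by shrinking $\eps$; and the boundary case $n-\mV_{+,(n)}=O(1)$, where monotonicity of $L$ near the endpoint could fail, is vacuous since there $\mV_{+,(n)}$ already exceeds $2\lambda n$.
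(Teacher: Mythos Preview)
Your proposal is correct and follows essentially the same strategy as the paper: chain the curved-V inequalities on $[\mW_{(n)},n]$ together with the already-established lower bound $\log|v_{\mW_{(n)}}|\ge L_{\mW_{(n)}}(a)-2n\eps$, compare the result with $\log|v_n|<L_n(a)-\eps_0 n$, and convert the resulting lower bound on $L_{\mV_{+,(n)}}(a)$ into one on $\mV_{+,(n)}$ via $L_m(a)\le m\log\Cn$.

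The only cosmetic difference is the handling of the case $\mW_{(n)}\le 0$. The paper bypasses $\mW_{(n)}$ entirely and reads the right branch of the curved-W directly at $m=0$ (using $|v_0|=1$, $L_0=0$) to get $\log|v_{\mV_{+,(n)}}|\ge -L_{\mV_{+,(n)}}(a)-2n\eps$. You instead keep the route through $\mW_{(n)}$ and absorb the stray $L_{\mW_{(n)}}(a)$ term into the $O(\eps n)$ error via the already-proved $|\mW_{(n)}|\le\frac{2\eps}{\eps_0}n$. Both work; the paper's version is slightly cleaner because it avoids the sign discussion you flag in your last paragraph, while yours has the advantage of a single computation for both signs of $\mW_{(n)}$.
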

\begin{proof}
We will establish the estimate for $\mV_{+,(n)}$, as the other one is completely analogous. To do so, assume first that $\mW_{(n)}>0$.
Then, we have three inequalities
\begin{equation}\label{eq:incr1-W}
\log |v_{\mW_{(n)}}| \ge L_{\mW_{(n)}}(a) - 2 \eps n,
\end{equation}
\begin{equation}\label{eq:incr2-W}
\log |v_{\mV_{+,(n)}}| - \log |v_{\mW_{(n)}}| \ge  -(L_{\mV_{+,(n)}}(a)- L_{\mW_{(n)}}(a)) - 2 n\eps,
\end{equation}
\begin{equation}\label{eq:incr3-W}
\log |v_n| - \log |v_{\mV_{(+,n)}}| \ge  (L_n(a)- L_{\mV_{+,(n)}}(a)) - 2 n\eps.
\end{equation}
that are analogues of~\eqref{eq:incr1}, \eqref{eq:incr2} and \eqref{eq:incr3} respectively. Adding, we get an analogue of~\eqref{eq:incr-total}
\begin{equation}\label{eq:incr-total-W}
\log |v_n| \ge L_n(a) - 2(L_{\mV_{+,(n)}}(a)- L_{\mW_{(n)}}(a)) - 6 n\eps,
\end{equation}
and hence
\[
2(L_{\mV_{+,(n)}}(a)- L_{\mW_{(n)}}(a)) \ge n(\eps_0 - 6 \eps).
\]
Thus, once $\eps< \frac{1}{12} \eps_0$, we have
\[
\log \Cn \cdot  (\mV_{+,(n)} - \mW_{(n)}) \ge  \frac{n\eps_0}{4}
\]
and hence the desired
\[
\mV_{+,(n)} - \mW_{(n)} \ge \frac{n\eps_0}{4 \log \Cn} = 2 \lambda n.
\]

Now, in the case $\mW_{(n)}\le 0$, instead of~\eqref{eq:incr1-W} and \eqref{eq:incr2-W} we get directly
\[
\log |v_{\mV_{+,(n)}}|  \ge  -L_{\mV_{+,(n)}}(a) - 2 n\eps;
\]
together with~\eqref{eq:incr3-W}, we then get
\[
\log |v_n| \ge L_n(a) - 2(L_{\mV_{+,(n)}}(a)) - 4 n\eps,
\]
and conclude in the same way as before.
\end{proof}

Now, we have that for all sufficiently large $n$
\[
\mW_{(n)}<\lambda n < 2\lambda n < \mV_{+,(n)},
\]
and hence~\eqref{eq:m12} holds for any two $m_1,m_2$ on the interval $[\lambda n, 2\lambda n]$. From this moment
the exact repetition of the arguments of the first part allows to conclude: we denote $r_m:=\log |v_m| + L_m(a)$, obtain
the estimate~\eqref{eq:m-half} for all sufficiently large $m$. By summing over $m, \frac{m}{2}, \frac{m}{4},\dots$, we get
\eqref{eq:m-estimate} and hence
\[
\limsup_{m\to\infty} \frac{1}{m} \left|\log |v_{m}| +L_{m}(a) \right| \le \frac{4}{\lambda} \eps,
\]
thus implying (as $\eps$ can be taken arbitrarily small) the desired
\[
\limsup_{m\to\infty} \frac{1}{m} \left( \log |v_{m}| +L_{m}(a) \right) = 0.
\]
This completes the proof of the first part of Theorem~\ref{t.vector}.

The asymptotics at $-\infty$ can be handled in the same way. This completes the proof of Theorem~\ref{t.vector}.
\end{proof}



\section{Dynamical Localization: proof of Theorem~\ref{t.dl}}\label{s.dynloc}

\subsection{Operator on $\ell_2(\N)$ with the Dirichlet boundary conditions}

\begin{proof}[Proof of the first part of Theorem~\ref{t.dl}]
We will start by considering the case of the operator on $\ell_2(\N)$ with the Dirac boundary condition at the origin.
We already know from Theorem~\ref{t.al} that the spectral localization holds: the operator $H$ admits an orthonormal base of eigenfunctions $u_\ki\in \ell_2(\N)$,
$$
H u_\ki = E_\ki u_\ki.
$$
As before, this equation can be transformed into the recurrent relation~\eqref{eq:Pi-v} on the vectors $v_{n,\ki} =\left( \begin{smallmatrix} u_\ki(n+1) \\ u_\ki(n) \end{smallmatrix} \right)$. Note that due to the Dirichlet boundary condition, the vector $v_{0,\ki}$ is proportional to the vector $v_0=
\left( \begin{smallmatrix} 1 \\ 0 \end{smallmatrix} \right)$.

Take $\alpha=\frac{h}{2}$,
where $h$ is chosen for the random product of $\Pi_{n,E}$-matrices (with~$E$ belonging to the interval $J:=[-K,K]$, containing the spectrum) as in  Theorem \ref{t.2param}. Let us show that the conclusion of the theorem holds with this value of~$\alpha$. That is, for any given $\xi>0$ we show the existence of a constant $C_{\xi}$ such that the desired estimate~\eqref{eq:SULE} holds, where $\mmax_\ki$ is always chosen to be the index, at which the norm of the vector $v_{m,\ki}$ is maximal:
\begin{equation}\label{eq:choice-mj}
\forall m \quad |v_{\mmax_\ki,\ki}| \ge |v_{m,\ki}|.
\end{equation}

Actually, we are going to establish a slightly stronger estimate: as the function $u_j$ is orthonormal, one has $|v_{\mmax_\ki,\ki}|\le 1$; we will actually show that
\begin{equation}\label{eq:SULE-v}
 |v_{m,j}|\le C_\xi e^{\xi|\mmax_\ki |-\alpha|m-\mmax_\ki|} \cdot |v_{\mmax_\ki, \ki}|
\end{equation}
for all $m$ and $\ki$.

In order to do so, we will repeat the arguments of the proof of Theorem~\ref{t.vector}. Namely, we first fix sufficiently small $\eps,\eps'>0$; in fact, as we will see, one can take
\begin{equation}\label{eq:eps-choices}
\eps= \lambda' \cdot \min\left(\frac{\xi}{20},\frac{h}{20}\right), \quad \eps'=\frac{\eps}{C_1}, \quad \text{where} \quad  \lambda' :=  \frac{h}{20 \log \Cn},
\end{equation}
and $\Cn$ is given by \ref{B:C1}, and $C_1$ is given by Proposition \ref{p:derivatives-control}.
Then, almost surely there exists an $n_2$
such that for all $n>n_2$ the conclusions of Theorem~\ref{t:main} and of Proposition~\ref{p:derivatives-control} hold
for the chosen values of~$\eps, \eps'$. We will show that knowing $n_2$ suffices to give an explicit value for the constant~$C_{\xi}$. To do so, let us first establish the following lemma, analogous to the first steps the proof of Theorem~\ref{t.vector}:
\begin{lemma}\label{l:V-bounds}
For any $n\ge \max( \frac{2}{\lambda'} \mmax_\ki,n_2)$ the parameter interval $J_{k_n,n}$, containing the energy~$E_{\ki}$, is a jump interval in terms of Proposition~\ref{p:derivatives-control}. Moreover, denote by~$\mjump_{(n)}$ the corresponding jump moment, and let $\mV_{(n)}\in [\mjump_{(n)}, n]$ be
the moment  obtained by the application of ``curved-V'' Lemma~\ref{l:V-shape}. Then, for any such $n$, the following estimates hold:
\begin{itemize}
\item the jump moment satisfies $\mW_{(n)}'<\lambda' n$,
\item the lowest point of curved-V satisfies $\mV_{(n)}> 2\lambda' n$.
\end{itemize}
\end{lemma}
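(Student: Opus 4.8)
The plan is to follow the first part of the proof of Theorem~\ref{t.vector} line by line, applied to the parameter $a=E_\ki$, to the base point $x_0=[v_{0,\ki}]$ on the circle — by the Dirichlet condition $v_{0,\ki}$ is proportional to $\left(\begin{smallmatrix}1\\0\end{smallmatrix}\right)$, which is the standard base point of Proposition~\ref{p:derivatives-control} — and to the sequence $v_{m,\ki}=T_{m,E_\ki,\omega}(v_{0,\ki})$. The one genuine difference with Theorem~\ref{t.vector} is that there the initial vector is a \emph{unit} vector (so $\log|v_0|=0$), whereas here $v_{0,\ki}$ is not normalized and, before localization is established, no bound on $\log|v_{0,\ki}|=\log|u_\ki(1)|$ that is useful relative to $\mmax_\ki$ is available. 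The remedy is to anchor every comparison of log-norms at $\log|v_{\mmax_\ki,\ki}|$ and to use the defining inequality $|v_{m,\ki}|\le|v_{\mmax_\ki,\ki}|$ (for all $m$) as the source of every contradiction; since the differences $\log|v_{m,\ki}|-\log|v_{m',\ki}|=-\tfrac12\big(\log\tf'_{m,E_\ki,\omega}(\tx_0)-\log\tf'_{m',E_\ki,\omega}(\tx_0)\big)$ do not depend on the scaling of $v_{0,\ki}$, the unknown constant $\log|v_{0,\ki}|$ cancels from all of them.

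First I would show that $J_{k_n,n}$ is a jump interval. If it were ``small'' or ``opinion-changing'', then~\eqref{eq:derivative-u} gives $\big|\,\log|v_{m,\ki}|-\log|v_{0,\ki}|-L_m(E_\ki)\,\big|\le\tfrac{C_1\eps'}{2}n$ for all $m\le n$; subtracting the instances at $m=\mmax_\ki$ (legitimate because $n\ge\tfrac{2}{\lambda'}\mmax_\ki\ge\mmax_\ki$) and $m=n$ makes $\log|v_{0,\ki}|$ disappear and yields $L_n(E_\ki)-L_{\mmax_\ki}(E_\ki)\le\big(\log|v_{n,\ki}|-\log|v_{\mmax_\ki,\ki}|\big)+C_1\eps'n\le C_1\eps'n$. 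This contradicts the elementary lower bound $L_n(E_\ki)-L_{\mmax_\ki}(E_\ki)\ge\tfrac h2(n-\mmax_\ki)-c_0$ (a uniform $c_0$; cf.\ \cite[Lemma~3.7]{GK22}), since $\mmax_\ki\le\tfrac{\lambda'}{2}n$ forces the right side to be at least $\tfrac h2(1-\tfrac{\lambda'}{2})n-c_0$, which exceeds $C_1\eps'n=\eps n$ for the choices~\eqref{eq:eps-choices} once $n$ is larger than a constant depending only on $h,c_0$ — a $\ki$-independent bound that I simply fold into the definition of $n_2$. Hence $J_{k_n,n}$ is a jump interval; call its jump moment $\mjump_{(n)}$, as in Proposition~\ref{p:derivatives-control}.

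Next, applying the ``curved-V'' Lemma~\ref{l:V-shape} on the $(L,n\eps')$-hyperbolic part $[\mjump_{(n)},n]$ (hyperbolic by parts~\ref{i:m2}--\ref{i:m3} of Theorem~\ref{t:main} together with the $(h,n\eps')$-growth of $\{L_m\}$ from Proposition~\ref{p.additivity}, after relabeling $L$ at $\mjump_{(n)}$ as in the discussion preceding Lemma~\ref{l:W-shape}) produces the lowest index $\mV_{(n)}\in[\mjump_{(n)},n]$. Combining~\eqref{eq:derivative-upl} on $[0,\mjump_{(n)}]$ with the curved-V shape on $[\mjump_{(n)},n]$ gives the full ``up, then V'' picture of $\log|v_{m,\ki}|$ on $[0,n]$, whose only candidate maxima are $m\approx\mjump_{(n)}$ and $m=n$; since $\mmax_\ki\le\tfrac{\lambda'}{2}n<n$, maximality forces $\mmax_\ki$ to lie within $O(n\eps)$ of $\mjump_{(n)}$, and in particular $\mjump_{(n)}\le\mmax_\ki+O(n\eps)<\lambda' n$, which is the first assertion. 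This also gives the upper bound $\log|v_{n,\ki}|-\log|v_{0,\ki}|\le\log|v_{\mmax_\ki,\ki}|-\log|v_{0,\ki}|\le L_{\mjump_{(n)}}(E_\ki)+O(n\eps)$. On the other hand, reproducing the chain~\eqref{eq:incr1}--\eqref{eq:incr-total} (the first bound from~\eqref{eq:derivative-upl}, the other two from Lemma~\ref{l:V-shape}), all taken relative to $\log|v_{0,\ki}|$, gives the lower bound $\log|v_{n,\ki}|-\log|v_{0,\ki}|\ge L_n(E_\ki)-2\big(L_{\mV_{(n)}}(E_\ki)-L_{\mjump_{(n)}}(E_\ki)\big)-O(n\eps)$. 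Comparing the two, and using $L_n(E_\ki)-L_{\mjump_{(n)}}(E_\ki)\ge\tfrac h2(n-\mjump_{(n)})-c_0\ge\tfrac h2(1-\lambda')n-c_0$, forces, exactly as in~\eqref{eq:L-diff}, $L_{\mV_{(n)}}(E_\ki)-L_{\mjump_{(n)}}(E_\ki)\ge\tfrac{h}{5}n$ for $\eps,\eps'$ small and $n$ large; then $L_{\mV_{(n)}}(E_\ki)-L_{\mjump_{(n)}}(E_\ki)\le(\mV_{(n)}-\mjump_{(n)})\log\Cn$ yields $\mV_{(n)}-\mjump_{(n)}\ge\tfrac{h}{5\log\Cn}n=4\lambda' n$, hence $\mV_{(n)}>2\lambda' n$, the second assertion.

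The only genuinely new ingredient relative to Theorem~\ref{t.vector} is the bookkeeping device just described — replacing the use of $|v_0|=1$ by comparisons anchored at $\log|v_{\mmax_\ki,\ki}|$ — and the one point needing care is to verify that $n\ge\tfrac{2}{\lambda'}\mmax_\ki$ keeps the relevant index gaps of order $\lambda' n$, so that after $\log|v_{0,\ki}|$ cancels, the $L$-increments of order $hn$ beat the $O(n\eps)$ error terms, with all remaining thresholds $\ki$-independent and hence absorbable into $n_2$. Once this is set up, every displayed inequality is a line-by-line copy of its counterpart in the proof of Theorem~\ref{t.vector}.
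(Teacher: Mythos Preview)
Your argument is correct and, as you announce, is a faithful port of the proof of Theorem~\ref{t.vector} with all log-norm comparisons anchored at $\log|v_{\mmax_\ki,\ki}|$ rather than at $\log|v_{0,\ki}|$; the cancellation of the unknown $\log|v_{0,\ki}|$ in differences is exactly the right observation. The paper's own proof, however, is organised a bit differently and is shorter on both counts. For the first conclusion the paper does not separate ``not a jump interval'' from ``$\mjump_{(n)}\ge\lambda' n$'': in either case the growth estimate~\eqref{eq:derivative-u} (resp.~\eqref{eq:derivative-upl}) is valid up to the index $\lceil\lambda' n\rceil$, and comparing the values there and at $\mmax_\ki$ (both on the ``up'' branch) directly yields $\log|v_{\lceil\lambda' n\rceil,\ki}|>\log|v_{\mmax_\ki,\ki}|$, a contradiction. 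This avoids your ``candidate maxima'' discussion entirely. For the second conclusion the paper bypasses the full chain~\eqref{eq:incr1}--\eqref{eq:incr-total}: it combines the trivial Lipschitz bound $\log|v_{\mV_{(n)},\ki}|-\log|v_{\mmax_\ki,\ki}|\ge-|\mmax_\ki-\mV_{(n)}|\log\Cn$ with the curved-V increment from $\mV_{(n)}$ to $n$, and shows that $\mV_{(n)}\le 2\lambda' n$ would force $\log|v_{n,\ki}|>\log|v_{\mmax_\ki,\ki}|$. Your route has the virtue of reusing Theorem~\ref{t.vector} verbatim; the paper's buys brevity and sidesteps the (minor) case-splitting your ``up, then V'' picture needs when locating $\mmax_\ki$ relative to $\mjump_{(n)}$ and $\mV_{(n)}$.
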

\begin{proof}
Indeed, if the corresponding interval was not a jump one, or if the upper estimate for the jump index $\mW_{(n)}'$ did not hold, we would have
(due to the log-growth estimates~\eqref{eq:derivative-u} and~\eqref{eq:derivative-upl} respectively) a lower bound for the norm of $v_{\lceil \lambda' n\rceil,\ki}$:
\begin{equation}\label{eq:norms-hv}
\log |v_{\lceil \lambda' n\rceil,\ki}| > \log |v_{\mmax_\ki,\ki}| - 2 C_1 n \eps' + L_{\lceil \lambda' n\rceil} (E_j)-L_{\mmax_\ki} (E_j).
\end{equation}
As $\lambda'n \ge 2\mmax_{\ki}$, we then would have
$$
L_{\lceil \lambda' n\rceil} (E_j)-L_{\mmax_\ki} (E_j)\ge h (\lceil \lambda' n\rceil-\mmax_\ki) \ge h \cdot \frac{ \lambda' n}{2};
$$
as due to the choices~\eqref{eq:eps-choices} one has
\[
2 C_1 n \eps' = 2 \eps n  \le \frac{h \lambda'}{10} \cdot n,
\]
the right hand side of~\eqref{eq:norms-hv} would thus be greater than~$\log |v_{\mmax_\ki,\ki}|$, and this would be in contradiction with the choice~\eqref{eq:choice-mj} of the index~$\mmax_{\ki}$.

The second part, the a lower bound for $\mV_{(n)}$, is obtained by an argument close to the one ensuring~\eqref{eq:bm-p-lower}. Namely, we get a lower estimate for $|v_{n,\ki}|$, joining a lower estimate
\[
\log |v_{\mV_{(n)},\ki}| - \log |v_{\mmax_{\ki},\ki}| \ge - |\mmax_{\ki} - \mV_{(n)}|\cdot  \log \Cn
\]
with
\[
\log |v_{n,\ki}| - \log |v_{\mV_{(n)},\ki}| \ge L_{n} - L_{\mV_{(n)}} - 2 \eps n \ge |n- \mV_{(n)}| \cdot h - 3 \eps n,
\]
we get
\begin{multline}\label{eq:v-n-lower}
\log |v_{n,\ki}| - \log |v_{\mmax_{\ki},\ki}| \ge  |n- \mV_{(n)}| \cdot h - |\mmax_{\ki} - \mV_{(n)}|\cdot  \log \Cn - 3 \eps n \ge
\\
\ge nh - 2 \mV_{(n)}\cdot  \log \Cn - h \frac{\lambda'}{2} n.
\end{multline}
Now, if we had $\mV_{(n)}\le 2\lambda' n$, that would imply $2 \mV_{(n)}\cdot  \log \Cn\le \frac{h}{10} n$ and hence
\[
\log |v_{n,\ki}| - \log |v_{\mmax_{\ki},\ki}| \ge hn - \frac{h}{10} n - h \frac{\lambda'}{2} n>0
\]
(where we have used that $\lambda'<\frac{1}{20}$). And this would be a contradiction with the choice of the index~$\mmax_{\ki}$.
\end{proof}

Lemma~\ref{l:V-bounds} already suffices to provide explicit exponential decrease bounds for all $m>\max(2\mmax_\ki,\lambda'n_2)=: m_{init}$. Namely, we have the inequalities
\[
\mW_{(n)} < \lambda n, \quad \mV_{(n)} > 2 \lambda n
\]
for every $n>\max( \frac{2}{\lambda'} \mmax_\ki,n_2)$. In the same way as in the proof of the first part of  Theorem~\ref{t.vector},
this implies that for every $m>m_{init}$, taking $n_{init}:=\lceil \frac{m}{\lambda'} \rceil$, from
\[
\mW_{(n)}< \left\lceil \frac{m}{2} \right\rceil < m < 2 \lambda n
\]
we get (compare with~\eqref{eq:m12})
\begin{equation}\label{eq:v-m-2}
(\log |v_{m,\ki}| - \log |v_{\lceil \frac{m}{2}\rceil,\ki}|) \in  U_{2n\eps} \left(-(L_{m}(E_j)- L_{\lceil \frac{m}{2}\rceil}(E_j))\right).
\end{equation}
Denoting $r'_m:=\log |v_{m,\ki}| + \alpha \cdot |m-\mmax_{\ki}|$ and recalling that we chose $\alpha=\frac{h}{2}$, we get from~\eqref{eq:v-m-2}
\begin{multline}\label{eq:rp-incr}
r'_m - r'_{\lceil \frac{m}{2}\rceil} = \log |v_{m,\ki}| - \log |v_{\lceil \frac{m}{2}\rceil, \ki  }| + \frac{m}{2} \alpha \le
\\
\le \left(-(L_{m}(E_j)- L_{\lceil \frac{m}{2}\rceil}(E_j))\right) + 2n\eps + \frac{m}{2} \alpha
\\
\le -\frac{m}{2} h + 4 n\eps + \frac{m}{2}\alpha
\end{multline}
Now, due to the choice of $\eps$ we have $4n\eps \le \frac{\lambda' h n}{5} \le \frac{mh}{5}$, and finally the right hand side of~\eqref{eq:rp-incr} is less or equal to
\[
-\frac{m}{2} h + \frac{mh}{5} + \frac{m}{2}\alpha = m h (-\frac{1}{2} + \frac{1}{5} + \frac{1}{4}) < 0.
\]
Hence, for every $m\ge m_{init}$ we have
\[
v_{m} + \alpha \cdot |m-\mmax_{\ki}| = r'_m \le r'_{\lceil \frac{m}{2} \rceil} = v_{\frac{m}{2}} + \alpha \cdot |\frac{m}{2}-\mmax_{\ki}|,
\]
and hence it suffices to establish~\eqref{eq:SULE-v} for $m\in [0,m_{init}]$; recall that $m_{init}=\max(2\mmax_{\ki},\lambda' n_2)$.

Now, note that to handle the case $2 \mmax_{\ki}\le \lambda' n_2$, it suffices to take $C_{\xi}>e^{\alpha n_2}$, as then for any $m= 0,\dots,m_{init}$ one has
$$
\log |v_{m}| < 1 < C_{\xi} e^{-\alpha |m-\mmax_{\ki}|}.
$$

Finally, let us consider the case $\lambda' n_2<2\mmax_{\ki}$. Take $n_{init}:=\left\lceil \frac{2\mmax_\ki}{\lambda'}\right\rceil$ and consider the corresponding jump index $\mW_{(n_{init})}$ and break point $\mV_{(n_{init})}$. Due to Lemma~\ref{l:V-bounds}, we have
\[
0\le \mW_{(n_{init})}, \mmax_\ki \le 2\lambda'n_{init} < \mV_{(n_{init})}
\]
\begin{figure}[!h!]
\includegraphics[width=0.45\textwidth]{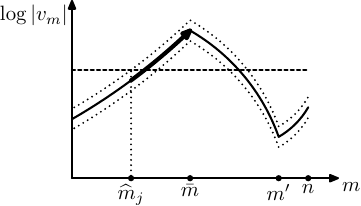} \quad \includegraphics[width=0.45\textwidth]{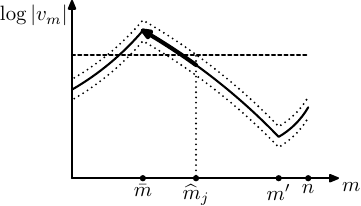}
\caption{Estimating $L_{\mjump} (E_j) - L_{\mmax_{\ki}} (E_j)$. The graph represents the behavior of $\log |v_m|$ predicted by curved-V Lemma, together with $\pm\eps n$ error. Dashed line shows maximal possible value of $\log |v_m|$ due to the choice of $\mmax_{\ki}$, and the arrow shows the ``forbidden'' growth between the compared values if the estimated difference was too large.}\label{f:contr}
\end{figure}

The conclusions of Lemma~\ref{l:V-shape} imply that, regardless of whether $\mmax_{\ki}<\mW_{(n_{init})}$ or $\mmax_{\ki}\ge \mW_{(n_{init})}$,
\[
\log |v_{\mjump_{(n_{init})},\ki}| - \log |v_{\mmax_\ki,\ki}| \ge |L_{\mjump_{(n_{init})}} (E_j) - L_{\mmax_{\ki}} (E_j)| - 3 \eps n_{init},
\]
and as the increment in the left hand side should be negative, we actually get (see Fig.~\ref{f:contr})
\begin{equation}\label{eq:L-delta}
|L_{\mjump_{(n_{init})}} (E_j) - L_{\mmax_{\ki}} (E_j)| \le  3 \eps n_{init}.
\end{equation}

Also, again from Lemma~\ref{l:V-shape} for every $m\le m_{init}$ we have
\[
\log |v_{m,\ki}|  - \log |v_{\mjump_{(n_{init})},\ki}| \le -| L_{m} (E_j)- L_{\mjump_{(n_{init})}} (E_j)| + 3 \eps n_{init},
\]
and joining it with~\eqref{eq:L-delta} we obtain
\begin{multline*}
\log |v_{m,\ki}|  - \log |v_{\mjump_{(n_{init})},\ki}| \le -|L_{m} (E_j)- L_{\mmax_{\ki}} (E_j) | + 6 \eps n_{init}
\\
\le  - h \cdot  | m-\mmax_{\ki}| + 8 \eps n_{init}.
\end{multline*}
Now, $\eps\le \frac{\lambda' \xi}{20}$, and hence
\[
8\eps n_{init}\le \frac{2}{5} \lambda' \xi n_{init} = \frac{2}{5} \lambda'  \lceil \frac{2\mmax_{\ki}}{\lambda'} \rceil \cdot  \xi < \xi \mmax_{\ki}.
\]
We finally obtain
\[
|v_{m,\ki}| \le e^{-h\cdot |m-\mmax_{\ki}| + \xi \mmax_{\ki}} |v_{\mmax_{\ki},\ki}|,
\]
so the estimate~\eqref{eq:SULE-v} holds for these values of $m$ for any $C_{\xi}\ge 1$.

Joining the two cases, we see that~\eqref{eq:SULE-v} (and hence the desired uniform estimate~\eqref{eq:SULE}) always holds for $C_{\xi}:= e^{\alpha n_2}$, where $n_2$ corresponds to the chosen~$\eps$ and~$\eps'$. This concludes the proof of the theorem for the case of~$\ell_2(\N)$.
\end{proof}

\begin{remark}\label{rk:V}
Actually, a slightly more accurate estimates (copying those of the proof of Theorem~\ref{t.vector}) allow to show that the eigenfunctions' localization rate is given by the corresponding function $L_{n}(E)$ up to an arbitrarily small correction: for any $\xi>0$ there exists $C_{\xi}$ such that for any eigenfunction $u$, satisfying $Hu=Eu$, there exists $\mmax$ such that
$$
\forall \ki \quad \forall m \quad |u_\ki(m)| \le C_{\xi} e^{\xi \mmax} \cdot e^{-|L_m(E_{\ki})-L_{\mmax}(E_{\ki})| + \xi |m-\mmax|}.
$$
\end{remark}


\subsection{Operator on $\ell_2(\Z)$}

\begin{proof}[Proof of the second part of Theorem~\ref{t.dl}]

Let us now pass to the case of the operator on~$\ell_2(\Z)$. Again, due to the spectral localization there exists an orthonormal base of eigenvectors
$$
H u_\ki = E_\ki u_\ki,
$$
and this equation becomes a recurrent relation $v_{n+1,j}=\Pi_{n,E_\ki} v_{n,j}$ on the vectors  $v_{n,\ki} =\left( \begin{smallmatrix} u_\ki(n+1) \\ u_\ki(n) \end{smallmatrix} \right)$. Again, we will take $\alpha=\frac{h}{4}$, where $h$ is chosen for the setting of the product of random matrices $\Pi_{n,E}$, where $E\in [-K,K]\supset \sigma(H)$.

Now, for any given $\xi>0$, as in the first part, we take $\eps, \eps',\lambda$ given by~\eqref{eq:eps-choices}
and consider $n_2$ such that for all $n>n_2$ for the products
$$
\Pi_{n,E}\dots \Pi_{-n,E}
$$
the conclusions of Theorem~\ref{t:main} hold, and Lemma~\ref{l:W-shape} hence can be applied.
As before, we will construct $C_{\xi}$, depending only on $n_2$ (but not on the eigenvalue~$E_\ki$),
for which the estimate~\eqref{eq:SULE} holds; actually, we will again establish a stronger estimate~\eqref{eq:SULE-v}.

Also as before, let the eigenfunction $u_\ki$ (and the corresponding eigenvalue $E_\ki$) be fixed,
and let $\mmax_{\ki}$ be the index of the maximal norm for the corresponding vectors~$v_{m,\ki}$:
$$
|v_{\mmax_\ki,\ki}|=\max_{m\in \Z} |v_{m,\ki}|
$$

For any $n\ge n_2$ denote by $\mV_{-,(n)}<\mW_{(n)}<\mV_{+,(n)}$ be the indices given for this~$n$
by Lemma~\ref{l:W-shape} (these indices correspond to the breakpoints of the ``curved~W'' graph,
the central one being the upwards break point).

The following lemma is an analogue of Lemma~\ref{l:V-bounds}:
\begin{lemma}\label{l:W-bounds}
For any $n\ge \max( \frac{2}{\lambda'} |\mmax_\ki|,n_2)$
the following estimates hold:
\begin{itemize}
\item the central index satisfies
$|\mW_{(n)}|<\lambda' n$,
\item the left and right indices satisfiy
\[
\mV_{-,(n)} < -2\lambda' n, \quad \mV_{+,(n)}> 2\lambda' n.
\]
\end{itemize}
\end{lemma}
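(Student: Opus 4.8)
The plan is to follow, mutatis mutandis, the proofs of Lemma~\ref{l:V-bounds} and of Lemma~\ref{l:2-half}, replacing the one-sided ``curved-V'' description by the two-sided ``curved-W'' description of Lemma~\ref{l:W-shape} and replacing the a priori decay estimate~\eqref{eq:two-finite-n} by the maximality property $|v_{m,\ki}|\le |v_{\mmax_\ki,\ki}|$, which holds for all $m\in\Z$ by the choice of $\mmax_\ki$. Throughout, $\eps,\eps',\lambda'$ are fixed by~\eqref{eq:eps-choices}, the sequence $\{L_m(E_\ki)\}$ is $(h,\eps n)$-growing by Proposition~\ref{p.additivity}, and $L_{m'}(E_\ki)-L_m(E_\ki)\le \log\Cn\cdot(m'-m)$ for $m<m'$; the curved-W estimates of Lemma~\ref{l:W-shape} come with the slop $U_{n\eps}$, admissible since $n\ge n_2$. (As in Lemma~\ref{l:V-bounds}, in the range $n\ge\tfrac2{\lambda'}|\mmax_\ki|$ the parameter interval $J_{k_n,n}\ni E_\ki$ is automatically a ``jump'' interval — otherwise $T_{[-n,n],a,\omega}$ would be $(L,n\eps')$-hyperbolic and $|\mmax_\ki|$ would have to be within $O(n\eps/h)$ of $n$, contradicting the hypothesis — but this is not needed, as Lemma~\ref{l:W-shape} yields the break indices $\mV_{-,(n)}<\mW_{(n)}<\mV_{+,(n)}$ in all cases.)

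First I would locate $\mmax_\ki$, establishing the analogue of~\eqref{eq:0-between}, namely $\mV_{-,(n)}\le \mmax_\ki\le \mV_{+,(n)}$. Indeed, if $\mmax_\ki>\mV_{+,(n)}$, then (as $\mW_{(n)}\le\mV_{+,(n)}<\mmax_\ki$) the index $\mmax_\ki$ lies on the second branch $[\mW_{(n)},n]$; comparing the curved-W estimates at $m=\mmax_\ki$ and $m=n$ and using $(h,\eps n)$-growth together with $n-\mmax_\ki\ge n(1-\tfrac{\lambda'}2)$, one gets $\log|v_{n,\ki}|-\log|v_{\mmax_\ki,\ki}|\ge h(n-\mmax_\ki)-O(n\eps)>0$ for $\eps$ as in~\eqref{eq:eps-choices}, contradicting the maximality of $|v_{\mmax_\ki,\ki}|$; the inequality $\mmax_\ki\ge\mV_{-,(n)}$ is symmetric, via the first branch. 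Next, $|\mW_{(n)}|<\lambda' n$ is obtained by contradiction: if $\mW_{(n)}\ge\lambda' n$ (the case $\mW_{(n)}\le-\lambda' n$ is symmetric), then since $|\mmax_\ki|\le\tfrac{\lambda'}2 n<\lambda' n\le\mW_{(n)}$ and $\mmax_\ki\ge\mV_{-,(n)}$, both $\mmax_\ki$ and $\mW_{(n)}$ lie on the first branch with bottom $\mV_{-,(n)}$, and comparing the curved-W estimates there (resolving the absolute values by $(h,\eps n)$-growth, since $\mmax_\ki,\mW_{(n)}\ge\mV_{-,(n)}$ and $\mW_{(n)}-\mmax_\ki\ge\tfrac{\lambda'}2 n$) gives
$$
\log|v_{\mW_{(n)},\ki}|-\log|v_{\mmax_\ki,\ki}|\ \ge\ h(\mW_{(n)}-\mmax_\ki)-O(n\eps)\ \ge\ \tfrac{h\lambda'}2 n-O(n\eps)\ >\ 0,
$$
again contradicting maximality.

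For the outer bound $\mV_{+,(n)}>2\lambda' n$ I would argue as in Lemma~\ref{l:2-half}: assume $\mV_{+,(n)}\le 2\lambda' n$. The curved-W estimate on $[\mW_{(n)},n]$ at $m=n$ gives $\log|v_{n,\ki}|\ge\log|v_{\mV_{+,(n)},\ki}|+h(n-2\lambda' n)-O(n\eps)$; and since, by the two preceding steps, all of $\mmax_\ki,\mW_{(n)},\mV_{+,(n)}$ lie within $O(\lambda' n)$ of the origin, curved-W combined with $L_{m'}-L_m\le\log\Cn\cdot(m'-m)$ gives $\log|v_{\mV_{+,(n)},\ki}|-\log|v_{\mmax_\ki,\ki}|\ge -O(\log\Cn\cdot\lambda' n)$. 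Adding,
$$
\log|v_{n,\ki}|-\log|v_{\mmax_\ki,\ki}|\ \ge\ h\,n(1-2\lambda')-O(\log\Cn\cdot\lambda' n)-O(n\eps),
$$
which is strictly positive because $\lambda'=\tfrac h{20\log\Cn}$ (so $\log\Cn\cdot\lambda'$ is a small multiple of $h$) and $\eps$ is even smaller by~\eqref{eq:eps-choices}; this contradicts maximality. The bound $\mV_{-,(n)}<-2\lambda' n$ is obtained by the symmetric argument towards $-\infty$.

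I expect the only real obstacle to be the bookkeeping of the non-stationary quantities $L_m(E_\ki)$ for indices of both signs and in the various mutual orders relative to $\mV_{-,(n)}<\mW_{(n)}<\mV_{+,(n)}$ and to $\mmax_\ki$: to apply the curved-W estimates one must consistently resolve each $|L_m-L_{m'}|$, using $(h,\eps n)$-growth in the ``decreasing'' direction and the crude bound $L_{m'}-L_m\le\log\Cn\cdot(m'-m)$ in the ``increasing'' one — which is exactly why the location step $\mV_{-,(n)}\le\mmax_\ki\le\mV_{+,(n)}$ is carried out first. Once the indices are ordered and $\eps,\eps',\lambda'$ are fixed by~\eqref{eq:eps-choices}, each inequality is a routine combination of these two facts with the maximality of $|v_{\mmax_\ki,\ki}|$, paralleling the proofs of Lemma~\ref{l:V-bounds} and Lemma~\ref{l:2-half}.
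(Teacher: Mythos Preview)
Your proposal is correct and follows essentially the same approach as the paper, with one organizational difference worth noting. You proceed in the order \emph{location} ($\mV_{-,(n)}\le \mmax_\ki \le \mV_{+,(n)}$) $\to$ \emph{central bound} $\to$ \emph{outer bounds}, mirroring the structure of the proof of the second part of Theorem~\ref{t.vector} and of Lemma~\ref{l:2-half}. The paper instead proves the outer bounds \emph{first}, using only the crude estimate $\log|v_{\lfloor 2\lambda' n\rfloor,\ki}|-\log|v_{\mmax_\ki,\ki}|\ge -|\mmax_\ki-\lfloor 2\lambda' n\rfloor|\cdot\log\Cn$ (which needs no information about the position of $\mmax_\ki$ relative to $\mV_{\pm,(n)}$) together with the curved-W growth on $[\lfloor 2\lambda' n\rfloor,n]$; the location $\mV_{-,(n)}<\mmax_\ki<\mV_{+,(n)}$ then falls out automatically from $|\mmax_\ki|\le\tfrac{\lambda'}{2}n$ and the already-established outer bounds, after which the central bound is obtained exactly as you do. Your extra location step is harmless and your outer-bound argument is in fact the same computation as the paper's (your ``curved-W combined with $L_{m'}-L_m\le\log\Cn\cdot(m'-m)$'' for the passage from $\mmax_\ki$ to $\mV_{+,(n)}$ reduces to the same crude $\log\Cn$ bound), so the two proofs differ only in the order of the steps.
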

\begin{proof}
Let us start with the second conclusion, following the same lines as in Lemma~\ref{l:V-bounds}.

Namely, we have
\begin{multline}\label{eq:W1}
\log |v_{\lfloor 2\lambda'n\rfloor ,\ki}| - \log |v_{\mmax_{\ki},\ki}| \ge - |\mmax_{\ki} - \lfloor 2\lambda'n\rfloor |\cdot  \log \Cn \ge
\\
\ge  -3\lambda'n \cdot  \log \Cn = - \frac{3}{20} hn .
\end{multline}
On the other hand, if we had $\mV_{+,(n)}\le 2\lambda' n$, this would imply
\begin{multline}\label{eq:W2}
\log |v_{n,\ki}| - \log |v_{\lfloor 2\lambda'n\rfloor ,\ki}| \ge L_{n}(E_j) - L_{\lfloor 2\lambda'n\rfloor}(E_j) - 2 \eps n \ge
\\
\ge (n- \lfloor 2\lambda'n\rfloor ) \cdot h - 3 \eps n \ge hn - 2 \lambda'h n - 3\frac{\lambda'h}{20} n.
\end{multline}

Adding~\eqref{eq:W1} and~\eqref{eq:W2}, and recalling that $\lambda'<\frac{1}{20}$, we would get
\[
\log |v_{n,\ki}| - \log |v_{\mmax_{\ki},\ki}| \ge nh ( 1- 2\lambda'-\frac{3}{20} \lambda' - \frac{3}{20}) > 0,
\]
thus obtaining a contradiction with the choice of the index~$\mmax_{\ki}$.

We have obtained the desired $\mV_{+,(n)}> 2\lambda' n$. The same arguments show that $\mV_{-,(n)}<- 2\lambda' n$; this time, a contradiction comes from the consideration of~$\log |v_{-n,\ki}|$.
%
%


For the first conclusion, we have
\begin{multline}\label{eq:v-W-center}
\log |v_{\mW_{(n)},\ki}| > \log |v_{\mmax_\ki,\ki}| - 2 C_1 n \eps' + |L_{\mW_{(n)}} (E_j)-L_{\mmax_\ki} (E_j)| -n\eps \ge
\\
\ge \log |v_{\mmax_\ki,\ki}| - 2 n \eps + h \cdot |\mW_{(n)} - \mmax_{\ki}| - 2n\eps
\end{multline}
If we had $|\mW_{(n)}|\ge \lambda' n$, the inequality $\lambda'n >2|\mmax_{\ki}|$ would imply that
\begin{multline*}
\log |v_{\mW_{(n)},\ki}| - \log |v_{\mmax_\ki,\ki}| \ge  h \cdot \frac{\lambda' n}{2} - 4 n \eps \ge
\\
\ge h \cdot \frac{\lambda' n}{2} - 4 n \cdot \frac{\lambda'h}{20} = \lambda' \cdot hn \cdot (\frac{1}{2}-\frac{1}{5}) >0,
\end{multline*}
again providing a contradiction with the choice of the index~$\mmax_{\ki}$.
\end{proof}

Again, Lemma~\ref{l:W-bounds} suffices to provide explicit exponential decrease bounds for all $|m|>\max(2\mmax_\ki,\lambda'n_2)=: m_{init}$.
Consider the case $m>0$, for the case $m<0$ is completely analogous. We have the inequalities
\[
\mW_{(n)} < \lambda n, \quad \mV_{(n)} > 2 \lambda n
\]
for every $n>\max( \frac{2}{\lambda'} \mmax_\ki,n_2)$. In the same way as before,
taking $n:=\lceil \frac{m}{\lambda'} \rceil$, from
\[
\mW_{(n)}< \left\lceil \frac{m}{2} \right\rceil < m < 2 \lambda n
\]
we get
\begin{equation}\label{eq:v-m-2-w}
(\log |v_{m,\ki}| - \log |v_{\lceil \frac{m}{2}\rceil,\ki}|) \in  U_{2n\eps} \left(-(L_{m}(E_j)- L_{\lceil \frac{m}{2}\rceil}(E_j))\right).
\end{equation}
Denoting $r'_m:=\log |v_{m,\ki}| + \alpha \cdot |m-\mmax_{\ki}|$ and recalling that we chose $\alpha=\frac{h}{2}$, we get from~\eqref{eq:v-m-2-w}
\begin{multline}\label{eq:rp-incr-w}
r'_m - r'_{\lceil \frac{m}{2}\rceil} = \log |v_{m,\ki}| - \log |v_{\lceil \frac{m}{2}\rceil, \ki  }| + \frac{m}{2} \alpha \le
\\
\le \left(-(L_{m}(E_j)- L_{\lceil \frac{m}{2}\rceil}(E_j))\right) + 2n\eps + \frac{m}{2} \alpha
\\
\le -\frac{m}{2} h + 4 n\eps + \frac{m}{2}\alpha
\end{multline}
Now, due to the choice of $\eps$ we have $4n\eps \le \frac{\lambda' h n}{5} \le \frac{mh}{5}$, and finally the right hand side of~\eqref{eq:rp-incr-w} is less or equal to
\[
-\frac{m}{2} h + \frac{mh}{5} + \frac{m}{2}\alpha = m h (-\frac{1}{2} + \frac{1}{5} + \frac{1}{4}) < 0.
\]
Hence, for every $m\ge m_{init}$ we have
\[
v_{m} + \alpha \cdot |m-\mmax_{\ki}| = r'_m \le r'_{\lceil \frac{m}{2} \rceil} = v_{\frac{m}{2}} + \alpha \cdot |\frac{m}{2}-\mmax_{\ki}|,
\]
and hence it suffices to establish~\eqref{eq:SULE-v} for $m\in [-m_{init},m_{init}]$.

Again as before, the case $2 |\mmax_{\ki}|\le \lambda' n_2$, is handled by requesting $C_{\xi}\ge e^{2\alpha n_2}$, as then for any $|m| \le m_{init}$ one has
$$
\log |v_{m}| < 1 < C_{\xi} e^{-\alpha |m-\mmax_{\ki}|}.
$$
Finally, let us consider the case $\lambda' n_2<2|\mmax_{\ki}|$. Take $n_{init}:=\left\lceil \frac{2|\mmax_\ki|}{\lambda'}\right\rceil$ and consider the corresponding jump index $\mW_{(n_{init})}$ and break points $\mV_{-,(n_{init})}, \mV_{+,(n_{init})}$. Due to Lemma~\ref{l:W-bounds}, we have
\[
\mV_{-(n_{init})} \le -2\lambda'n_{init} \le \mW_{(n_{init})}, \mmax_\ki \le 2\lambda'n_{init} < \mV_{+,(n_{init})}
\]

Then, the conclusions of Lemma~\ref{l:W-shape} imply
\[
\log |v_{\mW_{(n_{init})},\ki}| - \log |v_{\mmax_\ki,\ki}| \ge |L_{\mW_{(n_{init})}} (E_j) - L_{\mmax_{\ki}} (E_j)| - 3 \eps n_{init},
\]
and as the increment in the left hand side should be negative, we actually get
\begin{equation}\label{eq:L-delta-2}
|L_{\mW_{(n_{init})}} (E_j) - L_{\mmax_{\ki}} (E_j)| \le  3 \eps n_{init}.
\end{equation}

Also, again from Lemma~\ref{l:W-shape} for every $m\le m_{init}$ we have
\[
\log |v_{m,\ki}|  - \log |v_{\mV_{(n_{init})},\ki}| \le -| L_{m} (E_j)- L_{\mV_{(n_{init})}} (E_j)| + 3 \eps n_{init},
\]
and joining it with~\eqref{eq:L-delta-2} we obtain
\begin{multline*}
\log |v_{m,\ki}|  - \log |v_{\mV_{(n_{init})},\ki}| \le -|L_{m} (E_j)- L_{\mmax_{\ki}} (E_j) | + 6 \eps n_{init}
\\
\le  - h \cdot  | m-\mmax_{\ki}| + 8 \eps n_{init}.
\end{multline*}
Now, $\eps\le \frac{\lambda' \xi}{20}$, and hence
\[
8\eps n_{init}\le \frac{2}{5} \lambda' \xi n_{init} = \frac{2}{5} \lambda'  \left\lceil \frac{2\mmax_{\ki}}{\lambda'} \right\rceil \cdot  \xi < \xi \mmax_{\ki}.
\]
We finally obtain
\[
|v_{m,\ki}| \ge e^{-h\cdot |m-\mmax_{\ki}| + \xi \mmax_{\ki}} |v_{\mmax_{\ki},\ki}|,
\]
so the estimate~\eqref{eq:SULE-v} holds for these values of $m$ for any $C_{\xi}\ge 1$.

Joining the cases studied, we again see that the uniform estimate~\eqref{eq:SULE} holds for the choice $C_{\xi}:=e^{2 \alpha n_2}$, where $n_2$ corresponds to the chosen $\eps$ and~$\eps'$. This completes the proof for the case of~$\ell_2(\Z)$.
\end{proof}

Again in the same way as before, we have the following remark.

\begin{remark}\label{rk:W}
Slightly more accurate estimates allow to show that the eigenfunctions' localization rate is given by the corresponding function $L_{n}(E)$ up to an arbitrarily small correction: for any $\xi>0$ there exists $C_{\xi}$ such that for any eigenfunction $u$, satisfying $Hu=Eu$, there exists $\mmax$ such that
$$
\forall \ki \quad \forall m \quad |u_\ki(m)| \le C_{\xi} e^{\xi \mmax} \cdot e^{-|L_m(E_{\ki})-L_{\mmax}(E_{\ki})| + \xi |m-\mmax|}.
$$
\end{remark}

We conclude this section by a reference to Theorem~7.5 from \cite{DJLS}: this theorem states that
SULE implies SUDL, and hence the dynamical localization for this operator is also established.

\begin{appendix}

\section{Locally Cantor essential spectrum}\label{a.1}
Here we give an example of a non-stationary Anderson-Bernoulli potential such that the almost sure essential spectrum of the corresponding discrete Schr\"odinger operator $H:l^2(\mathbb{Z})\to l^2(\mathbb{Z})$ intersects an open interval at a Cantor set of zero measure.
 Construction is very explicit. Namely, choose  any  sequence $\{n_k\}_{k\in \mathbb{N}}$ of integers such that
$$
\quad  n_k\to \infty\ \ \text{and} \quad n_{k+1}-n_k\to \infty \quad \text{as} \quad k\to \infty.
$$
We define the random potential in the following way:
\begin{equation}\label{e.example2}
V(n)=\left\{
  \begin{array}{ll}
    \text{$0$ or~$1$ with probability $1/2$}, & \hbox{if $n\not\in\{n_k\}$;} \\
    \text{$0$ or~$100$ with probability $1/2$}, & \hbox{if $n\in \{n_k\}$.}
  \end{array}
\right.
\end{equation}

\begin{prop}\label{p.ex2}
Almost sure essential spectrum of the operator $H$ with the potential (\ref{e.example2}) is a union of the interval $[-2, 3]$ and a Cantor set contained in the interval $[98, 102]$.
\end{prop}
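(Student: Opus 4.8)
The plan is to identify $\sigma_{\mathrm{ess}}(H)$ via the description of the essential spectrum of a bounded whole-line Schr\"odinger operator in terms of its right limits, and then to recognize the ``impurity part'' of the answer as the continuous image of the Cantor group $\{0,1\}^{\mathbb Z}$ under a bound-state-energy map. As a preliminary observation, $\sigma_{\mathrm{ess}}(H)$ is almost surely a deterministic closed set: changing finitely many $V(n)$ is a finite-rank perturbation, so for each $E$ the event $\{E\in\sigma_{\mathrm{ess}}(H)\}$ is a tail event, hence a.s.\ constant by Kolmogorov's $0$--$1$ law, and $\mathrm{dist}(\,\cdot\,,\sigma_{\mathrm{ess}}(H))$ is $1$-Lipschitz, so is determined by its values at rational points.

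By the right-limit description, $\sigma_{\mathrm{ess}}(H)=\overline{\bigcup_{V_r}\sigma(\Delta+V_r)}$, the union over all pointwise limits $V_r=\lim_j V(\,\cdot\,+m_j)$ with $|m_j|\to\infty$. Since $n_{k+1}-n_k\to\infty$, no such limit can retain two of the values $100$, so every right limit is either some $\underline v\in\{0,1\}^{\mathbb Z}$ or of the form $\underline v+100\,\delta_c$ with $\underline v\in\{0,1\}^{\mathbb Z}$, $c\in\mathbb Z$; a Borel--Cantelli argument (each finite $\{0,1\}$-pattern occurs, with fixed positive probability, infinitely often inside the long gaps, and at every fixed offset from infinitely many special sites carrying the value $100$), together with a diagonalization over the countably many patterns, shows that a.s.\ all of these actually occur. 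For $\underline v\in\{0,1\}^{\mathbb Z}$ one has $\sigma(\Delta+\underline v)\subseteq[-2,3]$ (because $0\le\underline v\le1$ and $\sigma(\Delta)=[-2,2]$), while $\underline v\equiv0$, $\underline v\equiv1$ already give $[-2,2]\cup[-1,3]=[-2,3]$. For $\underline v+100\,\delta_c$ a rank-one computation (Aronszajn--Donoghue) gives $\sigma(\Delta+\underline v+100\delta_c)\cap(3,\infty)=\{\mathcal B_c(\underline v)\}$ and $\sigma(\Delta+\underline v+100\delta_c)\setminus(3,\infty)\subseteq[-2,3]$, where $\mathcal B_c(\underline v)$ is the unique solution in $(3,\infty)$ of $\langle\delta_c,(E-\Delta-\underline v)^{-1}\delta_c\rangle=\frac1{100}$ (existence and uniqueness follow from strict monotonicity in $E$ and the bound $\langle\delta_c,(3^+-\Delta-\underline v)^{-1}\delta_c\rangle\ge\frac15$); sandwiching $\Delta\le\Delta+\underline v\le\Delta+1$ yields $\frac1{\sqrt{E^2-4}}\le\langle\delta_c,(E-\Delta-\underline v)^{-1}\delta_c\rangle\le\frac1{\sqrt{(E-1)^2-4}}$, so $\mathcal B_c(\underline v)\in(\sqrt{10004},1+\sqrt{10004})\subset(98,102)$. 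By translation invariance $\mathcal B_c(\underline v)$ depends only on the shifted sequence, so, writing $\mathcal B(\underline v):=\mathcal B_0(\underline v)$ and $C_0:=\mathcal B(\{0,1\}^{\mathbb Z})$, we obtain $\sigma_{\mathrm{ess}}(H)=[-2,3]\cup C_0$ with $C_0\subset(98,102)$ disjoint from $[-2,3]$.

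It remains to show $C_0$ is a zero-measure Cantor set. Differentiating $g(\underline v,E):=\langle\delta_0,(E-\Delta-\underline v)^{-1}\delta_0\rangle-\frac1{100}$ gives $\partial_E g=-\|(E-\Delta-\underline v)^{-1}\delta_0\|^2\le-104^{-2}$ and $\partial_{v(m)}g=\langle\delta_0,(E-\Delta-\underline v)^{-1}\delta_m\rangle^2\le C^2e^{-2\mu|m|}$ by a Combes--Thomas estimate ($E$ lies at distance $\ge97$ from $\sigma(\Delta+\underline v)\subseteq[-2,3]$, so $\mu$ can be taken large, in particular $2\mu>\ln4$); the implicit function theorem then gives $|\mathcal B(\underline v)-\mathcal B(\underline v')|\le C''e^{-2\mu R}$ whenever $\underline v=\underline v'$ on $[-R,R]$, so $\mathcal B$ is continuous for the product topology and $C_0$ is compact. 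The same formulas give $\partial_{v(m)}\mathcal B=\phi(m)^2/\|\phi\|^2>0$, where $\phi=(\mathcal B(\underline v)-\Delta-\underline v)^{-1}\delta_0$ is the bound state and $\phi(m)\ne0$ for every $m$ because $\phi\in\ell^2$ satisfies a second-order recursion at an energy near $100$, where a vanishing would force exponential growth. Hence $\mathcal B$ is strictly increasing in each coordinate, so flipping the $R$-th coordinate of any $\underline v$ moves $\mathcal B$ by a nonzero amount of size at most $C''e^{-2\mu(R-1)}$: $C_0$ has no isolated points. Finally, covering $C_0$ by the $2^{2R+1}$ sets $\mathcal B(\{\underline v:\underline v|_{[-R,R]}=p\})$, each of diameter $\le2C''e^{-2\mu R}$, gives total length $\le4C''(4e^{-2\mu})^R\to0$; thus $C_0$ has Lebesgue measure zero, hence is nowhere dense and totally disconnected. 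A nonempty compact perfect totally disconnected subset of $\mathbb R$ is a Cantor set, so $\sigma_{\mathrm{ess}}(H)=[-2,3]\cup C_0$ has the asserted form.

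The main obstacle is the enumeration of the right limits: $n_{k+1}-n_k\to\infty$ is exactly what forbids a right limit with two impurities, and the genericity of all prescribed limits must be supplied by Borel--Cantelli; the quantitative Combes--Thomas input, used both for the continuity of $\mathcal B$ and for the measure-zero covering, is the other delicate point. If one prefers to avoid the right-limit machinery, the same result follows from a direct Weyl-sequence construction for the inclusions together with a diagonal-dominance (Fredholm) analysis of the reduced rank-one-per-impurity matrix $\big[\delta_{jk}+100\,\langle\delta_{n_j},(H_b-E)^{-1}\delta_{n_k}\rangle\big]_{j,k}$, where $H_b=\Delta+V_b$ is the $\{0,1\}$-valued background obtained by erasing all impurities; this route is more computational but elementary.
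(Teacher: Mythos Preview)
Your proof is correct and takes a genuinely different route from the paper's. A small slip: writing the impurity right limits as $\underline v+100\,\delta_c$ with $\underline v\in\{0,1\}^{\mathbb Z}$ literally allows the value $101$ at~$c$, which never occurs (at special sites $V\in\{0,100\}$); the correct family has $\underline v(c)=0$, i.e.\ the parameter space is $\{0,1\}^{\mathbb Z\setminus\{c\}}$. This is harmless---all of your estimates (monotonicity in each remaining coordinate, the Combes--Thomas covering bound) go through verbatim on this slightly smaller Cantor group.

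The paper proceeds dynamically: it fixes $E\in[98,102]$, notes that every transfer matrix $\Pi_{n,E}$ with $V(n)\in\{0,1\}$ is uniformly hyperbolic, and builds for each such~$E$ two dynamically defined Cantor sets $K(E),C(E)\subset\mathbb{RP}^1$ of stable/unstable directions at the impurity site. The eigenvalue set is then $\{E:f_{0,E}(K(E))\cap C(E)\neq\varnothing\}$, and a parametric intersection lemma for transverse families of Cantor sets with $\dim_B K+\dim_B C<1$ shows this is a Cantor set of small box dimension. Your approach is purely spectral: Last--Simon right limits, Aronszajn--Donoghue for the single bound state, and a Combes--Thomas Lipschitz estimate on the bound-state map $\mathcal B$. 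What the paper's method buys is an explicit box-dimension bound (your covering argument gives one too, $\dim_B C_0\le (\ln 2)/\mu$, though you only extract measure zero); what your method buys is that it avoids the transfer-matrix formalism entirely and makes the role of the rank-one impurity transparent. Both identifications of the Cantor set are the same object, seen either as $\{E:\text{stable and unstable cones meet at the impurity}\}$ or as $\mathcal B(\{0,1\}^{\mathbb Z\setminus\{0\}})$.
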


To characterize the spectrum of an operator it will be convenient to use the following criterion (notice that we do not make any assumptions regarding the nature of the potential in Proposition \ref{p.crit}):

\begin{prop}\label{p.crit}
Let $\{V(n)\}_{n\in \mathbb{Z}}$ be a bounded potential of the discrete Schr\"odinger operator $H$ acting on $\ell^2(\Z)$ via
\begin{equation}
[H u](n) = u(n+1) + u(n-1) + V(n) u(n).
\end{equation}
Then we have the following:

\vspace{5pt}

1) Energy $E\in \mathbb{R}$ belongs to the spectrum of the operator $H$ if and only if there exists $K>0$ such that for any $N\in \mathbb{N}$ there is $m\in \mathbb{Z}$ and a unit vector $\bar u$, $|\bar u|=1$,  such that $|T_{[m, m+i], E}\,\bar u|\le K$ for all $|i|\le N$, where $T_{[m, m+i], E}$ is the product of transfer matrices given by
$$
T_{[m, m+i], E}=\left\{
               \begin{array}{lll}
                 \Pi_{m+i-1, E}\ldots \Pi_{m, E}, & \hbox{if $i> 0$;} \\
                 \text{\rm Id},  & \hbox{if $i= 0$;} \\
                 \Pi_{m+i, E}^{-1}\ldots \Pi_{m-1, E}^{-1}, & \hbox{if $i<0$,}
               \end{array}
             \right.
$$
and $\Pi_{n, E}=\left(
               \begin{array}{cc}
                 E-V(n) & -1 \\
                 1 & 0 \\
               \end{array}
             \right)$.

\vspace{5pt}

2) Energy $E\in \mathbb{R}$ belongs to the essential spectrum of the operator $H$ if and only if there exists $K>0$ such that for any $N\in \mathbb{N}$ there is  a sequence $\{m_j\}_{j\in \mathbb{N}}, m_j\in \mathbb{Z},$ with $|m_j-m_{j'}|>2N$ if $j\ne j'$, and unit vectors $\bar u_j$, $|\bar u_j|=1$,  such that $|T_{[m_j, m_j+i], E}\,\bar u_j|\le K$ for all $|i|\le N$ and all $j\in \mathbb{N}$.
\end{prop}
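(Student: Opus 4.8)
The plan is to translate membership in the (essential) spectrum into the existence of bounded, almost‑invariant orbit segments of the transfer cocycle $\{\Pi_{n,E}\}$, treating the two implications separately.

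\textbf{The direction ``bounded orbits $\Rightarrow$ $E$ in the spectrum''} is the elementary one. Given $K$ and, for a fixed $N$, the corresponding $m$ and unit vector $\bar u$, run the recursion $v_{n+1}=(E-V(n))v_n-v_{n-1}$ with $(v_{m+1},v_m)^{\top}=\bar u$; then $|(v_{n+1},v_n)|\le K$ on $[m-N,m+N]$ and $|(v_{m+1},v_m)|=1$. Multiply $v$ by a trapezoidal cut‑off $\chi$ supported in $[m-N,m+N]$ and equal to $1$ on $[m-N/2,m+N/2]$, obtaining $\psi_N\in\ell^2(\Z)$. Since $v$ solves $Hv=Ev$ exactly, $(H-E)\psi_N$ is supported in the two transition zones, where it equals $(\chi(n\pm1)-\chi(n))v(n\pm1)$ and is $O(K/N)$; hence $\|(H-E)\psi_N\|\le C K/\sqrt N$, while $\|\psi_N\|\ge|(v_{m+1},v_m)|=1$. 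Thus $\psi_N/\|\psi_N\|$ is a Weyl sequence and $E\in\sigma(H)$. For part 2 one does the same on the windows $[m_j-N,m_j+N]$, which are pairwise disjoint because $|m_j-m_{j'}|>2N$; picking one $\psi_{j(N)}$ at each scale $N$ with $m_{j(N)}$ avoiding the previously chosen windows (possible since for fixed $N$ the $m_j$'s are unbounded) yields an infinite orthonormal sequence with $\|(H-E)\psi_{j(N)}\|\to0$, i.e.\ a singular Weyl sequence, so $E\in\sigma_{\mathrm{ess}}(H)$.

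\textbf{The converse} ($E\in\sigma\Rightarrow$ bounded orbits) rests on two observations. First, $\|\Pi_{n,E}-\Pi_{n,E'}\|=|E-E'|$, so a Gronwall estimate gives $|T_{[m,m+i],E}\bar u-T_{[m,m+i],E'}\bar u|\le N\,C_0^{N}|E-E'|$ for $|i|\le N$, where $C_0$ bounds $\|\Pi_{n,E}^{\pm1}\|$ uniformly. Second, if $E$ is an eigenvalue of $H$, or more generally a limit of eigenvalues $E_j\to E$ (which is the situation in the applications, where $H$ has pure point spectrum), then for each $N$ we may pick $j$ with $|E-E_j|<(N C_0^{N})^{-1}$ and let $\phi=\phi_j$ be the corresponding normalized eigenfunction. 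Its orbit $w(n)=(\phi(n+1),\phi(n))$ satisfies $w(n)=\Pi_{n,E_j}w(n-1)$ for \emph{all} $n$, has $\sum_n|w(n)|^2=2$, hence attains its maximum $M$ at some $m_\ast$; taking $\bar u:=w(m_\ast)/M$ we get $|T_{[m_\ast,m_\ast+i],E_j}\bar u|=|w(m_\ast+i)|/M\le1$ for all $i$, and therefore $|T_{[m_\ast,m_\ast+i],E}\bar u|\le 1+N C_0^{N}|E-E_j|<2$ for $|i|\le N$. This gives the condition with $K=2$. For part 2 one uses that $\sigma_{\mathrm{ess}}(H)=\sigma_{\mathrm{ess}}(H|_{[L,\infty)})\cup\sigma_{\mathrm{ess}}(H|_{(-\infty,-L]})$ (decoupling is a rank $\le2$ perturbation) with the right‑hand side independent of $L$, so (say) $E\in\sigma(H|_{[L,\infty)})$ for every $L$; applying the argument above to these half‑line operators produces orbit centres $m_L$ with $m_L\to+\infty$, hence the required disjoint windows after passing to a subsequence.

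\textbf{The main obstacle} is the converse when $E$ lies in a continuous component of the spectrum, so that no honest $\ell^2$ solution is available and an approximate eigenfunction can be genuinely spread out. Then one works with a Dirichlet truncation $H_{[-L,L]}$, which for $L$ large has an eigenvalue within $(N C_0^N)^{-1}$ of $E$ and a corresponding eigenvector solving the equation exactly on $[-L,L]$; the delicate point is to ensure that the orbit‑maximum of this eigenvector falls in the interior $[-L+N,\,L-N]$ rather than being pinned within $N$ of $\pm L$ — otherwise the window leaks past the truncation and the bound $K$ degenerates to $C_0^{N}$. I would handle this by exploiting the freedom in $L$ and in the choice of near‑$E$ eigenvalue (a finitely supported approximate eigenfunction forces the spectral subspace of $H_{[-L,L]}$ near $E$ to carry mass bounded away from the boundary), or — cleanest for an arbitrary bounded potential — by invoking the classical equivalence between $E\notin\sigma(H)$ and the existence of an exponential dichotomy for $\{\Pi_{n,E}\}_{n\in\Z}$: a dichotomy forces $\max_{|i|\le N}|T_{[m,m+i],E}\bar u|\ge c\,e^{cN}$ for every $m$ and unit $\bar u$, which is exactly the negation of the stated condition, while conversely negating the condition yields, by iterating the uniform‑growth estimate, the uniform hyperbolicity that produces the dichotomy and hence the bounded resolvent.
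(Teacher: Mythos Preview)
Your Weyl-sequence construction for the forward implication (bounded orbit segments $\Rightarrow$ $E\in\sigma(H)$, resp.\ $\sigma_{\mathrm{ess}}(H)$) is correct and is precisely the ``Weyl's criterion'' half of the paper's sketch; the decoupling argument for part~2 is also fine.

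For the converse the paper takes a different route: it invokes the density of generalized eigenvalues. The set of $E'$ admitting a polynomially bounded solution of $Hu=E'u$ is dense in $\sigma(H)$ (Schnol), so given $E\in\sigma(H)$ and $N$ one picks such an $E'$ within $(NC_0^N)^{-1}$ of $E$, takes its polynomially bounded solution $u$ with orbit $w(n)=(u(n{+}1),u(n))$, and centers the window at an $m$ with $|w(m)|\ge\tfrac12\max_{|i|\le N}|w(m+i)|$. Such an $m$ exists because otherwise $M(L):=\max_{|n|\le L}|w(n)|$ would satisfy $M(L+N)>2M(L)$ for all $L$, contradicting polynomial growth. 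This is close in spirit to your eigenvalue and Dirichlet-box arguments, but sidesteps the boundary difficulty you flag because the generalized eigenfunction lives on all of~$\mathbb{Z}$.

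Your alternative via exponential dichotomy is a legitimate route --- the equivalence ``$E\notin\sigma(H)\Leftrightarrow$ the transfer cocycle has an exponential dichotomy'' is indeed classical for bounded Jacobi matrices --- but the clause ``negating the condition yields, by iterating the uniform-growth estimate, the uniform hyperbolicity'' hides a real gap. The negation only says that for $K=2$ there is an $N$ such that every orbit segment exceeds $2$ somewhere in $[-N,N]$; iterating produces a sequence of base-times along which the norm doubles, but since each doubling step may go forward \emph{or} backward, these times can oscillate and you do not obtain uniform exponential growth in a fixed direction, let alone an invariant splitting. The honest fix passes to the hull $\overline{\{V(\cdot+m):m\in\mathbb{Z}\}}$: absence of dichotomy for $V$ means absence over the compact, shift-invariant hull, so by Sacker--Sell/Selgrade some $\omega^*$ in the hull carries a nonzero bounded two-sided orbit; since $\omega^*$ is a limit of shifts $V(\cdot+m_k)$, restricting this orbit to $[-N,N]$ and pulling back gives the required bounded segments for $V$ with a uniform~$K$. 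Equivalently one may simply cite this hull characterization as part of the ``classical'' package rather than attempt a bare-hands iteration.
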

\begin{proof}
Proof of Proposition \ref{p.crit} can be extracted from the density of generalized eigenvalues (energies for which there are polynomially bounded solutions of the Schr\"odinger equation), e.g. see  \cite[Theorem~2.11]{D16}, and the classical Weyl's criterion. We leave the details to the reader.
\end{proof}

For each $\omega\in \{0,1\}^{\mathbb{Z}}$ consider an operator $H_{\omega}:l^2(\mathbb{Z})\to l^2(\mathbb{Z})$ given by the potential
$$
V_{\omega}(n)=\left\{
                \begin{array}{ll}
                  100, & \hbox{if $n=0$;} \\
                  \omega_n, & \hbox{if $n\ne 0$.}
                \end{array}
              \right.
$$
There are uncountably many operators of this form. Each of them has exactly one eigenvalue in the interval $[98, 102]$. Let us denote this eigenvalue by $E_{\omega}$.

\begin{lemma}\label{l.trick}
Intersection of the almost sure essential spectrum of the operator $H$ given by the potential $(\ref{e.example2})$ with the interval $[98, 102]$ is exactly $\cup_{\omega\in \{0,1\}^{\mathbb{Z}}}\,E_{\omega}$.
\end{lemma}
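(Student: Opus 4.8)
The plan is to prove the two inclusions
$$
\sigma_{\mathrm{ess}}(H)\cap[98,102]\subseteq\bigcup_{\omega}\{E_\omega\},\qquad \bigcup_{\omega}\{E_\omega\}\subseteq\sigma_{\mathrm{ess}}(H)
$$
(the second almost surely), using throughout one elementary observation. Call a site $n$ a \emph{defect} if $V(n)=100$; defects form a random subset of $\{n_k\}$. For every $E\in[98,102]$ and every non-defect site $n$ we have $E-V(n)\in[97,102]$, so $\Pi_{n,E}$ preserves the cone $\mathcal C_0:=\{(x,y):0\le y/x\le\varepsilon_0\}$ for a suitable small $\varepsilon_0$ and expands every vector of $\mathcal C_0$, while $\Pi_{n,E}^{-1}$ preserves the complementary cone and expands its vectors, all by a factor $\ge\mu>1$, with $\mu,\varepsilon_0$ uniform over $E\in[98,102]$. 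Hence over any defect-free interval the transfer cocycle is uniformly hyperbolic, and I first record the resulting rigidity: \emph{there is a constant $D_0=D_0(K)$ so that whenever $[m-D_0,m+D_0]$ contains no defect and a unit vector $v$ satisfies $|T_{[m,m+i],E}v|\le K$ for all $|i|\le D_0$, a contradiction results}; indeed such a $v$ would have to lie within $2K\mu^{-D_0}$ of the forward stable direction (which lies in $\mathcal C_0^{c}$) and simultaneously within $2K\mu^{-D_0}$ of the backward stable direction (which lies in $\mathcal C_0$), and these are uniformly transverse. With the hypothesis $n_{k+1}-n_k\to\infty$, this forces a ``one defect per window'' picture.

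\emph{Proof of $\subseteq$.} Fix $E\in\sigma_{\mathrm{ess}}(H)\cap[98,102]$ and let $K$ be the constant from Proposition~\ref{p.crit}(2). Applying that proposition with the value $10N$ in place of $N$, for each $N$ we obtain an index $m=m_{(N)}$ chosen so far out that all gaps near it exceed $20N$, together with a unit vector $\bar u$ with $|T_{[m,m+i],E}\bar u|\le K$ for $|i|\le 10N$. Then $[m_{(N)}-10N,m_{(N)}+10N]$ contains at most one $n_k$; by the rigidity statement applied at $m_{(N)}$, the maximal defect-free block around $m_{(N)}$ has radius $\le D_0$, so there is a defect $n_{\ell(N)}$ with $|n_{\ell(N)}-m_{(N)}|\le D_0$, and it is the only $n_k$ in the window. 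Since the transfer matrix from $n_{\ell(N)}$ to $m_{(N)}$ has norm bounded by some constant $C_0$, the solution $v_n:=T_{[m_{(N)},n],E}\bar u$ satisfies $|v_{n_{\ell(N)}}|\ge C_0^{-1}$, so renormalizing at $n_{\ell(N)}$ gives a bound $K^{*}:=KC_0$ \emph{uniform in $N$}. Let $\omega^{(N)}\in\{0,1\}^{\mathbb Z}$ be the configuration with $\omega^{(N)}_j=V(n_{\ell(N)}+j)$ for $0<|j|\le 9N$ and $\omega^{(N)}_j=0$ otherwise; then $H_{\omega^{(N)}}$ agrees with the $n_{\ell(N)}$-shift of $H$ on $[-9N,9N]$, so $H_{\omega^{(N)}}$ has a solution bounded by $K^{*}$ on $[-9N,9N]$, equal to a unit vector at $0$. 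Passing to a subsequence with $\omega^{(N)}\to\omega^{*}$ in $\{0,1\}^{\mathbb Z}$, Proposition~\ref{p.crit}(1) gives $E\in\sigma(H_{\omega^{*}})$; since $V_{\omega^{*}}$ is $\{0,1\}$-valued away from $0$ we have $\sigma_{\mathrm{ess}}(H_{\omega^{*}})\subseteq[-2,3]$, hence $E\in\sigma(H_{\omega^{*}})\cap[98,102]=\{E_{\omega^{*}}\}$, i.e. $E=E_{\omega^{*}}$.

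\emph{Proof of $\supseteq$.} First, $\omega\mapsto E_\omega$ is continuous: if $\omega'\to\omega$, the normalized eigenfunctions $\phi_{\omega'}$ obey $|\phi_{\omega'}(n)|\le C\mu^{-|n|}$ uniformly (off the single defect the cocycle of $H_{\omega'}$ at energies near $[98,102]$ is uniformly hyperbolic, and an $\ell^2$ solution lies in the stable directions on both sides), so a subsequence converges in $\ell^2$ to an eigenfunction of $H_\omega$ in $[98,102]$, forcing $E_{\omega'}\to E_\omega$. Fix a countable dense set $\{\omega^{(j)}\}\subseteq\{0,1\}^{\mathbb Z}$; it suffices to show that for each fixed $\omega$ one has $E_\omega\in\sigma_{\mathrm{ess}}(H)$ almost surely, since then, intersecting over $j$ and using continuity and closedness of $\sigma_{\mathrm{ess}}(H)$, we get $\bigcup_\omega\{E_\omega\}\subseteq\overline{\{E_{\omega^{(j)}}\}}\subseteq\sigma_{\mathrm{ess}}(H)$ a.s. So fix $\omega$, let $\phi$ ($\|\phi\|_2=1$) be the $E_\omega$-eigenfunction of $H_\omega$, with $|\phi(n)|\le C\mu^{-|n|}$. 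Put $g_k:=\min(n_k-n_{k-1},n_{k+1}-n_k)$ and $L_k:=\min\bigl(\lfloor\tfrac14\log_2 k\rfloor,\lfloor\tfrac14 g_k\rfloor\bigr)$; then $L_k\to\infty$, for large $k$ the windows $[n_k-L_k,n_k+L_k]$ are pairwise disjoint and contain no $n_j$ with $j\ne k$, and $(1/2)^{2L_k+1}\ge\tfrac12 k^{-1/2}$. Let $A_k$ be the event that $V(n_k+j)=V_\omega(j)$ for all $|j|\le L_k$ (which forces $V(n_k)=100$); then $\Prob(A_k)=(1/2)^{2L_k+1}$, the $A_k$ are independent for large $k$, and $\sum_k\Prob(A_k)=\infty$, so by the second Borel--Cantelli lemma $A_k$ holds for infinitely many $k$ almost surely. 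For each such $k$, $\psi^{(k)}:=\Ind_{[n_k-L_k,n_k+L_k]}\cdot\phi(\cdot-n_k)$ satisfies $\|(H-E_\omega)\psi^{(k)}\|\le C'\mu^{-L_k}\to0$ while $\|\psi^{(k)}\|\to1$, and these approximate eigenfunctions have pairwise disjoint supports escaping to infinity, whence $E_\omega\in\sigma_{\mathrm{ess}}(H)$.

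The main obstacle is the first inclusion: one must convert an abstract point of the essential spectrum into a \emph{single}-defect local configuration with constants uniform in the window size, which is precisely where the rigidity lemma is used together with the observation that the distinguished index $m_{(N)}$ is trapped within the fixed distance $D_0$ of the unique defect; compactness of $\{0,1\}^{\mathbb Z}$ then upgrades this to a genuine eigenvalue $E_{\omega^*}$ of a single-defect operator. In the second inclusion the only delicate point is the choice of the rate $L_k$, which must simultaneously keep the windows disjoint and free of other $n_j$'s (using $g_k\to\infty$) and keep $\sum(1/2)^{2L_k}$ divergent; the cap $L_k\le\tfrac14\log_2 k$ secures the latter while $\tfrac14 g_k\to\infty$ secures the former, and the $n_{k+1}-n_k\to\infty$ hypothesis is what makes both requirements compatible.
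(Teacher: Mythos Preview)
Your proof is correct and follows essentially the same strategy as the paper's: both inclusions rest on Proposition~\ref{p.crit}, with a compactness/diagonal extraction for $\sigma_{\mathrm{ess}}(H)\cap[98,102]\subseteq\bigcup_\omega\{E_\omega\}$ and a recurrence argument for the reverse. Your version is more careful in two places the paper leaves implicit---the cone/rigidity argument pinning the center $m_{(N)}$ within bounded distance of a defect (the paper's diagonal extraction tacitly assumes the centers can be taken at defect sites), and the handling of uncountably many $\omega$ via continuity of $\omega\mapsto E_\omega$ together with an explicit Borel--Cantelli scheme with growing windows $L_k$---whereas the paper simply asserts that every finite single-defect pattern recurs infinitely often, which (there being only countably many such patterns) covers all $\omega$ simultaneously without the continuity step.
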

\begin{proof}[Proof of Lemma \ref{l.trick}]
If $E_\omega\in [98, 102]$ is an eigenvalue of $H_\omega$, consider the corresponding eigenvector $\{u_{n, \omega}\}_{n\in \mathbb{Z}}\in l^2(\mathbb{Z})$, and the vector $\bar u=\left(
                                           \begin{array}{c}
                                             u_{1, \omega} \\
                                             u_{2, \omega} \\
                                           \end{array}
                                         \right)$. Notice that for any finite sequence $\{V_\omega(-N), \ldots, V_\omega(N)\}$ with probability 1 the potential $\{V(n)\}$ given by (\ref{e.example2}) contains infinitely many intervals $\{V(m_j-N), \ldots, V(m_j+N)\}$ that coincide with that sequence. Due to Proposition \ref{p.crit} this implies that $E_\omega$ belongs to almost sure spectrum of $H$ with potential (\ref{e.example2}).

Suppose now that $E_0\in [98, 102]$ belongs to almost sure essential spectrum of $H$ with potential (\ref{e.example2}).  Then there exists $K>0$, a sequence of vectors $\{\bar u_j\}$, and a sequence of finite intervals $\{V(m_j-j), \ldots, V(m_j+j)\}$ such that $|T_{[m_j, m_j+i]}\bar u_j|\le K$ for all $|i|\le j$. Using Cantor diagonal process, we can find a subsequence $\{j_t\}_{t\in \mathbb{N}}$ such that $\bar u_{j_t}\to \bar u^*$ as $t\to \infty$, and $V(m_{j_t}+k)=V_k^*\in \{0,1\}$ for all $k\ne 0$ and all large enough $t\in \mathbb{N}$. Consider $\omega=\{\ldots \omega_{-k}\ldots \omega_0\omega_1\ldots \omega_k\ldots\}$ with $\omega_k=V_k^*$. Then $\bar u^*$ must decay exponentially under the application of transfer matrices generated by the potential $\{V_\omega(n)\}$, and hence generate an eigenvector of the operator $H_\omega$ with the eigenvalue $E_\omega$.
\end{proof}


%
%
%

Notice that if $A>2$, then the matrix of the form $\left(
                                                    \begin{array}{cc}
                                                      A & 1 \\
                                                      -1 & 0 \\
                                                    \end{array}
                                                  \right)$ has two eigenvalues, namely $\frac{A+\sqrt{A^2-4}}{2}>1$ and $\frac{A-\sqrt{A^2-4}}{2}=\left(\frac{A+\sqrt{A^2-4}}{2}\right)^{-1}<1$. The corresponding eigenvectors are
                                                  $\left(\begin{array}{c}
                                                  \frac{A+\sqrt{A^2-4}}{2} \\
                                                  1 \\
                                                  \end{array} \right)$ and $\left(\begin{array}{c}
                                                  1 \\
                                                  \frac{A+\sqrt{A^2-4}}{2} \\
                                                  \end{array} \right)$. Let us denote the proectivizations of those vectors by $x_1(A)$ and $x_2(A)$.

                                                  For an operator $H_\omega$ each transfer matrix $\Pi_{n, E}$, $n\ne 0$, must be either
                                                  $\left(
                                                    \begin{array}{cc}
                                                      E & 1 \\
                                                      -1 & 0 \\
                                                    \end{array}\right)$,
                                                     or $\left(
                                                    \begin{array}{cc}
                                                      E-1 & 1 \\
                                                      -1 & 0 \\
                                                    \end{array}\right)$, and we are interested in the regime where $E\in [98, 102]$. Let us denote by $I_1(E)$ the interval on $S^1$ between the points $x_1(E)$ and $x_1(E-1)$, and by $I_2(E)$ the interval between the points $x_2(E)$ and $x_2(E-1)$. Denote by $f_{n, E}$ the proectivization of the map $\Pi_{n, E}$. Then if $n\ne 0$, we have $f_{n ,E}(I_1(E))\subset I_1(E)$, and $f_{n, E}^{-1}(I_2(E))\subset I_2(E)$. Moreover,  $f_{n, E}|_{I_1(E)}$ and $f_{n, E}^{-1}|_{I_2(E)}$ are contractions for each $n\ne 0$. For a given $\omega\in \{0,1\}^{\mathbb{Z}}$ there exists exactly one point $z_\omega(E)\in I_1(E)$ such that
$$
z_\omega(E)=\cap_{n\in \mathbb{N}}f_{-n, E}\circ \ldots \circ f_{-1, E}(I_1(E)).
$$
Notice that if the vector $\bar w\in \mathbb{R}^2, |\bar w|=1$, correspond to the direction defined by $z_\omega(E)$, then
$$
\left(\Pi_{-n, E}\ldots \Pi_{-1, E}\right)^{-1}(\bar w)\to 0 \ \ \text{as}\ \ \ n\to \infty,
$$
and for any vector $\bar v \nparallel \bar w$
$$
\left|\left(\Pi_{-n, E}\ldots \Pi_{-1, E}\right)^{-1}(\bar v)\right|\to \infty
$$
exponentially fast as $n\to \infty$. The set $K(E)=\cup_{\omega\in \{0,1\}^{\mathbb{Z}}}z\omega(E)$ is a dynamically defined Cantor set inside of $I_1(E)$. Notice that $\left|f_{{n,E}}'|_{I_1(E)}\right|\sim \frac{1}{E^2}$, and in our regime $E\sim 100$. Hence Hausdorff dimension of $K(E)$ is small, $\text{dim}_H\,K(E)=\text{dim}_B\,K(E)\ll 1/2$.

Similarly, the set
$$
C(E)=\cup_{\omega\in \{0,1\}^{\mathbb{Z}}}\left(\cap_{n\in \mathbb{N}}f_{1, E}^{-1}\circ\ldots\circ f_{n, E}^{-1}(I_2(E))\right)
$$
is a dynamically defined Cantor set, and $\text{dim}_H\,C(E)=\text{dim}_B\,C(E)\ll 1/2$.

A given point $E\in [98, 102]$ is an eigenvalue of an operator $H_\omega$ for some $\omega\in \{0,1\}^{\mathbb{Z}}$ if $f_{0, E}(K(E))\cap C(E)\ne \emptyset$. Now Proposition \ref{p.ex2} follows from the following
statement:
\begin{lemma}\label{l.intersect}
Let $K(E)$ and $C(E)$ be two families of dynamically defined Cantor sets on $\mathbb{R}^1$, $E\in [0,1]$. Suppose that the following properties hold:
\begin{enumerate}
\item The Cantor set $K(E)$ is generated by two $C^1$-smooth (both in $x\in \mathbb{R}^1$ and $E\in [0,1]$) orientation preserving contractions $f_{1, E}, f_{2, E}:\mathbb{R}^1\to \mathbb{R}^1$;
\vspace{4pt}
\item The Cantor set $C(E)$ is generated by two $C^1$-smooth (both in $x\in \mathbb{R}^1$ and $E\in [0,1]$) orientation preserving contractions $g_{1, E}, g_{2, E}:\mathbb{R}^1\to \mathbb{R}^1$;
\vspace{4pt}
\item $\max(K(0))<\min(C(0))$ and $\min(K(1))>\max(C(1))$;
\vspace{4pt}
\item There exists $\delta>0$ such that
$$
\frac{\partial f_{i,E}(x)}{\partial E}>\delta, \ \ \frac{\partial g_{i,E}(x)}{\partial E}<-\delta
$$
for all $E\in [0,1]$, $i=1,2$, and $x\in \mathbb{R}^1$;
\vspace{4pt}
\item We have
$$
\max_{E\in[0,1]}\dim_B\,C(E)+\max_{E\in[0,1]}\dim_B\,K(E)<1.
$$
\end{enumerate}
Then
$$
\left\{E\in[0,1]\ |\ C(E)\cap K(E)\ne \emptyset\right\}
$$
is a Cantor set of box counting dimension not greater than
$$
\left(\max_{E\in[0,1]}\dim_B C(E)+\max_{E\in[0,1]}\dim_B K(E)\right).
$$
\end{lemma}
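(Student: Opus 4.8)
The plan is to exploit the transversality built into condition~(4), via the same kind of distortion control that underlies the constants of Lemma~\ref{l:DC}, and then to run a covering argument at scale $\rho$ in which condition~(5) supplies the counting bound. Write $d_K:=\max_E\dim_B K(E)$, $d_C:=\max_E\dim_B C(E)$, so $d_K+d_C<1$, and for a finite word $\mathbf a$ over $\{1,2\}$ let $I_{\mathbf a}(E)=f_{a_1,E}\circ\cdots\circ f_{a_{|\mathbf a|},E}(I_0)$ be the generation-$|\mathbf a|$ interval of $K(E)$ (with $I_0$ a fixed interval containing $\bigcup_E K(E)$), $k_\alpha(E)=\bigcap_n I_{\alpha|_n}(E)$ for an infinite address $\alpha$, and similarly $J_{\mathbf b}(E),\,c_\beta(E)$ for $C(E)$. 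The $C^1$ dependence together with uniform contraction gives distortion control both in the space and in the parameter variable (as for the constants $\kappa,C$ of Lemma~\ref{l:DC}): over a parameter interval of length $\rho$ the ratio $|I_{\mathbf a}(E)|/|I_{\mathbf a}(E')|$ is $1+o(1)$ for the words of length $O(\log 1/\rho)$ that are relevant at scale $\rho$. From condition~(4) and a telescoping computation --- writing $k_\alpha(E)=f_{\alpha_1,E}(k_{\sigma\alpha}(E))$ and differentiating, $\partial_E k_\alpha=\partial_E f_{\alpha_1,E}(\cdot)+f'_{\alpha_1,E}(\cdot)\,\partial_E k_{\sigma\alpha}$, which is $>\delta$ by induction on the truncations since $\partial_E f_{i,E}>\delta$ and $0<f'_{i,E}<1$ --- one gets that every endpoint of every $I_{\mathbf a}(E)$ has $\partial_E\in[\delta,C']$ for a uniform $C'$, and symmetrically $\partial_E(\text{endpoints of }J_{\mathbf b}(E))\in[-C',-\delta]$. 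Hence for any finite words $\mathbf a,\mathbf b$ the difference-of-midpoints function $E\mapsto\operatorname{mid}I_{\mathbf a}(E)-\operatorname{mid}J_{\mathbf b}(E)$ is strictly increasing with derivative $\ge 2\delta$, so $S_{\mathbf a,\mathbf b}:=\{E:I_{\mathbf a}(E)\cap J_{\mathbf b}(E)\ne\varnothing\}$ is an interval, and comparing this function at the two endpoints of $S_{\mathbf a,\mathbf b}$ shows its length is at most $\tfrac1{2\delta}\max_{E\in S_{\mathbf a,\mathbf b}}\big(|I_{\mathbf a}(E)|+|J_{\mathbf b}(E)|\big)$.

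Next, $S=\{E:K(E)\cap C(E)\ne\varnothing\}$ is closed, since the graphs $\mathcal K=\{(x,E):x\in K(E)\}$ and $\mathcal C=\{(x,E):x\in C(E)\}$ are compact (the families $K(\cdot),C(\cdot)$ are continuous in the Hausdorff metric), and by condition~(3) together with the monotonicity $S$ is a compact subset of the open interval $(0,1)$: near $E=0$ the set $K(E)$ lies strictly left of $C(E)$, near $E=1$ strictly right. Geometrically, $\mathcal K$ is the union of the increasing curves $\Gamma_\alpha=\{(k_\alpha(E),E)\}$ (slope in $x$ lying in $[\delta,C']$) and $\mathcal C$ of the decreasing curves $H_\beta=\{(c_\beta(E),E)\}$ (slope in $[-C',-\delta]$); these two families are mutually transverse with angle bounded below, and $S=\pi_E(\mathcal K\cap\mathcal C)$, where $\pi_E$ is the Lipschitz projection to the $E$-axis, so $\dim_B S\le\dim_B(\mathcal K\cap\mathcal C)$. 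König's lemma applied to the finitely branching tree of pairs of words also gives the Moran-type description $S=\bigcap_n\bigcup_{|\mathbf a|=|\mathbf b|=n}S_{\mathbf a,\mathbf b}$, only the nonempty $S_{\mathbf a,\mathbf b}$ surviving.

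The heart of the matter is to show $\dim_B(\mathcal K\cap\mathcal C)\le d_K+d_C$. Conceptually one completes the Cantor lamination $\{\Gamma_\alpha\}$ to a foliation $\mathcal F_1$ of a neighbourhood by increasing curves with slopes in $[\delta,C']$ (interpolating monotonically in the gaps) and $\{H_\beta\}$ to a transverse foliation $\mathcal F_2$; straightening $\mathcal F_1,\mathcal F_2$ to the horizontal and vertical foliations by a change of coordinates $\Phi$ puts $\mathcal K$ inside $U_K\times\mathbb{R}$ and $\mathcal C$ inside $\mathbb{R}\times V_C$, where $U_K$ is a single reparametrized slice $K(E_0)$ (so $\dim_B U_K\le d_K$) and $V_C$ a slice $C(E_1)$ (so $\dim_B V_C\le d_C$); since $\mathcal K\cap\mathcal C\subseteq\Phi^{-1}(U_K\times V_C)$ and box dimension is sub-additive for products, one gets the claimed bound. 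In reality $\Phi$ is not globally bi-Lipschitz --- nearby leaves of $\mathcal F_1$ may spread apart at rate $C'-\delta$ along the parameter --- so one carries this out scale by scale: at scale $\rho$ partition $[0,1]$ into $\rho$-windows, on each of which the distortion estimate freezes the scale-$\rho$ Moran covers of $K(E)$ and $C(E)$, of cardinalities $\le C_\eta\rho^{-d_K-\eta}$ and $\le C_\eta\rho^{-d_C-\eta}$ by the definition of box dimension, and use the transversality bound on the $S_{\mathbf a,\mathbf b}$ (their length is $O(\rho)$ once both intervals are at scale $\rho$) to conclude that $S$ meets at most $C'_\eta\rho^{-d_K-d_C-2\eta}$ intervals of length $O(\rho)$; letting $\eta\downarrow 0$ yields $\dim_B S\le d_K+d_C$. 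As $d_K+d_C<1$, $S$ then has empty interior and zero length, hence is totally disconnected; it is also perfect, since near any $E_0\in S$ with $k_\alpha(E_0)=c_\beta(E_0)$ one produces other points of $S$ by perturbing the address $\alpha$ (resp.\ $\beta$) and applying the intermediate value theorem to the transversal gap function. Thus $S$ is a Cantor set of the asserted dimension.

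The main obstacle is exactly the step flagged above: bounding the box dimension of the transversal intersection $\mathcal K\cap\mathcal C$ --- equivalently, reconciling the sharp exponent $d_K+d_C$ with the fact that the natural scale-$\rho$ covers of $K(E)$ and of $C(E)$ depend on $E$ through the nonlinear, $\rho^{-O(1)}$-distorted dependence of interval lengths. A crude count using all words of a fixed length, or the pointwise number of intersecting interval pairs, yields only $\min(d_K,d_C)$ plus spurious logarithmic-in-$\rho$ losses; the sharp bound comes from combining the distortion control (which freezes the Moran structure over each $\rho$-window) with the transversality of condition~(4) (which keeps the correspondence between parameter windows and pairs of words finite-to-one, so that the two dimension budgets $\rho^{-d_K}$ and $\rho^{-d_C}$ multiply rather than only the smaller one surviving). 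This is parallel to the standard estimates for the box dimension of tangency/bifurcation sets in the dimension theory of dynamically defined Cantor sets.
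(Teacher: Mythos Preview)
Your approach is essentially the paper's: use the monotonicity in condition~(4) for transversality, then run a covering/counting argument with the dimension bound from condition~(5). The paper's execution, however, is much shorter and avoids almost all of your machinery. For each small $\eps>0$ it takes a cover of $K(E)$ by roughly $\eps^{-d_K}$ intervals of the form $(x_j(E),x_j(E)+\eps)$ depending smoothly on $E$ with $x_j'(E)>\delta$ (your telescoping computation on cylinder endpoints is exactly what supplies this monotonicity), and likewise a cover of $C(E)$ by $\eps^{-d_C}$ intervals moving with derivative $<-\delta$. A $K$-interval and a $C$-interval approach each other with relative speed $\ge 2\delta$, so they can overlap only on a parameter interval of length $\le 2\eps/\delta$; hence $S$ is covered by $\asymp\eps^{-(d_K+d_C)}$ intervals of length $O(\eps)$, and the dimension bound follows. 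There is no passage through a foliation, a straightening map $\Phi$, K\"onig's lemma, or the box dimension of $\mathcal K\cap\mathcal C$ in the $(x,E)$-plane.

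One gap in your write-up: the $\rho$-window count does not close as stated. With $\rho^{-1}$ windows, each carrying its own Moran cover of cardinality $\rho^{-d_K-\eta}$ (and these covers are genuinely different across windows), the total number of pairs is a priori $\rho^{-1}\cdot\rho^{-d_K-d_C-2\eta}$, not $\rho^{-d_K-d_C-2\eta}$. What actually makes the exponent come out right is that a \emph{single} moving family of $O(\eps^{-d_K-\eta})$ intervals covers $K(E)$ for every $E$ simultaneously; the paper simply asserts this (invoking conditions~(1), (2), (4)), and once it is granted the per-window bookkeeping becomes unnecessary. On the other hand, you do supply the Cantor-set half of the conclusion --- closedness, total disconnectedness from $d_K+d_C<1$, and perfectness via address perturbation and the intermediate value theorem --- which the paper's own proof does not address; it establishes only the dimension inequality.
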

\begin{proof}[Proof of Lemma \ref{l.intersect}]
Denote
$$
d_K=\max_{E\in[0,1]}\dim_B K(E), \ \ d_C=\max_{E\in[0,1]}\dim_B C(E).
$$
The assumption $(5)$ therefore means that $d_K+d_C<1$.
For any small $\eps>0$ there is a cover of $K(E)$ by $\eps^{-d_K}$ open intervals of length $\eps$, and of $C(E)$ by $\eps^{-d_C}$ intervals. Due to assumptions $(10$, $(2)$, and $(4)$, one can choose those intervals in such a way that each interval of the form $(x(E), x(E)+\eps)$ depends smoothly on $E\in [0,1]$, and $\frac{dx(E)}{dE}>\delta>0$ for intervals covering $K(E)$, and $\frac{dx(E)}{dE}<-\delta<0$ for intervals covering $C(E)$. This implies that the length of an interval in the space of parameters that correspond to a non-empty intersection of a given interval from a cover of $K(E)$ and a given interval from a cover of $C(E)$ is bounded from above by $\frac{2\eps}{\delta}$. Hence the set $\left\{E\in[0,1]\ |\ C(E)\cap K(E)\ne \emptyset\right\}$ can be covered by $\eps^{-d_K}\cdot \eps^{-d_C}=\eps^{-(d_K+d_C)}$ intervals of length $\frac{2\eps}{\delta}=\text{\rm const}\cdot \eps$. Hence $\text{dim}_B\,\left\{E\in[0,1]\ |\ C(E)\cap K(E)\ne \emptyset\right\}\le d_K+d_C$.
\end{proof}

\begin{remark}
 Notice that the question on the structure of the set of translations of one Cantor set that have non-empty intersections with another is closely related to the questions about the structure of the difference of two Cantor sets. Sums (and differences) of dynamically defined Cantor sets were heavily studied, e.g. see \cite{DG1} and references therein. But in our case we needed to work with two Cantor sets that depend on a parameter, so the question about the set of parameters that correspond to a non-empty intersection of the sets cannot be directly reduced to considering the difference of the Cantor sets, and therefore we needed Lemma \ref{l.intersect}.
\end{remark}

\section{Discontinuous upper limit}\label{s:limsup}

Lemma \ref{l:equicontinuous} claims that a sequence of functions $\{\frac{1}{n}L_n(a)\}$ is equicontinuous. This implies that $\limsup_{n\to \infty}\frac{1}{n}L_n(a)$ is a continuous function of $a\in J$. At the same time, Theorem~\ref{t:product} claims, in particular, that $\limsup_{n\to \infty} \frac{1}{n}\left(\log\|T_{n, a, \omega}\|-L_n(a)\right)=0$ for all $a\in J$. So it is tempting to expect that $\limsup_{n\to \infty} \frac{1}{n}\log\|T_{n, a, \omega}\|$ is a continuous function of the parameter $a\in J$, which would be nicely aligned with the fact that Lyapunov exponent is a continuous function of the parameter in the stationary setting.
Nevertheless, here we present an example that shows that this is not always the case.

Consider two diagonal matrices with very different norms:
$$
H':= \left( \begin{smallmatrix}
2 & 0
\\
0 & 1/2 \end{smallmatrix} \right),
\quad H'':=
\left( \begin{smallmatrix}
100 & 0
\\
0 & 1/100 \end{smallmatrix} \right).
$$

Take $J$ to be the interval $[0,2\pi]$, and let $\mgr',\mgr''$ be two measures on $C^1(J,\SL(2,\R)),$ obtained in the following way: a random parameter-dependent matrix $A(a)$ w.r.t. each of these measures is the corresponding diagonal matrix, $H'$ or $H''$, precomposed with a rotation by a random uniformly distributed angle $\alpha\in [0,2\pi]$, and postcomposed with the rotation by the parameter. That is,
\begin{equation}\label{eq:A-def}
A(a) = R_{a} \cdot H \cdot R_{\alpha},
\end{equation}
where $H=H'$ for $\mgr'$ and $H=H''$ for $\mgr''$.

It is not hard to see that this choice of the matrices $\{A_j(a)\}$ implies that the functions $L_n(a)$ are in fact independent of $a\in J$, i.e. are constant functions (but certainly depend on $n$). 

We will take a sequence of times $n_i$, defined recurrently by
$$
n_1=1000, \quad n_{i+1}=10^{10^{n_i}}.
$$
Let the laws for the matrices $A_j(a)$ to be chosen in the following way: we take
$$
\mgr_j=\begin{cases}
\mgr'', & \text{ if } n_i<j\le 2n_i\, \text{ for some } i,\\
\mgr' & \text{ otherwise.}
\end{cases}
$$
Then, we have the following proposition.
\begin{prop}\label{p:cancellations}
For the random product $A_n(a)\dots A_1(a)$ defined above, 
 almost surely there exists a (random)
dense set $X'\subset J$ of parameters, such that one has a strict inequality
$$
\limsup_{n\to\infty} \frac{1}{n} \log \|T_{n,a,\omega}\| < \limsup_{n\to\infty} \frac{1}{n} L_n(a).
$$
In particular, $\limsup_{n\to\infty} \frac{1}{n} \log \|T_{n,a,\omega}\|$ is not a continuous function of $a\in J$, contrary to the continuous (in fact, constant) function $\limsup_{n\to\infty} \frac{1}{n} L_n(a)$.
\end{prop}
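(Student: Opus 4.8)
\textbf{Proof proposal for Proposition~\ref{p:cancellations}.}

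The plan is to exploit the dramatic imbalance between the ``fast'' blocks (where $\mgr_j=\mgr''$, so the matrix norm contributes $\log 100$ per step) and the ``slow'' blocks (where $\mgr_j=\mgr'$, contributing $\log 2$ per step), together with the enormous gaps $n_{i+1}\gg 2n_i$ between consecutive fast blocks. The key observation is that the function $L_n(a)$ is constant in $a$ (as noted in the excerpt, the extra rotations $R_a$ and $R_\alpha$ are absorbed into the uniformly distributed random angle), and $L_n$ grows on average like $(\log 2) n$ plus a contribution of roughly $(\log 100 - \log 2)\cdot n_i$ accumulated over the fast block $(n_i,2n_i]$. So at time $n=2n_i$ we have $L_{2n_i}\approx (\log 2)\cdot 2n_i + (\log 100 - \log 2) n_i$, and then $\frac{1}{n}L_n$ has a visible ``bump'' right after each fast block that relaxes back toward $\log 2$ as $n\to n_{i+1}$; hence $\limsup_n \frac1n L_n(a)$ is some constant $\ell_0$ strictly larger than $\log 2$, achieved along $n=2n_i$.

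First I would make precise the claim that $L_n$ is $a$-independent: writing $A_j(a)=R_a H^{(j)} R_{\alpha_j}$ with $H^{(j)}\in\{H',H''\}$, the product $T_{n,a,\omega}$ is conjugate-by-$R_a$-free up to the outermost $R_a$ and innermost $R_{\alpha_1}$, and since $\|R_a M R_\beta\|=\|M\|$, actually $\|T_{n,a,\omega}\|$ does not depend on $a$ at all once we note that the interior rotations $R_{\alpha_j}$ for $j\ge 2$ combine with the parameter rotations from the previous factor. More carefully, $T_{n,a,\omega}=R_a H^{(n)} R_{\alpha_n} R_a H^{(n-1)} R_{\alpha_{n-1}}\cdots$; the pair $R_{\alpha_n}R_a=R_{\alpha_n+a}$ is again uniformly distributed when $\alpha_n$ is, so the \emph{distribution} of $\|T_{n,a,\omega}\|$ is independent of $a$, giving $L_n(a)\equiv L_n$. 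Next I would compute the asymptotics of $L_n$ using Theorem~\ref{t.2param} (or directly Theorem~\ref{t.LDET2}) applied block-by-block with the additivity-type estimate (Proposition~\ref{p.additivity}): up to $o(n)$ errors, $L_n$ equals the sum of the block contributions, each slow step contributing $\log 2+o(1)$ and each fast step $\log 100+o(1)$ on average, so $\ell_0:=\limsup_n \frac1n L_n = \log 2 + \tfrac12(\log 100-\log 2) = \tfrac12\log 200$ (attained at $n=2n_i$, $i\to\infty$).

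The heart of the argument is to produce, almost surely, a dense random set $X'$ of parameters $a$ for which $\limsup_n \frac1n\log\|T_{n,a,\omega}\| < \ell_0$. The idea, as in Example~\ref{ex:J-1} and Example~\ref{ex:J-2}, is that for a ``lucky'' parameter the most expanded direction of the product $T_{2n_i,a,\omega}$ over the fast block gets mapped onto the most contracted direction of the subsequent slow dynamics, so that the $n_i$-worth of extra growth from the fast block is \emph{cancelled} during the long slow stretch $(2n_i,n_{i+1}]$. Concretely, for each $i$ and each target point $a^*$ in a fixed countable dense subset of $J$, I would use the monotonicity of $a\mapsto A_j(a)$ (here literally a rotation by $a$) plus the contraction estimates (Lemma~\ref{l:r-contr}, Proposition~\ref{p:phi}, Corollary~\ref{c:l-a1}) and an intermediate-value argument like that in Lemma~\ref{l:m-half} to find, with positive probability bounded below, a parameter $a_i$ within $2\pi/(\text{something})$ of $a^*$ such that $x^+(T_{2n_i,a_i,\omega})=x^-(T_{[2n_i,m],a_i,\omega})$ for the relevant relaxation window $m$; by Lemma~\ref{l:cancel} this forces $\log\|T_{m,a_i,\omega}\|\le |L_{2n_i}-L_{[2n_i,m]}|+o(n)$, which at $m$ of order $n_{i+1}$ (or at a suitable $m\le n_{i+1}$ where the two block norms balance) is $o(m)$-close to $(\log 2)m$ rather than $\ell_0 m$. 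Since the gaps $n_{i+1}/n_i\to\infty$ super-fast, the $\limsup$ over \emph{all} $n$ for such an $a_i$ is at most $\log 2 + \text{(contribution of the single fast block seen so far)}/n \to \log 2 < \ell_0$; taking a subsequence of $i$'s along which the lucky events occur (Borel--Cantelli, using independence across far-apart blocks) and letting $a^*$ range over the dense set gives the dense random set $X'$. The main obstacle will be the last step: carefully arranging the cancellation to persist \emph{for all large $n$}, not just along $n=2n_i$ — one must check that between a cancelled fast block and the next fast block the log-norm stays below $\ell_0 n$, and that the $o(n)$ errors from the large-deviation and distortion estimates (which are uniform in $a$ over the grid but must be transferred to the specific random parameter $a_i$ via Lemma~\ref{l:DC}) do not accumulate; this is exactly the kind of bookkeeping done in the proof of Theorem~\ref{t.vector}, and I would model the final estimate on the telescoping/geometric-series argument used there. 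Once $X'$ is shown dense and the inequality is strict on it while $\frac1n L_n$ is continuous (indeed constant in $a$), discontinuity of $a\mapsto\limsup_n\frac1n\log\|T_{n,a,\omega}\|$ follows, since by the ``regular upper limit'' part of Theorem~\ref{t:product} this $\limsup$ equals $\ell_0$ for a.e.\ fixed $a$ (in particular on a dense set), yet is $<\ell_0$ on the dense set $X'$.
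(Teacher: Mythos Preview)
Your proposal has the right flavor but contains two genuine gaps that would prevent it from working as stated.

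\textbf{First gap: placement of the cancellation.} You arrange the cancellation so that $x^+(T_{2n_i,a_i,\omega})=x^-(T_{[2n_i,m],a_i,\omega})$, i.e.\ the jump moment sits at the \emph{end} of the fast block. But then for $n=2n_i$ the product has not yet started to cancel, so $\frac{1}{2n_i}\log\|T_{2n_i,a_i,\omega}\|\approx \frac{1}{2n_i}L_{2n_i}\approx \ell_0$, and you get no strict inequality at that scale. The paper instead places the jump moment $m_i$ \emph{inside} the fast block, near $1.5\,n_i$: then the peak of $\frac{1}{n}\log\|T_{n,a_i,\omega}\|$ over $n\in[n_i,2n_i]$ occurs at $n\approx 1.5\,n_i$ and equals roughly $\tfrac{2}{3}N(2)+\tfrac{1}{3}N(100)$, which is strictly below $\ell_0=\tfrac{1}{2}(N(2)+N(100))$. (Incidentally, your value $\ell_0=\tfrac{1}{2}\log 200$ is off: the per-step expected log-growth is $N(H)=\log\tfrac{\|H\|+\|H\|^{-1}}{2}$, not $\log\|H\|$.)

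\textbf{Second gap: a single $a_i$ does not control all future fast blocks.} Your parameter $a_i$ is tuned to cancel at the $i$-th fast block, but nothing you wrote forces any cancellation for this same $a_i$ at the $(i{+}1)$-st fast block; generically $\frac{1}{2n_{i+1}}\log\|T_{2n_{i+1},a_i,\omega}\|$ will again be close to $\ell_0$, so the $\limsup$ along $n=2n_j$ is still $\ell_0$. Your Borel--Cantelli ``subsequence of $i$'s'' cannot fix this: you need cancellation at \emph{every} sufficiently large $i$ for a \emph{single} parameter. The paper resolves this by a nested-intervals construction: the $i$-th cancellation estimate is shown to persist on a neighborhood $I_i$ of $a_i$ of radius $10^{-8n_i}$ (small for $2n_i$ iterations, but huge relative to the grid at scale $n_{i+1}=10^{10^{n_i}}$); one then finds the $(i{+}1)$-st cancellation parameter $a_{i+1}$ \emph{inside} $I_i$, using a submartingale argument (Lemma~\ref{l:large}) to guarantee many jump intervals in $I_i$ with the desired jump moment. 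Iterating gives $I_i\supset I_{i+1}\supset\cdots$, and the intersection point is a parameter for which the cancellation holds at every stage, hence the strict $\limsup$ bound. This recursive mechanism is the core idea you are missing.
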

\begin{remark}
It is clear that $X'\subset S_e$, where $S_e\subset J$ is a (random) subset of exceptional parameters defined in Theorem~\ref{t:product}. At the same time, one can show that $J\backslash X'$ is a $G_\delta$ subset of $J$, so $(J\backslash X')\cap S_e\ne \emptyset$, and, therefore, $X'$ must be a proper subset of $S_e$.
\end{remark}
\begin{proof}[Proof of Proposition \ref{p:cancellations}]
Let us first calculate the average log-norms $L_n$. Namely, following~\cite{AB}, for any matrix $B\in \SL(2,\R)$ let us consider the ``averaged expansion rate''
$$
N(B)=\frac{1}{2\pi} \int_0^{2\pi} \log \left| B \left( {\cos \gamma \atop \sin \gamma} \right) \right| \, d\gamma.
$$
An easy calculation (see~\cite[Proposition~3]{AB}) shows that
$$
N(B)=N(\|B\|):=\log \frac{\|B\|+\|B\|^{-1}}{2}.
$$

Now, let $H_j$ be the non-random sequence of diagonal matrices defining~$A_j$ in~\eqref{eq:A-def},
and let $q_j$ be the sequence of (non-random) values of $N(A_j)$, that is,
$$
q_j = N(A_j)= N(H_j)= \begin{cases}
N(2), & \text{if } \mgr_j=\mgr',
\\
N(100), & \text{if } \mgr_j=\mgr''.
\end{cases}
$$


Now, let $v_0$ be a unit vector, and set $v_m=T_{m,a,\omega}v$.
 Then (see~\eqref{eq:phi-sum}) we have
\begin{equation}\label{eq:log-change}
\log |T_{n,a,\omega} v_0| = \sum_{m=1}^n \varphi_{A_m}(v_{m-1}),
\end{equation}
where $\varphi_A(v)=\log \frac{|Av|}{|v|}$. Now,
$$
\varphi_{A_m}(v_{m-1}) = \varphi_{H_m} (R_{\alpha_m} v_{m-1}),
$$
as the rotations do not change the lengths.


However, for any initial vector $v$ the directions of the vector $R_\alpha(v)$, $\alpha\in [0,2\pi]$, are uniformly distributed. Therefore, taking the expectation of~\eqref{eq:log-change}, we get:
$$
\E \log |T_{n,a,\omega} v_0| = \sum_{m=1}^n \E  \varphi_{H_m}(R_{\alpha_m} v_{m-1}) =
\sum_{m=1}^n  N(H_m) = \sum_{m=1}^n  q_m.
$$
The right hand side does not depend on~$v_0$, and averaging this equality w.r.t. $v_0$ gives us
$$
\E N(T_{n,a,\omega}) = \sum_{m=1}^n  q_m.
$$

Note that for any matrix $A\in \SL(2,\R)$ we have
$$
\log \|A\| \ge N(A) \ge \log \frac{\|A\|}{2},
$$
and thus $|N(A) - \log \|A\| | \le \log 2$. Hence,
$$
|L_n - \sum_{m=1}^n q_m| \le \log 2,
$$
and thus
$$
\limsup_n \frac{1}{n} L_n = \limsup_n \frac{1}{n} \sum_{m=1}^n q_m =  \frac{N(2)+N(100)}{2},
$$
with the values close to the upper limit that are attained for $n=2n_i\cdot (1+o(1))$.

Now, fix a sufficiently small $\eps>0$ (for instance, $\eps=0.001$ will do); then, for all sufficiently large $n$ the conclusions of Theorem~\ref{t:main} for this $\eps$~hold. The mechanism leading to the appearance of the random set $X'$ is the following.

Take any interval $I\subset J$. Due to Lemma \ref{l:large} below and Conclusion~\ref{i:m5} of Theorem~\ref{t:main}, for a sufficiently large $n_i$ from the fast growing sequence $\{n_i\}$ defined above, with very large probability one can find an exceptional interval (in terms of Theorem~\ref{t:main}) in $I$ with some special property. Namely, if we denote that special exceptional (or ``jump'') interval by $J_i$, the corresponding cancelation parameter by $a_i$, and the corresponding jump moment by $m_{i}$, we can assume that
 $$
 (1.5-3\eps) n_i<m_{i}<1.5 n_i.
 $$
Thus the sequence of norms of products $T_{n,a,\omega}$ for $a=a_{i}$ will start decreasing after $n=m_i$, and hence we have
\begin{equation}\label{eq:gr-int}
\max_{n=n_i,\dots, 2n_i} \frac{1}{n} \log \|T_{n,a_{i},\omega}\| \le \frac{2}{3} N(2) + \frac{1}{3} N(100) + \eps.
\end{equation}
Moreover, the same estimate (upon replacing $\eps$ with $2\eps$) holds in a neighborhood $I_i$ of $a_{i}$ of size $10^{-8 n_i}$. Indeed, in such a neighborhood, the directions of any initial vectors stay $\eps$-close during $2n_i$ iterations, and one application of a matrix $A$ cannot increase any angle by more than $\|A\|^2\le 100^2$ times.

Notice that  between the moments $2n_i$ and $n_{i+1}$ we apply only norm 2 matrices, so one can easily see that  
\begin{equation}\label{eq:gr-ext}
\forall a\in I_i \quad \max_{n=2n_i,\dots, n_{i+1}} \frac{1}{n} \log \|T_{n,a,\omega}\| \le \max (N(2) + 6\eps N(100) + 3\eps, \log 2) =\log 2;
\end{equation}
the term $6\eps N(100)$ here has to be added,  since the cancelation moment $m_{i}$ is not perfectly at the center $1.5 n_i$ of the interval $[n_i,2n_i]$, so we need to include the ``worst case scenario'' of the application of a growing sequence of norm 100 matrices along the interval $[(1-6\eps)n_i, 2n_i]$. 

While the interval $I_i$ of size $10^{-8n_i}$ was ``small'' for $2n_i$ iterations (that is, the corresponding
norms of the matrix products behaved similarly), it becomes ``large'' for $n_{i+1}=10^{10^{n_i}}$ iterations. Namely, due to Lemma~\ref{l:large} below, on the interval $I_i$  one can find (with the probability extremely close to~$1$) a new jump subinterval $J_{i+1}\subset I_i$ with the
corresponding cancelation point $a_{i+1}\in J_{i+1}$, such the corresponding jump moment $m_{i+1}$ satisfies
$(1.5-3\eps) n_{i+1}<m_{i+1}<1.5 n_{i+1}$.

Again, we find an interval $I_{i+1}$ around the point $a_{i+1}$ of size $10^{-8n_{i+1}}$ where the cancelation estimates~\eqref{eq:gr-int},~\eqref{eq:gr-ext} hold (with $i$ replaced by $i+1$), etc. Continuing this procedure, we find a sequence of decreasing intervals
$$
I \supset I_i \supset I_{i+1} \supset I_{i+2} \supset \dots,
$$
such that
$$
\forall j \quad \forall a \in I_j \quad \max_{n=n_j,\dots, n_{j+1}} \frac{1}{n} \log \|T_{n,a,\omega}\| \le \frac{2}{3} N(2) + \frac{1}{3} N(100) + \eps.
$$
Hence, taking $a$ to be the intersection point of all the $I_j$, we get
$$
\limsup_{n\to\infty} \frac{1}{n}\log \|T_{n,a,\omega}\| \le \frac{2}{3} N(2) + \frac{1}{3} N(100) + \eps.
$$
As we have started with an arbitrary initial interval $I$, the constructed points $a$ form a dense set $X'$ in the interval of parameters.

\begin{figure}
\includegraphics{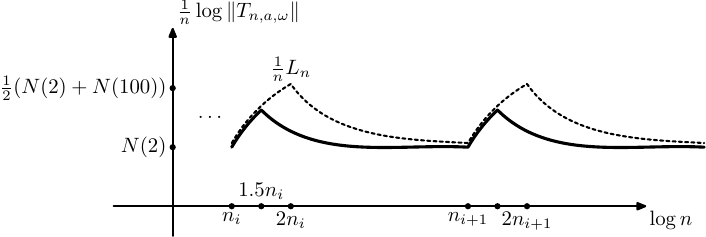}
\caption{Behavior of the sequence $\frac{1}{n}\log\|T_{n, a, \omega}\|$ for a parameter $a\in X'$ (bold line), compared with the one of $\frac{1}{n} L_n$ (dashed line).}\label{f:Lnocont}
\end{figure}

Thus, the proof of Proposition~\ref{p:cancellations} will be concluded, once we show that the probability of making each new step in the construction of the sequence of intervals $I_j$ is sufficiently high so that Borel-Cantelli Lemma can be applied. Finding a jump interval,  due to Conclusion~\ref{i:m5} of Theorem~\ref{t:main}, can be stated in terms of the corresponding lift iterations and their differences~$R_{m,\omega}(I)$. Namely, for finding a jump interval inside $I$ (assuming, that Conclusion~\ref{i:m5} holds)  with a jump index $m_i\in [(1.5-3\eps) n_i, 1.5 n_i]$ it suffices to establish the inequality
$$
R_{(1.5-3\eps)n_i,\omega}(I) + 3 <  R_{(1.5-2\eps)n_i,\omega}(I).
$$
Indeed, due to Conclusion~\ref{i:m5} of Theorem~\ref{t:main} (applied with $n=2n_i$) we know that number of exceptional intervals in $I$ with jump moment not greater than $1.5n_i$ is bounder from below by $R_{(1.5-2\eps)n_i, \omega}-1$. On the other hand, the number of exceptional intervals in $I$ with jump moment less than $(1.5-3\eps)n_i$ is at most $R_{(1.5-3\eps)n_i, \omega}$.

Therefore, it is enough to establish that following statement.

\begin{lemma}\label{l:large}
For any $i$, let $\alpha_1,\dots,\alpha_{n_i}$ and an interval $I\subset J$ of length $|I|$
be given. Then, with the probability at least $1-\frac{12}{\eps n_i | I |}$ one has
$$
R_{(1.5-3\eps)n_i,\omega}(I) + 10 <  R_{(1.5-2\eps)n_i,\omega}(I).
$$
\end{lemma}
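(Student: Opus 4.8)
The plan is to analyze how the sequence $R_{m,\omega}(I)$ evolves along the window $m\in[(1.5-3\eps)n_i,(1.5-2\eps)n_i]$. The essential simplification is that every index $m$ in this window lies in $(n_i,2n_i]$ (since $1.5-3\eps,1.5-2\eps\in(1,1.5)$ for $\eps<1/4$), so all the matrices $A_m(a)$ are distributed as $\mgr''$, i.e. of the form $R_a\cdot H''\cdot R_{\alpha_m}$ with $\|H''\|=100$ large and the angles $\alpha_m$ ($m>n_i$) i.i.d. uniform, the earlier $\alpha_1,\dots,\alpha_{n_i}$ being fixed. Writing $I=[a',a'']$ and using that the outer rotation $\tf_{R_a}$ acts as a translation by $a/\pi$ (so $\tf_{R_{a''}}-\tf_{R_{a'}}$ contributes $|I|/\pi$) while $\tf_{H''}$ is a degree-one lift, one obtains the one-step recursion
$$
R_{m,\omega}(I)=\lfloor R_{m-1,\omega}(I)\rfloor+\beta_m+\tfrac{|I|}{\pi},\qquad \beta_m=\int_{y_m}^{\,y_m+\{R_{m-1,\omega}(I)\}}f'_{H''}(t)\,dt\in(0,1),
$$
where $y_m=\tf_{R_{\alpha_m}}(\tf_{m-1,a',\omega}(\tx_0))$ and $\{\cdot\},\lfloor\cdot\rfloor$ denote fractional and integer parts. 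Two structural consequences are immediate: $\lfloor R_{m,\omega}(I)\rfloor$ is non-decreasing in $m$ and increases by at most $1$ at each step; and $\beta_m$ is the $f'_{H''}$-mass of an arc of length $\{R_{m-1,\omega}(I)\}$.

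Next I would extract the probabilistic input. Conditioned on the past, $\alpha_m$ is uniform, hence $y_m\bmod 1$ is uniform on $\Sc$; since $\int_\Sc f'_{H''}=1$, Fubini gives $\E[\beta_m\mid\mathcal F_{m-1}]=\{R_{m-1,\omega}(I)\}$. Therefore $R_{m,\omega}(I)-R_{m-1,\omega}(I)-\tfrac{|I|}{\pi}$ is a martingale difference bounded by $1$, so $\E[R_{m_2,\omega}(I)-R_{m_1,\omega}(I)]=\tfrac{|I|}{\pi}\,w$ with $w=\eps n_i$; moreover monotonicity of the integer part yields the a priori bound $R_{m_2,\omega}(I)\ge\lfloor R_{m_1,\omega}(I)\rfloor\ge R_{m_1,\omega}(I)-1$. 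Letting $N$ be the number of indices $m$ in the window at which $\lfloor R_{m,\omega}(I)\rfloor$ jumps (a ``wrap''), one has $\lfloor R_{m_2,\omega}(I)\rfloor=\lfloor R_{m_1,\omega}(I)\rfloor+N$ and hence $R_{m_2,\omega}(I)-R_{m_1,\omega}(I)\ge N-1$; so it suffices to prove $N\ge 12$ (say) with probability at least $1-\tfrac{12}{\eps n_i|I|}$.

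The core step is a concentration estimate for $N$. Because $H''$ is strongly hyperbolic, $f'_{H''}$ carries essentially all of its total mass $1$ on a narrow bump ($\sim 10^{-4}$ wide) around its repelling direction, which turns $\beta_m$ into a near-Bernoulli variable: a wrap occurs precisely when this bump is captured by the relevant arc, after which $\{R_{m,\omega}(I)\}$ is reset and then essentially relaxes. Tracking the auxiliary process $\{R_{m,\omega}(I)\}$ one shows that the wrap events form a process close to a renewal process, so $N$ has mean $\gtrsim c\,|I|\,w$ for an explicit constant $c>0$ and variance $\mathrm{Var}(N)=O(\E N)$. Then a Chebyshev bound $\P(N\le 11)\le \mathrm{Var}(N)/(\E N-11)^2$ closes the argument once $\eps n_i|I|$ is large enough (as it is in every application, where $\eps n_i|I|$ is astronomically large, so one may even invoke a Chernoff bound, which beats $\tfrac{12}{\eps n_i|I|}$ with room to spare); the range $\eps n_i|I|=O(1)$ is vacuous since then the claimed probability bound is $\le 1$ trivially. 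The initial conditions $\alpha_1,\dots,\alpha_{n_i}$ and $R_{m_1,\omega}(I)$ enter only through a boundary term controlled by the $1$-step bound $|R_{m,\omega}(I)-R_{m-1,\omega}(I)-\tfrac{|I|}{\pi}|\le 1$.

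I expect the main obstacle to be exactly this last concentration estimate — turning the heuristic ``wraps behave like a renewal/Poisson count, hence concentrate'' into a rigorous $\mathrm{Var}(N)\lesssim\E N$. This requires quantitative control of the correlations between wrap events: how quickly $\{R_{m,\omega}(I)\}$ relaxes after a step where $y_m$ lands in the bump, together with a bound on the rare steps where the whole (tiny) interval $I$ sits in the bump-sensitive region and the image curve $a\mapsto f_{m,a,\omega}(x_0)$ acquires extra turns. Everything else — the one-step recursion, the martingale identity, the reduction to counting wraps, and the final Chebyshev/Chernoff computation — is routine bookkeeping.
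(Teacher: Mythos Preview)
You correctly isolate the key martingale structure: on the window $[n'_1,n'_2]:=[(1.5-3\eps)n_i,(1.5-2\eps)n_i]$ the process $\eta_m:=R_{m,\omega}(I)$ satisfies $\E[\eta_{m+1}\mid\mathcal F_m]=\eta_m+c|I|$ (with $c$ a normalization constant), and you correctly note that the integer part of $\eta_m$ is non-decreasing. But your passage from here to the conclusion goes through an unproven concentration estimate for the wrap count $N$ --- you yourself flag $\mathrm{Var}(N)\lesssim\E N$ as the main obstacle, and it is a genuine one: controlling correlations between successive wrap events would require real work on the post-wrap relaxation of $\{\eta_m\}$, which you have not supplied.

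The paper avoids this entirely with a stopping-time argument that uses only the drift. Let $T$ be the first $m\ge n'_1$ with $\eta_m\ge 11$, or $T=n'_2$ if none exists. Since one-step increments of $\eta_m$ are bounded, one has $\eta_T\lesssim 12$ on $\{T<n'_2\}$, while $\eta_T<11$ on $\{T=n'_2\}$; optional stopping on the martingale $\eta_m-c|I|(m-n'_1)$ therefore gives $\E(T-n'_1)\le 12/(c|I|)$, and Markov's inequality yields
\[
\P(T=n'_2)\ \le\ \frac{\E(T-n'_1)}{n'_2-n'_1}\ \le\ \frac{12}{c|I|\,\eps n_i}.
\]
On the complement $\{T<n'_2\}$, the non-decreasing integer part forces $\eta_{n'_2}\ge\lfloor\eta_T\rfloor\ge 11>\eta_{n'_1}+10$, which is the claimed inequality. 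No variance bound, no renewal heuristics --- the martingale drift plus the deterministic cap $\eta_T\lesssim 12$ does everything in two lines. Your one-step recursion and the observation that integer parts are monotone are exactly the ingredients needed, but you route them through an unnecessary (and unfinished) second-moment computation instead of the first-moment stopping argument.
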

\begin{proof}
Denote $n'_1:=(1.5-3\eps)n_i$, $n'_2:=(1.5-2\eps)n_i$, and assume, additionally to the assumptions of the lemma, that $\alpha_n$ is given for all $n\le n'_1$. This defines the intermediate images $y_-=\tf_{n_1', b'_-,\omega}(\tx_0)$ and $y_+=\tf_{n_1',b'_+, \omega}(\tx_0)$, where $I=[b'_{-}, b'_{+}]$.
To prove Lemma \ref{l:large} it is enough to show that
\begin{equation}\label{eq:ypm}
\tf_{[n'_1, n'_2],b'_+, \omega}(y_+)- \tf_{[n'_1, n'_2],b'_-,\omega}(y_-) \ge (y_+ - y_-) +10
\end{equation}
with the claimed probability. This is exactly what we are going prove.

Note that as all the lifts $\tf$ commute with the shift by~$1$, inequality (\ref{eq:ypm}) is preserved if one shifts $y_-$ by any integer. Hence, without loss of generality we can assume that $y_-<y_+<y_-+1$.

Notice that due to the choice of the matrices in (\ref{eq:A-def}), for any fixed parameter $a\in J$ (in particular, for $a=b_-$) the Lebesgue measure on the circle of directions is stationary with respect to the inverse maps $f_{a,\omega}^{-1}$. Indeed, we defined $A(a)=R_a\circ H\circ R_{\alpha}$ (where $H$ is either $H'$ or $H''$), so $A(a)^{-1}=R_{-\alpha}\circ H^{-1}\circ R_{-a}$.  The rotation $R_{-\alpha}$ is a rotation by a random angle $-\alpha$ uniformly distributed in $[0,2\pi]$, hence the image of any initial point is uniformly distributed on the circle. A standard argument in random dynamics (see \cite{A}, \cite{DKN1},  or \cite{KN}), based on the ideas going back to Furstenberg's work \cite{Fur3}, implies that
$$
\forall\  a\in J, \quad \forall\  [y', y'']\subset \R  \quad \E |\tf_{a, \omega}([y', y''])| = |[y', y'']|.
$$
As lifts $\tf_{b'_-, \omega}$ and $\tf_{b'_+,\omega}$ of $1$-step maps differ by a translation by $|I|=b'_+ -b'_-$, this implies that
\begin{multline}\label{eq:submartingale}
    \forall \ [y', y'']\subset \R  \quad \E \left( \tf_{b'_+, \omega}(y'')-  \tf_{b'_-, \omega}(y') \right) =\\
    \E\left((\tf_{b'_+, \omega}(y'')-\tf_{b'_-, \omega}(y'')) +(\tf_{b'_-, \omega}(y'')- \tf_{b'_-, \omega}(y'))\right) = |I|+(y''-y'). 
\end{multline}
Consider now the random process given by the lengths of the corresponding images 
under iterations following the initial moment $n'_1$:
$$
\eta_{m}(\omega):= \tf_{[n'_1, m],b'_+, \omega}(y_+)- \tf_{[n'_1, m], b'_-, \omega}(y_-),
$$
where the sequence of random parameterized matrices $\omega$ is defined by the sequence of angles $\alpha_1, \alpha_2, \ldots$.
Then~\eqref{eq:submartingale} then becomes a submartingale relation
$$
\E (\eta_{m+1} \mid \alpha_1,\dots, \alpha_m) = \eta_{m-1} + |I|.
$$

It suffices now to apply standard submartingales technique. Namely, consider a stopping time $T=T(\omega)$, defined as
$$
T(\omega) = \min \left( \{ m \in [n'_1, n'_2] \mid  \eta_m \ge 11 \}  \cup \{n'_2\} \right).
$$
Then, one has
$$
\E \eta_{T(\omega)} = \eta_{n'_1} + |I| \cdot \E ( T(\omega) - n'_1).
$$
However, $\E\eta_{T(\omega)}< 12$ (due to the choice of $T(\omega)$ as the first moment at which the length exceeds~$11$), and hence
$$
\E ( T(\omega) - n'_1) \le \frac{12}{| I | }.
$$
Applying Chebyshev inequality, we see that
$$
\P(T(\omega)<n'_2) \le \frac{12}{(n'_2-n'_1) \cdot | I | } =\frac{12}{\eps n_i \cdot | I |},
$$
and the event $T(\omega)<n'_2$ exactly means that for some intermediate iteration $m\in [n'_1, n'_2]$ the length of the interval
$\left( \tf_{[n'_1, m],b'_+,\omega}(y'')-  \tf_{[n'_1, m],b'_-,\omega}(y') \right)$ exceeds $11$, and thus for all the consecutive iterations (in particular, $n'_2$) is at least $10+(y''-y')$. We have established~\eqref{eq:ypm}, thus completing the proof of Lemma~\ref{l:large}.
\end{proof}

Lemma~\ref{l:large} is proven, and this concludes the proof of Proposition~\ref{p:cancellations}.
%
%
\end{proof}

\end{appendix}

%

\section*{Acknowledgments}

We are grateful to David Damanik, Lana Jitomirskaya, and Abel Klein for providing numerous references in spectral theory and many useful remarks.


\begin{thebibliography}{BDFGVWZ}
\def\bi#1{\bibitem[#1]{#1}}


\bibitem[AW]{AW} M.\,Aizenman, S.\,Warzel, Random operators: disorder effects on quantum spectra and dynamics, {\it Providence, Rhode Island; American Mathematical Society}, 	Graduate studies in mathematics, vol. 168 (2015).

\bibitem[AM]{AM} M.\,Aizenman, S.\,Molchanov,  Localization at large disorder and at extreme energies: an
elementary derivation, {\it Commun. Math. Phys.}, vol. 157 (1993), pp. 245--278.

\bibitem[A]{A}  V.A.\,Antonov,
Modeling of processes of cyclic evolution type. Synchronization by a random signal. {\it
Vestnik Leningrad. Univ. Mat. Mekh. Astronom.}  1984, no. 2, pp. 67--76.

%
\bibitem[AB]{AB}   A.\,Avila, J.\,Bochi, A formula with some applications to the theory of Lyapunov exponents, {\it Israel Journal of Mathematics} {\bf 131} (2002), pp. 125--137.


\bibitem[ABD]{ABD}   A.\,Avila, J.\,Bochi, D.\,Damanik, Opening gaps in the spectrum of strictly ergodic Schr\"odinger operators, {\it J. Eur. Math. Soc.} {\bf 14}, pp. 61--106.

\bibitem[ADG]{ADG}  A.\,Avila, D.\,Damanik, A.\,Gorodetski, The Spectrum of Schr\"odinger Operators with Randomly Perturbed Ergodic Potentials, {\it Geometric and Functional Analysis}, {\bf 33} (2023), pp. 364--375.











\bibitem[BL]{BL}   P. Bougerol and J. Lacroix, {\it Products of Random Matrices with Applications to
Schr\"odinger Operators}, Birkhauser, Boston, 1985.





\bibitem[B1]{B1} J.\,Bourgain, On localization for lattice Schr\"odinger operators involving Bernoulli variables,
{\it Geometric aspects of functional analysis}, Lecture Notes in Math., vol. 1850, Springer,
Berlin, 2004, pp. 77--99.

\bibitem[B2]{B2} J.\,Bourgain, Anderson-Bernoulli Models, {\it Moscow Mathematical Journal} {\bf 5} (2005), pp. 523--536.

\bibitem[BK]{BK} J.\,Bourgain, C.\,Kenig, On localization in the continuous Anderson-Bernoulli model in
higher dimension, {\it Invent. Math.} vol. 161 (2005), pp. 389--426.

\bibitem[BKl]{BKl} J.\,Bourgain, A.\,Klein, Bounds on the density of states for Schr\"odinger operators, {\it Invent. Math.} {\bf 194} (2013), pp. 41--72.

\bibitem[BMT]{BMT} O.\,Bourget, G.\,Moreno Flores, A.\,Taarabt,
Dynamical localization for the one-dimensional continuum Anderson model in a decaying random potential, {\it Ann. Henri Poincare} {\bf 21} (2020), pp.  3095--3118.

\bibitem[BDFGVWZ]{BDFGVWZ} V.\, Bucaj, D.\,Damanik, J.\,Fillman, V.\,Gerbuz, T.\,Vandenboom, F.\,Wang, Z.\,Zhang, Localization for the one-dimensional Anderson model via positivity and large deviations for the Lyapunov exponent, {\it Transactions of the AMS}, {\bf 372} (2019), pp. 3619--3667.


\bibitem[CKM]{CKM} R.\,Carmona, A.\,Klein, F.\,Martinelli, Anderson localization for Bernoulli and other singular potentials, {\it Comm. Math. Phys.} {\bf 108} (1987), pp. 41--66.

\bibitem[CL]{CL} R.\,Carmona, J.\,Lacroix, Spectral theory of random Schr\"odinger operators. Boston:
Birkhauser 1990.

%
%

\bibitem[CS]{CS} W.\,Craig, B.\,Simon, Subharmonicity of the Lyapunov index, {\it Duke Mathematical Journal} {\bf 50} (1983), pp. 551--560.


\bibitem[CFKS]{CFKS} H.\,Cycon, R.\,Froese, W.\,Kirsh, B.\,Simon, opics in the Theory of Schr\"odinger
Operators. Berlin, Heidelberg, New York: Springer-Verlag 1987.

\bibitem[D15]{D} D.\,Damanik,  A Short Course on One-Dimensional Random Schr\"odinger Operators, arXiv:1107.1094.

\bibitem[D16]{D16} D.\,Damanik,  Schr\"odinger operators with dynamically defined
potentials,  {\it Ergod. Th. \& Dynam. Sys.}, vol. 37 (2017) , pp. 1681-1764.


\bibitem[DFG]{DFG} D.\,Damanik, J.\,Fillman, P.\,Gohlke,
Spectral characteristics of Schr\"odinger operators generated by product systems, {\it J. Spectr. Theory} {\bf 12} (2022), pp. 1659--1718.


%


\bibitem[DG1]{DG1} D.~Damanik, A.~Gorodetski, Sums of regular Cantor sets of large dimension and the Square Fibonacci Hamiltonian, 	arXiv:1601.01639.

\bibitem[DG2]{DG2} D.~Damanik, A.~Gorodetski,
Must the spectrum of a random Schr\"odinger operator contain an interval?, {\it Comm. Math. Phys.} {\bf 393} (2022), pp. 1583--1613.

\bibitem[DSS]{DSS}   D.\,Damanik, R.\,Sims, G.\,Stolz, Localization for one-dimensional, continuum, Bernoulli -
Anderson models, {\it Duke Math. J.} {\bf 114} (2002), pp. 59--100.

\bibitem[DeG]{DeG} S.\,De Bievre, F.\,Germinet, Dynamical Localization for the Random Dimer Schr\"odinger Operator, {\it Journal of Statistical Physics} {\bf 98} (2000), pp. 1135--1148.

\bibitem[DLS]{DLS}  F.\,Delyon, Y.\,Levy, B.\,Soulliard,  Anderson localization for multidimensional systems at
large disorder or large energy, {\it Comm. Math. Phys.}, {\bf 100} (1985), pp. 463--470.

%
%
\bibitem[DKN]{DKN1}   B.~Deroin,  V.~Kleptsyn,  A.~Navas,  Sur la dynamique unidimensionnelle en r\'egularit\'e interm\'ediaire, {\it Acta Math.} vol. 199 (2007), no. 2, pp. 199--262.
%
%


\bibitem[DSm]{DSm} J.\,Ding, C.\,Smart, Localization near the edge for the Anderson Bernoulli model on the two dimensional lattice, {\it Inventiones Mathematicae}, {\bf 219} (2020), pp. 467--506.

\bibitem[DJLS]{DJLS} R.\,del Rio, S.\,Jitomirskaya, Y.\,Last, B.\,Simon, Operators with singular continuous spectrum, IV. Hausdorff dimensions, rank one perturbations, and localization, {\it Journal d'Analyse Mathematique}, vol. 69 (1996), pp. 153--200.




\bibitem[DKKKR]{DKKKR} M.\,Disertori, W.\,Kirsch, A.\,Klein, F.\,Klopp, V.\,Rivasseau, Random Schr\"odinger Operators, {\it Panoramas et syntheses}, vol. 25 (2008).


\bibitem[vDK]{vDK} H.\,von Dreifus, A.\,Klein,  A new proof of localization in the Anderson tight binding
model, {\it Commun. Math. Phys.}, vol. 124 (1989), pp. 285--299.

\bibitem[FP]{FP} A.\,Figotin, L.\,Pastur, Spectral properties of disordered systems in the one-body
approximation. Berlin, Heidelberg, New York: Springer-Verlag 1991.

\bibitem[FS]{FS} J.\,Fr\"ohlich, T.\,Spencer, Absence of Diffusion in the Anderson Tight
Binding Model for Large Disorder or Low Energy, {\it Commun. Math. Phys.}, vol. 88 (1983), pp. 151--184.




\bibitem[Fur1]{Fur1}  H. Furstenberg, Noncommuting random products, {\it Trans. Amer. Math. Soc.}, {\bf 108} (1963), pp. 377--428.

\bibitem[Fur2]{Fur2}  H. Furstenberg, Random walks and discrete subgroups of Lie groups, {\it  1971 Advances in Probability and Related Topics}, Vol. 1,  pp. 1--63, {\it Dekker, New York}.

\bibitem[Fur3]{Fur3} H. Furstenberg, Boundary theory and stochastic processes on homogeneous spaces, {\it Harmonic analysis on homogeneous spaces (Proc. Sympos. Pure Math., Vol. XXVI, Williams Coll., Williamstown, Mass., 1972)}, 193--229. {\it Amer. Math. Soc., Providence, R.I.}, 1973.



\bibitem[Gl1]{Gl1} I.\,M.\,Glazman, On an application of the method of decomposition to multidimensional singular boundary problems, {\it Mat. Sb.} {\bf 35} (1954), pp. 231--246.

\bibitem[Gl2]{Gl2} I.\,M.\,Glazman, Direct methods of the qualitative spectral analysis of singular differential operators, {\it Gosudarstv. Izdat. Fiz.-Mat. Lit.}, Moscow 1963, 339 pp.
%
%
%




\bibitem[GMP]{GMP} I.\,Goldsheid, S.\,Molchanov, L.\,Pastur,
A random homogeneous Schr\"odinger operator has a pure point spectrum, {\it
Funkcional. Anal. i Prilozhen.} {\bf 11} (1977), pp. 1--8.

%
%
%
%

\bibitem[GK]{GK} A.\,Gorodetski, V.\,Kleptsyn, Parametric Furstenberg Theorem on Random Products of $\SL(2, \mathbb{R})$ matrices,  {\it Adv. Math. } {\bf 378} (2021), Paper No. 107522, 81 pp. 

\bibitem[GK22]{GK22} A.\,Gorodetski, V.\,Kleptsyn, Non-stationary version of Furstenberg Theorem on random matrix products, preprint, arXiv:2210.03805


\bibitem[H]{H} R.\,Han, Shnol's Theorem and the spectrum of long range operators, preprint, {\it Proceedings of the AMS} {\bf 147} (2019), pp. 2887--2897.



\bibitem[His]{His}  P.\,Hislop, Lectures on random Schr\"odinger operators, {\it Contemporary Mathematics}, vol. 476 (2008), pp. 41--131.



\bibitem[Hu]{Hu} D.\,Hundertmark, A short introduction to Anderson localization, {\it Analysis and stochastics of growth processes and interface models}, pp. 194--218, Oxford Univ. Press, Oxford, 2008.

\bibitem[Hur]{Hur} O.\,Hurtado, A ``lifting'' method for exponential large deviation estimates and an application to certain non-stationary 1D lattice Anderson models, {\it J. Math. Phys.} {\bf 64} (2023), 061902.

\bibitem[JZ]{JZ} S.\,Jitomirskaya, S.\,Zhang, Quantitative continuity of singular continuous spectral measures and arithmetic criteria for quasiperiodic Schr\"odinger operators, {\it J. Eur. Math. Soc.} {\bf 24} (2022), pp. 1723--1767.

\bibitem[JZh]{JZh}  S.\,Jitomirskaya, X.\,Zhu,  Large deviations of the Lyapunov exponents and localization for the 1D Anderson Model, {\it Communications in Mathematical Physics} {\bf 370} (2019), pp. 311--324.

\bibitem[JSS]{JSS} S.\,Jitomirskaya, H.\,Schulz-Baldes, G.\,Stolz, Delocalization in Random Polymer Models, {\it Commun. Math. Phys.} {\bf 233} (2003), pp. 27-48.

\bibitem[J]{J} R.\ Johnson, Exponential dichotomy, rotation number, and linear differential operators with bounded coefficients, \textit{J.\ Differential Equations}, vol. {61} (1986), pp. 54--78.










\bibitem[Kir]{Kir}  W.\,Kirsch, An invitation to random Schr\"odinger operators,
With an appendix by Frederic Klopp. Panor. Syntheses, 25, {\it Random Schr\"odinger operators}, 1--119, {\it Soc. Math. France, Paris,} 2008.

\bibitem[KLS]{KLS}  A.\,Kiselev, Y.\,Last, B\,Simon,
Modified Pr\"ufer and EFGP transforms and the spectral analysis of one-dimensional Schr\"odinger operators, {\it Comm. Math. Phys.} {\bf 194} (1998), pp. 1--45.


\bibitem[Kl1]{Kl} A.\,Klein, personal communication.

\bibitem[Kl2]{Kl2} A.\,Klein, Multiscale analysis and localization of random operators, {\it Random
Schr\"odinger operators}, Panor. Syntheses, vol. 25, {\it Soc. Math. France, Paris,} 2008,
pp. 121--159.

\bibitem[Kl3]{Kl3} A.\,Klein, Unique continuation principle for spectral projections of Schr\"odinger operators and optimal Wegner estimates for non-ergodic random Schr\"oodinger operators, {\it Comm. Math Phys.} {\bf 323} (2013), pp. 1229--1246.

\bibitem[KN]{KN}  V.~Kleptsyn,   M.~Nalskii, Convergence of orbits in random dynamical systems on a circle, {\it Funct. Anal. Appl.} vol. 38 (2004), no. 4, pp. 267--282.



\bibitem[KotS]{KotS} S.\,Kotani, B.\,Simon,  Localization in general one dimensional systems. II, {\it Commun. Math.
Phys.}, vol. 112 (1987), pp. 103--120.


\bibitem[KuS]{KuS} H.\,Kunz, B.\,Souillard, Sur le spectre des op\'erateurs aux diff\'erences finies al\'eatoires, {\it Comm. Math. Phys.} {\bf  78} (1980), pp. 201--246.

\bibitem[LTW]{LTW} A.\,Lagendijk, B.\,van\,Tiggelen, D.\,Wiersma, Fifty years of Anderson localization, {\it Physics Today}, vol. 62 (2009), pp. 24--29.




\bibitem[LZ]{LZ} L.\,Li, L.\,Zhang, Anderson-Bernoulli Localization on the 3D lattice and discrete unique continuation principle, {\it  Duke Math. J. } {\bf 171} (2022), no. 2, pp. 327--415.

\bibitem[M]{Mol} S.\,A.\,Molchanov,	The structure of eigenfunctions of one-dimensional  unordered structures, {\it Izv. Akad. Nauk SSSR Ser. Mat.}, vol. 42 (1978), pp. 70--103 (English version: {\it Mathematics of the USSR-Izvestiya}, vol. 12 (1978), pp. 69--101).





\bibitem[MS]{MS}  F.\,Martinelli, E.\,Scoppola,  Remark on the absence of absolutely continuous spectrum
for $d$-dimensional Schr\"odinger operators with random potential for large disorder or low
energy, {\it Commun. Math. Phys.}, vol. 97 (1985), pp. 465--471.






\bibitem[Ra]{Ra}  N.\,Rangamani, Unbounded Singular Random Jacobi Matrices, {\it Journal of Mathematical Physics} {\bf 60}, 081904 (2019).




\bibitem[Shn]{Sch} E.\,E.\,Shnol, On the behavior of the eigenfunctions of Schr\"odinger equation, {\it Mat. Sb.} {\bf 42} (1957), pp. 273--286.

\bibitem[S]{S} P.\,Stollmann, Caught by disorder,  {\it Progress in Mathematical Physics}, vol. 20 (2001), Boston, Birkhauser Boston. 

\bibitem[St1]{St} G.\,Stolz, Strategies in localization proofs for one-dimensional random Schr\"odinger operators, {\it Proc. Indian Acad. Sci. (Math. Sci.) } {\bf 112} (2002), pp. 229--243.

\bibitem[St2]{St2} G.\,Stolz,  An introduction to the mathematics of Anderson localization. In: Entropy
and Quantum II. Proceedings of the Arizona School of Analysis and Applications.
{\it Contemporary Mathematics} vol. 552 (2010).

\bibitem[SVW]{SVW} C.\,Shubin, R.\,Vakilian, T.\,Wolff, Some harmonic analysis questions suggested by
Anderson-Bernoulli models, {\it Geom. Funct. Anal.} {\bf 8} (1998), pp. 932--964.

\bibitem[SW]{SW}  B.\,Simon, T.\,Wolff, Singular continuous spectrum under rank one perturbations and localization for
random Hamiltonians, {\it  Comm. Pure Appl. Math.} {\bf 39} (1986), pp. 75--90.



\bibitem[Sim1]{Sim1} B.\ Simon, Some Jacobi matrices with decaying potential and dense point spectrum, \textit{Commun.\ Math.\ Phys.}\ \textbf{87} (1982), 253--258.

\bibitem[Sim2]{Sim} B.\,Simon, Spectrum and continuum eigenfunctions of Schr\"odinger operators, {\it J. Funct. Anal.} {\bf 42} (1981), pp. 347--355.


\bibitem[Sp1]{Sp1} T.\,Spencer,  Random and quasiperiodic Schr\"odinger operators, {\it Proceedings of the International Congress of Mathematicians},
Vol. 1, 2 (Berkeley, Calif., 1986), pp. 1312--1318, Amer. Math. Soc., Providence, RI, 1987.

\bibitem[Sp2]{Sp2} T.\,Spencer,  Localization for random and quasiperiodic potentials, {\it New directions in statistical mechanics} (Santa Barbara, CA, 1987). {\it J. Statist. Phys.} {\bf 51} (1988), pp. 1009--1019.

\bibitem[Sp2]{Sp3} T.\,Spencer, Mathematical aspects of Anderson localization, {\it Internat. J. Modern Phys. B} {\bf  24} (2010), pp.  1621--1639.









\bibitem[Wood]{Wood} W.\,Wood, On the spectrum of the periodic Anderson-Bernoulli model, {\it J. Math. Phys.} {\bf 63} (2022), Paper No. 102705, 16 pp.








%






%
%









%
%
%
%



%
%







%



%








%
%







%














































\end{thebibliography}
\end{document}